\newcommand{\C}{\mathbb{C}}
\newcommand{\Z}{\mathbb{Z}}
\newcommand{\ket}[1]{|#1\rangle}
\newcommand{\bra}[1]{\langle#1|}
\newcommand{\x}{\otimes}
\newcommand{\vphi}{\varphi}
\renewcommand{\phi}{\varphi}
\newcommand{\be}{\begin{equation}}
\newcommand{\ee}{\end{equation}}
\renewcommand{\vec}[1]{\mathbf{#1}}
\DeclareMathOperator{\Tr}{Tr}
\DeclareMathOperator{\tr}{tr}
\DeclareMathOperator{\spn}{span}
\DeclareMathOperator{\Cay}{Cay}
\newcommand{\Thm}[1]{\hyperref[thm:#1]{Theorem~\ref*{thm:#1}}}
\newcommand{\Lem}[1]{\hyperref[lem:#1]{Lemma~\ref*{lem:#1}}}
\newcommand{\Cor}[1]{\hyperref[cor:#1]{Corollary~\ref*{cor:#1}}}
\newcommand{\Def}[1]{\hyperref[def:#1]{Definition~\ref*{def:#1}}}
\newcommand{\Obs}[1]{\hyperref[obs:#1]{Observation~\ref*{obs:#1}}}
\newcommand{\Prop}[1]{\hyperref[prop:#1]{Proposition~\ref*{prop:#1}}}
\newcommand{\Rem}[1]{\hyperref[rem:#1]{Remark~\ref*{rem:#1}}}
\newcommand{\Ex}[1]{\hyperref[ex:#1]{Example~\ref*{ex:#1}}}
\newcommand{\Sec}[1]{\hyperref[sec:#1]{Section~\ref*{sec:#1}}}
\newcommand{\Fig}[1]{\hyperref[fig:#1]{Figure~\ref*{fig:#1}}}
\newcommand{\Tab}[1]{\hyperref[tab:#1]{Table~\ref*{tab:#1}}}
\newcommand{\EqRef}[1]{\hyperref[eq:#1]{(\ref*{eq:#1})}}
\newcommand{\Eq}[1]{Equation~\hyperref[eq:#1]{(\ref*{eq:#1})}}
\newtheorem*{rep@theorem}{\rep@title}
\newcommand{\newreptheorem}[2]{%
\newenvironment{rep#1}[1]{%
 \def\rep@title{#2 \ref{##1}}%
 \begin{rep@theorem}}%
 {\end{rep@theorem}}}
\newtheorem{theorem}{Theorem}[section]
\newtheorem*{theorem*}{Theorem}
\newtheorem{lemma}[theorem]{Lemma}
\newtheorem{cor}[theorem]{Corollary}
\newtheorem{prob}[theorem]{Open problem}
\newtheorem*{prob*}{Open problem}
\theoremstyle{definition}
\newtheorem{definition}[theorem]{Definition}
\newtheorem{remark}[theorem]{Remark}
\newtheorem{example}[theorem]{Example}
\newcommand{\one}{\ensuremath{\vec{1}}}
\renewcommand\bra[1]{{\langle{#1}|}}
\renewcommand\ket[1]{{|{#1}\rangle}}
\newcommand{\inner}[2]{\left\langle #1 | #2 \right\rangle}
\newcommand{\outerp}[2]{\left| #1 \left\rangle \! \right\langle #2 \right|}
\newcommand{\A}{\ensuremath{\mathcal{A}}}
\newcommand{\B}{\ensuremath{\mathcal{B}}}
\DeclareMathOperator{\rk}{rk}
\DeclareMathOperator{\sym}{Sym}
\DeclareMathOperator{\bij}{Bij}
\DeclareMathOperator{\conv}{conv}
\newcommand{\sm}{\backslash}
\title{Group Invariant Quantum Latin Squares}
\author[1]{Arnbj\"{o}rg Soff\'{i}a  \'{A}rnad\'{o}ttir\footnote{arnbjorg.soffia@dcc.ufmg.br}}
\author[2]{David E.~Roberson\footnote{dero@dtu.dk}}
\affil[1]{Departamento de Ciência da Computação, Universidade Federal de Minas Gerais, Belo Horizonte -- MG, 31270-901, Brazil}
\affil[2]{Department of Applied Mathematics and Computer Science, Technical University of Denmark, DK-2800 Lyngby, Denmark}
\affil[2]{QMATH}
\date{February 28, 2025}
\begin{document}

\maketitle


\begin{abstract}
A quantum Latin square is an $n \times n$ array of unit vectors where each row and column forms an orthonormal basis of a fixed complex vector space. We introduce the notion of $(G,G')$-invariant quantum Latin squares for finite groups $G$ and $G'$. These are quantum Latin squares with rows and columns indexed by $G$ and $G'$ respectively such that the inner product of the $a,b$-entry with the $c,d$-entry depends only on $a^{-1}c \in G$ and $b^{-1}d \in G'$. This definition is motivated by the notion of group invariant bijective correlations introduced in [Roberson \& Schmidt (2020)], and every group invariant quantum Latin square produces a group invariant bijective correlation, though the converse does not hold. In this work we investigate these group invariant quantum Latin squares and their corresponding correlations. Our main result is that, up to applying a global isometry to every vector in a $(G,G')$-invariant quantum Latin square, there is a natural bijection between these objects and trace and conjugate transpose preserving isomorphisms between the group algebras of $G$ and $G'$. This in particular proves that a $(G,G')$-invariant quantum Latin square exists if and only if the multisets of degrees of irreducible representations are equal for $G$ and $G'$. Another motivation for this line of work is that whenever Cayley graphs for groups $G$ and $G'$ are quantum isomorphic, then there is a $(G,G')$-invariant quantum correlation witnessing this, and thus it suffices to consider such correlations when searching for quantum isomorphic Cayley graphs. Given a group invariant quantum correlation, we show how to construct all pairs of graphs for which it gives a quantum isomorphism.
\end{abstract}

\setcounter{tocdepth}{1}
\setlength{\parskip}{-0.15cm}
\tableofcontents
\setlength{\parskip}{0cm}

\section{Introduction}

A Latin square, of order $n$, is an $n \times n$ array of the numbers $1, 2, \ldots, n$ such that each row and column contains each number precisely once. These objects have been heavily studied in combinatorics and have connections to statistical designs~\cite{RABailey}, cryptography~\cite{pal2010design}, and algebraic graph theory~\cite{AGT}, and they are still an active area of research. The focus of this paper is on a quantum analog of Latin squares, known as \emph{quantum Latin squares}, introduced in~\cite{qlatin}. In a quantum Latin square, the numbers $1,2, \ldots, n$ are replaced by unit vectors, and the condition that every row/column contains each number exactly once is replaced by the condition that every row/column forms an orthonormal basis of some fixed vector space, usually (but not always) $\mathbb{C}^n$. It is straightforward to see that restricting to only using standard basis vectors, or any fixed orthonormal basis, results in a usual Latin square with numbers replaced by a fixed set of $n$ orthonormal vectors. Quantum Latin squares have connections to other primitives used in quantum information, such as unitary error bases~\cite{qlatin}, mutually unbiased bases~\cite{qlatin2}, and controlled families of Hadamards~\cite{biunitary}. Our main motivation in this work is their application as quantum strategies for the graph isomorphism game~\cite{qiso1}. Any quantum strategy for this game can be given as a \emph{quantum permutation matrix} whose entries are projections, and quantum Latin squares can be seen as a special case of quantum permutation matrices where every projection has rank one.

In~\cite{quantumvsnonlocal}, Roberson and Schmidt introduced a highly symmetric type of strategy for the graph isomorphism game. These so-called \emph{group invariant} strategies are invariant under the regular action of a given group $G$ on the players' inputs and outputs (this is made precise in Section~\ref{sec:prelims}). For a strategy arising from a quantum Latin square, this amounts to an invariance of the square of the modulus of the inner products of its entries under independent regular actions on its row and column indices. Currently, a complete characterization of such quantum Latin squares seems out of reach. Instead we strengthen this invariance condition to apply to the inner products themselves, rather than the squares of their moduli, and say that such quantum Latin squares are \emph{group invariant}. We also consider the case where the row and column indices are acted on by different groups, say $G$ and $G'$ respectively, and refer to such Latin squares as being $(G,G')$-invariant.


In general, there are $n^4$ inner products among the $n^2$ vectors in an $n \times n$ quantum Latin square. However, due to the symmetry of group invariant quantum Latin squares, they have at most $n^2$ distinct inner products, and these can be stored in an $n \times n$ matrix that we term the \emph{transformation matrix}. The transformation matrix completely determines a $(G,G')$-invariant quantum Latin square up to a global isometry, and we are able to completely characterize these matrices in terms of simple conditions based on the groups $G$ and $G'$ (see Theorem \ref{thm:qlstransf}). 
This allows us to prove our main result, that the maps consisting of conjugating by a transformation matrix are precisely the trace-preserving isomorphisms of the left regular representations of $G$ and $G'$ that commute with conjugate transpose. This implies that a $(G,G')$-invariant quantum Latin square exists if and only if the multiset of degrees of irreducible representations of $G$ is the same as that of $G'$, i.e., if and only if the group algebras of $G$ and $G'$ are isomorphic. In particular, if $G$ and $G'$ are abelian groups of the same order, such a quantum Latin square always exists and in this case there are finitely many. If, however, a $(G,G')$-invariant quantum Latin square exists for non-abelian groups $G$ and $G'$, then there are uncountably many of them.

We also investigate the strategies for the isomorphism game arising from group invariant quantum Latin squares. This amounts to considering the squared moduli of the inner products, which give the so-called \emph{correlation probabilities} of a quantum strategy. Here we are focused on whether or not the correlation arising from a group invariant quantum Latin square is \emph{non-classical}. Though it is not difficult to produce such examples, a full understanding of when this occurs eludes us, and so this remains an intriguing open problem. In the abelian case, we prove a surprising result: the convex hull of the correlations arising from $(G,G')$-invariant quantum Latin squares is a unitary transformation of the set of \emph{classical} $(G,G')$-invariant correlations. For $G = G' = \mathbb{Z}_2^d$ where $d\in\{1,2,3\}$, the two sets are in fact equal, but we provide examples for $d=4$ that show this pattern does not continue, answering a question from~\cite{quantumvsnonlocal}.

In~\cite{quantumvsnonlocal}, a construction of $(G,G)$-invariant correlations for abelian $G$ was introduced, and which has since been used to prove results in quantum group theory~\cite{mccarthy2023tracing} and mathematical physics~\cite{de2023magic}. We show that this construction in fact produces all $(G,G)$-invariant quantum Latin squares, and its natural generalization produces all $(G,G')$-invariant quantum Latin squares for abelian $G$ and $G'$ (see Remark~\ref{rem:all}).

The remainder of the paper is outlined as follows: Section~\ref{sec:prelims} introduces the graph isomorphism game and its classical and quantum strategies and their corresponding correlations. This serves as both a preliminaries and motivation section. In Section~\ref{sec:GInvCorr} we introduce the notion of group invariant correlations, generalizing the definition from~\cite{quantumvsnonlocal} to the case of two different groups, and we provide descriptions of the set of classical group invariant correlations. We also connect these correlations to the isomorphism game on Cayley graphs.  In Section~\ref{sec:QLS} we introduce quantum Latin squares and in particular define what it means for them to be group invariant. We also show that the inner products of vectors in a group invariant quantum Latin square can be used to construct its \emph{transformation matrix} which determines the quantum Latin square up to isometry. Moreover, we characterize precisely which matrices are the transformation matrices of some $(G,G')$-invariant quantum Latin square. In Section~\ref{sec:composition} we show that, unlike general quantum Latin squares, group invariant quantum Latin squares can be composed to produce new quantum Latin squares, which are moreover group invariant, and this composition corresponds to multiplication of the respective transformation matrices. 

In Section~\ref{sec:reptheory} we delve into some representation theory that we will need for our main result and also investigate the relationship between transformation matrices of $(G,G')$-invariant quantum Latin squares and what we refer to as \emph{quasi-regular representations} of $G$ and $G'$. We prove our main result in Section~\ref{sec:transexist}, namely that maps given by conjugating by a transformation matrix of a $(G,G')$-invariant quantum Latin square are precisely the trace and conjugate transpose preserving isomorphisms of the group algebras of $G$ and $G'$. Section~\ref{sec:constructing} shows how to actually construct $(G,G')$-transformation matrices, assuming one has unitaries that block-diagonalize the group algebras. In the abelian case this is achieved by the character tables of the groups, and thus we can construct all $(G,G')$-transformation matrices since there are finitely many in this case. We study the abelian case further in Section~\ref{sec:abelian} and show that in this case the convex hull of the correlations produced by $(G,G')$-invariant quantum Latin squares is the image of the set of classical $(G,G')$-invariant correlations under a particular unitary map. In Section~\ref{sec:isomorphisms} we show that the submatrices of $(G,G')$-transformation matrices corresponding to the rows/columns with a single nonzero entry give isomorphisms between subgroups of $G$ and $G'$, as well as the linear characters of these groups. Additionally, we prove a partial converse assuming a certain product structure of the groups $G$ and $G'$. In Section \ref{sec:supportgraphs} we define the \emph{support graph} of a correlation and consider in particular support graphs of group invariant correlations. Section \ref{sec:computations} presents some computational results and important examples, e.g., an example of a non-classical correlation produced by a $\mathbb{Z}_2^4$-invariant quantum Latin square, answering a question of~\cite{quantumvsnonlocal}. Finally, we end with a discussion of our results and some open problems.

\section{The Isomorphism Game}\label{sec:prelims}


Though developed independently, our perspective on quantum Latin squares is mainly influenced by their relation to the graph isomorphism game. In this section, we introduce this concept and talk about its connection to our work. We also give some preliminaries.

\begin{definition}
    Given graphs $X$ and $Y$ with $|V(X)| = |V(Y)|$\footnote{This condition can be dropped if one instead considers a slightly more complicated version of the isomorphism game, for which there exist winning strategies only if this condition holds. However, the version defined here allows for a much cleaner presentation of our results, and is equivalent to the more complicated game for the classes of strategies/correlations we consider.}, the \emph{$(X,Y)$-isomorphism game} consists of two players (Alice and Bob) attempting to convince a referee/verifier that they know an isomorphism from $Y$ to $X$\footnote{The reason for the seemingly backwards direction of this definition is that our $(G,G')$-invariant quantum Latin squares have rows indexed by $G$ and columns indexed by $G'$. Thus so do their associated transformation matrices, which we then think of as mapping $\mathbb{C}^{G'}$ to $\mathbb{C}^{G}$.}. Each player is given a (possibly different) vertex of $Y$, and must respond with a vertex of $X$. They win if their outputs satisfy the same relation (i.e., equal, adjacent, distinct non-adjacent) as their inputs. We say that a given strategy for the game is a \emph{winning strategy} if the players are guaranteed to win regardless of the particular inputs they are given. 
    Importantly, the players are not aware of each other's input. 
\end{definition}

\begin{remark}\label{rem:digraphs}
    Though the isomorphism game is usually only discussed in the setting of undirected graphs, as in the definition above, it can easily be extended to the setting of directed graphs. In this case, there must be an arc from the vertex Alice responded with to the vertex Bob responded with if and only if there was an arc from the vertex Alice received to the vertex Bob received. We can also allow loops, which will impose the condition that each player responds with a vertex with a loop if and only if their input vertex had a loop. We will mainly focus on undirected loopless graphs, but we will allow the more general case in Section~\ref{subsec:cayley}.
\end{remark}

A deterministic strategy for the isomorphism game consists of functions $f_A, f_B: V(Y) \to V(X)$ such that Alice (resp.~Bob) responds with $f_A(y)$ (resp.~$f_B(y'))$ on input $y$ (resp.~$y'$). It is straightforward to see that such a strategy is winning if and only if $f_A = f_B$ and this is an isomorphism from $Y$ to $X$. More generally, a \emph{classical strategy} allows for the players' answers to additionally depend on a shared source of randomness. This allows the players to probabilistically choose from some set of deterministic strategies, and thus a winning classical strategy exists if and only if a winning deterministic strategy exists, i.e., if the graphs $X$ and $Y$ are isomorphic.

In general, any strategy for the isomorphism game leads to a \emph{correlation} $p$ where $p(x,x'|y,y')$ is the probability that Alice and Bob answer $x,x'$ conditioned on their receiving $y,y'$ as respective inputs. Note that by definition, one always has that 
\[\sum_{x,x'\in V(X)} p(x,x'|y,y') = 1.\] The strategy is a winning strategy if and only if $p(x,x'|y,y') = 0$ when $x,x'$ do not satisfy the same relation as $y,y'$, i.e., if the probability of them answering incorrectly is 0. For a deterministic strategy given by $f_A,f_B$, the corresponding correlation is
\[p(x,x'|y,y') = \begin{cases}
    1 & \text{if } x = f_A(y) \ \& \ x' = f_B(y') \\
    0 & \text{otherwise.}
\end{cases}\]
Then the correlations of general classical (winning) strategies are precisely the convex combinations of correlations arising from deterministic (winning) strategies. These correlations are said to be \emph{classical} or \emph{local} correlations. 

Quantum strategies for the isomorphism game are more complicated. Shortly, Alice must select measurements $\{E_{xy} \in M_{d_A}(\mathbb{C}): x \in V(X)\}$ 
of positive semidefinite matrices that sum to identity for each $y \in V(Y)$, Bob must similarly select measurements $\{F_{xy} \in M_{d_B}(\mathbb{C}): x \in V(X)\}$ for each $y \in V(Y)$, and they must choose a shared quantum state (i.e., a unit vector) $\ket{\psi} \in \mathbb{C}^{d_A} \otimes \mathbb{C}^{d_B}$. The correlation arising from such a strategy is given by
\begin{equation}\label{eq:qcorrdef}
    p(x,x'|y,y') = \bra{\psi}\left(E_{xy} \otimes F_{x'y'}\right)\ket{\psi} \text{ for all } x, x' \in V(X), \ y, y' \in V(Y).
\end{equation}

When a winning quantum strategy for the $(X,Y)$-isomorphism game exists, the graphs $X$ and $Y$ are said to be \emph{quantum isomorphic}\footnote{Strictly speaking we should specify ``quantum tensor" strategy/isomorphism, as opposed to the more general ``quantum commuting" strategy/isomorphism. But we will always use ``quantum strategy/isomorphism" to refer to ``quantum tensor strategy/isomorphism" in this work. This matches the terminology of~\cite{qiso1}.}, which is denoted by $X \cong_q Y$.

As shown in~\cite{qiso1}, the conditions of the isomorphism game impose certain conditions/relations on the measurement operators of Alice and Bob and their shared state. This allows one to formulate quantum isomorphism in a more succinct manner, but requires the following definition:


\begin{definition}
    A matrix $\mathcal{P} = (p_{ij}) \in M_n(M_d(\mathbb{C}))$ is a \emph{quantum permutation matrix}\footnote{Generally, quantum permutation matrices are allowed to have entries from an arbitrary $C^*$-algebra. However, for the purposes of this paper, we will restrict to the finite dimensional case.} if its entries $p_{ij} \in M_d(\mathbb{C})$ satisfy the following:
    \begin{enumerate}
        \item $p_{ij} = p_{ij}^2 = p_{ij}^\dagger$, i.e., they are orthogonal projections, and
        \item $\sum_{k=1}^n p_{ik} = I = \sum_{\ell=1}^n p_{\ell j}$ for all $i,j \in [n]$.
    \end{enumerate}
    We remark that, assuming (1), condition (2) is equivalent to the matrix $\mathcal{P}$ being unitary.
\end{definition}

We then have the following:

\begin{theorem}[\cite{qiso1}]\label{thm:qiso1}
    Let $X$ and $Y$ be graphs. Then $X \cong_q Y$ if and only if there exists a quantum permutation matrix $\mathcal{P}$ such that 
    \[A_X \mathcal{P} = \mathcal{P}A_Y,\]
    where $A_X$ and $A_Y$ are the adjacency matrices of $X$ and $Y$ respectively.
\end{theorem}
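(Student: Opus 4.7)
The plan is to prove both directions of the equivalence. The reverse direction is a direct construction, while the forward direction requires extracting rigid structure from an arbitrary winning quantum strategy.

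For the reverse direction, given a quantum permutation matrix $\mathcal{P} = (p_{xy})_{x \in V(X), y \in V(Y)}$ with entries in $M_d(\mathbb{C})$ satisfying $A_X \mathcal{P} = \mathcal{P} A_Y$, I would define Alice's PVM on input $y$ to be $\{p_{xy} : x \in V(X)\}$ and Bob's PVM on input $y$ to be $\{\overline{p_{xy}} : x \in V(X)\}$ (entrywise complex conjugation in the standard basis), and take the shared state to be the maximally entangled state $\ket{\psi} = \tfrac{1}{\sqrt{d}}\sum_{i=1}^{d} \ket{i} \otimes \ket{i}$. The standard ricochet identity $\bra{\psi}(A \otimes \overline{B})\ket{\psi} = \tfrac{1}{d}\Tr(AB^{\dagger})$ then yields the correlation
\[
p(x, x' \mid y, y') = \tfrac{1}{d}\Tr(p_{xy}\,p_{x'y'}).
\]
The consistency requirement $p(x, x' \mid y, y) = 0$ for $x \neq x'$ is immediate from the PVM relation $p_{xy}\,p_{x'y} = 0$. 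The edge/non-edge preservation requirements reduce to showing $\Tr(p_{xy}\,p_{x'y'}) = 0$ whenever the relation between $x$ and $x'$ in $X$ differs from that between $y$ and $y'$ in $Y$. These vanishings are extracted by multiplying the entrywise form $\sum_{x'' \sim x} p_{x''y} = \sum_{y'' \sim y} p_{xy''}$ of $A_X \mathcal{P} = \mathcal{P} A_Y$ by a suitable $p_{x'y'}$ on one side and then collapsing all but a few terms using the row-PVM and column-PVM relations.

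For the forward direction, starting from a winning strategy $(\{E_{xy}\}, \{F_{xy}\}, \ket{\psi})$ with $\ket{\psi} \in \mathbb{C}^{d_A} \otimes \mathbb{C}^{d_B}$, I would first restrict all measurement operators to the supports of the reduced density operators of $\ket{\psi}$, so that without loss of generality $\ket{\psi}$ has full Schmidt rank. Applying the Schmidt decomposition then identifies the two factors and lets us view the bilinear form $(A, B) \mapsto \bra{\psi}(A \otimes B)\ket{\psi}$ as a weighted Hilbert--Schmidt-type inner product. The ``no losing outcome'' conditions translate into orthogonality relations: from $\bra{\psi}(E_{xy} \otimes F_{x'y})\ket{\psi} = 0$ for $x \neq x'$, combined with $\sum_{x'} F_{x'y} = I$, positivity, and a Cauchy--Schwarz argument for this bilinear form, one forces $F_{xy}$ to coincide with the transpose of $E_{xy}$ (in the Schmidt basis) on the support of $\ket{\psi}$, and simultaneously forces each $E_{xy}$ to be a projection with $\sum_x E_{xy} = I$ there. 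Setting $p_{xy} := E_{xy}$ then produces a quantum permutation matrix $\mathcal{P}$, and the remaining edge/non-edge preservation conditions, re-encoded via the Schmidt identification, assemble into exactly the matrix equation $A_X \mathcal{P} = \mathcal{P} A_Y$.

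The main obstacle is the forward direction, specifically the \emph{synchronization} step that upgrades a generic winning strategy into one whose measurements are projections related by a maximally entangled state. This is technical and depends delicately on the positivity and completeness structure of the POVMs; in particular, the full-Schmidt-rank reduction is essential, since without it a POVM element could a priori have rank exceeding one on the state's support without producing losing outcomes. Once projections and synchronization are in hand, assembling the intertwining relation $A_X \mathcal{P} = \mathcal{P} A_Y$ from the remaining zero-probability conditions is a routine translation via the Schmidt identification.
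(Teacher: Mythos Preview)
The paper does not prove this theorem: it is stated with a citation to~\cite{qiso1} and no proof is given. So there is no ``paper's own proof'' to compare against; your outline is essentially the standard argument from~\cite{qiso1} that the paper is invoking.

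That said, there is one genuine omission in your forward direction. After the synchronization step you obtain projections $E_{xy}$ with $\sum_x E_{xy} = I$, and then assert that $p_{xy} := E_{xy}$ is a quantum permutation matrix. But a quantum permutation matrix also requires $\sum_y p_{xy} = I$, and nothing you have written forces this. The missing ingredient is the \emph{bisynchronous} part of the winning condition: since equality is one of the relations, inputs $y \neq y'$ force outputs $x \neq x'$, so $\tr(p_{xy}p_{xy'}) = 0$ and hence $p_{xy}p_{xy'} = 0$ for $y \neq y'$. Thus $\sum_y p_{xy}$ is a projection, in particular $\le I$. A rank count then closes the gap: from the row sums $\sum_{x,y}\rk p_{xy} = |V(Y)|\,d$, while column-wise $\sum_y \rk p_{xy} \le d$ gives $\sum_{x,y}\rk p_{xy} \le |V(X)|\,d$; the standing hypothesis $|V(X)| = |V(Y)|$ forces equality and hence $\sum_y p_{xy} = I$ for every $x$. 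Without this step you do not actually have a quantum permutation matrix, so the intertwining equation $A_X\mathcal{P} = \mathcal{P}A_Y$ is not yet well-posed in the sense of the theorem.
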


It follows from the proof of Theorem~5.4 in~\cite{qiso1} that a quantum permutation matrix $\mathcal{P} = (p_{xy})_{x \in V(X), y \in V(Y)}$ satisfying the conditions of the above theorem results in a correlation $p$ that wins the $(X,Y)$-isomorphism game and is given by
\begin{equation}\label{eq:qcorr}
    p(x,x'|y,y') = \tr(p_{xy}p_{x'y'}),
\end{equation}
where $\tr$ denotes the normalized trace, i.e., $\tr(M) = \Tr(M)/d$ for $M \in M_d(\mathbb{C})$. It is well known that if all of the entries of a quantum permutation matrix pairwise commute, then the resulting correlation is classical. The fact that the converse is not true is the focus of~\cite{quantumvsnonlocal}.

In some cases, e.g.~Section~\ref{sec:supportgraphs}, it may be useful to allow our quantum permutation matrices to be over some self-adjoint subalgebra $\A \subseteq M_n(\mathbb{C})$. By this we mean that the entries of the quantum permutation matrix all live in $\A$ and the sum of any row or column is the identity of $\A$, which is some projection $q$ in $M_n(\mathbb{C})$. In this case, the normalized trace is defined as the usual trace of $M_n(\mathbb{C})$ divided by the trace of $q$, and the formula~\eqref{eq:qcorr} still produces a quantum correlation.

\begin{remark}\label{rem:qconvex}
It is likely, but unknown, that not every winning quantum correlation for the $(X,Y)$-isomorphism game arises from a quantum permutation matrix via Equation~\eqref{eq:qcorr}. 
However, it follows from known results in the literature, see e.g.~\cite{lauramaxent}, 
that every winning quantum correlation for the $(X,Y)$-isomorphism game is a convex combination of correlations of the form~\eqref{eq:qcorr}. Importantly, the sets of classical and quantum correlations are convex with the latter containing the former. Lastly, we note that it is well-known, and easy to see from~\eqref{eq:qcorr}, that all quantum correlations satisfy the so-called \emph{non-signalling} condition, i.e., that Alice's marginals $p_A(x|y) := \sum_{x'} p(x,x'|y,y')$ do not depend on Bob's input $y'$ and Bob's marginals $p_B(x'|y') := \sum_{x} p(x,x'|y,y')$ do not depend on Alice's input $y$. This condition enforces that no information is transmitted between the two parties.
\end{remark}

Notably, it is possible for graphs $X$ and $Y$ to be quantum isomorphic but not isomorphic~\cite{qiso1}. In this case, the correlation of any winning quantum strategy for the $(X,Y)$-isomorphism game is necessarily non-classical.  
However, the converse does not hold: there exist non-classical quantum correlations that win the $(X,Y)$-isomorphism game, but only for isomorphic $X$ and $Y$, e.g., the correlation used in~\cite[Theorem 3.19]{quantumvsnonlocal} to show that $K_5$ has ``nonlocal symmetry". Note that if $p$ is a winning correlation for the $(X,Y)$-isomorphism game, then it is also a winning correlation for the isomorphism game where $X$ and $Y$ are replaced by empty graphs on the same vertex set. Since we are mainly focused on group invariant quantum Latin squares and the correlations they produce, this least restrictive isomorphism game is the one we will consider. Since for empty graphs, two vertices can only be equal or distinct non-adjacent, in this case it makes sense to simply think of the isomorphism game as a \emph{bijection game} between two sets, in which the players must give the same answer if and only if they are given the same input. For us, the two sets will always be two groups $G$ and $G'$. As with the isomorphism game, we restrict to the case where $|G| = |G'|$, but remark that this assumption can be removed as in the isomorphism game case by passing to a larger game. It is then clear that the winning deterministic strategies for the $(G,G')$-bijection game correspond precisely to the bijections between $G$ and $G'$. Thus there is always a winning classical strategy, but we are interested in the winning quantum strategies that produce non-classical correlations. However, in Section~\ref{subsec:cayley} we will show how to take any winning group invariant correlation for the bijection game and determine all the graph isomorphism games for which it is also a winning strategy. Note that any quantum permutation matrix with rows indexed by $G$ and columns by $G'$ gives a winning quantum correlation for the $(G,G')$-bijection game, since the $A_X \mathcal{P} = \mathcal{P} A_Y$ condition of Theorem~\ref{thm:qiso1} is trivial for empty graphs.

\section{Group Invariant Correlations}\label{sec:GInvCorr}

Here we introduce group invariant correlations, in particular we investigate the set of such correlations that are classical. We also describe the connection between group invariant correlations and the isomorphism game on Cayley graphs. 

\begin{definition}
    Let $G$ and $G'$ be finite groups with $|G|=|G'|$. A \emph{$(G,G')$-invariant correlation} $p$ is a winning correlation for the $(G,G')$-bijection game such that the value of $p(b,d|a,c)$ depends only on the values of the quotients $b^{-1}d \in G$ and $a^{-1}c \in G'$, and additionally $\sum_{a,c \in G'} p(b,d|a,c) = 1$ for all  $b,d\in G$.
\end{definition}

\begin{remark}
    The constraint $\sum_{a,c \in G'} p(b,d|a,c) = 1$ in the definition above is to ensure that the correlation matrix of a group invariant correlation (see Definition~\ref{def:corrmat}) is doubly stochastic. Importantly, any winning quantum correlation for the $(G,G')$-bijection game must necessarily satisfy this condition as can be easily checked using Equation~\eqref{eq:qcorr}. Also note that the analogous sum over $G$ is necessarily equal to 1 for any correlation $p$. 
\end{remark}

For convenience, we will use $\hat{B}(G,G')$ to denote the set of all winning correlations for the $(G,G')$-bijection game that satisfy $\sum_{a,c \in G'} p(b,d|a,c) = 1$ for all $b,d \in G$.


Group invariant correlations were first introduced in~\cite{quantumvsnonlocal}, but there they were only defined for a single group, i.e., they defined the case where $G=G'$ in the definition above. In fact, in~\cite{quantumvsnonlocal} they also required that $p(b,d|a,c) = p(d,b|c,a)$ for all $b,d \in G$ and $a,c \in G'$, but we do not need this condition. In the case where $G=G'$, we will follow the convention of~\cite{quantumvsnonlocal} and refer to these correlations as $G$-invariant, as opposed to $(G,G)$-invariant.

In~\cite{quantumvsnonlocal}, they introduce an important specific $G$-invariant correlation, denoted $p_G$, which acts as a projection onto the set of $G$-invariant correlations. This correlation is defined as

\begin{equation}\label{eq:identity}
    p_G(b,d|a,c) = \begin{cases}
        \frac{1}{|G|} & \text{if } b^{-1}d = a^{-1}c \\
        0 & \text{otherwise}
    \end{cases}
\end{equation}

Importantly, as noted in~\cite{quantumvsnonlocal}, the correlation $p_G$ is always classical as it is a uniform convex combination (over $a \in G$) of the correlations arising from the deterministic strategies where both parties answer with $ax$ upon input $x \in G$. In order to understand how $p_G$ acts as a projection onto the set of $G$-invariant correlations, we must first introduce the composition of correlations.

\begin{definition}
    Given (winning) correlations $p_1$ and $p_2$ for the $(G,G')$-bijection game and the $(G',G'')$-bijection game respectively, their \emph{composition}, denoted $p_1 \circ p_2$, is the (winning) correlation for the $(G,G'')$-bijection game defined as
    \begin{equation}
        p_1 \circ p_2(b,d|a,c) = \sum_{x,y \in G'} p_1(b,d|x,y)p_2(x,y|a,c) \text{ for all } b,d \in G \text{ and } a,c \in G''.
    \end{equation}
    We remark that it is easy to verify that $p_1 \in \hat{B}(G,G')$, $p_2 \in \hat{B}(G',G'')$ implies that $p_1 \circ p_2 \in \hat{B}(G,G'')$.
\end{definition}

It is well known and not difficult to prove that composition of correlations preserves the property of being classical/quantum. As remarked in~\cite{quantumvsnonlocal}, we have that $p_G \circ p_G = p_G$. Moreover, they proved the following lemma in the case where  $G'=G$. The proof for the general case is not sufficiently different, so we omit it.

\begin{lemma}\label{lem:projection}
    Let $G$ and $G'$ be equicardinal finite groups, and let $p \in \hat{B}(G,G')$. Then $p$ is $(G,G')$-invariant if and only if $p = p_G \circ p \circ p_{G'}$. As a consequence, $p_G \circ p \circ p_{G'}$ is $(G,G')$-invariant for any $p \in \hat{B}(G,G')$.
\end{lemma}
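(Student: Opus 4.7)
The plan is to observe that left-composition with $p_G$ acts as an averaging operation that symmetrizes a correlation along the fibers of the quotient map $(b,d) \mapsto b^{-1}d$, and similarly right-composition with $p_{G'}$ symmetrizes along the fibers of $(a,c) \mapsto a^{-1}c$. Group invariance of $p$ is precisely the statement that $p$ is constant on these fibers, and hence fixed by both averaging operations. Both directions of the equivalence will then follow from this observation.

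For the forward direction, I would unpack the composition using \Eq{identity}: since $p_G(b,d|x,y)$ is nonzero exactly when $y = x \cdot b^{-1}d$, in which case it equals $1/|G|$,
\begin{equation*}
(p_G \circ p)(b,d|a,c) = \frac{1}{|G|} \sum_{x \in G} p(x,\, x \cdot b^{-1}d \,|\, a,c).
\end{equation*}
Assuming $p$ is $(G,G')$-invariant, each summand equals $p(b,d|a,c)$ because $x^{-1}(x \cdot b^{-1}d) = b^{-1}d$, giving $p_G \circ p = p$; the symmetric calculation yields $p \circ p_{G'} = p$, hence $p = p_G \circ p \circ p_{G'}$.

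For the reverse direction, performing the same expansion on both sides yields
\begin{equation*}
(p_G \circ p \circ p_{G'})(b,d|a,c) = \frac{1}{|G|\,|G'|} \sum_{x \in G,\, u \in G'} p\bigl(x,\, x \cdot b^{-1}d \,\big|\, u,\, u \cdot a^{-1}c\bigr),
\end{equation*}
which manifestly depends on the inputs only through the quotients $b^{-1}d$ and $a^{-1}c$. Combined with the fact that $\hat{B}$ is closed under composition (as noted right after the definition of $\circ$), this shows that $p$ is $(G,G')$-invariant whenever $p = p_G \circ p \circ p_{G'}$.

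The final consequence follows by applying the reverse implication to $p' := p_G \circ p \circ p_{G'}$: using associativity of composition together with the identities $p_G \circ p_G = p_G$ and $p_{G'} \circ p_{G'} = p_{G'}$ noted in the paragraph preceding the lemma, one obtains $p_G \circ p' \circ p_{G'} = p'$, so $p'$ is $(G,G')$-invariant. I do not anticipate any real obstacle; the entire argument is definition-chasing, and the only points that reward some care are keeping the two group actions indexed correctly and verifying associativity of the composition operation (a routine re-indexing of the triple sum).
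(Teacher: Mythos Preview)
Your proposal is correct. The paper actually omits the proof of this lemma entirely, stating only that the $G=G'$ case was proved in~\cite{quantumvsnonlocal} and that the general case is not sufficiently different; your averaging argument is exactly the natural way to carry this out, and all the ingredients you use (the explicit form of $p_G$, idempotence $p_G\circ p_G=p_G$, closure of $\hat{B}$ under composition) are noted in the paper just before the lemma.
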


The above lemma also leads to one of the motivations for studying group invariant correlations. As noted in~\cite{quantumvsnonlocal}, if $X$ and $Y$ are Cayley graphs for $G$ and $G'$ respectively and $p$ is a winning correlation for the $(X,Y)$-isomorphism game, then $p_G \circ p \circ p_{G'}$ is a winning correlation for the $(X,Y)$-isomorphism game that is $(G,G')$-invariant. Thus if one is searching for quantum isomorphisms of Cayley graphs, it suffices to search for ones that are group invariant. 

\begin{definition}\label{def:corrmat}
    If $p$ is a $(G,G')$-invariant correlation, then we define its \emph{correlation matrix} $D^p$ as the $G \times G'$ matrix with
    \[D^p_{x,y} = |G|p(b,d|a,c) \text{ where } b^{-1}d = x \ \& \ a^{-1}c = y.\]
\end{definition}

The following Lemma collects several basic facts about correlation matrices of group invariant correlations. All of these were proven in~\cite{quantumvsnonlocal} in the $G=G'$ case, and we omit the analogous proofs here. Note that we use $e$ to denote the identity of a group.

\begin{lemma}\label{lem:basic}
    Let $G$, $G'$, and $G''$ be equicardinal finite groups. Let $p_1$ and $p_2$ be $(G,G')$-invariant and $(G',G'')$-invariant correlations respectively. Then we have the following:
    \begin{enumerate}
        \item $D^{p_1}$ is doubly stochastic with $D^{p_1}_{e,e} = 1$. 
        \item $D^{p_G} = I$.
        \item $p_1 \circ p_2$ is $(G,G'')$-invariant and $D^{p_1 \circ p_2} = D^{p_1}D^{p_2}$.
        \item If $p_1$ is a quantum correlation, then $D^{p_1}_{x,y} = D^{p_1}_{x^{-1},y^{-1}}$.
    \end{enumerate}
\end{lemma}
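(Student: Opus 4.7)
The plan is to handle the four items in order, since each is a fairly direct computation once the defining formulas are unwound. For Part 1 (double stochasticity and $D^{p_1}_{e,e}=1$), I would fix $y \in G'$ and pick any representatives $a \in G'$ and $b \in G$, setting $c = ay$. By group invariance, $D^{p_1}_{x,y} = |G| p_1(b, bx \mid a, ay)$ for every $x \in G$, and summing over $x$ is the same as summing over $d = bx \in G$, giving Bob's marginal for input $c$, which is $1$ since $p_1$ is a probability distribution over answers. For the column sums, I would use the extra hypothesis $\sum_{a,c} p_1(b,d|a,c) = 1$ in the analogous way, noting that for fixed $x = b^{-1}d$ the value $D^{p_1}_{x,y}$ again depends only on $y = a^{-1}c$, so that summing over $y$ reproduces the double sum up to a factor of $|G|$. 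To obtain $D^{p_1}_{e,e} = 1$, I would observe that the winning condition forces $p_1(b,d|a,a) = 0$ whenever $b \ne d$, so the normalization $\sum_{b,d} p_1(b,d|a,a) = 1$ reduces to $\sum_b p_1(b,b|a,a) = 1$, and group invariance makes the summand constant, equal to $1/|G|$.

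For Part 2, I would just substitute the definition~\eqref{eq:identity}: when $x = y$ we get $D^{p_G}_{x,y} = |G| \cdot \frac{1}{|G|} = 1$, and otherwise the entry is zero. Part 3 is the main bookkeeping step. I would write $p_i(b,d|a,c) = \frac{1}{|G|} D^{p_i}_{b^{-1}d,\, a^{-1}c}$ and plug into the composition formula, getting
\[
p_1 \circ p_2(b,d|a,c) = \frac{1}{|G|^2} \sum_{x,y \in G'} D^{p_1}_{b^{-1}d,\, x^{-1}y}\, D^{p_2}_{x^{-1}y,\, a^{-1}c}.
\]
Changing variables from $(x,y)$ to $(x, z) = (x, x^{-1}y)$ shows the summand depends only on $z$, giving a factor of $|G'| = |G|$ and reducing the sum to $\sum_z D^{p_1}_{b^{-1}d,z} D^{p_2}_{z, a^{-1}c} = (D^{p_1}D^{p_2})_{b^{-1}d,\, a^{-1}c}$. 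This simultaneously shows that $p_1 \circ p_2$ depends only on $b^{-1}d$ and $a^{-1}c$ (hence is $(G,G'')$-invariant) and establishes the product formula for correlation matrices.

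The interesting step is Part 4, which is where I expect the real content to lie. I would invoke Remark~\ref{rem:qconvex}: every quantum correlation is a convex combination of correlations of the form $p(b,d|a,c) = \tr(p_{ba} p_{dc})$ for some quantum permutation matrix. Since the normalized trace is symmetric, $\tr(p_{ba} p_{dc}) = \tr(p_{dc} p_{ba})$, so $p(b,d|a,c) = p(d,b|c,a)$, and this identity persists under convex combinations. Applying this to $p_1$ and observing that $(d^{-1}b,\, c^{-1}a) = (x^{-1}, y^{-1})$ when $(b^{-1}d, a^{-1}c) = (x,y)$ yields $D^{p_1}_{x,y} = D^{p_1}_{x^{-1}, y^{-1}}$. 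The only subtlety here is making sure the convex-combination reduction in Remark~\ref{rem:qconvex} is genuinely available for all quantum correlations (not merely those produced by a single quantum permutation matrix); since the paper has already asserted this, I would simply cite the remark rather than reprove it.
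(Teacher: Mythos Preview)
The paper omits the proof of this lemma entirely, deferring to the $G=G'$ case in~\cite{quantumvsnonlocal}, so there is nothing to compare your argument against. Your plan is essentially correct and covers all four parts.

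There is one slip in Part~1: the sum $\sum_{d \in G} p_1(b,d \mid a,c)$ is not ``Bob's marginal'' and is not equal to $1$; it is Alice's marginal probability of outputting $b$. What you need is the extra observation that, by group invariance, $p_1(b,d \mid a,c)$ depends only on $b^{-1}d$, so this marginal is independent of $b$; summing over $b$ then shows it equals $1/|G|$, and the factor of $|G|$ in the definition of $D^{p_1}$ gives the row sum $1$. Your column-sum argument needs the same fix, using the hypothesis $\sum_{a,c} p_1(b,d \mid a,c)=1$ together with invariance in $a^{-1}c$. Parts~2--4 are fine as stated; in particular your use of Remark~\ref{rem:qconvex} and cyclicity of trace for Part~4 is exactly what is needed.
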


Note that by definition, a $(G,G')$-invariant correlation is completely determined by its correlation matrix. Additionally, in converse to item~(1) above, it is easy to see that if $D$ is a $G \times G'$ doubly stochastic matrix with $D_{e,e} = 1$, then the correlation $p$ defined as
\[p(b,d|a,c) = \frac{1}{|G|}D_{x,y} \text{ where } x = b^{-1}d \text{ and } y = a^{-1}c\]
is $(G,G')$-invariant. It is also worth remarking that this shows that all group invariant correlations satisfy the no-signalling condition, i.e, that $\sum_{d \in G} p(b,d|a,c)$ is independent of $c \in G'$ and $\sum_{b \in G} p(b,d|a,c)$ is independent of $a \in G'$. Indeed, all of these so-called \emph{marginal probabilities} are equal to $1/|G|$.

\subsection{Classical group invariant correlations}\label{subsec:classicalGICs}

It follows from Lemma~\ref{lem:projection} and the convexity of the set of classical winning correlations for the $(G,G')$-bijection game that the set of all classical $(G,G')$-invariant correlations is convex. Moreover, Lemma~\ref{lem:projection} allows us to compute all of the extreme points of this set. Indeed, letting $\bij(G,G')$ be the set of bijections from $G'$ to $G$ and defining the correlation 
\[
p_\pi(b,d|a,c) = \begin{cases}
    1 & \text{if } b = \pi(a) \text{ and } d = \pi(c) \\ 0 & \text{otherwise}
\end{cases}
\]
for all $\pi \in \bij(G,G')$, it is immediate from Lemma~\ref{lem:projection} that every extreme point of the set of classical $(G,G')$-invariant correlations is contained in the set $\{p_G \circ p_\pi \circ p_{G'} : \pi \in \bij(G,G')\}$. Note however that it is not necessarily the case that every element of this set is an extreme point, and in fact this is not the case for some groups (see Section~\ref{subsec:Z6comps} for an example with $G = G' = \mathbb{Z}_6$). Determining which points are extreme can of course be done via linear programming, though the size of the linear program will be exponential in $|G|$ due to the size of $\bij(G,G')$. Another point worth mentioning is that it follows from the above that there are always \emph{classical} $(G,G')$-invariant correlations whenever $|G| = |G'|$. Motivated by our observations here, we define the following.

\begin{definition}
Given a bijection $\pi$ from $G'$ to $G$, define the correlation $\hat{p}_\pi = p_G \circ p_\pi \circ p_{G'}$. Then let $D^\pi$ be the correlation matrix of $\hat{p}_\pi$. Further, define the $G \times G'$ permutation matrix $P^\pi$ as
\[P^\pi_{x,y} = \begin{cases}1 & \text{if } x = \pi(y) \\ 0 & \text{otherwise} \end{cases}\]
Lastly, for any $a \in G$, define the bijection $\tau_a \in \bij(G) := \bij(G,G) = \sym(G)
$ by $\tau_a(x) = ax$ and define $P^a := P^{\tau_a}$ (here, $\sym(G)$ denotes symmetric group on the set $G$).
\end{definition}


Note that $P^\pi P^{\pi'} = P^{\pi \circ \pi'}$ when these products make sense. The above notation allows us to express the correlation matrices $D^\pi$ as the average of the permutation matrices of the elements of $\{\tau_{\pi(g)^{-1}} \circ \pi \circ \tau_g : g \in G'\}$:

\begin{lemma}\label{lem:avg}
    Let $\pi \in \bij(G,G')$, then
    \[D^\pi = \frac{1}{|G|} \sum_{g \in G'} P^{\pi(g)^{-1}} P^\pi P^g.\]
\end{lemma}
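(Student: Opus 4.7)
The plan is to verify the identity entrywise by computing both sides at a general $(u,v) \in G \times G'$, using the deterministic decomposition of $p_G$ and $p_{G'}$.

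Recall from the observation after Equation~\eqref{eq:identity} that $p_G = \frac{1}{|G|}\sum_{a \in G} p_{\tau_a}$, and analogously for $p_{G'}$. Since composition of correlations is bilinear, this yields
\[
\hat{p}_\pi \;=\; p_G \circ p_\pi \circ p_{G'} \;=\; \frac{1}{|G|\,|G'|} \sum_{a \in G,\; g \in G'} p_{\tau_a \circ \pi \circ \tau_g}.
\]
Choosing representatives $b = e = a_0$, $d = u$, $c = v$ in the definition of the correlation matrix (so that $b^{-1}d = u$ and $a_0^{-1}c = v$) gives $D^\pi_{u,v} = |G|\,\hat{p}_\pi(e,u|e,v)$. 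Since $(\tau_a \circ \pi \circ \tau_g)(x) = a\pi(gx)$, the summand $p_{\tau_a \circ \pi \circ \tau_g}(e,u|e,v)$ equals $[e = a\pi(g)]\cdot[u = a\pi(gv)]$. The first indicator pins down $a = \pi(g)^{-1}$ uniquely, collapsing the sum over $a$, after which the second indicator becomes $[u = \pi(g)^{-1}\pi(gv)]$. Combining the factors yields
\[
D^\pi_{u,v} \;=\; \frac{1}{|G'|} \sum_{g \in G'} \bigl[\,u = \pi(g)^{-1}\pi(gv)\,\bigr].
\]

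For the right-hand side, I would directly expand the $(u,v)$-entry of $P^{\pi(g)^{-1}} P^\pi P^g$ using $(P^g)_{y,v} = [y = gv]$, $(P^\pi)_{x,y} = [x = \pi(y)]$, and $(P^{\pi(g)^{-1}})_{u,x} = [u = \pi(g)^{-1} x]$. The two inner sums each collapse to a single nonzero term, giving $(P^{\pi(g)^{-1}} P^\pi P^g)_{u,v} = [u = \pi(g)^{-1}\pi(gv)]$. Averaging over $g \in G'$ and using $|G| = |G'|$ then recovers exactly the formula for $D^\pi_{u,v}$ obtained above, proving the identity.

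There is no serious obstacle here; the argument is an index-chase whose only subtlety is bookkeeping—keeping the roles of $G$ and $G'$ distinct, and avoiding a name clash between the summation index $a$ in the decomposition of $p_G$ and the input variable $a$ of the bijection game (hence my renaming to $a_0$ above). The only conceptual input is that composition of correlations is bilinear and therefore distributes over the convex combinations defining $p_G$ and $p_{G'}$, which is immediate from the definition.
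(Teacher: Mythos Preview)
Your proof is correct and follows essentially the same route as the paper: both arguments compute each side entrywise and reduce to the common formula $D^\pi_{u,v} = \frac{1}{|G|}\,|\{g \in G' : \pi(g)^{-1}\pi(gv) = u\}|$. The only cosmetic difference is that you first expand $p_G$ and $p_{G'}$ as convex combinations of deterministic correlations before evaluating, whereas the paper evaluates the composition sums directly; the resulting index-chase is identical.
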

\begin{proof}
    Let $n = |G| = |G'|$. Let $s \in G$, $t \in G'$ and choose any $b,d \in G$ and $a,c \in G'$ such that $b^{-1}d = s$ and $a^{-1}c = t$ . Then by definition
    \begin{align*}
        D^\pi_{s,t} &= n p_G \circ p_\pi \circ p_{G'} (b,d|a,c) \\
        &= n \sum_{\substack{w,x \in G\\ \ y,z \in G'}}p_{G}(b,d|w,x)p_\pi(w,x|y,z)p_{G'}(y,z|a,c) \\
        &= n \sum_{y,z \in G'}p_{G}(b,d|\pi(y),\pi(z))p_{G'}(y,z|a,c) \\
        &= \frac{1}{n} \left|\left\{y,z \in G' : y^{-1}z = t \ \& \ \pi(y)^{-1}\pi(z) = s\right\}\right| \\
        &= \frac{1}{n} \left|\left\{y \in G' : \pi(y)^{-1}\pi(yt) = s\right\}\right|.
    \end{align*}
    On the other hand,
    \begin{align*}
        \left(\frac{1}{n} \sum_{g \in G'} P^{\pi(g)^{-1}} P^\pi P^g\right)_{s,t} &= \frac{1}{n} \sum_{x \in G, \ g,y \in G'} P^{\pi(g)^{-1}}_{s,x} P^\pi_{x,y} P^g_{y,t} \\
        &= \frac{1}{n} \sum_{g,y \in G'} P^{\pi(g)^{-1}}_{s,\pi(y)}P^g_{y,t} \\
        &= \frac{1}{n} |\{g \in G' : \pi(g)^{-1}\pi(gt) = s\}|,
    \end{align*}
    which is the same as the above.
\end{proof}

As a corollary we obtain the following.

\begin{cor}\label{cor:avg}
    Let $\pi \in \bij(G, G')$. 
    Then $D^{\tau_a \circ \pi \circ \tau_b} = D^\pi$ for all $a \in G, b \in G'$.
\end{cor}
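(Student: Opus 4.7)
The plan is to apply Lemma 3.5 (the averaging formula) directly to $\sigma = \tau_a \circ \pi \circ \tau_b$ and then reindex the sum so as to recover $D^\pi$. The algebraic facts I will use are that $P^{xy} = P^x P^y$ for group elements (since $\tau_{xy} = \tau_x \circ \tau_y$) and that $P^{\sigma_1 \circ \sigma_2} = P^{\sigma_1} P^{\sigma_2}$ for general bijections. These are immediate from the definitions.

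First I would expand $P^\sigma = P^{\tau_a} P^\pi P^{\tau_b} = P^a P^\pi P^b$, and observe that $\sigma(g) = a\pi(bg)$, so $\sigma(g)^{-1} = \pi(bg)^{-1} a^{-1}$ and hence $P^{\sigma(g)^{-1}} = P^{\pi(bg)^{-1}} P^{a^{-1}}$. Substituting into the formula from Lemma 3.5 gives
\[
D^\sigma = \frac{1}{|G|} \sum_{g \in G'} P^{\pi(bg)^{-1}} P^{a^{-1}} P^a P^\pi P^b P^g
       = \frac{1}{|G|} \sum_{g \in G'} P^{\pi(bg)^{-1}} P^\pi P^{bg},
\]
using $P^{a^{-1}} P^a = P^e = I$ and $P^b P^g = P^{bg}$. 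Reindexing the sum by $g' = bg$ (which is a bijection of $G'$ with itself) yields $D^\sigma = \frac{1}{|G|} \sum_{g' \in G'} P^{\pi(g')^{-1}} P^\pi P^{g'} = D^\pi$, as desired.

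There is no real obstacle here; the only thing that requires a little care is tracking whether the left multiplications $\tau_a$, $\tau_b$ act on $G$ or on $G'$ and keeping the compositions in the correct order. An alternative, slightly more conceptual route would be to prove the two identities $p_G \circ p_{\tau_a} = p_G$ and $p_{\tau_b} \circ p_{G'} = p_{G'}$ (each is a short direct check: pre-/post-composing with a left translation permutes the deterministic strategies that get averaged to form $p_G$ or $p_{G'}$) and then use associativity of correlation composition together with $p_{\tau_a \circ \pi \circ \tau_b} = p_{\tau_a} \circ p_\pi \circ p_{\tau_b}$ to conclude $\hat p_{\tau_a \circ \pi \circ \tau_b} = \hat p_\pi$. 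I would likely mention this second viewpoint as a remark, but present the first calculation as the proof, since it is a two-line application of the lemma that was just established.
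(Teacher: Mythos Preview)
Your proof is correct and is essentially identical to the paper's own argument: both apply Lemma~\ref{lem:avg} to $\tau_a\circ\pi\circ\tau_b$, expand $P^{\tau_a\circ\pi\circ\tau_b}=P^aP^\pi P^b$, cancel the $a$-factors, and reindex by $g'=bg$. The alternative conceptual route you sketch is a nice remark but not needed.
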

\begin{proof}
    We have that
    \begin{align*}
        D^{\tau_a \circ \pi \circ \tau_b} &= \frac{1}{|G|} \sum_{g \in G'} P^{(a\pi(bg))^{-1}} P^{\tau_a \circ \pi \circ \tau_b} P^g \\
        &= \frac{1}{|G|} \sum_{g \in G'} P^{(a\pi(bg))^{-1}} P^a P^{\pi} P^b P^{g} \\
        &= \frac{1}{|G|} \sum_{g \in G'} P^{(a\pi(bg))^{-1}a} P^{\pi} P^{bg} \\
        &= \frac{1}{|G|} \sum_{g \in G'} P^{\pi(bg)^{-1}} P^{\pi} P^{bg} \\
        &= \frac{1}{|G|} \sum_{g \in G'} P^{(\pi(g))^{-1}} P^{\pi} P^{g} \\
        &= D^\pi.\qedhere
    \end{align*}
\end{proof}

\begin{remark}\label{rem:doublecosets}
    Note that the relation $\sim$ on $\bij(G,G')$ defined as $\pi_1 \sim \pi_2$ if there exists $a \in G$ and $b \in G'$ such that $\pi_2 = \tau_a \circ \pi_1 \circ \tau_b$ is an equivalence relation. Thus the above states that $D^\pi$ depends only on the particular equivalence class $\{\tau_a \circ \pi' \circ \tau_b : a \in G, \ b \in G'\}$ that $\pi$ is contained in. This reduces the set of possible extreme points, but only by a factor of approximately $|G|^2$. We can also use these equivalence classes to reinterpret Lemma~\ref{lem:avg}: it says that $D^\pi$ is the average of the matrices $\{P^{\pi'} : \pi' \sim \pi, \ \pi'(e) = e\}$. We also note that in the case $G = G'$, these equivalence classes are the double cosets of the subgroup $\{\tau_a : a \in G\}$ of $\bij(G)$.
\end{remark}


Next we will show that the permutation matrices contained in $\{D^\pi : \pi \in \bij(G,G')\}$ are precisely the isomorphisms of $G$ and $G'$.

\begin{lemma}\label{lem:classperms}
    Let $G$ and $G'$ be equicardinal finite groups. If $\pi \in \bij(G,G')$, then $D^\pi$ is a permutation matrix if and only if $\pi = \tau_a \circ \sigma \circ \tau_b$ for some $a \in G$, $b \in G'$, and some isomorphism $\sigma$ from $G'$ to $G$. In this case, $D^\pi = P^{\sigma}$. As a consequence, the only permutation matrices that are the correlation matrix of a classical $(G,G')$-invariant correlation are the matrices $P^\sigma$ where $\sigma$ is an isomorphism from $G'$ to $G$.
\end{lemma}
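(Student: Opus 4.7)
The plan is to derive both directions from Lemma~\ref{lem:avg}, which expresses $D^\pi$ as an average of $|G|$ permutation matrices, combined with the fact that permutation matrices are the extreme points of the Birkhoff polytope of doubly stochastic matrices.

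For the forward direction, suppose $D^\pi$ is a permutation matrix. By Lemma~\ref{lem:avg},
\[D^\pi = \frac{1}{|G|}\sum_{g\in G'}P^{\pi_g}, \qquad \text{where } \pi_g := \tau_{\pi(g)^{-1}}\circ\pi\circ\tau_g.\]
Since each $P^{\pi_g}$ is doubly stochastic and since permutation matrices are extreme points of the doubly stochastic polytope, all the $P^{\pi_g}$ must agree with $D^\pi$. In particular $\pi_g = \pi_e$ as bijections for every $g\in G'$. Setting $\sigma := \pi_e = \tau_{\pi(e)^{-1}}\circ\pi$ (so $\sigma(e)=e$), unpacking $\pi_g = \sigma$ gives $\pi(gx) = \pi(g)\,\sigma(x)$ for all $g,x\in G'$. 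Specializing $g=e$ yields $\pi(x) = \pi(e)\sigma(x)$, so substituting back,
\[\pi(e)\sigma(gx) = \pi(gx) = \pi(g)\sigma(x) = \pi(e)\sigma(g)\sigma(x),\]
which forces $\sigma(gx) = \sigma(g)\sigma(x)$, i.e., $\sigma$ is a homomorphism and thus an isomorphism $G'\to G$. Moreover $\pi = \tau_{\pi(e)}\circ\sigma\circ\tau_e$, which gives the claimed form with $a=\pi(e)$ and $b=e$.

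For the backward direction, let $\sigma$ be any isomorphism $G'\to G$. Applying Lemma~\ref{lem:avg} to $\sigma$, the bijection $\sigma_g$ satisfies
\[\sigma_g(x) = \sigma(g)^{-1}\sigma(gx) = \sigma(g)^{-1}\sigma(g)\sigma(x) = \sigma(x),\]
so every term in the average equals $P^\sigma$, and hence $D^\sigma = P^\sigma$. If $\pi = \tau_a\circ\sigma\circ\tau_b$, then Corollary~\ref{cor:avg} gives $D^\pi = D^\sigma = P^\sigma$, completing the characterization.

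For the consequence, recall from the discussion preceding the lemma that every classical $(G,G')$-invariant correlation is of the form $p_G\circ p\circ p_{G'}$ for some classical winning correlation $p$ for the bijection game, and such a $p$ is in turn a convex combination of deterministic strategies $p_\pi$ with $\pi\in\bij(G,G')$. Applying $p_G\circ(\cdot)\circ p_{G'}$ termwise and using Lemma~\ref{lem:basic}(3) (linearity of the correlation-matrix assignment under composition), we see that the correlation matrix of any classical $(G,G')$-invariant correlation lies in $\conv\{D^\pi:\pi\in\bij(G,G')\}$. Suppose such a convex combination equals a permutation matrix $P$; since permutation matrices are extreme in the polytope of doubly stochastic matrices and each $D^\pi$ is doubly stochastic by Lemma~\ref{lem:basic}(1), every $D^\pi$ with nonzero coefficient must equal $P$. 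The first part then forces $P = P^\sigma$ for some isomorphism $\sigma$ from $G'$ to $G$. The main (and essentially only) step requiring care is the initial extremality argument, but Birkhoff's theorem makes this routine.
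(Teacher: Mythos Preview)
Your proof is correct and follows essentially the same approach as the paper: both rely on Lemma~\ref{lem:avg} together with the extremality of permutation matrices among doubly stochastic matrices, and both invoke Corollary~\ref{cor:avg} for the backward direction and the consequence. The only cosmetic difference is that in the ``only if'' direction the paper argues by contradiction (assuming the coset contains no isomorphism and exhibiting two unequal terms in the average), whereas you directly derive the homomorphism identity $\sigma(gx)=\sigma(g)\sigma(x)$ from the equality $\pi_g=\pi_e$; the underlying content is the same.
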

\begin{proof}
    Let $\pi \in \bij(G,G')$. By Lemma~\ref{lem:avg}, $D^\pi$ is a permutation matrix if and only if $P^{\pi(g)^{-1}}P^\pi P^g$ is constant for all $g \in G'$. A simple calculation shows that
    \begin{equation}\label{eq:ppp}
        (P^{\pi(g)^{-1}}P^\pi P^g)_{s,t} = \begin{cases}
            1 & \text{if } \pi(gt) = \pi(g)s \\
            0 & \text{otherwise}
            \end{cases}
    \end{equation}
    If $\pi$ is an isomorphism from $G'$ to $G$, then the above shows that $(P^{\pi(g)^{-1}}P^\pi P^g)_{s,t}$ is 1 if and only if $\pi(t)=s$ and is 0 otherwise. Thus if $\pi$ is an isomorphism, then $(P^{\pi(g)^{-1}}P^\pi P^g) = P^\pi$ as desired. Additionally, by Corollary~\ref{cor:avg}, if $\pi = \tau_a \circ \sigma \circ \tau_b$ for some $a \in G$, $b \in G'$, and some isomorphism $\sigma$ from $G'$ to $G$, then $D^\pi = D^\sigma$ the latter of which is equal to $P^\sigma$ by the above. Therefore we have proven the second claim of the lemma, as well as the if direction of the first statement.

    Now suppose that the set $\{\tau_a \circ \pi \circ \tau_b : a \in G, \ b \in G'\}$ does not contain an isomorphism. By Corollary~\ref{cor:avg}, every element in this set results in the same correlation matrix. Therefore, we may assume that $\pi(e) = e$. Further, since $\pi$ is not an isomorphism, there exists $\hat{g}, \hat{t} \in G'$ such that $\pi(\hat{g}\hat{t}) \ne \pi(\hat{g})\pi(\hat{t})$. Considering these particular values for Equation~\eqref{eq:ppp}, we see that $(P^{\pi(\hat{g})^{-1}}P^\pi P^{\hat{g}})_{\pi(\hat{t}),\hat{t}} = 0$. On the other hand,
    \[(P^{\pi(e)^{-1}}P^\pi P^e)_{\pi(\hat{t}),\hat{t}} = P^\pi_{\pi(\hat{t}),\hat{t}} = 1.\]
    Therefore, the matrix $P^{\pi(g)^{-1}}P^\pi P^g$ is not constant for all $g \in G'$ and thus $D^\pi$ is not a permutation matrix. So we have proven the only if the direction of the first claim.

    Lastly, since the set of correlation matrices of classical $(G,G')$-invariant correlations is equal to the convex hull of $\{D^\pi : \pi \in \bij(G,G')\}$, any permutation matarix in this convex hull must in fact be equal to one of the $D^\pi$. By the above, this proves the final claim of the lemma.
\end{proof}

Before moving on, we will prove one result concerning correlation matrices of quantum group invariant correlations. The following extends the final claim of the above lemma to this set.

\begin{cor}\label{cor:qperms}
    Let $G$ and $G'$ be equicardinal finite groups. If $P$ is a permutation matrix which is the correlation matrix of a quantum $(G,G')$-invariant correlation, then $P = P^\pi$ for some isomorphism $\pi$ from $G'$ to $G$.
\end{cor}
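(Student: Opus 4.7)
The plan is to show that the permutation-matrix structure of $D^p$ forces the underlying projections of any realizing quantum strategy to be highly rigid, which in turn forces $\pi$ to preserve multiplication. I would first extract some easy structural facts: if $P$ is the correlation matrix of a quantum $(G,G')$-invariant correlation, then $P$ is doubly stochastic with $P_{e,e}=1$ by Lemma~\ref{lem:basic}(1), so writing $P=P^\pi$ for a bijection $\pi:G'\to G$ forces $\pi(e)=e$, and Lemma~\ref{lem:basic}(4) gives $\pi(y^{-1})=\pi(y)^{-1}$.

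The main step is to reduce to the case where $p$ is realized by a single quantum permutation matrix. By Remark~\ref{rem:qconvex}, $p=\sum_i\lambda_i p_i$ with each $p_i$ arising from a QPM via Equation~\eqref{eq:qcorr}. Since $p$ is group invariant, Lemma~\ref{lem:projection} gives $p=\sum_i\lambda_i(p_G\circ p_i\circ p_{G'})$. Each projected correlation is group invariant and quantum, with doubly stochastic correlation matrix, and since $P^\pi$ is an extreme point of the doubly stochastic matrices the summands must each equal $p$. Because $p_G$ and $p_{G'}$ are classical (uniform averages of deterministic left-translation strategies) and composing a classical correlation with a QPM correlation just permutes rows/columns of the QPM and averages (which can be repackaged as one QPM via a direct-sum construction), the correlation $p$ itself is realized by some QPM $(p_{ba})$.

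Now I would exploit the projection structure. Writing $p(b,d|a,c)=\tr(p_{ba}p_{dc})$ and $p(b,d|a,c)=\tfrac{1}{|G|}[b^{-1}d=\pi(a^{-1}c)]$, the vanishing entries force $p_{ba}p_{dc}=0$ whenever $d\neq b\pi(a^{-1}c)$. Summing over $d$ using $\sum_d p_{dc}=I$ yields $p_{ba}=p_{ba}\,p_{b\pi(a^{-1}c),c}$, and by the same argument applied symmetrically the two projections coincide: $p_{ba}=p_{b\pi(a^{-1}c),c}$ for all $a,b,c$. Setting $c=e$ and defining $q_x:=p_{x,e}$ gives $p_{ba}=q_{b\pi(a)^{-1}}$, so the entire QPM is controlled by the single family $\{q_x\}_{x\in G}$.

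The final step is to feed this back into the correlation. Since $\tr(p_{ba}p_{dc})=\tr(q_x q_y)$ with $x=b\pi(a)^{-1}$, $y=d\pi(c)^{-1}$, the value depends only on $(x,y)$; but the target $\tfrac{1}{|G|}[b^{-1}d=\pi(a^{-1}c)]$ must then also depend only on $(x,y)$ when $a,b,c,d$ vary subject to these substitutions. A direct computation rewrites the condition as $x^{-1}y=\pi(a)\pi(a^{-1}c)\pi(c)^{-1}$, so the right-hand side must be independent of $a,c$; specializing $a=c$ and using $\pi(e)=e$ shows the constant is $e$, giving $\pi(a^{-1}c)=\pi(a)^{-1}\pi(c)$, which is the homomorphism property. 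As $\pi$ is already a bijection, it is an isomorphism. The main obstacle is the reduction to a single QPM realization, since a priori only a convex-combination realization is available; once that is handled, the projection-algebra calculation is essentially forced.
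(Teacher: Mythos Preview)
Your argument is correct, and its heart---the projection identity $p_{ba}=p_{b\pi(a^{-1}c),c}$ obtained by summing the orthogonality relations over a row/column---is exactly the key step the paper uses. The two proofs diverge in the wrapping around that identity. The paper skips your single-QPM reduction: since $p=\sum_i\lambda_i p_i$ with $p_i\ge 0$, the zero pattern of $p$ is inherited by each $p_i$, so the projection identity already holds for every $\mathcal{Q}^i$ in the convex decomposition. From there the paper observes that the identity forces all entries of each $\mathcal{Q}^i$ to lie in a single row, hence to commute; this makes every $p_i$ classical, so $p$ is classical, and Lemma~\ref{lem:classperms} finishes. Your endgame is different: having arranged a QPM that realises $p$ exactly (this is where you genuinely need the reduction, since your final step uses the precise values $\tr(q_xq_y)=\tfrac{1}{|G|}[x=y]$, not just the zeros), you read off the homomorphism property of $\pi$ directly. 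Your route trades the appeal to Lemma~\ref{lem:classperms} for the direct-sum construction of a realising QPM; the paper's route trades that construction for the ``commuting entries $\Rightarrow$ classical'' fact and the earlier lemma. Both are short once the projection identity is in hand.
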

\begin{proof}
    Let $p$ be the quantum $(G,G')$-invariant correlation whose correlation matrix is $P$. By Remark~\ref{rem:qconvex}, the correlation $p$ is the convex combination of quantum correlations $p^i$ of the form
    \[p^i(b,d|a,c) = \tr(q^i_{a,b}q^i_{c,d})\]
    for some quantum permutation matrices $\mathcal{Q}^i = (q^i_{x,y})$. Let $\pi \in \bij(G,G')$ be the permutation such that $P = P^\pi$. By the above, we have that $\tr(q^i_{a,b}q^i_{c,d}) = 0$, and therefore $q^i_{a,b}q^i_{c,d} = 0$, if $b^{-1}d \ne \pi(a^{-1}c)$, for all $i$. Thus, for fixed $\hat{a},\hat{c} \in G'$ and $\hat{b},\hat{d} \in G$ such that $\hat{b}^{-1}\hat{d} = \pi(\hat{a}^{-1}\hat{c})$, we have that
    \begin{equation*}
        q^i_{\hat{a},\hat{b}} = q^i_{\hat{a},\hat{b}}\left(\sum_{c \in G'} q^i_{c,\hat{d}}\right) 
        = q^i_{\hat{a},\hat{b}}q^i_{\hat{c},\hat{d}} 
        = \left(\sum_{a \in G'}q^i_{a,\hat{b}}\right) q^i_{\hat{c},\hat{d}} 
        = q^i_{\hat{c},\hat{d}}
    \end{equation*}
    It follows that every row/column of $\mathcal{Q}^i$ contains the same multiset of projections and in particular all of its entries commute. Therefore, each correlation $p^i$ must be classical and thus $p$ is classical. The result then follows from Lemma~\ref{lem:classperms}.
\end{proof}

In general we do not have that $D^{\pi \circ \pi'} = D^{\pi} D^{\pi'}$, since this cannot hold for $\pi' = \pi^{-1}$ unless $\pi$ is an isomorphism. The next lemma shows the equation does hold if one of $\pi$ or $\pi'$ is an isomorphism.

\begin{lemma}
    Let $G, G'$, and $G''$ be equicardinal finite groups. If $\pi \in \bij(G,G')$ and $\pi' \in \bij(G',G'')$, then $D^{\pi \circ \pi'} = D^\pi D^{\pi'}$ as long as $\pi$ or $\pi'$ is an isomorphism.
\end{lemma}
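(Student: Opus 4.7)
My plan is to lift the claimed matrix identity back to the level of correlations. By Lemma~\ref{lem:basic}(3), if we can show that
\[
\hat{p}_\pi \circ \hat{p}_{\pi'} \;=\; \hat{p}_{\pi \circ \pi'}
\]
whenever $\pi$ or $\pi'$ is an isomorphism, then taking correlation matrices immediately yields $D^\pi D^{\pi'} = D^{\pi\circ\pi'}$. Since $\hat{p}_\pi = p_G \circ p_\pi \circ p_{G'}$ and $\hat{p}_{\pi'} = p_{G'} \circ p_{\pi'} \circ p_{G''}$, and since $p_{G'}\circ p_{G'} = p_{G'}$, the composition $\hat{p}_\pi\circ\hat{p}_{\pi'}$ simplifies to $p_G \circ p_\pi \circ p_{G'} \circ p_{\pi'} \circ p_{G''}$, while $\hat{p}_{\pi\circ\pi'}$ equals $p_G \circ p_\pi \circ p_{\pi'} \circ p_{G''}$ since $p_{\pi\circ\pi'} = p_\pi\circ p_{\pi'}$. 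So everything boils down to eliminating the interior $p_{G'}$ factor.

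The key sublemma I will prove is an intertwining identity: if $\pi$ is an isomorphism from $G'$ to $G$, then
\[
p_\pi \circ p_{G'} \;=\; p_G \circ p_\pi.
\]
This is a direct unfolding of definitions. The left-hand side, evaluated at $(b,d|a,c)$ with $b,d\in G$ and $a,c\in G'$, equals $p_{G'}(\pi^{-1}(b),\pi^{-1}(d)|a,c)$, which is $1/|G'|$ precisely when $\pi^{-1}(b)^{-1}\pi^{-1}(d) = a^{-1}c$. Using that $\pi$ (hence $\pi^{-1}$) is a homomorphism, this condition becomes $b^{-1}d = \pi(a^{-1}c)$. The right-hand side evaluates to $p_G(b,d|\pi(a),\pi(c))$, which is $1/|G|$ precisely when $b^{-1}d = \pi(a)^{-1}\pi(c) = \pi(a^{-1}c)$. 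Since $|G| = |G'|$, the two expressions agree. An entirely symmetric calculation shows that if $\pi'$ is an isomorphism from $G''$ to $G'$, then $p_{G'} \circ p_{\pi'} = p_{\pi'} \circ p_{G''}$.

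Plugging either intertwining into the chain of compositions finishes the proof. If $\pi$ is an isomorphism, I use $p_\pi \circ p_{G'} = p_G \circ p_\pi$ and $p_G \circ p_G = p_G$ to collapse the expression for $\hat{p}_\pi \circ \hat{p}_{\pi'}$ down to $p_G \circ p_\pi \circ p_{\pi'} \circ p_{G''} = \hat{p}_{\pi\circ\pi'}$; if $\pi'$ is an isomorphism, I instead slide $p_{G'}$ past $p_{\pi'}$ and use $p_{G''}\circ p_{G''} = p_{G''}$ to obtain the same conclusion.

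I do not expect any serious obstacle. The only point requiring a bit of care is the direction conventions: $\bij(G,G')$ denotes bijections $G' \to G$, so $\pi\colon G'\to G$ and $\pi'\colon G''\to G'$, and I need to make sure the intertwining identity is applied to the group that matches the invariance type of the $p_{G'}$ I am trying to absorb. Once the directions line up, the calculation is a short chain of associative rewrites on compositions, using only that $\pi$ (or $\pi'$) respects the group operation.
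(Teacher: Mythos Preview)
Your proof is correct and takes a genuinely different route from the paper's. The paper works entirely at the matrix level via Lemma~\ref{lem:avg}: writing $D^{\pi\circ\pi'} = \frac{1}{|G|}\sum_{g\in G''} P^{(\pi\circ\pi'(g))^{-1}} P^\pi P^{\pi'} P^g$ and then, assuming $\pi'$ is an isomorphism, using $P^{\pi'}P^g = P^{\pi'(g)}P^{\pi'}$ to slide $P^{\pi'}$ to the right, re-index the sum, and recognize the remaining expression as $D^\pi P^{\pi'} = D^\pi D^{\pi'}$ (with the $\pi$-case handled symmetrically).

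Your approach instead lifts the identity back to correlations and isolates the single conceptual point: when $\pi$ is an isomorphism, $p_\pi$ intertwines the ``group-averaging'' correlations, $p_\pi \circ p_{G'} = p_G \circ p_\pi$. This cleanly explains \emph{why} the extra $p_{G'}$ in $\hat{p}_\pi \circ \hat{p}_{\pi'}$ is harmless, and the rest is bookkeeping with associativity and the idempotence $p_G\circ p_G = p_G$. The intertwining calculation you give is correct, as is the observation $p_{\pi\circ\pi'} = p_\pi\circ p_{\pi'}$. Compared with the paper's argument, yours avoids Lemma~\ref{lem:avg} entirely and arguably gives a more transparent reason for the result; the paper's computation is shorter on the page but less illuminating as to mechanism. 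Both arguments ultimately rely on the same underlying fact---that an isomorphism respects quotients $a^{-1}c$---just expressed in different languages.
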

\begin{proof}
    Suppose that $\pi'$ is an isomorphism. Then for any $g \in G'$ we have that
    \[P^{\pi'}P^g = P^{\pi' \circ \tau_g} = P^{\tau_{\pi'(g)} \circ \pi'} = P^{\pi'(g)}P^{\pi'},\]
    and $P^{(\pi \circ \pi'(g))^{-1}} = P^{\pi(\pi'(g))^{-1}}$. Applying Lemmas~\ref{lem:avg} and~\ref{lem:classperms} we have the following:
    \begin{align*}
        D^{\pi \circ \pi'} &= \sum_{g \in G'} P^{\pi(\pi'(g))^{-1}} P^\pi P^{\pi'} P^g \\
        &= \sum_{g \in G'} P^{\pi(\pi'(g))^{-1}} P^\pi P^{\pi'(g)} P^{\pi'} \\
        &= \left(\sum_{g \in G'} P^{\pi(\pi'(g))^{-1}} P^\pi P^{\pi'(g)}\right) P^{\pi'} \\
        &= \left(\sum_{g \in G'} P^{\pi(g)^{-1}} P^\pi P^{g}\right) D^{\pi'} \\
        &= D^\pi D^{\pi'}
    \end{align*}
    The case where $\pi$ is an isomorphism is similar.
\end{proof}

One useful consequence of the above is that the set of correlation matrices of $(G,G')$-invariant correlations is fixed under left (resp.~right) multiplication by automorphisms of $G$ (resp.~$G'$), and of course such actions map extreme points to extreme points. Therefore, once we identify some $\pi \in \bij(G,G')$ such that $D^\pi$ is an extreme point, then we can obtain many more such extreme points.

Next we will give another description of the correlation matrices $D^\pi$. For this we will need the following definition.

\begin{definition}
    Given a finite group $G$, define its \emph{reduction matrix} $R := R^G$ as the $G^2 \times G$ matrix such that
    \[R_{(a,b),c} = \begin{cases} 1 & \text{if } b^{-1}a = c\\ 0 & \text{o.w.} \end{cases}.\]
\end{definition}

\begin{lemma}\label{lem:redmtx}
    Let $G$ and $G'$ be finite groups with $|G| = |G'| = n$, and let $\pi \in \bij(G,G')$. Then
    \[D^\pi = \frac{1}{n}(R^G)^T\left(P^\pi \otimes P^\pi\right)R^{G'}.\]
\end{lemma}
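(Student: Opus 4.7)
The plan is to directly compute the $(s,t)$-entry of the right-hand side and compare it with the formula for $D^\pi_{s,t}$ that is implicit in the proof of \Lem{avg}. Fix $s \in G$ and $t \in G'$, and let $n = |G| = |G'|$.

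First, I would unpack the proof of \Lem{avg}, where it was shown that
\[
D^\pi_{s,t} \;=\; \frac{1}{n}\,\bigl|\{g \in G' : \pi(g)^{-1}\pi(gt) = s\}\bigr|.
\]
This is the target expression that the right-hand side of the claimed identity must reproduce.

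Next, I would expand the matrix product $(R^G)^T(P^\pi \otimes P^\pi)R^{G'}$ entry-wise. Its $(s,t)$-entry equals
\[
\sum_{\substack{a,b \in G \\ c,d \in G'}} (R^G)^T_{s,(a,b)}\,(P^\pi)_{a,c}\,(P^\pi)_{b,d}\,R^{G'}_{(c,d),t},
\]
and each factor is a $0/1$ indicator: the four factors are $1$ precisely when $b^{-1}a = s$, $a = \pi(c)$, $b = \pi(d)$, and $d^{-1}c = t$, respectively. Therefore the sum counts quadruples $(a,b,c,d)$ satisfying all four conditions simultaneously. Because $a$ and $b$ are determined by $c$ and $d$ via the middle two conditions, the count collapses to
\[
\bigl|\{(c,d) \in G' \times G' : \pi(d)^{-1}\pi(c) = s \text{ and } d^{-1}c = t\}\bigr|.
\]
Substituting $c = dt$ (forced by the second condition) eliminates $c$ and yields $|\{d \in G' : \pi(d)^{-1}\pi(dt) = s\}|$, which matches $n \cdot D^\pi_{s,t}$ exactly. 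Multiplying by $1/n$ completes the identification.

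The argument is entirely mechanical index-chasing; there is no real obstacle, only bookkeeping. The one subtle point worth stating explicitly is that both reduction matrices are used with the convention $R_{(a,b),c} = 1 \iff b^{-1}a = c$, so that $(R^G)^T$ on the left groups entries of $(P^\pi \otimes P^\pi)$ by the quotient $b^{-1}a$ in $G$, while $R^{G'}$ on the right groups them by the quotient $d^{-1}c$ in $G'$; the tensor $P^\pi \otimes P^\pi$ is what ties these two quotients together through $\pi$.
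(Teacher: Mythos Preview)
Your proof is correct and follows essentially the same approach as the paper: both compute the $(s,t)$-entry of $\frac{1}{n}(R^G)^T(P^\pi\otimes P^\pi)R^{G'}$ directly by expanding the sum and using the $0/1$ indicator structure of all three factors, then match the resulting count with the formula for $D^\pi_{s,t}$ derived in \Lem{avg}. The only difference is cosmetic variable naming.
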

\begin{proof}
    Let $s \in G$ and $t \in G'$. We have that
    \begin{align*}
       \frac{1}{n}\left((R^G)^T\left(P^\pi \otimes P^\pi\right)R^{G'}\right)_{s,t} &= \frac{1}{n}\sum_{b,d \in G, \ a,c \in G'} (R^G)^T_{s,(b,d)} (P^\pi \otimes P^\pi)_{(b,d),(a,c)} R^{G'}_{(a,c),t} \\
       &= \frac{1}{n}\sum_{b,d \in G, \ a,c \in G'} R^G_{(b,d),s} P^\pi_{b,a}P^\pi_{d,c} R^{G'}_{(a,c),t} \\
       &= \frac{1}{n}\sum_{d \in G, \ c \in G'} P^\pi_{ds,ct}P^\pi_{d,c} \\
       &= \frac{1}{n}|\{c \in G' : \pi(ct) = \pi(c)s\}|
    \end{align*}
    which is clearly equal to the expression derived for $D^\pi_{s,t}$ in the proof of Lemma~\ref{lem:avg}.
\end{proof}

The above will be used to prove that in the case of abelian groups, the convex hull of correlations produced by group invariant quantum Latin squares is a unitary transformation of the set of classical group invariant correlations (see Theorem~\ref{thm:C2Q}). 

\subsection{Cayley (Di)Graphs}\label{subsec:cayley}

There is a connection between group-invariant quantum correlations and quantum isomorphisms of graphs. For a finite group $G$ and a subset $C \subseteq G$, we denote by $\Cay(G,C)$ the \emph{Cayley (di)graph} for $G$ with connection set $C$, i.e., the digraph with vertex set $G$ and arc set $\{(g,gs): s\in C\}$. Thus there is an arc from $a$ to $b$ if and only if $a^{-1}b \in C$. Note that if $C$ is inverse-closed and does not contain the identity, then the Cayley digraph is an undirected loopless graph, in which case we call it a \emph{Cayley graph}.
We will show that given groups $G$ and $G'$, and a quantum $(G,G')$-invariant correlation, we can construct quantum isomorphic Cayley (di)graphs for $G$ and $G'$, respectively. Unfortunately, there is no guarantee that the digraphs will not also be isomorphic.

Let $G$ and $G'$ be finite groups with $|G|=|G'|$ and suppose there exists a $(G,G')$-invariant correlation $p$ with correlation matrix $D^p$. We will start by defining an auxiliary bipartite graph. Let $B$ be the graph with vertex set $G\cup G'$ where $x\in G$ and $y\in G'$ are adjacent if $D^p_{x,y}\neq 0$. Let $V_1,\dots, V_k$ be the vertex sets of the connected components of $B$. For each $i=1,\dots, k$ we define the sets 
\[C_i=G\cap V_i\,\text{ and } C_i' = G'\cap V_i\]
and for each $I\subseteq [k]$, define 
\[C_I = \bigcup_{i\in I}C_i\,\text{ and } C_I' = \bigcup_{i\in I}C_i'.\]

\begin{remark}\label{rem:Csizes}
    Since the matrix $D^p$ is doubly stochastic, we must have that $|C_i| = |C'_i|$ for all $i \in [k]$. This is because, by definition of the graph $B$, we have that $D^p_{a,b} = 0$ if $a \in C_i$ and $b \notin C'_i$ or vice versa, and therefore the submatrix of $D^p$ with rows from $C_i$ and columns from $C'_i$ must be a doubly stochastic matrix and thus square. Furthermore, it follows that $|C_I| = |C'_I|$ for all $I \subseteq [k]$.
\end{remark}

We will show that for a fixed $I\subseteq[k]$, the isomorphism game for the Cayley digraphs $\Cay(G,C_I)$ and $\Cay(G',C_I')$ can be won using the correlation $p$. Moreover, these are exactly the digraphs for which $p$ wins the isomorphism game.

\begin{theorem}\label{thm:CayConstruction}
    Let $p$ be a $(G,G')$-invariant correlation for finite groups $G$ and $G'$ and let $X$ and $Y$ be digraphs. Then $p$ wins the $(X,Y)$-isomorphism game if and only if they are a pair of Cayley digraphs that arise from the construction above, that is, defining $C_i$ and $C_i'$ as above using the correlation $p$, there is some index set $I \subseteq [k]$ satisfying $X = \Cay(G,C_I)$ and $Y = \Cay(G',C'_I)$.
\end{theorem}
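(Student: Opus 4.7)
The plan is to translate the winning condition for the $(X,Y)$-isomorphism game into a condition on the correlation matrix $D^p$ together with the arc sets of $X$ and $Y$. Since group invariance gives $p(b,d \mid a,c) = \tfrac{1}{|G|}\, D^p_{b^{-1}d,\,a^{-1}c}$, the correlation $p$ wins the $(X,Y)$-isomorphism game if and only if, for every $s \in G$, $t \in G'$ with $D^p_{s,t} > 0$ and every $b\in G$, $a\in G'$, the ordered pair $(b, bs)$ is an arc of $X$ if and only if $(a, at)$ is an arc of $Y$ (the bijection-game win condition, together with $D^p_{e,e}=1$, handles the equality and loop relations consistently).

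For the $(\Leftarrow)$ direction, assume $X = \Cay(G, C_I)$ and $Y = \Cay(G', C'_I)$. If $D^p_{s,t} > 0$ then by definition of $B$ the vertices $s$ and $t$ lie in a common component $V_j$, so $s \in C_j$ and $t \in C'_j$; hence $s \in C_I \iff j \in I \iff t \in C'_I$, which is the required constraint. Since $D^p_{e,e} = 1$ forces $e_G, e_{G'} \in V_1$, both graphs acquire loops at every vertex precisely when $1 \in I$, so the loop condition is automatic.

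For the $(\Rightarrow)$ direction, first establish that $X$ and $Y$ must themselves be Cayley digraphs. For each $s \in G$, double stochasticity of $D^p$ produces some $t$ with $D^p_{s,t} > 0$, and the winning condition then forces that whether $(b,bs)$ is an arc of $X$ is independent of $b$; hence $X = \Cay(G, S_X)$ where $S_X := \{s \in G : (e_G, s) \text{ is an arc of } X\}$, and analogously $Y = \Cay(G', T_Y)$. The winning condition reduces to the implication $D^p_{s,t} > 0 \implies (s \in S_X \iff t \in T_Y)$. To identify $S_X$ with some $C_I$, observe that an edge $s \sim t$ of $B$ is precisely the setting in which this equivalence is imposed; chaining it along any path inside a component $V_i$ (which alternates between $C_i$ and $C'_i$) yields $S_X \cap C_i \in \{\emptyset, C_i\}$ and $T_Y \cap C'_i \in \{\emptyset, C'_i\}$, with the two alternatives agreeing. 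Setting $I := \{i : C_i \subseteq S_X\}$ then gives $S_X = C_I$ and $T_Y = C'_I$, with loops matched by $1 \in I \iff e \in S_X \iff e \in T_Y$.

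The main obstacle is this final propagation-along-paths step. The decisive insight, which makes the argument short, is that the edges of $B$ were defined so as to encode exactly the non-trivial constraints of the winning condition: each edge $s \sim t$ corresponds to an enforced equivalence $s \in S_X \iff t \in T_Y$. Thus the connectedness of each $V_i$ inside $B$ translates directly into uniformity of $S_X/T_Y$-membership across the pieces $C_i$ and $C'_i$, with the two sides linked together to yield the common index set $I$.
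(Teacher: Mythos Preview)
Your proposal is correct and follows essentially the same approach as the paper's proof: both first argue that $X$ and $Y$ must be Cayley digraphs (using that the value of $p(b,d\mid a,c)$ depends only on $b^{-1}d$ and $a^{-1}c$), then reduce the winning condition to the implication $D^p_{s,t}\ne 0 \Rightarrow (s\in C \iff t\in C')$, and finally use connectivity of the components of the auxiliary bipartite graph $B$ to conclude that $C=C_I$ and $C'=C'_I$ for a common $I$. Your upfront translation of the winning condition into a statement about $D^p$ is a minor organizational difference, not a different argument.
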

\begin{proof}
    Suppose that $p$ is a winning correlation for the $(X,Y)$-isomorphism game. Then $X$ and $Y$ must have vertex sets $G$ and $G'$ respectively. First we show that $X$ and $Y$ are Cayley digraphs.
    
    Suppose that $a,b,c,d \in G'$ are such that $a^{-1}b = c^{-1}d$ and there is an arc from $a$ to $b$ but not from $c$ to $d$ in $Y$. Let $x,y \in G$ be such that $p(x,y|a,b) \ne 0$. Since $p$ is a winning correlation for the $(X,Y)$-isomorphism game, there must be an arc from $x$ to $y$. However, since $a^{-1}b = c^{-1}d$ and $p$ is $(G,G')$-invariant, $p(x,y|c,d) = p(x,y|a,b) \ne 0$ and therefore there must be an arc from $c$ to $d$, a contradiction. Therefore the existence of an arc from $a$ to $b$ depends only on the value $a^{-1}b$, i.e., $Y$ is a Cayley digraph for $G'$. Analogously, $X$ is a Cayley digraph for $G$.

    Now let $C$ and $C'$ be such that $X = \Cay(G,C)$ and $Y = \Cay(G',C')$. Suppose that $s \in G$ and $t \in G'$ are such that $D^p_{s,t} \ne 0$. We will show that $s \in C$ if and only if $t \in C'$. Let $a,b \in G$ and $c,d \in G'$ be such that $a^{-1}b = s$ and $c^{-1}d = t$, and note that $p(a,b|c,d) \ne 0$ and therefore $(a,b)$ satisfy the same relation (equal/adjacent/distinct non-adjacent) as $(c,d)$. So then $t \in C'$ if and only if there is an arc from $c$ to $d$ if and only if there is an arc from $a$ to $b$ if and only if $s \in C$. Since we have shown that $s \in C$ if and only if $t \in C'$ whenever $D^p_{s,t} \ne 0$, it follows from the definition of the bipartite graph $B$ and the notion of connectivity that there is some $I \subseteq [k]$ such that $C = C_I$ and $C' = C'_I$, as desired.

    The converse is similar.
\end{proof}

Next we give another, more algebraic, characterization of the digraphs for which a given $(G,G')$-invariant correlation wins the corresponding isomorphism game. Here we will use $\mathbf{e}_C \in \mathbb{C}^G$ to denote the characteristic vector of the subset $C \subseteq G$, and similarly for $C' \subseteq G'$.

\begin{theorem}\label{thm:DpeC}
    Let $G$ and $G'$ be finite groups and let $p$ be a $(G,G')$-invariant correlation. Then $p$ wins the $(X,Y)$-isomorphism game if and only if $X = \Cay(G,C)$ and $Y = \Cay(G',C')$ for some subsets $C\subseteq G$ and $C'\subseteq G'$ such that $D^p\mathbf{e}_{C'} = \mathbf{e}_C$.
\end{theorem}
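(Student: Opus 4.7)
The plan is to use \Thm{CayConstruction} as the workhorse; it already characterises the winning digraphs as pairs $(\Cay(G,C_I), \Cay(G',C'_I))$ indexed by subsets $I \subseteq [k]$. The goal is therefore to show that the combinatorial condition ``$C = C_I$ and $C' = C'_I$ for some $I$'' is equivalent to the linear condition $D^p\mathbf{e}_{C'} = \mathbf{e}_C$, after which the theorem follows immediately.

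For the forward direction, I would assume that $p$ wins the $(X,Y)$-isomorphism game, so that \Thm{CayConstruction} yields $X = \Cay(G,C_I)$ and $Y = \Cay(G',C'_I)$ for some $I \subseteq [k]$. By the very definition of the bipartite support graph $B$, the entry $D^p_{s,t}$ vanishes whenever $s$ and $t$ lie in different connected components, so $D^p$ is block-diagonal with respect to the partitions $\{C_i\}_{i=1}^k$ and $\{C'_i\}_{i=1}^k$. Because $D^p$ is doubly stochastic and $|C_i| = |C'_i|$ by \Rem{Csizes}, each diagonal block is itself a doubly stochastic square matrix. Summing the columns in $C'_i$ then gives $\mathbf{e}_{C_i}$ for each $i$, and summing over $i \in I$ yields $D^p\mathbf{e}_{C'_I} = \mathbf{e}_{C_I}$, as required.

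For the converse, I would begin with $D^p\mathbf{e}_{C'} = \mathbf{e}_C$ and exploit non-negativity together with double stochasticity. For $s \in C$ the identity says $\sum_{t \in C'} D^p_{s,t} = 1 = \sum_{t \in G'} D^p_{s,t}$, forcing $D^p_{s,t} = 0$ for every $t \notin C'$; the column-stochastic version of the same argument, applied to $s \notin C$, gives $D^p_{s,t} = 0$ for $t \in C'$. These two zero patterns mean that $B$ has no edges between $C \cup C'$ and its complement in $G \cup G'$, so $C \cup C'$ is a union of connected components of $B$. Writing $C = C_I$ and $C' = C'_I$ for the corresponding index set $I \subseteq [k]$ and invoking \Thm{CayConstruction} then closes the argument.

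The main obstacle is essentially conceptual rather than technical: one has to recognise that doubly stochasticity combined with a sum-to-one constraint on a subset automatically forces the off-diagonal blocks to vanish, which is precisely what promotes $C \cup C'$ from an arbitrary vertex subset to a union of components of $B$. Once this block decomposition is visible, both directions are short consequences of \Thm{CayConstruction} and \Rem{Csizes}, and no new ingredients are needed.
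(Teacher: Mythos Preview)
Your proposal is correct and follows essentially the same route as the paper: both reduce via \Thm{CayConstruction} to showing that $D^p\mathbf{e}_{C'}=\mathbf{e}_C$ is equivalent to $C\cup C'$ being a union of connected components of the bipartite support graph $B$. The only difference is cosmetic: the paper invokes a cited lemma (for doubly stochastic $D$ and $u,v$ with the same multiset of entries, $Du=v$ iff $D_{ij}=0$ whenever $u_j\ne v_i$), whereas you derive the needed zero patterns directly from nonnegativity and row stochasticity; your converse is in fact slightly cleaner, since for $s\notin C$ the equation $\sum_{t\in C'}D^p_{s,t}=0$ already forces all terms to vanish without appealing to column sums.
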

\begin{proof}
    Let $B$ be the auxiliary bipartite graph with components $V_1,\dots V_k$ and $C_i$, $C_i'$ subsets of $G$, $G'$, as constructed above using the correlation $p$. By Theorem~\ref{thm:CayConstruction} it suffices to show that $D^p\mathbf{e}_{C'} = \mathbf{e}_C$ if and only if $C = C_I$ and $C' = C'_I$ for some $I \subseteq [k]$.

    First note that if the vectors $\mathbf{e}_C$ and $\mathbf{e}_{C'}$ do not have the same multiset of entries, i.e., $|C| \ne |C'|$, then $D^p\mathbf{e}_{C'} \ne \mathbf{e}_C$ since the sums of the entries of the vectors are different and $D^p$ is doubly stochastic. Moreover, it cannot be the case that $C = C_I$ and $C' = C'_I$ by Remark~\ref{rem:Csizes}. Thus the above if and only if holds when $\mathbf{e}_C$ and $\mathbf{e}_{C'}$ do not have the same multiset of entries, so for the remainder of the proof we may assume that they do.

    For any $n \times n$ doubly stochastic matrix $D$ and vectors $u, v \in \mathbb{C}^n$ with the same multiset of entries, it is well known that $Du = v$ if and only if $D_{ij} = 0$ whenever $u_j \ne v_i$ (see, e.g.,~\cite[Lemma~7.1]{mancinska_graph_2020}). 
    Therefore $D^p\mathbf{e}_{C'} = \mathbf{e}_C$ if and only if $D^p_{a,b} = 0$ whenever ($a \in C$ and $b \notin C'$) or ($a \notin C$ and $b \in C'$). By definition of the graph $B$, this is equivalent to there being no edges between $C \cup C'$ and $(G \cup G') \setminus (C \cup C')$. This is, in turn, equivalent to $C \cup C'$ being the vertex set of a union of connected components of $B$, which is equivalent to $C = C_I$ and $C' = C'_I$ for some $I \subseteq [k]$.
%
%
\end{proof}

We would like to use Theorems \ref{thm:CayConstruction} and \ref{thm:DpeC} to construct new pairs of non-isomorphic, quantum isomorphic (di)graphs. To do this, we would construct a $(G,G')$-invariant quantum correlation $p$ and apply the theorems to construct all of the pairs of Cayley (di)graphs for which $p$ wins the isomorphism game. Since $p$ is a quantum correlation, all these pairs are quantum isomorphic. If any of these pairs are non-isomorphic, then we have found the desired example. In later sections we will see how to construct $(G,G')$-invariant quantum correlations. However, despite some effort, we were unable to find any that yield non-isomorphic graphs using this method. We therefore propose the following problem.

\begin{prob}\label{prob:Cayley}
    Use this construction to come up with non-isomorphic, quantum isomorphic Cayley (di)graphs, or show that this is not possible.
\end{prob}

We remark that the results of Section~\ref{subsec:Z24comps} make us hopeful that the above open problem can be resolved positively.

\section{Quantum Latin Squares}\label{sec:QLS}

Recall that a Latin square of order $n$ is an $n \times n$ array of the numbers $1, 2, \ldots, n$ such that each number appears precisely once in each row and column. The following definition, introduced in~\cite{qlatin}, defines a quantum analog of these objects:

\begin{definition}
A \emph{quantum Latin square} $\Psi = (\ket{\psi_{i,j}})$ is an $n \times n$ array of vectors from an $n$-dimensional complex vector space $V$ such that the entries of each row and column form an orthonormal basis of $V$.
\end{definition}

Typically we will take $V = \mathbb{C}^n$, but in general we may want to let $V$ be an $n$-dimensional subspace of $\mathbb{C}^N$ for some $N \ge n$ (e.g., when we define the composition of quantum Latin squares).

\begin{definition}
    For finite groups $G$ and $G'$, we say that a quantum Latin square $\Psi = (\ket{\psi_{a,b}})$ is \emph{$(G,G')$-invariant} if its rows are indexed by $G$ and its columns by $G'$ and the inner product $\inner{\psi_{a,b}}{\psi_{c,d}}$ depends only on the values of $a^{-1}c \in G$ and $b^{-1}d \in G'$.
\end{definition}

Obviously, we must have $|G| = |G'|$ for a $(G,G')$-invariant quantum Latin square to exist. Later we will see that a $(G,G')$-invariant quantum Latin square exists if and only if $G$ and $G'$ have the same multiset of degrees of irreducible representations. In particular, this means that a $(G,G')$-invariant quantum Latin square always exists for abelian groups $G$ and $G'$ of the same order.

Any list of vectors $\ket{\psi_1}, \ldots, \ket{\psi_n} \in \mathbb{C}^d$ can, up to isometry, be determined by the pairwise inner products $\inner{\psi_i}{\psi_j}$, $i,j = 1, \ldots, n$. Thus, essentially all information about a quantum Latin square is contained in its pairwise inner products. 
By definition of a $(G,G')$-invariant quantum Latin square, this information can be contained in a matrix with rows indexed by $G$ and columns by $G'$:



\begin{definition}
    Suppose that $\Psi = (\ket{\psi_{a,b}})$ is a $(G,G')$-invariant quantum Latin square. Define its \emph{transformation matrix}, denoted $U^\Psi$, to be the $G \times G'$ matrix defined entrywise as
    \[U^\Psi_{x,y} = \inner{\psi_{a,b}}{\psi_{c,d}} \text{ for some } a,c \in G, \ b,d \in G' \text{ s.t. } a^{-1}c = x \ \& \ b^{-1}d = y.\]
\end{definition}

\begin{example}\label{ex:charconstruction}
Let $G$ and $G'$ be abelian groups of order $n$ with irreducible characters $\chi_1,\dots, \chi_n$ and $\chi_1',\dots,\chi_n'$, respectively. For each $g\in G$ and $h\in G'$, define the vector $\ket{\psi_{g,h}}$ by 
\[\ket{\psi_{g,h}} = (\chi_1(g^{-1})\chi'_1(h),\dots, \chi_n(g^{-1})\chi_n'(h))^T. \]
Then the array $(\ket{\psi_{g,h}})_{g\in G,h\in H}$ is a $(G,G')$-invariant quantum Latin square. We remark that when $G = G'$, this is equivalent to the construction of Definition 4.1 in~\cite{quantumvsnonlocal}. An example with $G=\Z_4$ and $G'=\Z_2\times \Z_2$ is given in Figure \ref{fig:charsQLS} and the corresponding transformation matrix is
\[
\begin{pmatrix}
1 & 0 & 0 & 0 \\
0 & 0 & \alpha & \beta \\
0 & 1 & 0 & 0 \\
0 & 0 & \beta & \alpha
\end{pmatrix},
\]
where $\alpha = \frac{1-i}2$ and $\beta = \frac{1+i}2.$
\begin{figure}[h]
\renewcommand{\arraystretch}{1.2}
\[\frac1{2}\,\,\begin{array}{|c|c|c|c|}
\hline
(1,1,1,1) & (1,-1,1,-1) & (1,1,-1,-1) & (1,-1,-1,1) \\
\hline
(1,-i,-1,i) & (1,i,-1,-i) & (1,-i,1,-i) &(1,i,1,i)\\
\hline
(1,-1,1,-1) & (1,1,1,1) & (1,-1,-1,1) & (1,1,-1,-1) \\
\hline
(1,i,-1,-i) & (1,-i,-1,i) & (1,i,1,i) & (1,-i,1,-i) \\
\hline
\end{array}\]
\caption{A $(\Z_4,\Z_2\times \Z_2)$-invariant quantum Latin square.}
\label{fig:charsQLS}
\end{figure}
\end{example}

Note that by definition of quantum Latin square, the transformation matrix of any group invariant quantum Latin square must of course be square. In fact, it turns out that the transformation matrix of a $(G,G')$-invariant quantum Latin square is unitary. Moreover, we prove the following:

\begin{theorem}\label{thm:qlstransf}
    Let $G$ and $G'$ be finite groups. Then a matrix $U \in \mathbb{C}^{G \times G'}$  is the transformation matrix of a $(G,G')$-invariant quantum Latin square if and only if
    \begin{enumerate}
        \item $U$ is unitary;
        \item $\overline{U_{a,b}} = U_{a^{-1},b^{-1}}$, for all $a \in G$, $b \in G'$, and
        \item $U_{ab,c} = \displaystyle\sum_{\substack{x,y \in G' \\ xy=c}} U_{a,x}U_{b,y}$ for all $a,b \in G$ and $c \in G'$.
    \end{enumerate}
\end{theorem}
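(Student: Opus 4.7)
The plan is to prove both directions, with the forward direction exploiting the freedom to choose representatives when expressing entries of $U^\Psi$ as inner products, and the backward direction via an explicit construction of a quantum Latin square.

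For the forward direction, let $\Psi = (\ket{\psi_{a,b}})$ be a $(G,G')$-invariant quantum Latin square. Condition (2) is immediate from $\overline{\inner{\psi_{e,e}}{\psi_{a,b}}} = \inner{\psi_{a,b}}{\psi_{e,e}}$ together with group invariance. For condition (1), the trick is to write $U^\Psi_{x,y} = \inner{\psi_{x^{-1},e}}{\psi_{e,y}}$, which places the varying index $x$ entirely on the left factor. Inserting the resolution of identity $\sum_x \ketbra{\psi_{x^{-1},e}}{\psi_{x^{-1},e}} = I$ (the $e$-th column of $\Psi$ is an orthonormal basis) into $\sum_x \overline{U^\Psi_{x,y}}\,U^\Psi_{x,y'}$ collapses the sum to $\inner{\psi_{e,y}}{\psi_{e,y'}} = \delta_{y,y'}$, using orthonormality of the $e$-th row of $\Psi$; row orthonormality of $U^\Psi$ is symmetric. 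For condition (3), write $U^\Psi_{ab,c} = \inner{\psi_{e,e}}{\psi_{ab,c}}$ and insert the resolution of identity $I = \sum_{x \in G'} \ketbra{\psi_{a,x}}{\psi_{a,x}}$ from the $a$-th row; group invariance identifies $\inner{\psi_{a,x}}{\psi_{ab,c}} = U^\Psi_{b,x^{-1}c}$, and substituting $y = x^{-1}c$ produces the stated convolution formula.

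For the backward direction, I would first derive the preliminary identities $U_{g,e} = \delta_{g,e}$ and $U_{e,g} = \delta_{e,g}$. The first follows directly by applying (3) with $c = e$, then using (2) to rewrite the summand and invoking row orthonormality from (1). The second requires a short averaging argument: (3) applied to the factorization $e = b \cdot b^{-1}$, combined with (2), gives $U_{e,g} = \sum_z U_{b,gz}\,\overline{U_{b,z}}$ for every $b \in G$; summing over $b$ and invoking column orthonormality of $U$ yields $U_{e,g} = \delta_{e,g}$. Then define vectors $\ket{\psi_{a,b}} \in \mathbb{C}^{G'}$ componentwise by $(\ket{\psi_{a,b}})_y := U_{a,yb}$. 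A direct computation gives
\[
\inner{\psi_{a,b}}{\psi_{c,d}} = \sum_{y \in G'} \overline{U_{a,yb}}\,U_{c,yd} = \sum_{y \in G'} U_{a^{-1},b^{-1}y^{-1}}\,U_{c,yd} = U_{a^{-1}c,\,b^{-1}d},
\]
where (2) rewrites the conjugate and (3) applies once one observes that as $y$ ranges over $G'$, the pair $(b^{-1}y^{-1}, yd)$ ranges over all factorizations of $b^{-1}d$ in $G'$. The preliminary identities then force $\inner{\psi_{a,b}}{\psi_{a,d}} = U_{e,b^{-1}d} = \delta_{b,d}$ and $\inner{\psi_{a,b}}{\psi_{c,b}} = U_{a^{-1}c,e} = \delta_{a,c}$, and since each row/column of $\Psi$ consists of $|G'|$ orthonormal vectors in the $|G'|$-dimensional space $\mathbb{C}^{G'}$, they automatically form orthonormal bases. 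By construction the transformation matrix is $U$, and group invariance is built into the inner product formula.

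The main obstacle is finding the correct explicit ansatz for $\ket{\psi_{a,b}}$ in the backward direction: the appearance of $yb$ (rather than $yb^{-1}$ or another variant) in the $y$-th coordinate is precisely what makes (2) and (3) telescope into $U_{a^{-1}c, b^{-1}d}$. Once the ansatz is identified, the verifications are essentially mechanical. A secondary technical point is the averaging argument establishing $U_{e,g} = \delta_{e,g}$, which seems to genuinely require all three of (1), (2), and (3) acting together.
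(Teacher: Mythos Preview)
Your proof is correct and follows essentially the same strategy as the paper: both directions use resolutions of identity coming from rows/columns of $\Psi$ in the forward direction, and an explicit coordinate construction in the backward direction. Your construction $(\ket{\psi_{a,b}})_y := U_{a,yb}$ is the paper's formula $\ket{\psi_{a,b}} = \sum_x U_{a,x^{-1}b}\ket{\psi_{e,x}}$ specialized to a particular choice of standard basis (up to reindexing $x \mapsto x^{-1}$). One minor simplification you could borrow: the paper obtains both preliminary identities at once by computing only $U_{e,e} = \sum_x U_{e,x}U_{e,x^{-1}} = \sum_x |U_{e,x}|^2 = 1$ via (3), (2), and the unit norm of row $e$; unitarity then forces the rest of row and column $e$ to vanish, avoiding your averaging argument for $U_{e,g}$.
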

\begin{proof}
Let $\Psi = (\ket{\psi_{a,b}})$ be a $(G,G')$-invariant quantum Latin square, and let $U:= U^\Psi$ be its transformation matrix. Then, letting $\delta_{x,y}$ denote the Kronecker delta function, we have that
    \begin{align*}
    \left(UU^\dagger\right)_{x,y} &= \sum_{z \in G'} U_{x,z}\overline{U_{y,z}} \\
    &= \sum_{z \in G'} \inner{\psi_{e,e}}{\psi_{x,z}}\overline{\inner{\psi_{xy^{-1},e}}{\psi_{x,z}}} \\
    &= \sum_{z \in G'} \inner{\psi_{e,e}}{\psi_{x,z}}\inner{\psi_{x,z}}{\psi_{xy^{-1},e}} \\
    &= \bra{\psi_{e,e}}\left( \sum_{z \in G'} \outerp{\psi_{x,z}}{\psi_{x,z}}\right) \ket{\psi_{xy^{-1},e}} \\
    &= \inner{\psi_{e,e}}{\psi_{xy^{-1},e}} \\
    &= \delta_{x,y},
    \end{align*}
    where the last two equalities follow from the fact that every row and column of $\Psi$ is an orthonormal basis. Since $U$ must be square, it follows that it is unitary, i.e., item $(1)$ holds.
    Next, we have that
    \[\overline{U_{x,y}} = \overline{\inner{\psi_{e,e}}{\psi_{x,y}}} = \inner{\psi_{x,y}}{\psi_{e,e}} = U_{x^{-1},y^{-1}},\]
    i.e., item $(2)$ holds.
    Finally,
    \begin{align*}
        \sum_{\substack{x,y \in G' \\ \text{ s.t. } xy=c}} U_{a,x}U_{b,y} &= \sum_{x \in G'} U_{a,x}U_{b,x^{-1}c} \\
        &= \sum_{x \in G'} \inner{\psi_{a^{-1},e}}{\psi_{e,x}} \inner{\psi_{e,x}}{\psi_{b,c}} \\
        &= \bra{\psi_{a^{-1}, e}} \left( \sum_{x \in G'}\outerp{\psi_{e,x}}{\psi_{e,x}} \right) \ket{\psi_{b,c}} \\
        &= \inner{\psi_{a^{-1}, e}}{\psi_{b,c}} \\
        &= U_{ab,c},
    \end{align*}
    i.e., item $(3)$ holds. Thus we have proven the forward direction of the theorem.

    Conversely, suppose that $U$ is a $G \times G'$ matrix satisfying $(1)$--$(3)$. We first show that $U_{e,e} = 1$ and thus $U_{a,e} = U_{e,b} = 0$ for all $a \in G\setminus{e}$ and $b \in G'\setminus{e}$. Using properties $(1)$--$(3)$ in reverse order, we have that
    \begin{align*}
    U_{e,e} &= \sum_{\substack{x,y \in G' \\ xy = e}} U_{e,x}U_{e,y} \\
    &= \sum_{x \in G'} U_{e,x}U_{e,x^{-1}} \\
    &= \sum_{x \in G'} U_{e,x}\overline{U_{e,x}} \\
    &= 1
    \end{align*}
    Since $U$ is unitary, the remaining entries of the $e$ row/column must be 0 as desired.
    
    Now let $\{\ket{\psi_{e,b}} : b \in G'\}$ be any orthonormal basis of $\mathbb{C}^{|G'|}$. For all $a \in G$ and $b \in G'$, define
    \begin{equation}\label{eq:defkets}
        \ket{\psi_{a,b}} = \sum_{x \in G'} U_{a,x^{-1}b} \ket{\psi_{e,x}}.
    \end{equation}
    Note that the above holds trivially if $a=e$, so we are not redefining our basis $\{\ket{\psi_{e,b}}:b\in G'\}$. We must show that these vectors form a $(G,G')$-invariant quantum Latin square whose inner product matrix is $U$. Towards this, we consider the inner products
    \begin{align*}
        \inner{\psi_{a,b}}{\psi_{c,d}} &= \sum_{x,y \in G'} \overline{U_{a,x^{-1}b}}U_{c,y^{-1}d}\inner{\psi_{e,x}}{\psi_{e,y}} \\
        &= \sum_{x \in G'} \overline{U_{a,x^{-1}b}}U_{c,x^{-1}d} \\
        &= \sum_{x \in G'} U_{a^{-1},b^{-1}x}U_{c,x^{-1}d} \\
        &= U_{a^{-1}c,b^{-1}d}.
    \end{align*}
    Since $U_{e,e} = 1$ as shown earlier, the above equation implies that every row and column of $\Psi = (\ket{\psi_{a,b}})$ is an orthonormal basis of $\mathbb{C}^{|G'|}$ and therefore $\Psi$ is a quantum Latin square. Moreover, $\Psi$ is $(G,G')$-invariant with transformation matrix $U$ by the same equation.
\end{proof}

Due to the above theorem, we will often refer to any $G \times G'$ matrix $U$ that satisfies conditions $(1)$--$(3)$ as a \emph{$(G,G')$-transformation matrix}, without referring to any $(G,G')$-invariant quantum Latin square.

\begin{remark}\label{rem:trans_mtx_extras}
    We note that any transformation matrix $U$ of a $(G,G')$-invariant quantum Latin square must satisfy $U_{e,e} = 1$ and thus $U_{a,e} = U_{e,b} = 0$ for $a \in G \setminus \{e\}$ and $b \in G' \setminus \{e\}$. Moreover, by symmetry of $G$ and $G'$ in the definition of $(G,G')$-invariant quantum Latin square, we have that any transformation matrix $U$ also satisfies
    \[U_{a,bc} = \sum_{\substack{x,y \in G \\ xy=a}} U_{x,b}U_{y,c}\text{ for all } a \in G \text{ and } b,c \in G'.\]
    Lastly, we leave it to the reader to verify that the identity matrix is a $(G,G)$-transformation matrix, and that if $U$ is a $(G,G')$-transformation matrix, then so is $\overline{U}$, and $U^\dagger$ is a $(G',G)$-transformation matrix.
\end{remark}

For any $n \times n$ quantum Latin square $\Psi = (\ket{\psi_{i,j}})$, there is a corresponding quantum permutation matrix
\begin{equation}\label{eq:qls2qpm}
    \mathcal{P}^\Psi = (\outerp{\psi_{i,j}}{\psi_{i,j}}),
\end{equation}
and the corresponding correlation defined as in Equation~\eqref{eq:qcorr} is given by
\[p(i,j|k, \ell) = \frac{1}{n}|\langle \psi_{i,k} | \psi_{j,\ell}\rangle|^2.\]
It follows immediately that if $\Psi$ is $(G,G')$-invariant, then so is $p$ and its correlation matrix $D^p$ is equal to $U \circ \overline{U}$, where $U$ is the transformation matrix of $\Psi$ and $\circ$ denotes the entrywise product. We remark that in general, not every quantum permutation matrix arises from a quantum Latin square in the manner of Equation~\eqref{eq:qls2qpm}, since $\mathcal{P}^\Psi$ necessarily only has entries of rank one. However, any quantum permutation matrix where every entry has rank one does arise in this way, but the correspondence is not one-to-one since multiplying any entry of $\Psi$ by a complex unit would not change $\mathcal{P}^\Psi$.

\section{Composition}\label{sec:composition}

Given $n \times n$ quantum permutation matrices $\mathcal{P} = (p_{ij})$ and $\mathcal{Q} = (q_{k\ell})$, one can \emph{compose} these to obtain a new quantum permutation matrix~\cite{qperms}, denoted $\mathcal{P} \circ \mathcal{Q}$, defined as
\[\left(\mathcal{P} \circ \mathcal{Q}\right)_{ik} = \sum_{j \in [n]} p_{ij} \otimes q_{jk}.\]
Moreover, if $p$ and $q$ are the correlations arising from $\mathcal{P}$ and $\mathcal{Q}$ via Equation~\eqref{eq:qcorr}, then the correlation that arises from $\mathcal{P} \circ \mathcal{Q}$ is the composition $p \circ q$.

Here we will introduce a notion of \emph{composition} of quantum Latin squares that is motivated by the above, but behaves differently in some key aspects.

\begin{definition}
    Let $\Psi = (\ket{\psi_{i,j}})$ and $\Psi' = (\ket{\psi'_{j,k}})$ be two $n \times n$ quantum Latin squares. Then we define the \emph{composition} of $\Psi$ and $\Psi'$, denoted by $\Psi \circ \Psi'$ as the $n \times n$ array with $i,k$-entry equal to 
    \[\frac1{\sqrt n}\sum_{j \in [n]} \ket{\psi_{i,j}} \otimes \ket{\psi'_{j,k}}.\] 
\end{definition}

Note that the underlying space of $\Psi \circ \Psi'$ in the above definition is $\mathbb{C}^n \otimes \mathbb{C}^n \cong \mathbb{C}^{n^2}$. The rows/columns of $\Psi \circ \Psi'$ clearly cannot be orthonormal bases of this space since they only contain $n$ vectors each. This does not preclude the possibility that every row and column spans the same $n$-dimensional subspace of $\mathbb{C}^n \otimes \mathbb{C}^n$, but this turns out not to be the case in general as can be seen by the following example:

\begin{example}
Let
\[\Psi = \begin{array}{|c|c|}
\hline
(1,0) & (0,1) \\
\hline
(0,1) & (1,0)\\
\hline
\end{array}
\quad \text{and} \quad
\Psi' = \begin{array}{|c|c|}
\hline
(1,0) & (0,1) \\
\hline
(0,1) & (-1,0)\\
\hline
\end{array}\]
Then
\[\Psi \circ \Psi'= \frac{1}{\sqrt{2}}\,\,\begin{array}{|c|c|}
\hline
(1,0,0,1) & (0,1,-1,0) \\
\hline
(0,1,1,0) & (-1,0,0,1)\\
\hline
\end{array}\]
whose rows do not span the same space.
\end{example}

Thus the main difference between composition of quantum Latin squares and composition of quantum permutation matrices is that the composition of quantum Latin squares is not necessarily a quantum Latin square.

The composition of quantum Latin squares will however always be an array in which each row and column is an orthonormal set of vectors. But the above example shows that these are not always bases for some common vector space.

What is interesting for us is that, for group invariant quantum Latin squares, the composition is in fact a quantum Latin square which is moreover group invariant:

\begin{theorem}
    Suppose that $G, G'$, and $G''$ are finite groups and that $\Psi = (\ket{\psi_{a,b}})_{a \in G, b \in G'}$ and $\Psi' = (\ket{\psi'_{b,c}})_{b \in G', c \in G''}$ are $(G,G')$-invariant and $(G',G'')$-invariant quantum Latin squares respectively, and let $U$ and $U'$ be their respective transformation matrices. Then the composition $\Psi \circ \Psi'$ is a $(G,G'')$-invariant quantum Latin square with transformation matrix equal to $UU'$. 
\end{theorem}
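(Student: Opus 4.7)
The plan is to compute the pairwise inner products of the entries of $\Psi \circ \Psi'$ directly, read off both the $(G,G'')$-invariance and the identity of the transformation matrix from a single calculation, and then verify the quantum Latin square conditions.

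Denote $\ket{\Phi_{a,c}} := (\Psi \circ \Psi')_{a,c}$ and $n = |G'|$. The key identity I would derive first is
\[
\langle \Phi_{a,c} \mid \Phi_{a',c'}\rangle \;=\; \frac{1}{n}\sum_{b,b' \in G'} U_{a^{-1}a',\,b^{-1}b'}\,U'_{b^{-1}b',\,c^{-1}c'} \;=\; (UU')_{a^{-1}a',\,c^{-1}c'},
\]
where the first equality is a straightforward expansion using the tensor-product inner product and the invariance of $\Psi, \Psi'$, and the second follows by substituting $y = b^{-1}b'$ and observing that, for each fixed $b$, the variable $y$ ranges over all of $G'$, so the outer sum over $b$ contributes a compensating factor of $n$. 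This one computation already shows that the inner products depend only on the quotients $a^{-1}a'$ and $c^{-1}c'$, which is the $(G,G'')$-invariance, and identifies $UU'$ as the (candidate) transformation matrix.

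From this identity, together with \Rem{trans_mtx_extras} applied to $U$ and $U'$ (giving $U_{e,e} = U'_{e,e} = 1$ and vanishing on the rest of the $e$-row/column of each), I would deduce unit norm via $(UU')_{e,e} = U_{e,e}\,U'_{e,e} = 1$, row orthogonality via $(UU')_{e,z} = U_{e,e}\,U'_{e,z} = 0$ for $z \in G'' \setminus \{e\}$, and column orthogonality via $(UU')_{x,e} = U_{x,e}\,U'_{e,e} = 0$ for $x \in G \setminus \{e\}$.

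The main obstacle — and the feature that distinguishes group-invariant composition from the general case exhibited in the example preceding the theorem — is to show that all $n^2$ vectors $\ket{\Phi_{a,c}}$ live in a common $n$-dimensional subspace, so that each row and column is an orthonormal basis of one fixed space $V$ rather than just an orthonormal set. Let $V := \spn\{\ket{\Phi_{e,c'}} : c' \in G''\}$, which the previous step shows is $n$-dimensional with the displayed orthonormal basis. The crucial input is that $UU'$ is unitary (product of unitaries), hence has rows of unit $\ell^2$ norm. Therefore, for any $a \in G$ and $c \in G''$, the squared norm of the orthogonal projection of $\ket{\Phi_{a,c}}$ onto $V$ is
\[
\sum_{c' \in G''} \bigl|\langle \Phi_{e,c'} \mid \Phi_{a,c}\rangle\bigr|^2 \;=\; \sum_{z \in G''} |(UU')_{a,z}|^2 \;=\; 1 \;=\; \|\Phi_{a,c}\|^2,
\]
which forces $\ket{\Phi_{a,c}} \in V$. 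Together with the orthonormality established above, this shows that $\Psi \circ \Psi'$ is a $(G,G'')$-invariant quantum Latin square on $V$ with transformation matrix $UU'$.
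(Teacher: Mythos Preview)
Your proof is correct and follows essentially the same approach as the paper: compute the inner products directly to obtain $(UU')_{a^{-1}a',\,c^{-1}c'}$, deduce orthonormality of rows/columns from the $e$-row/column structure of $UU'$, and use unitarity of $UU'$ to show all vectors lie in a common $n$-dimensional subspace via the projection-norm argument. The only cosmetic difference is that the paper phrases the last step as ``every vector lies in the span of any given row'' rather than fixing the $e$-row, but the computation is identical.
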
 
\begin{proof}
Let $\Phi=(\ket{\vphi_{a,c}})_{a\in G,c\in G''}$ denote the composition of $\Psi$ and $\Psi'$, and let $U'' = UU'$. Then $\Phi$ is an $n\times n$ array of vectors with rows indexed by $G$ and columns by $G''$. We will first show that 
\begin{equation}\label{eq:compisGinv}
    \inner{\varphi_{a,c}}{\varphi_{b,d}} = \left(U''\right)_{a^{-1}b,c^{-1}d}.
\end{equation}
We have that
\begin{align*}
    \inner{\phi_{a,c}}{\phi_{b,d}} & = \frac1n \left(\sum_{x\in G'} \bra{\psi_{a,x}}\otimes \bra{\psi'_{x,c}}\right)\left(\sum_{y\in G'} \ket{\psi_{b,y}}\otimes \ket{\psi'_{y,d}}\right)\\
    & = \frac1n \left(\sum_{x,y\in G'} \inner{\psi_{a,x}}{\psi_{b,y}} \inner{\psi'_{x,c}}{\psi'_{y,d}}\right)\\
    & = \frac1n \left(\sum_{x,y\in G'} U_{a^{-1}b,x^{-1}y} U'_{x^{-1}y,c^{-1}d}\right)\\
    & = \sum_{z\in G'} U_{a^{-1}b,z} U'_{z,c^{-1}d}\\
    & = U''_{a^{-1}b,c^{-1}d}.
\end{align*}
Note that in particular we have that $U''_{e,e} = 1$ and $U''_{a,e} = U_{e,c} = 0$ for all nonidentity elements $a \in G$ and $c \in G''$ since the analogous statements hold for $U$ and $U'$. Therefore the above implies that each $\ket{\varphi_{a,c}}$ is a unit vector and moreover the vectors in a row/column of $\Phi$ are orthonormal.

Finally, for any $a,b \in G$, $c \in G''$, we have that
\begin{equation}\label{eq:inspan}
    \sum_{d \in G''} |\!\inner{\phi_{a,c}}{\phi_{b,d}}\!|^2 = \sum_{d \in G''} |U''_{a^{-1}b,c^{-1}d}|^2 = 1
\end{equation}
since $U''$ is unitary. Since $\ket{\varphi_{a,c}}$ is a unit vector and $\{\ket{\varphi_{b,d}} : d \in G''\}$ is an orthonormal set, Equation~\eqref{eq:inspan} implies that $\ket{\varphi_{a,c}} \in \spn\{\ket{\varphi_{b,d}} : d \in G''\}$. Thus every vector of $\Phi$ is contained in the span of the vectors in any row of $\Phi$, and the proof for columns is analogous. It follows that every row/column of $\Phi$ is an orthonormal basis of a common subspace of the underlying space and therefore $\Phi$ is indeed a quantum Latin square. Moreover, Equation~\eqref{eq:compisGinv} proves that $\Phi$ is $(G,G'')$-invariant with transformation matrix $UU'$.
\end{proof}



As a corollary we immediately obtain the following:

\begin{cor}
    The existence of a $(G,G')$-invariant quantum Latin square is an equivalence relation on groups.\qed
\end{cor}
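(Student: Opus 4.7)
The plan is to verify each of the three defining properties of an equivalence relation directly, using the characterization of $(G,G')$-invariant quantum Latin squares via their transformation matrices (Theorem~\ref{thm:qlstransf}), together with the construction just proved.

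For \emph{reflexivity}, I would observe that for any finite group $G$, the identity matrix $I \in \mathbb{C}^{G \times G}$ is a $(G,G)$-transformation matrix: it is unitary, satisfies $\overline{I_{a,b}} = I_{a^{-1},b^{-1}}$ trivially since both sides vanish unless $a = b = e$ (and equal $1$ in that case), and satisfies the convolution identity~(3) since $I_{ab,c} = \delta_{ab,c}$ equals $\sum_{xy = c} I_{a,x} I_{b,y} = \delta_{a, c y^{-1}} \delta_{b, y}$ summed appropriately. This is the content of the observation in Remark~\ref{rem:trans_mtx_extras}. By Theorem~\ref{thm:qlstransf}, this transformation matrix arises from some $(G,G)$-invariant quantum Latin square, so the relation is reflexive.

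For \emph{symmetry}, suppose a $(G,G')$-invariant quantum Latin square exists with transformation matrix $U$. Then, as noted in Remark~\ref{rem:trans_mtx_extras}, $U^\dagger$ is a $(G',G)$-transformation matrix, and so by Theorem~\ref{thm:qlstransf} there exists a $(G',G)$-invariant quantum Latin square. Hence the relation is symmetric.

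For \emph{transitivity}, this is essentially immediate from the composition theorem proved just above the corollary: if $\Psi$ is a $(G,G')$-invariant quantum Latin square and $\Psi'$ is a $(G',G'')$-invariant quantum Latin square, then $\Psi \circ \Psi'$ is a $(G,G'')$-invariant quantum Latin square. So the relation is transitive.

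There is no real obstacle here; all three properties fall out immediately from results already established. The proof is just a short invocation of Theorem~\ref{thm:qlstransf}, Remark~\ref{rem:trans_mtx_extras}, and the composition theorem, and can be written in a single line each.
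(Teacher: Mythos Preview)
Your proof is correct and takes essentially the same approach as the paper, which marks the corollary with \qed and treats it as immediate from the composition theorem together with the observations in Remark~\ref{rem:trans_mtx_extras} that the identity matrix is a $(G,G)$-transformation matrix and that $U^\dagger$ is a $(G',G)$-transformation matrix whenever $U$ is a $(G,G')$-transformation matrix.
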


The above also means that if $\mathcal{G}$ is an equivalence class of the relation in the corollary, then the transformation matrices in the set
\[\bigcup_{G,G' \in \mathcal{G}}\{U \in \mathbb{C}^{G \times G'} : U \text{ is a $(G,G')$-transformation matrix}\}\]
form a groupoid, and that the $(G,G)$-transformation matrices form a group.

As mentioned above, if $\mathcal{P}$ and $\mathcal{Q}$ are quantum permutation matrices with corresponding correlations $p$ and $q$, then the correlation produced by the composition $\mathcal{P} \circ \mathcal{Q}$ will be $p \circ q$. Recall also that in terms of correlation matrices we have that $D^{p \circ q} = D^p D^q$. Thus the correspondence between quantum permutation matrices and correlations commutes with their respective notions of composition. However, this is not the case for composing group invariant quantum Latin squares. Indeed, if $U$ and $U'$ are the transformation matrices of a $(G,G')$-invariant and $(G',G'')$-invariant quantum Latin squares respectively, then their correlation matrices are $U \circ \overline{U}$ and $U' \circ \overline{U'}$. But the transformation matrix of their composition is $UU'$ and thus its correlation matrix is $UU' \circ \overline{UU'}$, which is in general not equal to $\left(U \circ \overline{U}\right)\left(U' \circ \overline{U'}\right)$. Indeed, the correlation matrix $\left(U \circ \overline{U}\right)\left(U' \circ \overline{U'}\right)$ is not even necessarily the correlation matrix of some group invariant quantum Latin square, as the following example shows.

\begin{example}
The following correlation matrix of a $\mathbb{Z}_5$-invariant quantum Latin square was given in~\cite{quantumvsnonlocal} (here $\varphi$ denotes the golden ratio $\frac{1+\sqrt{5}}{2}$):
\begin{equation}\label{eq:Z5corrmtx1}
\frac{1}{5}\begin{pmatrix}
5 & 0 & 0 & 0 & 0 \\
0 & 1 + \varphi & 1 & 1 & 2 - \varphi \\
0 & 1 & 2 - \varphi & 1 + \varphi & 1 \\
0 & 1 & 1 + \varphi & 2 - \varphi & 1 \\
0 & 2 - \varphi & 1 & 1 & 1 + \varphi
\end{pmatrix}.
\end{equation}
Multiplying this matrix by its transpose, which is the correlation matrix of a $\mathbb{Z}_5$-invariant quantum Latin square by Remark~\ref{rem:trans_mtx_extras}, yields the following:
\begin{equation}\label{eq:Z5corrmtx2}
\frac{1}{25}\begin{pmatrix}
25 & 0 & 0 & 0 & 0 \\
0 & 9 & 6 & 6 & 4 \\
0 & 6 & 9 & 4 & 6 \\
0 & 6 & 4 & 9 & 6 \\
0 & 4 & 6 & 6 & 9
\end{pmatrix}.
\end{equation}
As we will see in Theorem~\ref{thm:abeliancase}, there are only finitely many transformation matrices of $\mathbb{Z}_5$-invariant quantum latin squares, and thus we can simply compute all of them and their corresponding correlation matrices. They are all either permutation matrices or the matrix in~\eqref{eq:Z5corrmtx1} multiplied on the left and/or right by a permutation matrix. Thus the matrix in~\eqref{eq:Z5corrmtx2} is not among them.
\end{example}




\section{Representation Theory}\label{sec:reptheory}

In this section we will see some connections between $(G,G')$-transformation matrices and representations of the groups $G$ and $G'$. 
We start by introducing some basic concepts in representation theory, but for more on group representations, we refer the reader to \cite{james2001reps,serre-reps}. 

Let $G$ be a finite group and let $\rho$ be a representation, that is, a homomorphism from $G$ to some general linear group. The dimension, $d,$ of this general linear group is the \emph{degree} of $\rho$. We say that representations $\rho$ and $\rho'$ of degree $d$ are \emph{equivalent} if there exists an invertible $d\times d$ matrix, $M$ such that $M\rho(g)M^{-1}=\rho'(g)$ for all $g\in G.$ The representation  $\rho$ is said to be \textit{unitary} if $\rho(g)$ is a unitary matrix for all $g\in G.$ We observe that every representation is equivalent to some unitary representation. Given a representation $\rho$, its corresponding \emph{character} is the map $\chi:G\to \C$ given by $g\mapsto \Tr(\rho(g)),$ where $\Tr$ is the usual trace.

For matrices $M$ and $N$, we denote by $\langle M,N\rangle$ their normalized Hilbert-Schmidt inner product, i.e., $\langle M, N \rangle = \tr(M^\dagger N) = \frac{1}{d}\Tr(M^\dagger N)$ where $d$ is the dimension of the matrices and $\tr$ is the trace scaled so that $\tr(I) = 1$. We will call a representation $\rho$ \textit{quasi-regular} if it is unitary and if for all distinct $g,h\in G$ we have $\langle \rho(g),\rho(h)\rangle = 0$.

We are in particular interested in one such representation, namely the left regular representation, $\lambda$, defined by $(\lambda(g))_{x,y} = \delta_{x,gy}$ for all $g,x,y\in G$. This representation is clearly quasi-regular.



\begin{lemma}\label{lem:reps}
    Let $G$ and $G'$ be finite groups and suppose that $\rho$ and $\rho'$ are quasi-regular representations of $G$ and $G'$ respectively. Define $\B:=\{\rho(g):g\in G\}$ and $\B':= \{\rho'(h):h\in G'\}$. Then the following are equivalent:
    \begin{enumerate}
        \item $\spn(\B) = \spn(\B')$;
        \item There exists a $(G,G')$-transformation matrix $U$ such that
        \[\rho'(h) = \sum_{x \in G} U_{x,h}\rho(x) \text{ for all } h \in G'.\]
    \end{enumerate}
\end{lemma}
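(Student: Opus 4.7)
The plan is to prove $(2)\Rightarrow(1)$ by a short dimension count and $(1)\Rightarrow(2)$ by taking $U_{x,h} := \langle \rho(x),\rho'(h)\rangle$ and verifying the three defining properties of a $(G,G')$-transformation matrix from \Thm{qlstransf}. The central preliminary observation is that quasi-regularity combined with unitarity forces $\B$ and $\B'$ to be orthonormal sets in the normalized Hilbert--Schmidt inner product, since $\langle \rho(g),\rho(g)\rangle = \tr(\rho(g)^\dagger\rho(g)) = \tr(I) = 1$. Hence $\dim\spn(\B) = |G|$ and $\dim\spn(\B') = |G'|$.

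For $(2)\Rightarrow(1)$, a $(G,G')$-transformation matrix is by definition unitary, hence square, so $|G|=|G'|$; the hypothesised identity then gives $\spn(\B')\subseteq\spn(\B)$, and equality of dimensions upgrades this to an equality. For $(1)\Rightarrow(2)$, the equality of spans forces $|G|=|G'|$, so $\B$ is an orthonormal basis of the common span and the expansion $\rho'(h) = \sum_x \langle\rho(x),\rho'(h)\rangle \rho(x)$ uniquely defines the desired coefficients $U_{x,h}$. Property (1) of \Thm{qlstransf} is then immediate: the columns of $U$ are the coordinate vectors of the orthonormal set $\B'$ in the orthonormal basis $\B$, and $U$ is square, so it is unitary. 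Property (2), $\overline{U_{a,b}} = U_{a^{-1},b^{-1}}$, follows from $\overline{\tr(M^\dagger N)} = \tr(N^\dagger M)$, cyclicity of the trace, and the identities $\rho(a)^\dagger = \rho(a^{-1})$, $\rho'(b)^\dagger = \rho'(b^{-1})$ coming from unitarity.

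The main obstacle is verifying property (3), $U_{ab,c} = \sum_{x\in G'} U_{a,x}\,U_{b,x^{-1}c}$. I plan to exploit that $\spn(\B')$ is closed under matrix multiplication, because $\rho'(x)\rho'(y) = \rho'(xy) \in \B'$. Combined with the hypothesis $\spn(\B)=\spn(\B')$, this yields in particular $\rho'(c)\rho(b^{-1}) \in \spn(\B')$. A direct manipulation using $\rho'(x^{-1}c) = \rho'(x)^\dagger \rho'(c)$ and cyclicity rewrites $U_{b,x^{-1}c}$ as $\langle \rho'(x), \rho'(c)\rho(b)^\dagger\rangle$, so that $\sum_x U_{a,x}\,U_{b,x^{-1}c}$ becomes $\langle \rho(a), \Pi(\rho'(c)\rho(b)^\dagger)\rangle$, where $\Pi$ is the orthogonal projection onto $\spn(\B')$. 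By the closure observation $\Pi$ fixes $\rho'(c)\rho(b)^\dagger$, and one final application of cyclicity together with $\rho(b)^\dagger\rho(a)^\dagger = \rho(ab)^\dagger$ collapses the expression to $\langle \rho(ab),\rho'(c)\rangle = U_{ab,c}$, as required.
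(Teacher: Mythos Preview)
Your proof is correct and follows essentially the same strategy as the paper: define $U_{x,h} = \langle \rho(x),\rho'(h)\rangle$ as the change-of-basis matrix between the two orthonormal sets, deduce unitarity from orthonormality, obtain property~(2) from $\rho(g)^\dagger=\rho(g^{-1})$, and obtain the multiplicative property from the homomorphism property of the representations.

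The only noteworthy difference is in how the multiplicative condition is verified. The paper checks the symmetric variant $U_{a,bc}=\sum_{xy=a}U_{x,b}U_{y,c}$ by expanding $\rho'(b)$ and $\rho'(c)$ in the basis $\B$ via the already-established relation~\eqref{eq:changebasis} and then invoking quasi-regularity of $\rho$; this is a two-line computation. You instead verify property~(3) of \Thm{qlstransf} directly by expanding in the basis $\B'$, interpreting the sum as $\langle\rho(a),\Pi(\rho'(c)\rho(b)^\dagger)\rangle$, and then using that $\spn(\B')=\spn(\B)$ is an algebra so that $\Pi$ acts trivially. Both arguments are short and use the same ingredients (orthonormal expansion plus the homomorphism property); the paper's is marginally more direct since it avoids the projection detour, but your route has the small advantage of not needing to invoke the symmetry observation of Remark~\ref{rem:trans_mtx_extras}.
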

\begin{proof}
    Suppose first that 1.\ holds. Note that with respect to our normalized trace inner product, $\B$ and $\B'$ are orthonormal bases, so the matrix $U$ defined by $U_{g,h}:=\langle \rho(g),\rho'(h)\rangle$ is a unitary change of basis matrix between them and satisfies
    \begin{equation}\label{eq:changebasis}
        \rho'(h) = \sum_{x \in G} U_{x,h}\rho(x) \text{ for all } h \in G'.
    \end{equation}

    Since $\rho$ and $\rho'$ are homomorphisms and each value is a unitary matrix, we have $\rho(g^{-1}) = \rho(g)^{-1} = \rho(g)^\dagger$ for all $g\in G$, and similarly for $\rho'$. Now we see that for $g\in G$, $h \in G'$ we get
    \begin{align*}
        U_{g^{-1},h^{-1}} &= \langle \rho(g^{-1}),\rho'(h^{-1}) \rangle \\
        &= \langle \rho(g)^\dagger,\rho'(h)^\dagger \rangle \\
        &= \tr\left(\rho(g) \rho'(h)^\dagger\right) \\
        &= \overline{\tr\left(\rho(g)^\dagger \rho'(h)\right)} \\
        &= \overline{U_{g,h}}
    \end{align*}
    as needed.

    Lastly, we will show that 
    \[U_{a,bc} = \sum_{\substack{x,y \in G\\ xy=a}} U_{x,b} U_{y,c},\] 
    which completes the proof that $U$ is a $(G,G')$-transformation matrix by Remark~\ref{rem:trans_mtx_extras}. We have that
    \begin{align*}
        U_{a,bc} &= \langle \rho(a), \rho'(bc)\rangle \\
        &= \langle \rho(a), \rho'(b)\rho'(c)\rangle \\
        &= \left\langle \rho(a), \left(\sum_{x \in G}U_{x,b}\rho(x)\right)\left(\sum_{y \in G}U_{y,c}\rho(y)\right)\right\rangle \\
        &= \left\langle \rho(a), \sum_{x,y \in G}U_{x,b}U_{y,c}\rho(x)\rho(y)\right\rangle \\
        &= \left\langle \rho(a), \sum_{x,y \in G}U_{x,b}U_{y,c}\rho(xy)\right\rangle \\
        &= \sum_{\substack{x,y\in G\\xy=a}}U_{x,b}U_{y,c},
    \end{align*}
    where the last equality is by quasi-regularity of $\rho$.

    Conversely, suppose that $(2)$ holds. Then clearly, $\spn(\B')\subseteq\spn(\B)$, and since $\B$ and $\B'$ are orthonormal sets of the same size, this implies equality.
\end{proof}

\begin{remark}\label{rem:construction}
    The representations $\rho$ and $\rho'$ from the above lemma can be used directly to construct a $(G,G')$-invariant quantum Latin square. Specifically, letting $\ket{\psi_{a,c}} = \rho'(c)\rho(a^{-1})$ and using the normalized Hilbert-Schmidt inner product gives a quantum Latin square with transformation matrix equal to the matrix $U$ from the lemma.
\end{remark}

Next we show that given a quasi-regular representation of $G$ and a $(G,G')$-transformation matrix, we can construct a quasi-regular representation of $G'$ satisfying the conditions of Lemma~\ref{lem:reps}.

\begin{lemma}\label{lem:quasi}
    If $\rho$ is a quasi-regular representation of $G$ and $U$  a $(G,G')$-transformation matrix, then 
    \[\rho'(b) := \sum_{a \in G}U_{a,b}\rho(a)\] 
    is a quasi-regular representation of $G'$.
\end{lemma}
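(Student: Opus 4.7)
The plan is to verify in turn the four defining properties of a quasi-regular representation for $\rho'$: the homomorphism property, $\rho'(e) = I$, unitarity, and pairwise orthogonality of the images under the normalized Hilbert--Schmidt inner product. Each of these will correspond directly to one of the defining conditions of a transformation matrix from \Thm{qlstransf} (together with the auxiliary facts noted in \Rem{trans_mtx_extras}).

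First I would check the homomorphism property. Expanding
\[\rho'(b)\rho'(c) = \sum_{x,y\in G} U_{x,b}U_{y,c}\,\rho(x)\rho(y) = \sum_{a\in G}\Bigl(\sum_{\substack{x,y\in G\\ xy=a}} U_{x,b}U_{y,c}\Bigr)\rho(a),\]
and applying the symmetric version of condition (3) noted in \Rem{trans_mtx_extras} (namely $U_{a,bc}=\sum_{xy=a}U_{x,b}U_{y,c}$) collapses the inner sum to $U_{a,bc}$, giving $\rho'(b)\rho'(c)=\rho'(bc)$. Next, using $U_{e,e}=1$ and $U_{a,e}=0$ for $a\neq e$ from \Rem{trans_mtx_extras}, we get $\rho'(e)=\rho(e)=I$.

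For unitarity, I would compute $\rho'(b)^\dagger$ using that $\rho$ is unitary, so $\rho(a)^\dagger=\rho(a^{-1})$:
\[\rho'(b)^\dagger = \sum_{a\in G} \overline{U_{a,b}}\,\rho(a^{-1}) = \sum_{a\in G} U_{a^{-1},b^{-1}}\,\rho(a^{-1}) = \sum_{c\in G} U_{c,b^{-1}}\,\rho(c) = \rho'(b^{-1}),\]
where the second equality uses condition (2) of $U$ and the third reindexes $c=a^{-1}$. Combined with the homomorphism property this yields $\rho'(b)\rho'(b)^\dagger=\rho'(b)\rho'(b^{-1})=\rho'(e)=I$.

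Finally, for quasi-regularity I would compute, for $b,c\in G'$,
\[\langle \rho'(b),\rho'(c)\rangle = \tr\bigl(\rho'(b)^\dagger\rho'(c)\bigr) = \tr\bigl(\rho'(b^{-1}c)\bigr) = \sum_{a\in G} U_{a,b^{-1}c}\,\tr(\rho(a)).\]
Since $\rho$ is quasi-regular and $\rho(e)=I$, we have $\tr(\rho(a))=\langle\rho(e),\rho(a)\rangle=\delta_{e,a}$, so the sum collapses to $U_{e,b^{-1}c}$, which equals $1$ if $b=c$ and $0$ otherwise by \Rem{trans_mtx_extras}. This is exactly the orthonormality condition. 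None of the steps is a serious obstacle; the only subtlety is remembering to use the ``transposed'' form of condition (3) recorded in \Rem{trans_mtx_extras} for the homomorphism step, rather than the form stated in \Thm{qlstransf}.
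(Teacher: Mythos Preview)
Your proof is correct and essentially identical to the paper's: both verify $\rho'(e)=I$, the homomorphism property via the ``transposed'' condition $U_{a,bc}=\sum_{xy=a}U_{x,b}U_{y,c}$ from \Rem{trans_mtx_extras}, unitarity via $\rho'(b)^\dagger=\rho'(b^{-1})$ using condition~(2), and quasi-regularity by reducing $\langle\rho'(b),\rho'(c)\rangle$ to $U_{e,b^{-1}c}$. The only difference is the order in which the first two items are checked.
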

\begin{proof}
    Since $U_{x,e}=\delta_{x,e}$ for all $x\in G$, it is clear that $\rho'(e) =I$, and for $g,h\in G'$, we have 
    \begin{align*}
        \rho'(g)\rho'(h) & = \left(\sum_{x\in G}U_{x,g}\rho(x)\right)\Bigg(\sum_{y\in G}U_{y,h}\rho(y)\Bigg)\\ 
        & = \sum_{x,y\in G}U_{x,g}U_{y,h}\rho(xy)\\ 
        & = \sum_{z\in G}\left(\sum_{x\in G}U_{x,g}U_{x^{-1}z,h}\right)\rho(z)\\ 
        & = \sum_{z\in G}U_{z,gh}\rho(z)\\
        & = \rho'(gh),
    \end{align*}
    so $\rho'$ is a homomorphism. We also have 
    \begin{align*}
        \rho'(g)^\dagger & = \sum_{x\in G}\overline{U_{x,g}}\rho(x)^\dagger\\ 
        & = \sum_{x\in G}U_{x^{-1},g^{-1}}\rho(x^{-1})\\
        & = \sum_{x\in G}U_{x,g^{-1}}\rho(x)\\
        & = \rho'(g^{-1}).
    \end{align*}
    It follows that
    \[\rho'(g)^\dagger\rho'(g) = \rho'(g^{-1})\rho'(g) = \rho'(g^{-1}g) = \rho'(e) = I,\]
    so $\rho'$ is unitary. Finally it is clear that $\langle \rho'(g),\rho'(h)\rangle = \tr(\rho'(g)^\dagger\rho'(h)) = \tr(\rho'(g^{-1}h))$, and 
    \begin{align*}
        \tr(\rho'(g^{-1}h)) & = \sum_{x \in G}U_{x,g^{-1}h} \tr(\rho(x))\\
        & = U_{e,g^{-1}h}\\
        & = \delta_{g,h}.
    \end{align*}
    Therefore, $\langle \rho'(g),\rho'(h)\rangle=\delta_{g,h}$, and we have shown that $\rho'$ is a quasi-regular representation of $G'.$ 
\end{proof}

The next lemma shows that without loss of generality, we may take $\rho$ (respectively $\rho'$) to be the left regular representation of $G$ (respectively $G'$). In this case, the change of basis matrix $U$ satisfies an additional interesting relation. Namely, if we let $\hat{\rho}'(b) = \sum_{x \in G} U_{x,b} \lambda(x)$ where $\lambda$ is the left regular representation of $G$, then $\hat{\rho}'(b) = U\lambda'(b)U^\dagger$ where $\lambda'$ is the left regular representation of $G'$. 

\begin{lemma}\label{lem:regreps}
    Let $G, G', \rho$ and $\rho'$ be as in Lemma \ref{lem:reps} and assume that the conditions of the Lemma hold with $(G,G')$-transformation matrix $U$.
    Let $\lambda, \lambda'$ be the left regular representations of $G,G'$, respectively. Then defining
    \begin{align*}
        \hat{\rho}(a) &:= \sum_{y \in G'} \overline{U_{a,y}}\lambda'(y) \\
        \hat{\rho}'(b) &:= \sum_{x \in G} U_{x,b }\lambda(x)
    \end{align*}
    satisfies
    \begin{align*}
        &\hat{\rho}(a) = U^\dagger \lambda(a) U, \\
        &\hat{\rho}'(b) = U \lambda'(b) U^\dagger,
    \end{align*}
    and
    \begin{align*}
        &\langle \lambda(a), \hat{\rho}'(b) \rangle = \langle \hat{\rho}(a), \lambda'(b) \rangle = U_{a,b} = \langle \rho(a), \rho'(b) \rangle.
    \end{align*}
\end{lemma}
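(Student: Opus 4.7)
The plan is to establish the three identities in the order: identity 3, then identity 2, and finally identity 1 by a symmetry argument. Throughout I will use the three defining properties of a $(G,G')$-transformation matrix from Theorem~\ref{thm:qlstransf} and Remark~\ref{rem:trans_mtx_extras}.

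For identity 3, this is essentially a direct consequence of quasi-regularity. Since $\{\lambda(x):x\in G\}$ is an orthonormal set under the normalized Hilbert--Schmidt inner product and $\hat\rho'(b)=\sum_{x}U_{x,b}\lambda(x)$, the inner product $\langle\lambda(a),\hat\rho'(b)\rangle$ simply extracts the coefficient $U_{a,b}$. The analogous equality $\langle\hat\rho(a),\lambda'(b)\rangle=U_{a,b}$ works the same way: the coefficient of $\lambda'(y)$ in $\hat\rho(a)$ is $\overline{U_{a,y}}$, and the inner product reintroduces the conjugate. The final equality $U_{a,b}=\langle\rho(a),\rho'(b)\rangle$ is just how $U$ was constructed as the change-of-basis matrix in the proof of Lemma~\ref{lem:reps}.

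For identity 2, I would prove it by comparing matrix entries directly. Using $\lambda(x)_{i,j}=\delta_{i,xj}$, a one-line computation gives $\hat\rho'(b)_{i,j}=U_{ij^{-1},b}$. On the other hand, expanding $(U\lambda'(b)U^\dagger)_{i,j}$ via $\lambda'(b)_{x,y}=\delta_{x,by}$ produces $\sum_{y\in G'}U_{i,by}\overline{U_{j,y}}$. The plan is then to apply property (2) of Theorem~\ref{thm:qlstransf} to rewrite $\overline{U_{j,y}}$ as $U_{j^{-1},y^{-1}}$ and reindex via $z=by$, putting the sum into the exact form recognized by property (3) applied with first indices $i,j^{-1}\in G$ and second index $b\in G'$. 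This yields $U_{ij^{-1},b}$, matching $\hat\rho'(b)_{i,j}$.

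Identity 1 then follows from identity 2 by applying it to the matrix $U^\dagger$, which is a $(G',G)$-transformation matrix by Remark~\ref{rem:trans_mtx_extras}, with the roles of $G,G',\lambda,\lambda'$ swapped. Since $(U^\dagger)_{y,a}=\overline{U_{a,y}}$, the analogue of $\hat\rho'$ in this swapped setting coincides exactly with $\hat\rho$, and the conclusion of identity 2 translates directly into $\hat\rho(a)=U^\dagger\lambda(a)U$. The main obstacle is the careful index bookkeeping in the matrix-entry computation for identity 2: the substitution $z=by$ combined with the correct application of property (3) to recover the first index $ij^{-1}$ is the only nontrivial step, but it is routine once the indices are tracked carefully.
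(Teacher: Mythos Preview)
Your proposal is correct and follows essentially the same approach as the paper: both verify the conjugation identities by direct matrix-entry computation using the defining properties of a $(G,G')$-transformation matrix, and both obtain identity~3 from quasi-regularity (orthonormality of the $\lambda(x)$). The only cosmetic difference is that the paper computes identity~1 directly and remarks that identity~2 is similar, while you compute identity~2 directly and obtain identity~1 by applying the result to the $(G',G)$-transformation matrix $U^\dagger$; your entrywise manipulation also collapses via property~(3) in one step rather than expanding via property~(3) and then invoking unitarity as the paper does, but these are minor variations of the same argument.
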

\begin{proof}
We will show that $\hat{\rho}(a)= U^\dagger\lambda(a)U$; the second equality can be proven similarly. Let $g,h\in G'$. Then 
\begin{align*}
    \left(U^\dagger\lambda(a)U\right)_{g,h} &= \sum_{x,y\in G}U^\dagger_{g,y}\lambda(a)_{y,x}U_{x,h}\\
    & = \sum_{x\in G}\overline{U_{ax,g}}U_{x,h}\\
    & = \sum_{x\in G}\left(\sum_{y\in G}\overline{U_{a,y}U_{x,y^{-1}g}}\right)U_{x,h}\\
    & = \sum_{y\in G}\overline{U_{a,y}}\left(\sum_{x\in G}U^\dagger_{y^{-1}g,x}U_{x,h}\right)\\
    & = \sum_{y\in G}\overline{U_{a,y}}\delta_{y^{-1}g,h}\\
    & = \sum_{x\in G}\overline{U_{a,x}}\lambda'(x)_{g,h}
\end{align*}
and we have shown that $U^\dagger\lambda(a)U = \hat{\rho}(a)$. 
Further,
\begin{align*}
    \langle \lambda(a),\hat{\rho}'(b)\rangle & = \tr\left(\lambda(a)^\dagger\hat{\rho}'(b)\right) \\
    & = \tr\left(\lambda(a^{-1})\sum_{x\in G}U_{x,b}\lambda(x)\right)\\
    & = \sum_{x\in G}U_{x,b}\tr(\lambda(a^{-1}x))\\
    & = \sum_{x\in G}U_{x,b}\delta_{a,x}\\
    & = U_{a,b}.
\end{align*}
The other equalities can be shown similarly.
\end{proof}

As a corollary we obtain another characterization of $(G,G')$-transformation matrices:

\begin{cor}\label{cor:transformchar}
    Let $G$ and $G'$ be finite groups with left regular representations $\lambda$ and $\lambda'$ respectively. Then a unitary matrix $U \in \mathbb{C}^{G \times G'}$ is a $(G,G')$-transformation matrix if and only if
    \[U\lambda'(b)U^\dagger = \sum_{a \in G} U_{a,b} \lambda(a).\]
\end{cor}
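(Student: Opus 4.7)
Both directions are immediate specializations of the three preceding lemmas to the case where the quasi-regular representations are the left regular representations $\lambda$ and $\lambda'$.

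For the ``only if'' direction, I would first apply Lemma~\ref{lem:quasi} to the quasi-regular representation $\lambda$ and the given $(G,G')$-transformation matrix $U$, obtaining a quasi-regular representation of $G'$ defined by $\rho'(b) := \sum_{a \in G} U_{a,b}\lambda(a)$. By construction each $\rho'(b)$ lies in $\spn\{\lambda(a) : a \in G\}$, and since both $\{\lambda(a) : a \in G\}$ and $\{\rho'(b) : b \in G'\}$ are orthonormal sets of size $|G| = |G'|$, the two spans must coincide. This places $\lambda$ and $\rho'$ in the hypothesis of Lemma~\ref{lem:reps}, with change-of-basis matrix forced to be $U$ by uniqueness of coordinates in an orthonormal basis. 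Lemma~\ref{lem:regreps} then directly identifies $\rho'(b) = \hat\rho'(b) = U\lambda'(b)U^\dagger$, which is the desired identity.

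For the ``if'' direction, suppose that $U$ is unitary and satisfies the displayed equation. I would set $\rho'(b) := U\lambda'(b)U^\dagger$ and first verify that $\rho'$ is a quasi-regular representation of $G'$: conjugation by a unitary is a unital $\ast$-homomorphism and preserves the normalized trace, so $\rho'$ is unitary and $\langle \rho'(g),\rho'(h)\rangle = \tr(\lambda'(g^{-1}h)) = \delta_{g,h}$. The hypothesis expresses each $\rho'(b)$ as a combination of the $\lambda(a)$, so the two orthonormal bases $\{\lambda(a)\}$ and $\{\rho'(b)\}$ have the same span by dimension. Lemma~\ref{lem:reps} then produces a $(G,G')$-transformation matrix $V$ with $\rho'(b) = \sum_{a \in G} V_{a,b}\lambda(a)$, and linear independence of $\{\lambda(a)\}$ forces $V_{a,b} = U_{a,b}$, so $U$ itself is a $(G,G')$-transformation matrix.

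I do not foresee any serious obstacle, since each implication is a short application of previously established machinery. The two small points requiring explicit attention are the quasi-regularity of $U\lambda' U^\dagger$ (handled by unitary invariance of the trace) and the uniqueness of the expansion coefficients over the orthonormal basis $\{\lambda(a)\}$, which is what pins the abstract transformation matrix produced by Lemma~\ref{lem:reps} down to our given $U$ in the reverse direction.
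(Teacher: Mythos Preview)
Your argument is correct. The forward direction matches the paper's, which simply cites Lemma~\ref{lem:regreps}; you just spell out the setup more explicitly.

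For the backward direction you take a genuinely different route from the paper. The paper proceeds concretely: it sets $\ket{\psi_{a,b}} = U\lambda'(b)U^\dagger\lambda(a^{-1})$ (following the construction of Remark~\ref{rem:construction}) and computes the inner products directly, using the assumed identity to reduce $\inner{\psi_{a,b}}{\psi_{c,d}}$ to $U_{a^{-1}c,b^{-1}d}$; this exhibits an explicit $(G,G')$-invariant quantum Latin square with transformation matrix $U$. Your approach is more abstract: you observe that $\rho'(b):=U\lambda'(b)U^\dagger$ is quasi-regular (trace and conjugation are preserved under unitary conjugation), invoke Lemma~\ref{lem:reps} to produce some transformation matrix, and then pin it down to $U$ via linear independence of $\{\lambda(a)\}$. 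The paper's version has the advantage of yielding a concrete quantum Latin square as a byproduct; yours has the advantage of staying entirely within the representation-theoretic framework of Lemmas~\ref{lem:reps}--\ref{lem:regreps} without any additional computation.
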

\begin{proof}
    The forward direction follows immediately from Lemma~\ref{lem:regreps}. For the backwards implication, suppose that $U \in \mathbb{C}^{G \times G'}$ is a unitary matrix satisfying
    \[U\lambda'(b)U^\dagger = \sum_{a \in G} U_{a,b} \lambda(a).\]
    Following the construction of Remark~\ref{rem:construction}, we let
    \[\ket{\psi_{a,b}} = U\lambda'(b)U^\dagger\lambda(a^{-1})\]
    for $a \in G$ and $b \in G'$. Then
    \begin{align*}
        \inner{\psi_{a,b}}{\psi_{c,d}} &= \left\langle U\lambda'(b)U^\dagger\lambda(a^{-1}), U\lambda'(d)U^\dagger\lambda(c^{-1})\right\rangle \\
        &= \tr\left(\left(U\lambda'(b)U^\dagger\lambda(a^{-1})\right)^\dagger U\lambda'(d)U^\dagger\lambda(c^{-1})\right) \\
        &= \tr\left(\lambda(a^{-1})^\dagger U \lambda'(b)^\dagger U^\dagger U\lambda'(d)U^\dagger\lambda(c^{-1})\right) \\
        &= \tr\left(\lambda(c^{-1}a)U\lambda'(b^{-1}d)U^\dagger\right)\\
        &= \tr\left(\lambda(a^{-1}c)^\dagger \left(\sum_{x \in G} U_{x,b^{-1}d} \lambda(x)\right) \right) \\
        &= U_{a^{-1}c,b^{-1}d}
    \end{align*}
    as desired. From this it follows that $(\ket{\psi_{a,b}})_{a \in G, b \in G'}$ is a $(G,G')$-invariant quantum Latin square with transformation matrix $U$.
\end{proof}


\section{Existence of transformation matrices}\label{sec:transexist}

In this section, we will prove our main result: that a $(G,G')$-invariant quantum Latin square exists if and only if the multiset of degrees of the irreducible representations is the same for $G$ and $G'$. Moreover, the $(G,G')$-transformation matrices are in correspondence with the isomorphisms from the group algebra of $G'$ to the group algebra of $G$ that commute with trace and conjugate transpose. Again, we refer the reader to \cite{james2001reps} for definitions of irreducible representations and characters and for theorems on orthogonality relations of irreducible characters.

Given a finite group $G$ with left regular representation $\lambda,$ the group algebra of $G$ is canonically isomorphic to the matrix algebra
\[\Lambda_G:= \spn\{\lambda(g): g\in G\}.\]
We will therefore think of the group algebra as this matrix algebra. It is a semi-simple algebra, and for finite $G$ it is Artinian.

Let $\rho_1,\dots, \rho_r$ be a complete set of irreducible representations of $G$ with degrees $d_1,\dots, d_r$. Then as a consequence of Schur's Lemma, we have that $\spn\{\rho_i(g): g\in G\}$ is the full matrix algebra, $M_{d_i}(\C)$. The following lemma is an immediate consequence of some well known results in representation theory (see for example Section 6.2 in \cite{serre-reps}) and on semisimple, Artinian algebras (see, for example Section 1.3 in \cite{BensonReps}).

\begin{lemma}\label{lem:Wedderburn}
    Let $G$ be a finite group with left regular representation $\lambda$ and irreducible, unitary representations $\rho_1,\dots, \rho_r$ with degrees $d_1\dots, d_r.$ Then the map 
    \[\Phi_G:\Lambda_G\to \bigoplus_{i=1}^r\big(I_{d_i}\otimes M_{d_i}(\C)\big),\quad \lambda(g)\mapsto \bigoplus_{i=1}^r\big(I_{d_i}\otimes \rho_i(g)\big) \]
    (extended linearly) is an algebra isomorphism that preserves trace and commutes with the conjugate transpose. Further, there exists a unitary matrix $U$ such that for any $M\in \Lambda_G,$ we have $\Phi_G(M) = U^\dagger M U$.\qed
\end{lemma}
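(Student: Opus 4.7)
The plan is to reduce the entire statement to the classical decomposition of the left regular representation $\lambda$ into irreducibles, from which all four claims (algebra isomorphism, trace preservation, conjugate-transpose commutation, and unitary conjugation) will follow essentially for free.

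First I would invoke the standard fact that, since every irreducible representation of a finite group can be chosen unitary and any unitary representation decomposes orthogonally into irreducibles, there exists a unitary $U \in M_{|G|}(\C)$ such that
\[U^\dagger \lambda(g) U = \bigoplus_{i=1}^r \bigl(I_{d_i} \otimes \rho_i(g)\bigr) \quad \text{for all } g \in G.\]
Here the multiplicity of each $\rho_i$ inside $\lambda$ equals $d_i$, which is the standard character-theoretic computation $\langle \chi_\lambda, \chi_i\rangle = d_i$; the multiplicity space appears as the left tensor factor on which $\lambda(g)$ acts trivially, explaining the $I_{d_i}$. Extending linearly identifies $\Phi_G$ with the restriction of $M \mapsto U^\dagger M U$ to $\Lambda_G$, simultaneously proving the final claim about the unitary $U$.

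Next I would check that $\Phi_G$ is an algebra isomorphism onto $\bigoplus_i I_{d_i} \otimes M_{d_i}(\C)$. The map $M \mapsto U^\dagger M U$ is an injective algebra homomorphism since $U$ is unitary, and its image on $\Lambda_G$ is clearly contained in the target. A dimension count then closes the argument: $\dim \Lambda_G = |G|$ because the $\lambda(g)$ are linearly independent permutation matrices, while $\dim \bigoplus_i I_{d_i} \otimes M_{d_i}(\C) = \sum_i d_i^2$, and this sum equals $|G|$ by the standard degree identity. So $\Phi_G$ is bijective onto the claimed target. The two remaining properties are automatic: trace preservation follows from invariance of the normalized trace under unitary conjugation, verified on the basis $\{\lambda(g)\}$ via $\tr(\lambda(g)) = \delta_{g,e}$ and the identity $\frac{1}{|G|}\sum_i d_i \chi_i(g) = \delta_{g,e}$; commutation with conjugate transpose follows from $(U^\dagger M U)^\dagger = U^\dagger M^\dagger U$, together with the observation that $\rho_i(g)^\dagger = \rho_i(g^{-1})$ keeps the image closed under this operation.

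The main obstacle here is essentially notational bookkeeping rather than mathematical content: one must carefully track how the multiplicity $d_i$ of $\rho_i$ in $\lambda$ becomes the $I_{d_i} \otimes M_{d_i}(\C)$ summand (with the multiplicity space on the left tensor factor) and choose an orthonormal basis adapted to the isotypic decomposition so that the intertwining matrix $U$ is genuinely unitary rather than merely invertible. Once this setup is in place, all four assertions follow from elementary properties of unitary conjugation together with the two standard identities $\sum_i d_i^2 = |G|$ and $\sum_i d_i \chi_i(g) = |G|\,\delta_{g,e}$.
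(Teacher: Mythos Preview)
Your proposal is correct and is essentially the standard argument the paper is pointing to: the paper does not actually prove this lemma but states it as ``an immediate consequence of some well known results in representation theory'' with references to Serre and Benson, and closes with a bare \qed. Your sketch --- decompose $\lambda$ unitarily into isotypic components, identify $\Phi_G$ with conjugation by the resulting unitary, then read off the algebra isomorphism, trace preservation, and $\dagger$-commutation --- is exactly the content of those references, so there is nothing to compare.
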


\begin{definition}
    We will call an algebra isomorphism that preserves trace and commutes with the conjugate transpose a \emph{unitary isomorphism}, since (assuming some conditions on the algebra which always apply in our case) such an isomorphism may be written as conjugation by a unitary matrix. 
\end{definition}

Now the next two lemmas follow quite easily.

\begin{lemma}
    Two groups, $G$ and $G'$ have isomorphic group algebras if and only if their multisets of degrees of irreducible representations are the same. 
\end{lemma}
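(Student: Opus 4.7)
The plan is to apply the preceding Wedderburn-type lemma to both $G$ and $G'$ and then invoke the uniqueness part of the Artin--Wedderburn theorem. Concretely, if $d_1,\dots,d_r$ are the degrees of the irreducible representations of $G$, then by Lemma~\ref{lem:Wedderburn} we have $\Lambda_G \cong \bigoplus_{i=1}^r \bigl(I_{d_i} \otimes M_{d_i}(\mathbb{C})\bigr)$, and as an abstract algebra the $I_{d_i}$ tensor factor contributes nothing, so in fact $\Lambda_G \cong \bigoplus_{i=1}^r M_{d_i}(\mathbb{C})$. Likewise, if the irreducible degrees of $G'$ are $d'_1,\dots,d'_{r'}$, then $\Lambda_{G'} \cong \bigoplus_{j=1}^{r'} M_{d'_j}(\mathbb{C})$.

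The forward direction is then essentially a tautology: if the two multisets of degrees coincide, then the Wedderburn decompositions of $\Lambda_G$ and $\Lambda_{G'}$ are identical up to reordering of summands, and composing one Wedderburn isomorphism with the inverse of the other yields an algebra isomorphism $\Lambda_G \to \Lambda_{G'}$.

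For the converse, suppose $\Phi: \Lambda_G \to \Lambda_{G'}$ is an algebra isomorphism. Composing with the two Wedderburn isomorphisms gives an algebra isomorphism
\[
\Psi: \bigoplus_{i=1}^r M_{d_i}(\mathbb{C}) \;\longrightarrow\; \bigoplus_{j=1}^{r'} M_{d'_j}(\mathbb{C}).
\]
I would then exploit that the minimal central idempotents of a finite direct sum of full matrix algebras over $\mathbb{C}$ are exactly the identity elements of the individual summands, and that any algebra isomorphism must permute minimal central idempotents. This forces $r = r'$ and supplies a bijection $\sigma \in \operatorname{Sym}([r])$ such that $\Psi$ restricts to algebra isomorphisms $M_{d_i}(\mathbb{C}) \cong M_{d'_{\sigma(i)}}(\mathbb{C})$. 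Since $M_a(\mathbb{C}) \cong M_b(\mathbb{C})$ implies $a^2 = b^2$ by dimension count, hence $a=b$, we conclude $d_i = d'_{\sigma(i)}$ for every $i$, so the multisets of degrees coincide.

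The only nontrivial ingredient is the uniqueness clause of Artin--Wedderburn, which is standard (and is already implicit in the cited background for Lemma~\ref{lem:Wedderburn}); the minimal central idempotent argument makes it fully explicit. I do not expect any serious obstacle, since we are essentially reading the multiset of degrees directly off the Wedderburn decomposition.
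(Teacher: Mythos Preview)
Your proof is correct and follows essentially the same route as the paper: both invoke Lemma~\ref{lem:Wedderburn} to obtain the Wedderburn decomposition and then appeal to the uniqueness clause of Artin--Wedderburn. The paper's proof is a two-line citation of that uniqueness, whereas you spell it out via minimal central idempotents, but the underlying argument is the same.
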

\begin{proof}
    By the Wedderburn-Artin Theorem \cite[Theorem 1.3.5]{BensonReps}, any semisimple Artinian algebra can be written as a direct sum of full matrix algebras whose dimensions are uniquely determined. Now the result follows from Lemma \ref{lem:Wedderburn}.
\end{proof}




\begin{lemma}\label{lem:groupalgisos}
    Let $G$ and $G'$ be groups with isomorphic group algebras, $\Lambda_G$ and $\Lambda_{G'}$. Then there exists a unitary isomorphism, $\Psi:\Lambda_G\to \Lambda_{G'}.$ 
\end{lemma}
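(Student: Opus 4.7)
The plan is to build $\Psi$ by composing the two Wedderburn-style isomorphisms supplied by Lemma~\ref{lem:Wedderburn}, one for each of $G$ and $G'$, through a common "canonical form" algebra. First, I would invoke the preceding lemma: since $\Lambda_G \cong \Lambda_{G'}$ as algebras, the multisets of degrees of the irreducible representations of $G$ and $G'$ coincide. Fix a common ordering $d_1, \ldots, d_r$ of these degrees, so that the target algebra
\[
\A := \bigoplus_{i=1}^r \bigl(I_{d_i} \otimes M_{d_i}(\mathbb{C})\bigr)
\]
appearing in Lemma~\ref{lem:Wedderburn} is literally the same subalgebra of $M_n(\mathbb{C})$ (where $n = \sum_i d_i^2 = |G| = |G'|$) regardless of whether we start from $G$ or from $G'$.

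Next, I would apply Lemma~\ref{lem:Wedderburn} twice to obtain unitary isomorphisms
\[
\Phi_G : \Lambda_G \to \A, \qquad \Phi_{G'} : \Lambda_{G'} \to \A,
\]
each realized by conjugation with a unitary matrix in $M_n(\mathbb{C})$. I would then set
\[
\Psi := \Phi_{G'}^{-1} \circ \Phi_G : \Lambda_G \to \Lambda_{G'}.
\]
Since $\Phi_G$ and $\Phi_{G'}^{-1}$ are algebra isomorphisms, so is $\Psi$. Concretely, if $\Phi_G(M) = U^\dagger M U$ and $\Phi_{G'}(N) = V^\dagger N V$ for unitaries $U, V \in M_n(\mathbb{C})$, then $\Psi(M) = (UV^\dagger)^\dagger M (UV^\dagger)$ is conjugation by the unitary $UV^\dagger$, which makes all required properties transparent.

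To finish, I would check the two extra properties directly: trace-preservation follows from
\[
\tr \Psi(M) = \tr \Phi_{G'}^{-1}(\Phi_G(M)) = \tr \Phi_G(M) = \tr M
\]
using that each of $\Phi_G$ and $\Phi_{G'}^{-1}$ preserves trace (the latter because $\Phi_{G'}$ does), and commutation with $\dagger$ follows similarly from the corresponding property of $\Phi_G$ and $\Phi_{G'}$. Given the tools already in place, the only point requiring care is the identification of the target algebras of $\Phi_G$ and $\Phi_{G'}$ as the \emph{same} subalgebra of $M_n(\mathbb{C})$; once we fix a common ordering of the irreducible degrees, this is automatic and the rest of the proof is essentially bookkeeping. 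Thus there is no real obstacle — the lemma is an immediate consequence of Lemma~\ref{lem:Wedderburn} together with the equality of multisets of irreducible degrees.
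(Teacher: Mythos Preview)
Your proposal is correct and follows exactly the paper's approach: the paper's entire proof is the single line ``This is obvious by taking $\Psi := \Phi_{G'}^{-1} \circ \Phi_G$.'' Your additional verification of trace preservation and commutation with $\dagger$, and your care in identifying the common target algebra via the equality of degree multisets, are sound and simply make explicit what the paper leaves to the reader.
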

\begin{proof}
    This is obvious by taking $\Psi:= \Phi_{G'}^{-1}\circ\Phi_G$.
\end{proof}
  
Finally, we identify the $(G,G')$-transformation matrices with the unitary isomorphisms of the group algebras.

\begin{theorem}\label{thm:U-iso}
    Let $G$ and $G'$ be finite groups. Then $U \in \mathbb{C}^{G \times G'}$ is a $(G,G')$-transformation matrix if and only if the linear map $\Psi_U : \Lambda_{G'} \to \Lambda_G$ given by $\Psi_U(\lambda'(b)) = \sum_{a \in G} U_{a,b} \lambda(a)$ is a unitary isomorphism. Moreover, every unitary isomorphism from $\Lambda_{G'}$ to $\Lambda_G$ is attained in this way.
\end{theorem}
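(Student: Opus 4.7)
The plan is to leverage Corollary~\ref{cor:transformchar}, which states that a unitary matrix $U$ is a $(G,G')$-transformation matrix if and only if $U\lambda'(b)U^\dagger = \sum_{a \in G} U_{a,b}\lambda(a) = \Psi_U(\lambda'(b))$. Thus, when $U$ is a transformation matrix, extending linearly shows $\Psi_U(M) = UMU^\dagger$ for every $M \in \Lambda_{G'}$, and this identification drives both directions.

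For the forward direction, assume $U$ is a $(G,G')$-transformation matrix. Then $U$ is unitary, so $|G| = |G'|$, and by Corollary~\ref{cor:transformchar} the map $\Psi_U$ is the restriction to $\Lambda_{G'}$ of the inner automorphism $M \mapsto UMU^\dagger$ of $M_{|G|}(\mathbb{C})$. Any such inner automorphism is manifestly a trace-preserving, $^\dagger$-preserving algebra isomorphism, so the same properties hold for $\Psi_U$. Its image contains the spanning set $\{\Psi_U(\lambda'(b)) : b \in G'\} \subseteq \Lambda_G$, so the image lies in $\Lambda_G$; since $\Psi_U$ is injective (as a restriction of an automorphism) and $\dim \Lambda_{G'} = |G'| = |G| = \dim \Lambda_G$, the image is all of $\Lambda_G$, giving the desired unitary isomorphism.

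For the reverse direction, given a unitary isomorphism $\Psi : \Lambda_{G'} \to \Lambda_G$, I define $U_{a,b} := \langle \lambda(a), \Psi(\lambda'(b))\rangle$. Quasi-regularity of $\lambda$ makes $\{\lambda(a) : a \in G\}$ an orthonormal basis of $\Lambda_G$, so by construction $\Psi(\lambda'(b)) = \sum_a U_{a,b}\lambda(a) = \Psi_U(\lambda'(b))$, yielding $\Psi = \Psi_U$ by linearity. It remains to verify the three conditions of Theorem~\ref{thm:qlstransf}. Since $\Psi$ is trace and $^\dagger$ preserving it preserves the normalized Hilbert--Schmidt inner product, and so
\[(U^\dagger U)_{b,b'} = \sum_a \overline{U_{a,b}}\, U_{a,b'} = \langle \Psi(\lambda'(b)), \Psi(\lambda'(b'))\rangle = \langle \lambda'(b), \lambda'(b')\rangle = \delta_{b,b'}.\]
The equality $\dim \Lambda_G = \dim \Lambda_{G'}$ forces $|G| = |G'|$, so $U$ is square and hence unitary. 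Condition~(2) follows from $\Psi(\lambda'(b))^\dagger = \Psi(\lambda'(b^{-1}))$ and $\lambda(a)^\dagger = \lambda(a^{-1})$ via $\overline{U_{a,b}} = \langle \Psi(\lambda'(b)), \lambda(a)\rangle = \langle \lambda(a^{-1}), \Psi(\lambda'(b^{-1}))\rangle = U_{a^{-1}, b^{-1}}$.

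The main obstacle is condition~(3), which cannot be extracted from multiplicativity of $\Psi$ alone: expanding $\Psi(\lambda'(xy)) = \Psi(\lambda'(x))\Psi(\lambda'(y))$ produces the alternate form of Remark~\ref{rem:trans_mtx_extras} rather than condition~(3) itself. The fix is to pass to $\Psi^{-1}$. A short computation using HS-preservation shows $\Psi^{-1}(\lambda(a)) = \sum_b \overline{U_{a,b}}\, \lambda'(b)$; expanding both sides of $\Psi^{-1}(\lambda(ab)) = \Psi^{-1}(\lambda(a))\Psi^{-1}(\lambda(b))$ in the orthonormal basis $\{\lambda'(c) : c \in G'\}$ and comparing coefficients yields $\overline{U_{ab,c}} = \sum_{xy = c} \overline{U_{a,x}}\, \overline{U_{b,y}}$, and conjugating gives exactly condition~(3). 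This completes the verification that $\Psi = \Psi_U$ for the transformation matrix $U$, proving both the equivalence and that every unitary isomorphism is attained in this way.
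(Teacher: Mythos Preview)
Your proof is correct. The forward direction matches the paper's argument exactly. For the converse (and the ``moreover'' clause), the paper takes a different route: it sets $\rho'(b) := \Psi_U(\lambda'(b))$, checks that $\rho'$ is a quasi-regular representation of $G'$ with the same span as $\lambda$, and then invokes Lemma~\ref{lem:reps} to produce a transformation matrix $U'$, which must coincide with $U$. You instead verify the three conditions of Theorem~\ref{thm:qlstransf} directly from the unitary-isomorphism hypotheses. Your pass to $\Psi^{-1}$ to obtain condition~(3) is a nice touch, but it is actually more work than needed: the identity $U_{a,bc} = \sum_{xy=a} U_{x,b}U_{y,c}$ that you first obtain from multiplicativity of $\Psi$, together with (1) and (2), already suffices to make $U$ a transformation matrix (this is how Remark~\ref{rem:trans_mtx_extras} is used in the proof of Lemma~\ref{lem:reps}, and it follows because those three conditions say exactly that $U^\dagger$ satisfies (1)--(3) as a $(G',G)$-transformation matrix). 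Your approach has the advantage of being self-contained and avoiding the representation-theoretic detour, while the paper's approach reuses machinery already established; the two are comparable in length and difficulty.
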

\begin{proof}
    Let $U \in \mathbb{C}^{G \times G'}$. If $U$ is a transformation matrix, then $\Psi_U(M) = UMU^\dagger$ for all $M \in \Lambda_{G'}$ by Corollary~\ref{cor:transformchar}. It is then immediate that $\Psi_U$ is a unitary isomorphism.

    Conversely, suppose that $\Psi_U$ is a unitary isomorphism. Define $\rho'(b) = \Psi_U(\lambda'(b))$ for $b \in G'$. Since $\lambda'$ is a homomorphism and $\Psi_U$ is an isomorphism, it is clear that $\rho'$ is a homomorphism from $G'$ to $\mathbb{C}^{G \times G}$. We can further show that $\rho'$ is quasi-regular: since $\lambda'$ is unitary and $\Psi_U$ is unitary, $\rho'$ must also be unitary, and since $\Psi_U$ is unitary it preserves inner products and thus $\langle\rho'(g),\rho'(h)\rangle = \delta_{g,h}$.

    Now since $\rho'(b) = \Psi_U(\lambda'(b))\in \Lambda_G$ for all $b\in G',$ we see that $\spn\{\rho'(b):b\in G'\}\subseteq \Lambda_G = \spn\{\lambda(a):a\in G\}$ and by a dimension argument we see that the spans must be equal. We can then apply Lemma \ref{lem:reps} to obtain a $(G,G')$-transformation matrix $U'$ that satisfies \begin{equation}\label{eq:UPsi}
        \Psi_U(\lambda'(b)) = \rho'(b) = \sum_{a \in G} U'_{a,b} \lambda(a) \text{ for each } b \in G'.
    \end{equation}
    But then of course $U=U'$ and thus $U$ is a $(G,G')$-transformation matrix as desired.

    Lastly, suppose that $\Psi: \Lambda_{G'} \to \Lambda_G$ is a unitary isomorphism. Then simply from the fact that $\Psi$ is a linear map there exist coefficients $U_{a,b}$ for $a \in G$ and $b \in G'$ such that
    \[\Psi(\lambda'(b)) = \sum_{a \in G} U_{a,b} \lambda(a) \text{ for all } b \in G'.\]
    In other words, $\Psi = \Psi_U$ and by the above we have that $U$ must be a $(G,G')$-transformation matrix.    
\end{proof}



We now obtain the following result directly from the previous lemmas.
\begin{cor}\label{cor:existence}
    Let $G$ and $G'$ be finite groups. There exists a $(G,G')$-transformation matrix if and only if the multisets of degrees of the irreducible representations of $G$ and $G'$ are the same.\qed
\end{cor}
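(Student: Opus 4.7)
The plan is to chain together the results already developed in the excerpt, as the corollary is really just a bookkeeping consequence of \Thm{U-iso} combined with the Wedderburn-type decomposition in \Lem{Wedderburn}.

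First I would use \Thm{U-iso} to translate the existence of a $(G,G')$-transformation matrix into a purely algebraic statement: such a matrix $U$ exists if and only if there exists a unitary isomorphism $\Psi : \Lambda_{G'} \to \Lambda_G$ between the group algebras. This is the content of the ``moreover'' clause of the theorem, so no extra work is needed here.

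Next I would invoke \Lem{groupalgisos}, which shows that whenever $\Lambda_G$ and $\Lambda_{G'}$ are isomorphic as algebras, one can produce a \emph{unitary} isomorphism between them (via the composition $\Phi_{G'}^{-1}\circ\Phi_G$ of the two Wedderburn isomorphisms from \Lem{Wedderburn}). Conversely, any unitary isomorphism is in particular an algebra isomorphism. So the existence of a $(G,G')$-transformation matrix is equivalent to $\Lambda_G \cong \Lambda_{G'}$ as algebras.

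Finally, I would appeal to the unnamed lemma immediately preceding \Lem{groupalgisos}: $\Lambda_G \cong \Lambda_{G'}$ as algebras if and only if $G$ and $G'$ have the same multiset of degrees of irreducible representations. Concretely, \Lem{Wedderburn} exhibits $\Lambda_G$ as $\bigoplus_i (I_{d_i} \otimes M_{d_i}(\mathbb{C}))$, which is abstractly isomorphic as an algebra to $\bigoplus_i M_{d_i}(\mathbb{C})$, and by the Wedderburn--Artin uniqueness of such a decomposition this multiset of block sizes is an invariant of the algebra. Stitching the three equivalences together yields the corollary. I anticipate no real obstacle, since every ingredient is already in place; the only thing to be careful about is simply citing each equivalence in the right direction.
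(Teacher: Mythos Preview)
Your proposal is correct and is exactly the argument the paper intends: the corollary is marked with a \qed\ and the preceding sentence states that it follows directly from the previous lemmas, namely \Thm{U-iso}, \Lem{groupalgisos}, and the lemma equating isomorphism of group algebras with equality of the degree multisets. There is nothing to add.
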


The corollary implies that if $G$ and $G'$ are groups of the same order and $G$ is abelian, then a $(G,G')$-transformation matrix exists if and only if $G'$ is abelian. We will further look into the abelian case in Section \ref{sec:abelian}.

Before moving on, we prove the following additional characterization of transformation matrices. Here we use $\mathbf{e}_g \in \mathbb{C}^G$ to denote the standard basis vector indexed by $g \in G$.

\begin{theorem}
    Let $G$ and $G'$ be finite groups. Then $U \in \mathbb{C}^{G \times G'}$ is a $(G,G')$-transformation matrix if and only if $U$ is unitary, $U\Lambda_{G'}U^\dagger = \Lambda_G$, and $U\mathbf{e}_e = \mathbf{e}_e$.
\end{theorem}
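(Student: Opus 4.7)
The plan is to prove both implications, leaning heavily on \Thm{U-iso} and \Cor{transformchar} for the algebraic content, and using $U\mathbf{e}_e = \mathbf{e}_e$ as the normalization that pins down which unitary implements a given algebra isomorphism.

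For the forward direction, suppose $U$ is a $(G,G')$-transformation matrix. Unitarity is condition (1) of \Thm{qlstransf}, and in particular $U$ is square, so $|G|=|G'|$. By \Cor{transformchar}, $U\lambda'(b)U^\dagger = \sum_{a \in G} U_{a,b}\lambda(a) \in \Lambda_G$ for every $b \in G'$, so $U\Lambda_{G'}U^\dagger \subseteq \Lambda_G$; since $\dim\Lambda_{G'} = |G'| = |G| = \dim\Lambda_G$, this inclusion is equality. Finally, by \Rem{trans_mtx_extras}, $U_{e,e}=1$ and $U_{a,e}=0$ for $a\in G\setminus\{e\}$, which is precisely the statement $U\mathbf{e}_e = \mathbf{e}_e$.

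For the reverse direction, assume the three conditions hold. The map $\Psi : \Lambda_{G'} \to \Lambda_G$ defined by $\Psi(M) = UMU^\dagger$ is a linear bijection (by the assumption $U\Lambda_{G'}U^\dagger = \Lambda_G$), it is multiplicative because $U^\dagger U = I$, it commutes with the adjoint since $(UMU^\dagger)^\dagger = U M^\dagger U^\dagger$, and it preserves the normalized trace because $|G|=|G'|$ and the ordinary trace is invariant under unitary conjugation. Hence $\Psi$ is a unitary isomorphism, and by \Thm{U-iso} there exists a unique $(G,G')$-transformation matrix $V$ such that $\Psi = \Psi_V$, i.e.\
\[
U\lambda'(b)U^\dagger = \sum_{a \in G} V_{a,b}\,\lambda(a) \quad \text{for all } b \in G'.
\]

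The crux is now to show $U = V$, and this is exactly where the normalization $U\mathbf{e}_e = \mathbf{e}_e$ enters. I would apply both sides of the above identity to the vector $\mathbf{e}_e \in \mathbb{C}^G$, using the elementary facts $\lambda'(b)\mathbf{e}_e = \mathbf{e}_b$ and $\lambda(a)\mathbf{e}_e = \mathbf{e}_a$, together with $U^\dagger \mathbf{e}_e = U^\dagger U \mathbf{e}_e = \mathbf{e}_e$ (from unitarity and $U\mathbf{e}_e=\mathbf{e}_e$). The left side becomes $U\mathbf{e}_b$, whose $a$-entry is $U_{a,b}$, while the right side becomes $\sum_{a}V_{a,b}\mathbf{e}_a$, whose $a$-entry is $V_{a,b}$. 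Reading off entries gives $U_{a,b} = V_{a,b}$ for all $a,b$, so $U = V$ is a $(G,G')$-transformation matrix, completing the proof. The main conceptual obstacle is recognizing that without the condition $U\mathbf{e}_e = \mathbf{e}_e$, many different unitaries $U$ can implement the same conjugation isomorphism on $\Lambda_{G'}$, so this condition is essential to identify $U$ itself with the unique $V$ produced by \Thm{U-iso}.
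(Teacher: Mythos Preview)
Your proof is correct and follows essentially the same approach as the paper: the forward direction is identical, and for the converse both arguments produce a transformation matrix $V$ via \Thm{U-iso} and then use the condition $U\mathbf{e}_e=\mathbf{e}_e$ to force $U=V$. The only minor difference is in the final step: the paper additionally invokes \Cor{transformchar} to write $\Psi(M)=VMV^\dagger$ and then shows $U^\dagger V$ centralizes $\Lambda_{G'}$, whereas you bypass this by applying the identity $U\lambda'(b)U^\dagger=\sum_a V_{a,b}\lambda(a)$ directly to $\mathbf{e}_e$ and reading off matrix entries---a slightly more direct computation with the same content.
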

\begin{proof}
    If $U$ is a $(G,G')$-transformation matrix, then it is unitary and it follows from Corollary~\ref{cor:transformchar} that $U\Lambda_{G'}U^\dagger \subseteq \Lambda_G$. Then the equality follows from the fact that they have the same dimension, and $U\mathbf{e}_e = \mathbf{e}_e$ follows from Remark~\ref{rem:trans_mtx_extras}.

    Conversely, suppose that $U \in \mathbb{C}^{G \times G'}$ is a unitary matrix such that $U\Lambda_{G'}U^\dagger = \Lambda_G$ and $U\mathbf{e}_e = \mathbf{e}_e$. Then the map $\Psi : \Lambda_{G'} \to \Lambda_G$ given by $\Psi(M) = UMU^\dagger$ is a unitary isomorphism. Thus by Theorem~\ref{thm:U-iso} and Corollary~\ref{cor:transformchar} there exists a $(G,G')$-transformation matrix $\hat{U}$ such that $\Psi(M) = \hat{U}M\hat{U}^\dagger$ for all $M \in \Lambda_{G'}$. Therefore, $U^\dagger\hat{U}M\hat{U}^\dagger U = M$ for all $M \in \Lambda_{G'}$. In particular, this implies that $U^\dagger\hat{U} \lambda'(g) = \lambda'(g)U^\dagger \hat{U}$ for all $g \in G'$. Therefore,
    \begin{align*}
        U^\dagger \hat{U} \mathbf{e}_g & = U^\dagger \hat{U} \lambda'(g) \mathbf{e}_e \\
        &= \lambda'(g) U^\dagger \hat{U} \mathbf{e}_e \\
        &= \lambda'(g) \mathbf{e}_e \\
        &= \mathbf{e}_g.
    \end{align*}
    Thus $U^\dagger \hat{U} = I$, i.e., $U = \hat{U}$ and so $U$ is a $(G,G')$-transformation matrix.
\end{proof}

\section{Constructing transformation matrices}\label{sec:constructing}

In the previous section, we saw that the $(G,G')$-transformation matrices are in correspondence with the unitary isomorphisms from $\Lambda_{G'}$ to $\Lambda_G$, and this means that $(G,G')$-transformation matrices exist if and only if $G$ and $G'$ have the same multiset of degrees of their irreducible representations, as stated in Corollary~\ref{cor:existence}. However, even if we know this is the case for two groups $G$ and $G'$, it is not obvious how to construct explicit examples of $(G,G')$-transformation matrices. In this section, we will show how to do this.

If $G$ and $G'$ are finite groups whose multisets of degrees of irreducible representations are the same, then both $\Lambda_G$ and $\Lambda_{G}'$ are isomorphic to the algebra
\begin{equation}\label{eq:Ahat}
\hat{\A} := \bigoplus_{i=1}^r\big(I_{d_i}\otimes M_{d_i}(\C)\big),\end{equation}
where $d_1, \ldots, d_r$ are the degrees of their irreducible representations. Moreover, by Lemma~\ref{lem:Wedderburn} there are unitary isomorphisms from $\Lambda_G$ and $\Lambda_{G'}$ to $\hat{\A}$. In other words, there exist unitaries $V$ and $W$ such that
\begin{equation}\label{eq:conjugate}
    V^\dagger \Lambda_G V = \bigoplus_{i=1}^r\big(I_{d_i}\otimes M_{d_i}(\C)\big) = W^\dagger \Lambda_{G'} W.
\end{equation}
The idea for constructing $(G,G')$-transformation matrices then is to pick a unitary $\hat{U}$ such that $\hat{U}\hat{\A}\hat{U}^\dagger = \hat{\A}$ and then computing $V\hat{U}W^\dagger$. However, this does not work in general, essentially because there are nontrivial unitaries that pointwise fix $\hat{\A}$, and therefore a unitary isomorphism of $\hat{\A}$ can be written as conjugation by a unitary in multiple different ways. So we must choose the unitary $\hat{U}$ more carefully. This is what we show how to do in the remainder of this section. We remark here that even after we know how to choose our unitaries $\hat{U}$ appropriately, it is still necessary to know unitaries $V$ and $W$ satisfying Equation~\eqref{eq:conjugate} in order to produce $(G,G')$-transformation matrices. For abelian groups one can always use the character table (normalized so that it is unitary), but for non-abelian groups finding such $V$ and $W$ seems to be less straightforward.

Consider a unitary $V$ that satisfies~\eqref{eq:conjugate}. We can index its columns by triples $(i,\ell,k)$ where the first coordinate corresponds to the terms in the summand, the second coordinate corresponds to indices of the first tensor factor, and the third coordinate corresponds to indices of the second tensor factor. Let $v^i_{\ell,k}$ denote the $(i,\ell,k)$ column of $V$. Letting $\lambda$ be the left regular representation of $G$, Equation~\eqref{eq:conjugate} implies that there are coefficients $\alpha^{a,i}_{k,t}$ for $a \in G$, $i \in [r]$ and $k,t \in [d_i]$ such that
\begin{equation}\label{eq:lambdavilk}
    \lambda(a)v^{i}_{\ell, k} = \sum_{t=1}^{d_i} \alpha^{a,i}_{k,t} v^{i}_{\ell, t}
\end{equation}
Importantly, the coefficients appearing in this sum do not depend on $\ell$ (this reflects the fact that each term in the direct sum of Equation~\eqref{eq:conjugate} is a tensor product with the identity). Recall that we denote by $\mathbf{e}_g$ the standard basis vector indexed by the element $g\in G$. Define a vector $x = V^\dagger \mathbf{e}_e$. Thus the $(i,\ell,k)$-entry of $x$ is $x^{i}_{\ell, k} := (v^{i}_{\ell, k})^\dagger \mathbf{e}_e$. Fixing the first index $i$, we define a $d_i \times d_i$ matrix $X^i$ entrywise as

\begin{equation}\label{eq:Xdef}
X^i_{\ell,k} = x^{i}_{\ell, k} = (v^{i}_{\ell, k})^\dagger \mathbf{e}_{e}. 
\end{equation}

We will need the following lemma:

\begin{lemma}\label{lem:Xunitary}
Let $G$ be a finite group and let $V$ be a unitary satisfying Equation~\eqref{eq:conjugate}. Further let $X^i$ be defined as in Equation~\eqref{eq:Xdef} for each $i$ indexing the irreducible representations of $G$. Then the matrix $\sqrt{\frac{|G|}{d_i}}X^i$ is unitary.
\end{lemma}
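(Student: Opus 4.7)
The plan is to compute $(X^i)^\dagger X^i$ directly and show it equals $\tfrac{d_i}{|G|} I_{d_i}$, from which the claim follows since $X^i$ is square. Unpacking the definition, since $X^i_{\ell,k}$ is a scalar we have
\[
\bigl((X^i)^\dagger X^i\bigr)_{k,k'} = \sum_\ell \overline{X^i_{\ell,k}} X^i_{\ell,k'} = \mathbf{e}_e^\dagger \Bigl(\sum_\ell v^i_{\ell,k}(v^i_{\ell,k'})^\dagger\Bigr)\mathbf{e}_e,
\]
so the task reduces to identifying the matrix $M^i_{k,k'} := \sum_\ell v^i_{\ell,k}(v^i_{\ell,k'})^\dagger$ and reading off its $(e,e)$-entry.

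The key observation is that $M^i_{k,k'}$ lies in $\Lambda_G$. Indeed, let $E^i_{k,k'}$ denote the element of $\hat{\A}$ that in its $i$-th block equals $I_{d_i}\otimes e_k e_{k'}^\dagger$ and is zero elsewhere; the indexing of columns of $V$ by triples $(i,\ell,k)$ exactly matches the block/tensor structure of $\hat{\A}$ in~\eqref{eq:Ahat}, and a short calculation shows $V E^i_{k,k'} V^\dagger = M^i_{k,k'}$. Since $V$ satisfies~\eqref{eq:conjugate}, this gives $M^i_{k,k'}\in \Lambda_G$.

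Because $M^i_{k,k'} = \sum_{g\in G} c_g \lambda(g)$ for some scalars $c_g$, and $\lambda(g)_{h,h'}=1$ iff $h(h')^{-1}=g$, all diagonal entries of $M^i_{k,k'}$ are equal to $c_e = (M^i_{k,k'})_{e,e}$. Hence
\[
\mathrm{Tr}(M^i_{k,k'}) = |G| \cdot (M^i_{k,k'})_{e,e}.
\]
On the other hand, since trace is conjugation-invariant,
\[
\mathrm{Tr}(M^i_{k,k'}) = \mathrm{Tr}(E^i_{k,k'}) = \mathrm{Tr}(I_{d_i})\cdot \mathrm{Tr}(e_k e_{k'}^\dagger) = d_i\,\delta_{k,k'}.
\]
Combining these two expressions yields $\bigl((X^i)^\dagger X^i\bigr)_{k,k'} = (M^i_{k,k'})_{e,e} = \tfrac{d_i}{|G|}\delta_{k,k'}$, so $(X^i)^\dagger X^i = \tfrac{d_i}{|G|} I_{d_i}$, proving that $\sqrt{|G|/d_i}\,X^i$ is unitary.

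The only delicate step is the identification $V E^i_{k,k'} V^\dagger = M^i_{k,k'}$, which is really a bookkeeping matter about how the three indices $(i,\ell,k)$ parametrize the columns of $V$: the sum over $\ell$ corresponds to contracting the multiplicity (first) tensor factor against the identity, which is exactly how $I_{d_i}\otimes e_k e_{k'}^\dagger$ is built. Once this is set up, the remainder is a one-line trace computation combined with the fact that diagonal entries of elements of $\Lambda_G$ are constant.
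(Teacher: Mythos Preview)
Your proof is correct and takes a genuinely different route from the paper's. The paper computes $X^i(X^i)^\dagger$ rather than $(X^i)^\dagger X^i$, which leads it to the matrix $A^i_{\ell,k}=\sum_j v^i_{k,j}(v^i_{\ell,j})^\dagger$ (the sum running over the \emph{third} index). This matrix is the $V$-conjugate of an element of $M_{d_i}(\mathbb{C})\otimes I_{d_i}$, which lies in the \emph{commutant} of $\hat{\A}$ rather than in $\hat{\A}$ itself; consequently the paper cannot invoke~\eqref{eq:conjugate} directly, and instead proves by hand, via Equation~\eqref{eq:lambdavilk}, that $A^i_{\ell,k}$ commutes with every $\lambda(a)$ and hence has constant diagonal. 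Your choice to sum over the multiplicity index $\ell$ is the key simplification: it makes $M^i_{k,k'}=VE^i_{k,k'}V^\dagger$ with $E^i_{k,k'}\in\hat{\A}$, so $M^i_{k,k'}\in\Lambda_G$ drops out of~\eqref{eq:conjugate} immediately, and the constant-diagonal property is then automatic. Both arguments compute the same trace $d_i\,\delta_{k,k'}$, but yours avoids the explicit commutation check entirely.
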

\begin{proof}
We will show that $(X^i(X^i)^\dagger))_{\ell, k} = \delta_{\ell,k}\frac{d_i}{|G|}$, thus proving the lemma. We have that
\begin{align*}
    \left(X^i(X^i)^\dagger)\right)_{\ell, k} &= \sum_{j} X^i_{\ell,j}\overline{X^i_{k,j}} \\
    &= \sum_{j} \left((v^i_{\ell,j})^\dagger \mathbf{e}_e\right) \left(\overline{(v^i_{k,j})^\dagger \mathbf{e}_e}\right) \\
    &= \mathbf{e}_e^\dagger \left(\sum_{j} v^i_{k,j}(v^i_{\ell,j})^\dagger\right)\mathbf{e}_e
\end{align*}
To show that this is equal to $\delta_{\ell,k}\frac{d_i}{|G|}$, we define the $|G|\times|G|$ matrix $A^i_{\ell,k} = \sum_{j} v^i_{k,j} (v^i_{\ell,j})^\dagger$. So $\left(X^i(X^i)^\dagger)\right)_{\ell, k} = \mathbf{e}_e^\dagger A^i_{\ell,k}\mathbf{e}_e$. We will show that $A^i_{k,\ell}$ has constant diagonal and trace equal to $\delta_{\ell,k} d_i$, which will prove the lemma.

First, we note that
\begin{equation}\label{eq:Av}
    A^i_{\ell,k}v^j_{s,t} = \delta_{i,j}\delta_{\ell,s}v^i_{k,t}
\end{equation}
since the $v^j_{s,t}$ form an orthonormal basis. Therefore, the trace of $A^i_{\ell,k}$ is
\[\Tr(A^i_{\ell,k}) = \sum_{j,s,t}(v^j_{s,t})^\dagger A^i_{\ell,k} v^j_{s,t} = \sum_{j,s,t} \delta_{i,j}\delta_{\ell,s}(v^j_{s,t})^\dagger v^i_{k,t} = \sum_{t} (v^i_{\ell,t})^\dagger v^i_{k,t} = \delta_{\ell,k} d_i,\]
as desired.

To show that $A^i_{\ell,k}$ has constant diagonal, it suffices to show that $\lambda(a)^\dagger A^i_{\ell,k} \lambda(a) = A^i_{\ell,k}$ for all $a \in G$. We will show that $A^i_{\ell,k} \lambda(a) = \lambda(a) A^i_{\ell,k}$, which is equivalent. Using Equations~\eqref{eq:lambdavilk} and~\eqref{eq:Av}, we have that
\[A^i_{\ell,k} \lambda(a) v^j_{s,t} = \sum_{x} \alpha^{a,j}_{t,x} A^i_{\ell,k} v^j_{s,x} = \delta_{i,j}\delta_{\ell,s} \sum_{x} \alpha^{a,i}_{t,x} v^i_{k,x},\]
and
\[\lambda(a) A^i_{\ell,k} v^j_{s,t} = \delta_{i,j}\delta_{\ell,s} \lambda(a) v^i_{k,t} = \delta_{i,j}\delta_{\ell,s} \sum_{x} \alpha^{a,i}_{t,x} v^i_{k,x}.\]
Since $A^i_{\ell,k}\lambda(a)$ and $\lambda(a)A^i_{\ell,k}$ agree on a basis they are equal. Therefore $A^i_{\ell,k}$ has constant diagonal equal to $\Tr(A^i_{\ell,k})/ |G| = \delta_{\ell,k}\frac{d_i}{|G|}$. It follows that
\[\left(X^i(X^i)^\dagger)\right)_{\ell, k} = \mathbf{e}_e^\dagger A^i_{\ell,k}\mathbf{e}_e = \delta_{\ell,k}\frac{d_i}{|G|},\]
and thus $\sqrt{\frac{|G|}{d_i}}X^i$ is unitary.
\end{proof}

Next we show that we can always find a unitary $V$ satisfying~\eqref{eq:conjugate} such that the matrices $X^i$ defined above are proportional to the identity.

\begin{lemma}
Let $G$ be a finite group and suppose $d_1, \ldots, d_r$ are the degrees of its irreducible representations. Then there exists a unitary $V$ satisfying~\eqref{eq:conjugate} such that each $X^i$ defined as in Equation~\eqref{eq:Xdef} is equal to $\sqrt{\frac{d_i}{|G|}}I_{d_i}$.
\end{lemma}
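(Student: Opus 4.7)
The plan is to construct $V$ by starting with any unitary $V_0$ satisfying \eqref{eq:conjugate} (which exists by \Lem{Wedderburn}) and right-multiplying it by a carefully chosen unitary $W$. For the modification $V := V_0 W$ to preserve \eqref{eq:conjugate}, the unitary $W$ must normalize $\hat{\A}$; a convenient way to guarantee this is to pick $W$ from the commutant of $\hat{\A}$, which inside $M_{|G|}(\C)$ is exactly $\bigoplus_{i=1}^r \big(M_{d_i}(\C) \otimes I_{d_i}\big)$. I will therefore take $W = \bigoplus_i (M_i \otimes I_{d_i})$ for unitaries $M_i \in M_{d_i}(\C)$ to be determined. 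Any such $W$ commutes pointwise with $\hat{\A}$, so $V^\dagger \Lambda_G V = W^\dagger V_0^\dagger \Lambda_G V_0 W = W^\dagger \hat{\A} W = \hat{\A}$ automatically.

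Next I would track how the matrices $X^i$ change under this modification. In the block-tensor indexing of the columns of $V_0$, the column $v^i_{\ell,k}$ is replaced in $V_0W$ by $\sum_{\ell'} (M_i)_{\ell',\ell}\, v^i_{\ell',k}$; taking the Hermitian inner product with $\mathbf{e}_e$ then yields the matrix identity $\tilde{X}^i = M_i^\dagger X^i_0$, where $X^i_0$ denotes the matrix associated to $V_0$ via \eqref{eq:Xdef} and $\tilde{X}^i$ is the analogous matrix for $V_0 W$.

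It now suffices to choose the $M_i$ so that $\tilde{X}^i = \sqrt{d_i/|G|}\, I_{d_i}$. This forces $M_i^\dagger = \sqrt{d_i/|G|}\, (X^i_0)^{-1}$, and this is where \Lem{Xunitary} does essentially all the work: it guarantees that $\sqrt{|G|/d_i}\, X^i_0$ is unitary, so $(X^i_0)^{-1} = (|G|/d_i)(X^i_0)^\dagger$, and solving gives the clean candidate $M_i := \sqrt{|G|/d_i}\, X^i_0$, which is itself unitary by the very same lemma. A direct calculation then verifies $\tilde{X}^i = M_i^\dagger X^i_0 = \sqrt{|G|/d_i}\,(X^i_0)^\dagger X^i_0 = \sqrt{|G|/d_i}\cdot (d_i/|G|)\, I_{d_i} = \sqrt{d_i/|G|}\, I_{d_i}$, as required.

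The main conceptual point is recognizing the ``gauge freedom'' inherent in the choice of $V$: right-multiplying by any unitary in the commutant of $\hat{\A}$ preserves \eqref{eq:conjugate}, and this freedom is exactly enough to normalize the first-column data captured by the $X^i$. Once one suspects that $M_i = \sqrt{|G|/d_i}\, X^i_0$ is the right choice, unitarity of $M_i$ is immediate from \Lem{Xunitary}, so the only potential obstacle --- producing a \emph{unitary} modification $W$ --- dissolves. In short, the preceding lemma is doing the real work, and the present lemma is just the corresponding normalization step.
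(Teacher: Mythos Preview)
Your proof is correct and essentially identical to the paper's: both start from an arbitrary $V_0$ satisfying \eqref{eq:conjugate}, right-multiply by the block-diagonal unitary $\bigoplus_i \big(\sqrt{|G|/d_i}\,X^i_0 \otimes I_{d_i}\big)$ from the commutant of $\hat{\A}$, and verify the normalization via $\tilde{X}^i = M_i^\dagger X^i_0$. The only cosmetic difference is that the paper packages the column computation using the $\mathrm{vec}$ map and the identity $(A\otimes B)\mathrm{vec}(C)=\mathrm{vec}(ACB^T)$, whereas you compute the transformed columns directly.
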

\begin{proof}
    Let $V$ be any unitary satisfying Equation~\eqref{eq:conjugate}, and let $X^i$ be defined as in Equation~\eqref{eq:Xdef}. Then $\sqrt{\frac{|G|}{d_i}}X^i$ is unitary by Lemma~\ref{lem:Xunitary}. Thus
    \[X = \bigoplus_{i=1}^r \sqrt{\frac{|G|}{d_i}} X^i \otimes I_{d_i}\]
    is unitary. Define $\hat{V} := VX$. Since $V$ and $X$ are unitary, so is $\hat{V}$. Moreover, conjugation by $X$ clearly (pointwise) preserves the algebra $\hat{\A}$, and therefore $\hat{V}$ also satisfies Equation~\eqref{eq:conjugate}.

    Now let us consider the analog of the matrices $X^i$ but for $\hat{V}$, which we will call $\hat{X}^i$. Let $\mathrm{vec}$ be the linear map given by $\mathrm{vec}(e_ie_j^T) = e_i \otimes e_j$ (and therefore satisfies $\mathrm{vec}(uv^T) = u \otimes v$ for arbitrary vectors $u$ and $v$). It is well known and not difficult to see that
    
    \begin{equation}\label{eq:vecid}
        \left(A \otimes B\right) \mathrm{vec}(C) = \mathrm{vec}(ACB^T).
    \end{equation}
    
    By definition, $\mathrm{vec}(X^i)$ is the part of the $e$ column of $V^\dagger$ corresponding to the $i^\text{th}$ summand of~\eqref{eq:conjugate}, and $\mathrm{vec}(\hat{X}^i)$ is the same but for $\hat{V}$. Since $\hat{V}^\dagger = X^\dagger V^\dagger$, using~\eqref{eq:vecid} and the definition of $X$ we see that
    \[\mathrm{vec}(\hat{X}^i) = \left(\left( \sqrt{\frac{|G|}{d_i}} X^i\right)^\dagger \otimes I_{d_i}\right) \mathrm{vec}(X^i) = \mathrm{vec}\left(\left(\sqrt{\frac{|G|}{d_i}} X^i\right)^\dagger X^i\right) = \sqrt{\frac{d_i}{|G|}}\mathrm{vec}\left(I_{d_i}\right).\]
    Thus $\hat{X}^i = \sqrt{\frac{d_i}{|G|}} I_{d_i}$ as desired.
\end{proof}

We will soon show how to construct $(G,G')$-transformation matrices assuming we have unitaries $V$ and $W$ satisfying~\eqref{eq:conjugate}. But first we need to understand the structure of the unitaries that preserve the algebra $\hat{\A}$ from~\eqref{eq:Ahat}. For this, we need to define some notation. Letting $\hat{\A}$ be as in~\eqref{eq:Ahat}, define $\mathrm{Sym}_{\hat{\A}}$ to be the group of permutations $\pi$ of $[r]$ such that $d_{\pi(i)} = d_i$ for each $i \in [r]$. In other words, $\mathrm{Sym}_{\hat{\A}}$ is the group of permutations of $[r]$ that only permute summands of the same dimension. Now given $\pi \in \mathrm{Sym}_{\hat{\A}}$, define $\widehat{P}^\pi$ to be the $|G| \times |G|$ permutation matrix such that
\[\widehat{P}^\pi\left(\bigoplus_{i=1}^r I_{d_i} \otimes A_i\right) (\widehat{P}^\pi)^\dagger = \bigoplus_{i=1}^r I_{d_{\pi^{-1}(i)}} \otimes A_{\pi^{-1}(i)}\]
for arbitrary matrices $A_i \in M_{d_i}(\mathbb{C})$.


The following lemma classifies the unitaries $U$ such that $U\hat{\A}U^\dagger = \hat{\A}$. It is precisely the ones you expect, and the result is certainly known. However, the authors were unable to find a reference and so we include its rather tedious proof in Appendix~\ref{app:proof}.

\begin{lemma}\label{lem:hatAautos}
    Let
    \[\hat{\A} = \bigoplus_{i=1}^r I_{d_i} \otimes M_{d_i}(\mathbb{C}).\]
    Then a unitary $U$ satisfies $U\hat{\A}U^\dagger = \hat{\A}$ if and only if
    \[U = \widehat{P}^\pi \left(\bigoplus_{i=1}^r M^i \otimes N^i\right)\]
    where $\pi \in \mathrm{Sym}_{\hat{\A}}$, and $M^i$ and $N^i$ are $d_i \times d_i$ unitaries for each $i \in [r]$.
\end{lemma}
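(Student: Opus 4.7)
The plan is to prove the two directions of the equivalence separately, with the if direction being a short verification. For the if direction, observe that conjugating $\bigoplus_{i=1}^r I_{d_i}\otimes A_i \in \hat{\A}$ by $\bigoplus_{i=1}^r M^i\otimes N^i$ produces $\bigoplus_{i=1}^r I_{d_i}\otimes N^iA_i(N^i)^\dagger$, which still lies in $\hat{\A}$; subsequent conjugation by $\widehat{P}^\pi$ merely permutes these summands within $\hat{\A}$, which is valid because $\pi\in\mathrm{Sym}_{\hat{\A}}$ only permutes summands of equal dimension.

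For the only-if direction, the first move is to identify the minimal central projections of $\hat{\A}$ as the identities $E_1,\ldots,E_r$ of the individual summands; each $E_i$ has rank $d_i^2$. Conjugation by $U$ is an algebra automorphism of $\hat{\A}$, so it restricts to an automorphism of the center $\spn\{E_1,\ldots,E_r\}$ and therefore permutes the $E_i$: we obtain a permutation $\pi$ of $[r]$ with $UE_iU^\dagger = E_{\pi(i)}$. Since rank is conjugation-invariant, $d_{\pi(i)}=d_i$ and $\pi\in\mathrm{Sym}_{\hat{\A}}$. The defining property of $\widehat{P}^\pi$ gives $\widehat{P}^\pi E_i(\widehat{P}^\pi)^\dagger = E_{\pi(i)}$, so setting $U' := (\widehat{P}^\pi)^\dagger U$ yields $U'E_i(U')^\dagger = E_i$ for every $i$. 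Consequently $U'$ preserves the range of each $E_i$ and decomposes as a block-diagonal direct sum $U' = \bigoplus_{i=1}^r U_i$, where each $U_i$ is a unitary on $\C^{d_i}\otimes\C^{d_i}$ normalizing $I_{d_i}\otimes M_{d_i}(\C)$.

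The main technical step, which I expect to be the principal obstacle, is to show that each such $U_i$ must factor as a tensor product of unitaries. Writing $d:=d_i$, since $U_i$ normalizes $I_d\otimes M_d(\C)$ by conjugation, it also normalizes the commutant $M_d(\C)\otimes I_d$. Both of these algebras are copies of $M_d(\C)$, and by the Skolem--Noether theorem every $*$-automorphism of a full matrix algebra is inner, so there exist unitaries $M^i,N^i$ on $\C^d$ with
\[U_i(I\otimes A)U_i^\dagger = I\otimes N^iA(N^i)^\dagger \quad \text{and} \quad U_i(B\otimes I)U_i^\dagger = M^iB(M^i)^\dagger\otimes I.\]
Using that $(B\otimes I)(I\otimes A) = B\otimes A$ and that elementary tensors span $M_{d^2}(\C)$, we conclude $U_iXU_i^\dagger = (M^i\otimes N^i)X(M^i\otimes N^i)^\dagger$ for all $X\in M_{d^2}(\C)$. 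Hence $(M^i\otimes N^i)^\dagger U_i$ commutes with all of $M_{d^2}(\C)$, so by Schur's lemma it is a unit-modulus scalar, which can be absorbed into $M^i$ to yield $U_i = M^i\otimes N^i$. Assembling the pieces gives $U = \widehat{P}^\pi\bigoplus_{i=1}^r M^i\otimes N^i$, completing the proof.
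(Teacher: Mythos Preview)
Your proof is correct but takes a genuinely different route from the paper's. The paper works entirely elementarily: it uses rank arguments on the matrix units $E^i_{\ell,k}$ to show that conjugation by $U$ permutes the simple summands $\hat{\A}_i$, then derives the block-diagonal form of $(\widehat{P}^\pi)^\dagger U$ by direct computation with the formula $\left(UE^i_{\ell,k}U^\dagger\right)_{s,t}=U_{si}(I\otimes e_\ell e_k^\dagger)U_{ti}^\dagger$; finally, for each block $U_i$ normalizing $I_d\otimes M_d(\C)$, the paper establishes the tensor factorization by hand, writing $V(I\otimes e_ie_j^\dagger)V^\dagger=I\otimes B_{ij}$, showing that the rank-one matrices $B_{ij}$ can be written as $x_ix_j^\dagger$ for a common set of vectors, and then proving that all $d\times d$ blocks of $V$ are proportional. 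Your argument replaces both stages with structure theory: minimal central projections give the permutation and block decomposition at once, and Skolem--Noether applied to both $I\otimes M_d(\C)$ and its commutant $M_d(\C)\otimes I$ yields the tensor factorization via a Schur's-lemma conclusion. Your approach is shorter and cleaner for readers who know Skolem--Noether; the paper's approach is longer but entirely self-contained, requiring no external theorems beyond linear algebra.
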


We are now able to prove the main result of this section.

\begin{theorem}\label{thm:construct}
    Let $G$ and $G'$ be finite groups with isomorphic group algebras and suppose that $V$ and $W$ are unitaries satisfying~\eqref{eq:conjugate}. For $i = 1, \ldots, r$, let $X^i$ be the matrices defined by Equation~\eqref{eq:Xdef}, and let $Y^i$ be the analogous matrices defined using $W$ instead of $V$. Then a matrix $U$ is a $(G,G')$-transformation matrix if and only if $U = V\hat{U}W^\dagger$ where
    \begin{equation}\label{eq:Uhatform}
        \hat{U} = \widehat{P}^\pi \left(\bigoplus_{i=1}^r M^i \otimes N^i\right)
    \end{equation}
    such that $\pi \in \mathrm{Sym}_{\hat{\A}}$, and $M^i$ and $N^i$ are $d_i \times d_i$ unitaries satisfying $M^i = X^{\pi(i)}\overline{N^i}({Y^{i}})^{-1}$.
\end{theorem}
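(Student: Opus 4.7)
The plan is to use the characterization from the previous theorem, which says that $U \in \mathbb{C}^{G \times G'}$ is a $(G,G')$-transformation matrix if and only if $U$ is unitary, $U\Lambda_{G'}U^\dagger = \Lambda_G$, and $U\mathbf{e}_e = \mathbf{e}_e$. Writing $\hat{U} := V^\dagger U W$, so that $U = V\hat{U}W^\dagger$, unitarity of $U$ is equivalent to unitarity of $\hat{U}$, so I just need to translate the remaining two conditions into conditions on $\hat{U}$.

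First I would handle the algebra condition. Since $V^\dagger \Lambda_G V = \hat{\A} = W^\dagger \Lambda_{G'} W$ by hypothesis, we have
\[U\Lambda_{G'}U^\dagger = \Lambda_G \iff V\hat{U}W^\dagger\Lambda_{G'}W\hat{U}^\dagger V^\dagger = \Lambda_G \iff \hat{U}\hat{\A}\hat{U}^\dagger = \hat{\A}.\]
By \Lem{hatAautos}, the latter holds if and only if $\hat{U}$ has the form in \eqref{eq:Uhatform} for some $\pi \in \mathrm{Sym}_{\hat{\A}}$ and unitaries $M^i, N^i \in M_{d_i}(\mathbb{C})$. This reduces the problem to identifying exactly which such $\hat{U}$ additionally satisfy $V\hat{U}W^\dagger \mathbf{e}_e = \mathbf{e}_e$, equivalently $\hat{U}(W^\dagger \mathbf{e}_e) = V^\dagger \mathbf{e}_e$.

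Next I would unpack this vector equation. By definition of $X^i$ and $Y^i$ via Equation \eqref{eq:Xdef}, the $i$-th block of $V^\dagger \mathbf{e}_e$ is $\vect(X^i)$ and the $i$-th block of $W^\dagger \mathbf{e}_e$ is $\vect(Y^i)$. Using the identity $(A \otimes B)\vect(C) = \vect(ACB^T)$ (as in the proof of \Lem{Xunitary}'s companion lemma), the action of $\bigoplus_i M^i \otimes N^i$ on $W^\dagger \mathbf{e}_e$ sends its $i$-th block to $\vect(M^i Y^i (N^i)^T)$. Then $\widehat{P}^\pi$ moves the $i$-th block to position $\pi(i)$ (this follows directly from unpacking the definition of $\widehat{P}^\pi$ from the paragraph preceding \Lem{hatAautos}, since conjugation by $\widehat{P}^\pi$ sends the $i$-th summand to the $\pi(i)$-th). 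Therefore $\hat{U}(W^\dagger \mathbf{e}_e) = V^\dagger \mathbf{e}_e$ becomes, on comparing the $\pi(i)$-th blocks,
\[\vect\of{M^i Y^i (N^i)^T} = \vect\of{X^{\pi(i)}},\]
for every $i$, i.e., $M^i Y^i (N^i)^T = X^{\pi(i)}$. Since $N^i$ is unitary, $((N^i)^T)^{-1} = \overline{N^i}$, and since each $Y^i$ is invertible by \Lem{Xunitary}, this is equivalent to $M^i = X^{\pi(i)} \overline{N^i} (Y^i)^{-1}$, which is exactly the stated constraint.

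Combining both translations proves the theorem. The only non-routine step is the bookkeeping around $\widehat{P}^\pi$ acting on vectors rather than on the algebra, but this is merely a matter of carefully unfolding the definition. I should also verify that the constraint $M^i = X^{\pi(i)} \overline{N^i} (Y^i)^{-1}$ is consistent with $M^i$ being unitary: since $\sqrt{|G|/d_i}\, X^i$ and $\sqrt{|G'|/d_i}\, Y^i$ are unitary by \Lem{Xunitary} (and $|G| = |G'|$ follows from equality of the multisets of degrees), both $X^{\pi(i)}(Y^i)^{-1}$ and $\overline{N^i}$ are unitary, so their product $M^i$ is automatically unitary. Thus the parametrization is well-defined: any choice of $\pi \in \mathrm{Sym}_{\hat{\A}}$ and unitaries $N^i$ determines $M^i$ uniquely, and the resulting $V\hat{U}W^\dagger$ is a $(G,G')$-transformation matrix; conversely every such matrix arises this way.
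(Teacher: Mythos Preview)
Your proposal is correct and follows essentially the same approach as the paper's proof: both translate the three-condition characterization ($U$ unitary, $U\Lambda_{G'}U^\dagger = \Lambda_G$, $U\mathbf{e}_e = \mathbf{e}_e$) into conditions on $\hat{U} = V^\dagger U W$, invoke \Lem{hatAautos} for the algebra condition, and then unpack the vector condition $\hat{U}(W^\dagger\mathbf{e}_e) = V^\dagger\mathbf{e}_e$ blockwise using the $\vect$ identity to obtain $M^i Y^i (N^i)^T = X^{\pi(i)}$. Your additional consistency check that $M^i$ is automatically unitary appears in the paper as a remark immediately following the theorem.
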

\begin{proof}
    Let $\hat{\A} = \bigoplus_i I_{d_i} \otimes M_{d_i}(\mathbb{C})$ be the algebra in the center of Equation~\eqref{eq:conjugate}. We will make use of the characterization of transformation matrices from Corollary~\ref{cor:transformchar}. Let $U \in \mathbb{C}^{G \times G'}$ and define $\hat{U} = V^\dagger U W$. Then of course $U = V\hat{U}W^\dagger$ and $U$ is unitary if and only if $\hat{U}$ is unitary. Continuing in the case where they are unitary, we see that
    \begin{align*}
        \hat{U}\hat{\A}\hat{U}^\dagger = \hat{\A} \quad &\Leftrightarrow \quad V^\dagger U W \hat{\A}W^\dagger U^\dagger V = \hat{\A} \\
        &\Leftrightarrow \quad U W \hat{\A}W^\dagger U^\dagger = V \hat{\A} V^\dagger \\
        &\Leftrightarrow \quad U\Lambda_{G'}U^\dagger = \Lambda_G.
    \end{align*}
    By Lemma~\ref{lem:hatAautos}, this says that $U\Lambda_{G'}U^\dagger = \Lambda_G$ if and only if $\hat{U} = \widehat{P}^\pi \left(\oplus_{i=1}^r M^i \otimes N^i\right)$ for some $\pi \in \mathrm{Sym}_{\hat{\A}}$ and $d_i \times d_i$ unitaries $M^i$ and $N^i$ for all $i=1, \ldots, r$. Continuing in the case where this holds, we see that to prove the theorem we must show that for such $U$ and $\hat{U} = V^\dagger U W$, the condition $U\mathbf{e}_e = \mathbf{e}_e$ is equivalent to $M^i = X^{\pi(i)}\overline{N^i}\left(Y^i\right)^{-1}$ for each $i = 1, \ldots, r$. This we proceed to do.

    We have that $U\mathbf{e}_e = \mathbf{e}_e$ if and only if $V\hat{U}W^\dagger \mathbf{e}_e = \mathbf{e}_e$ if and only if $\hat{U}W^\dagger \mathbf{e}_e = V^\dagger\mathbf{e}_e$. Letting $x = V^\dagger \mathbf{e}_e$ and $y = W^\dagger \mathbf{e}_e$, the latter states that $\hat{U}y = x$. Letting $x^i$ and $y^i$ denote the portion of $x$ and $y$ corresponding to the $i^\text{th}$ summand in the definition of $\hat{U}$, and recalling the definitions of $X^i$ and $Y^i$, we see that $\mathrm{vec}(X^i) = x^i$ and $\mathrm{vec}(Y^i) = y^i$. The $\pi(i)^\text{th}$ block of the equation $\hat{U}y = x$ can be written as
    \[\left(M^i \otimes N^i\right)y^i = x^{\pi(i)}.\]
    Using~\eqref{eq:vecid}, we see that this is equivalent to $M^i Y^i (N^i)^T = X^{\pi(i)}$. By unitarity of $N^i$ this is equivalent to $M^i = X^{\pi(i)} \overline{N^i} (Y^i)^{-1}$, as desired.
\end{proof}

We remark that if $N^i$ is unitary then $M^i = X^{\pi(i)} \overline{N^i} (Y^i)^{-1}$ will necessarily be unitary since both $X^{\pi(i)}$ and $Y^{i}$ are equal to $\sqrt{\frac{d_i}{|G|}}$ times a unitary by Lemma~\ref{lem:Xunitary}. Thus the above theorem shows that for any choice of unitaries $N^i \in M_{d_i}(\mathbb{C})$ for $i \in [r]$ and permutation $\pi \in \mathrm{Sym}_{\hat{\A}}$, the matrix
\begin{equation}\label{eq:constructfinal}
    U = V \left( \widehat{P}^\pi \left( \bigoplus_{i=1}^r \left( X^{\pi(i)} \overline{N^i} (Y^i)^{-1} \right) \otimes N^i \right) \right) W^\dagger
\end{equation}
is a $(G,G')$-transformation matrix, and moreover all $(G,G')$-transformation matrices can be obtained in this way. Note however that the correspondence is not one-to-one, since multiplying any $N^i$ by a complex unit does not change $U$. However, it is clear that up to this degree of freedom, distinct choices for the $N^i$ and $\pi$ result in distinct $U$. This implies that when $G$ and $G'$ are non-abelian and have isomorphic group algebras there are uncountably many $(G,G')$-transformation matrices. This is because in this case at least one of the irreducible representations must have degree greater than one, and then there are uncountably many choices for the corresponding $N^i$. What is less clear is when the corresponding correlation matrices $U \circ \overline{U}$ are distinct. Since it is simple enough to choose some unitaries and a permutation, the above theorem allows us to construct arbitrarily many transformation matrices.

\section{The abelian case}\label{sec:abelian}

As noted at the end of Section~\ref{sec:transexist}, if $G$ is a finite abelian group, then there exist $(G,G')$-transformation matrices if and only if $G'$ is abelian and of the same order as $G$. In this case, using the results of the previous section allows us to describe the $(G,G')$-transformation matrices quite explicitly. In this section we give this description and additionally show that the correlation matrices resulting from these $(G,G')$-transformation matrices are closely related to the correlation matrices of the classical $(G,G')$-invariant correlations.

Given an abelian group $G$, we let $\widehat{G}$ denote the group of characters of $G$; it is isomorphic to $G$. We consider the character table of an abelian group to be a matrix with rows indexed by $\widehat{G}$ and columns by $G$ such that the $\chi,a$-entry is equal to $\chi(a)$. The \emph{normalized character table} has $\chi,a$-entry equal to $\frac{1}{\sqrt{|G|}}\chi(a)$ and is well known to be a unitary matrix. Using the results of the previous section, we prove the following theorem:

\begin{theorem}\label{thm:abeliancase}
    Let $G$ and $G'$ be equicardinal abelian groups with normalized character tables $\mathcal{C}$ and $\mathcal{C}'$ respectively. Then the set of $(G,G')$-transformation matrices is
    \[\left\{\mathcal{C}^\dagger P^\pi \mathcal{C}' : \pi \in \bij(\widehat{G},\widehat{G}')\right\}.\]
    In particular, this set is finite.
\end{theorem}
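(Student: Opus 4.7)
The plan is to deduce this theorem by specializing \Thm{construct} to the abelian setting, where all irreducible representations have degree one and the formulas simplify drastically.

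First I would set up the required data. Since $G$ and $G'$ are abelian of order $n$, each has exactly $n$ irreducible representations, all of degree one, so $r = n$ and $d_1 = \cdots = d_r = 1$. Consequently the algebra $\hat{\A}$ of \EqRef{Ahat} reduces to the algebra of $n\times n$ diagonal matrices. A short calculation using the orthogonality of characters shows that $\mathcal{C}\lambda(g)\mathcal{C}^\dagger = \diag(\chi(g))_{\chi \in \widehat{G}}$, so taking $V = \mathcal{C}^\dagger$ and $W = (\mathcal{C}')^\dagger$ gives unitaries satisfying \EqRef{conjugate}. Moreover, since the $(\chi,e)$-entry of $\mathcal{C}$ equals $\tfrac{1}{\sqrt{|G|}}\chi(e) = \tfrac{1}{\sqrt{|G|}}$, the scalars $X^i$ and $Y^i$ from \EqRef{Xdef} are all equal to $\tfrac{1}{\sqrt{|G|}}$ (compatibly with \Lem{Xunitary}).

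Next I would invoke \Thm{construct}. With all $d_i = 1$, the group $\mathrm{Sym}_{\hat{\A}}$ is the full symmetric group on the index set of irreducible characters, which under our labelling is naturally identified with $\bij(\widehat{G},\widehat{G}')$. The $1\times 1$ unitaries $M^i,N^i$ are just complex units, and the constraint $M^i = X^{\pi(i)}\overline{N^i}(Y^i)^{-1}$ becomes $M^i = \overline{N^i}$. Therefore $M^i \otimes N^i = M^i N^i = |N^i|^2 = 1$ for every $i$, so the direct sum $\bigoplus_i M^i \otimes N^i$ collapses to the identity and $\hat{U} = \widehat{P}^\pi$, which under the identification with $\bij(\widehat{G},\widehat{G}')$ is just the character-indexed permutation matrix $P^\pi$. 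Substituting into \EqRef{constructfinal} yields $U = V\widehat{P}^\pi W^\dagger = \mathcal{C}^\dagger P^\pi \mathcal{C}'$, and \Thm{construct} then says this exhausts the $(G,G')$-transformation matrices.

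Finally, the set is finite since $|\bij(\widehat{G},\widehat{G}')| = n!$, and distinct $\pi$ yield distinct matrices by invertibility of $\mathcal{C}$ and $\mathcal{C}'$. The main obstacle I anticipate is purely bookkeeping: keeping the index conventions (rows of $\mathcal{C}^\dagger$ indexed by $G$ and columns by $\widehat{G}$, rows of $\mathcal{C}'$ indexed by $\widehat{G}'$ and columns by $G'$) aligned so that $\mathcal{C}^\dagger P^\pi \mathcal{C}'$ is genuinely a $G \times G'$ matrix, and verifying that the abstract permutation matrix $\widehat{P}^\pi$ produced by \Thm{construct} really does coincide with $P^\pi$ once the summands of $\hat{\A}$ are labelled by characters.
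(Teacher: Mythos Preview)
Your proposal is correct and follows essentially the same approach as the paper: specialize \Thm{construct} to the abelian case, use the normalized character tables as the block-diagonalizing unitaries, observe that all $d_i=1$ forces $X^i=Y^i=1/\sqrt{|G|}$ and $M^i\otimes N^i=1$, and conclude that $\hat{U}$ reduces to the permutation matrix $P^\pi$. The paper's proof is slightly terser but makes exactly the same moves, and your flagged bookkeeping concern about identifying $\mathrm{Sym}_{\hat{\A}}$ with $\bij(\widehat{G},\widehat{G}')$ is real but routine.
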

\begin{proof}
    Since all irreducible characters of abelian groups have degree one, the algebra $\hat{\A}$ of Equation~\eqref{eq:conjugate} is simply the algebra of diagonal matrices, and $\mathrm{Sym}_{\hat{\A}}$ is simply all permutations of $[n]$ for $n=|G|$. Moreover, since $d_i = 1$ for all $i$ in this case, the matrix $\widehat{P}^\pi$ is simply $P^\pi$.
    
    It is well known that the normalized character table satisfies Equation~\eqref{eq:conjugate}. Since every entry of the $e$-column of $\mathcal{C}^\dagger$ is $\frac{1}{\sqrt{|G|}}$, all of the $X^i$ defined as in Equation~\eqref{eq:Xdef} are proportional to identity, and similarly for the analogous matrices for $G'$. Therefore each $N^i$ in Equation~\eqref{eq:Uhatform} is simply a complex unit and $M^i = \overline{N^i}$. Therefore $M^i \otimes N^i = 1$ for all $i$. Thus the present theorem follows from Theorem~\ref{thm:construct}.
\end{proof}

\begin{remark}\label{rem:all}
    It is straightforward to check that the transformation matrix of the $(G,G')$-invariant quantum Latin square described in Example~\ref{ex:charconstruction} is precisely the matrix $\mathcal{C}^\dagger P^\pi \mathcal{C}'$ of the above theorem, where $\pi$ is the bijection taking $\chi'_i$ to $\chi_i$. Thus the construction described in that example, which is the generalization of the construction of~\cite[Definition 4.1]{quantumvsnonlocal} to the case of possibly different groups $G$ and $G'$, produces all $(G,G')$-invariant quantum Latin squares, up to isometry.
\end{remark}

As remarked at the end of the previous section, in the non-abelian case there are uncountably many $(G,G')$-transformation matrices, assuming there are any.


\begin{definition}\label{def:Qpi}
    Due to Theorem~\ref{thm:abeliancase}, for $\pi \in \bij(\widehat{G},\widehat{G}')$ we define $U^\pi := \mathcal{C}^\dagger P^\pi \mathcal{C}'$, and we let $Q^\pi := U^\pi \circ \overline{U^\pi}$ be the corresponding correlation matrix.
\end{definition}

Recall from Section~\ref{sec:GInvCorr} that for equicardinal groups $G$ and $G'$, every bijection $\pi \in \bij(G,G')$ results in a correlation matrix $D^\pi$ of the $(G,G')$-invariant correlation $p_G \circ p_\pi \circ p_{G'}$, and moreover the convex hull of these $D^\pi$ is the full set of correlation matrices of \emph{classical} $(G,G')$-invariant correlations. For abelian groups $G$ and $G'$, we see that we obtain a correlation matrix $Q^{\hat{\pi}}$ which corresponds to a $(G,G')$-invariant quantum Latin square for every bijection $\hat{\pi} \in \bij(\widehat{G},\widehat{G'})$, and moreover these are all of the correlation matrices of $(G,G')$-invariant quantum Latin squares. Remarkably, the matrices $Q^{\hat{\pi}}$ and $D^\pi$ are closely related:

\begin{theorem}\label{thm:C2Q}
    Let $G$ and $G'$ be finite abelian groups of the same order with normalized character tables $\mathcal{C}$ and $\mathcal{C}'$ respectively. Fix isomorphisms $\phi: \widehat{G} \to G$ and $\phi': \widehat{G}' \to G'$, and let $\Phi, \Phi'$ be the permutation matrices encoding these isomorphisms. Then for $\pi \in \bij(G,G')$ and $\hat{\pi} = \phi^{-1} \circ \pi \circ \phi' \in \bij(\widehat{G},\widehat{G}')$, we have that
    \[Q^{\hat{\pi}} = \mathcal{C}^\dagger\Phi^\dagger D^\pi \Phi'\mathcal{C}'.\]
\end{theorem}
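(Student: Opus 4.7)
The plan is to factor the claimed identity through an intermediate equality $\Phi^\dagger D^\pi \Phi' = D^{\hat{\pi}}$, where the right-hand side is the correlation matrix (in the sense of Definition~\ref{def:corrmat}) of the classical $(\widehat{G},\widehat{G}')$-invariant correlation $p_{\widehat{G}}\circ p_{\hat{\pi}}\circ p_{\widehat{G}'}$ associated with the bijection $\hat{\pi}\in\bij(\widehat{G},\widehat{G}')$. The proof then splits into two claims:
\begin{align*}
\text{(a)}\quad &\Phi^\dagger D^\pi \Phi' \;=\; D^{\hat{\pi}}, \\
\text{(b)}\quad &\mathcal{C}^\dagger D^{\hat{\pi}} \mathcal{C}' \;=\; Q^{\hat{\pi}}.
\end{align*}

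For (a), I would invoke Lemma~\ref{lem:redmtx} to write $D^\pi = \frac{1}{n}(R^G)^T(P^\pi \otimes P^\pi) R^{G'}$ and, dually, $D^{\hat{\pi}} = \frac{1}{n}(R^{\widehat{G}})^T(P^{\hat{\pi}}\otimes P^{\hat{\pi}})R^{\widehat{G}'}$. The hypothesis $\hat{\pi}=\phi^{-1}\circ\pi\circ\phi'$, combined with the fact that $\phi$ and $\phi'$ are group isomorphisms, translates into the permutation identity $P^\pi = \Phi P^{\hat{\pi}}(\Phi')^T$, and hence $P^\pi\otimes P^\pi = (\Phi\otimes\Phi)(P^{\hat{\pi}}\otimes P^{\hat{\pi}})((\Phi')^T\otimes (\Phi')^T)$. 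What remains is to verify the two compatibility identities
\[\Phi^\dagger (R^G)^T(\Phi\otimes\Phi) \;=\; (R^{\widehat{G}})^T \qquad\text{and}\qquad ((\Phi')^T\otimes (\Phi')^T)R^{G'}\Phi' \;=\; R^{\widehat{G}'}.\]
Both are direct computations on basis vectors using $(R^G)^T(e_a\otimes e_b)=e_{b^{-1}a}$ and the fact that $\phi, \phi'$ respect the group operation; conjugating by $\Phi$ pushes the product on $G$ to the product on $\widehat{G}$.

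For (b), I would apply Lemma~\ref{lem:avg} to $\hat{\pi}$ to rewrite $D^{\hat{\pi}} = \frac{1}{n}\sum_{\beta\in\widehat{G}'}P^{\hat{\pi}(\beta)^{-1}}P^{\hat{\pi}}P^\beta$. The key observation is that the character table diagonalizes the left regular representation of an abelian group: a short calculation shows $\mathcal{C}^\dagger P^{\hat{\pi}(\beta)^{-1}} = D_\beta\,\mathcal{C}^\dagger$ and $P^\beta \mathcal{C}' = \mathcal{C}'\,D'_\beta$, for diagonal matrices $D_\beta = \diag(\hat{\pi}(\beta)(x):x\in G)$ and $D'_\beta = \diag(\overline{\beta(y)}:y\in G')$. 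Consequently, $\mathcal{C}^\dagger D^{\hat{\pi}}\mathcal{C}' = \frac{1}{n}\sum_\beta D_\beta\, U^{\hat{\pi}}\, D'_\beta$, whose $(x,y)$-entry is
\[U^{\hat{\pi}}_{x,y}\cdot \frac{1}{n}\sum_{\beta\in\widehat{G}'} \hat{\pi}(\beta)(x)\,\overline{\beta(y)}.\]
The explicit formula $U^{\hat{\pi}}_{x,y} = \frac{1}{n}\sum_{\chi'\in\widehat{G}'}\overline{\hat{\pi}(\chi')(x)}\,\chi'(y)$ (obtained by expanding $\mathcal{C}^\dagger P^{\hat{\pi}}\mathcal{C}'$) identifies the above sum as $\overline{U^{\hat{\pi}}_{x,y}}$, so the entry equals $|U^{\hat{\pi}}_{x,y}|^2 = Q^{\hat{\pi}}_{x,y}$.

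The main obstacle I expect is the bookkeeping between the four groups $G, G', \widehat{G}, \widehat{G}'$ and the resulting matrices and Kronecker products: verifying that $\Phi$ and $\Phi'$ consistently translate the reduction matrix, the permutation $P^\pi$, and the group product on $G$ to their dual-side analogues. Once step~(a) is in place, step~(b) is essentially the diagonalization of the left regular representation followed by a one-line application of character orthogonality.
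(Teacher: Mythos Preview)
Your proof is correct and takes a genuinely different route from the paper's. Both arguments share the same two ingredients on the classical side: Lemma~\ref{lem:redmtx} applied to $D^\pi$, and the compatibility identity $(\Phi\otimes\Phi)^\dagger R^G\Phi = R^{\widehat{G}}$ coming from the fact that $\phi$ is a group isomorphism. The divergence is on the quantum side. The paper linearizes the Schur product $U^{\hat{\pi}}\circ\overline{U^{\hat{\pi}}}$ by introducing an auxiliary matrix $S^G$ (with $S^G_{(a,b),c}=\delta_{a=b=c}$) so that $Q^{\hat{\pi}} = (S^G)^\dagger(U^{\hat{\pi}}\otimes\overline{U^{\hat{\pi}}})S^{G'}$, and then reduces everything to the single identity $(\mathcal{C}\otimes\overline{\mathcal{C}})S^G\mathcal{C}^\dagger = \frac{1}{\sqrt{n}}R^{\widehat{G}}$, verified entrywise by character orthogonality. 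You instead factor through the intermediate object $D^{\hat{\pi}}$: step~(a) is a clean naturality statement for the construction $\pi\mapsto D^\pi$ under group isomorphisms, and step~(b) uses Lemma~\ref{lem:avg} together with the fact that the character table diagonalizes left translation to recognize the resulting average as $U^{\hat{\pi}}\circ\overline{U^{\hat{\pi}}}$. Your route avoids the auxiliary $S^G$ and makes the role of $D^{\hat{\pi}}$ explicit; the paper's route handles the entrywise product in one stroke without the averaging argument. Both are of comparable length and difficulty.
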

\begin{proof}
    We begin by deriving a slightly different formula for $Q^{\hat{\pi}}$. First, define the $(G \times G) \times G$ matrix $S^G$ entrywise as
    \[S^G_{(a,b),c} = \begin{cases}
        1 & \text{if } a = b = c\\
        0 & \text{o.w.}
    \end{cases}\]
Define $S^{G'}$ analogously. It is then easy to check that for any two $G \times G'$ matrices $M$ and $N$, we have that
\[\left(S^G\right)^\dagger\left(M \otimes N\right)S^{G'} = M \circ N,\]
and therefore
\begin{align*}
Q^{\hat{\pi}} &= \left(S^G\right)^\dagger\left(U^{\hat{\pi}} \otimes \overline{U^{\hat{\pi}}}\right)S^{G'} \\
&= \left(S^G\right)^\dagger\left(\mathcal{C}^\dagger P^{\hat{\pi}} \mathcal{C}' \otimes \overline{\mathcal{C}^\dagger P^{\hat{\pi}} \mathcal{C}'}\right)S^{G'} \\
&= \left(S^G\right)^\dagger\left(\mathcal{C}^\dagger \otimes \overline{\mathcal{C}^\dagger}\right) \left(P^{\hat{\pi}} \otimes P^{\hat{\pi}}\right) \left(\mathcal{C}' \otimes \overline{\mathcal{C}'}\right)S^{G'} \\
&= \left(S^G\right)^\dagger\left(\mathcal{C}^\dagger \otimes \overline{\mathcal{C}^\dagger}\right) \left(\Phi^\dagger P^{\pi} \Phi' \otimes \Phi^\dagger P^{\pi} \Phi'\right) \left(\mathcal{C}' \otimes \overline{\mathcal{C}'}\right)S^{G'} \\
&= \left(S^G\right)^\dagger\left(\mathcal{C}^\dagger \otimes \overline{\mathcal{C}^\dagger}\right) \left(\Phi^\dagger \otimes \Phi^\dagger\right) \left(P^{\pi} \otimes P^{\pi} \right) \left(\Phi' \otimes \Phi' \right) \left(\mathcal{C}' \otimes \overline{\mathcal{C}'}\right)S^{G'}.
\end{align*}
On the other hand, Lemma~\ref{lem:redmtx} states that
\[D^\pi = \frac{1}{n} \left(R^G\right)^\dagger \left(P^\pi \otimes P^\pi \right) R^{G'},\]
where $n = |G| = |G'|$. Thus
\[\mathcal{C}^\dagger \Phi^\dagger D^\pi \Phi'\mathcal{C}' = \frac{1}{n} \mathcal{C}^\dagger \Phi^\dagger \left(R^G\right)^\dagger \left(P^\pi \otimes P^\pi \right) R^{G'} \Phi'\mathcal{C}'.\]
So it suffices to show that
\[(\Phi \otimes \Phi)(\mathcal{C} \otimes \overline{\mathcal{C}})S^G = \frac{1}{\sqrt{n}}R^G \Phi \mathcal{C},\]
or equivalently that
\[(\mathcal{C} \otimes \overline{\mathcal{C}})S^G\mathcal{C}^\dagger = \frac{1}{\sqrt{n}}(\Phi \otimes \Phi)^\dagger R^G \Phi.\]
It is easy to see from the definition of $R^G$ and the fact that $\Phi$ is the permutation matrix encoding an isomorphism $\varphi: \widehat{G} \to G$, that
\[\frac{1}{\sqrt{n}}(\Phi \otimes \Phi)^\dagger R^G \Phi = \frac{1}{\sqrt{n}} R^{\widehat{G}}. \]

On the other hand, for $\alpha, \beta, \chi \in \widehat{G}$, we have that
\begin{align*}
    \left(\left(\mathcal{C} \otimes \overline{\mathcal{C}}\right) S^G \mathcal{C}^\dagger\right)_{(\alpha,\beta),\chi} &= \sum_{a,b,c \in G} \mathcal{C}_{\alpha,a} \overline{\mathcal{C}_{\beta,b}} S^G_{(a,b),c} \overline{\mathcal{C}_{\chi,c}} \\
    &= \frac{1}{n^{3/2}} \sum_{a \in G} \alpha(a)\overline{\beta(a)}\overline{\chi(a)} \\
    &= \frac{1}{n^{3/2}} \sum_{a \in G} \beta(a)^{-1}\alpha(a)\overline{\chi(a)} \\
    &= \frac{1}{n^{3/2}} \sum_{a \in G} \left(\beta^{-1}\alpha(a)\right)\overline{\chi(a)} \\
    &= \begin{cases}
        \frac{1}{\sqrt{n}} & \text{if } \beta^{-1}\alpha = \chi \\
        0 &\text{otherwise}
    \end{cases}
\end{align*}
which is equal to $(1/\sqrt{n}) R^{\widehat{G}}_{(\alpha,\beta),\chi}$ and therefore we are done.
\end{proof}

The above shows that $\conv\{Q^{\hat{\pi}} : \hat{\pi} \in \bij(\widehat{G},\widehat{G}')\}$ and $\conv\{D^\pi : \pi \in \bij(G, G')\}$ are related by a unitary map (in particular they have the same volume and this map takes extreme points to extreme points). As a consequence, neither of these sets contains the other unless they are equal. In the case where they are not equal (which appears to be most cases) this implies that the group invariant quantum Latin squares produce at least some non-classical correlations, but their convex hull is not the set of all  of the quantum group invariant correlations as this convex hull does not even contain every classical correlation.

\section{Isomorphisms}\label{sec:isomorphisms}

Here we show that the collection of rows (resp.\ columns) of a $(G, G')$-transformation matrix $U$ containing only a single nonzero entry correspond to subgroups of $G$ (resp.\ $G'$). Moreover, the submatrix of $U$ corresponding to these rows and columns gives an isomorphism between these subgroups and a representation of degree one of each. 

A character of a group is called \emph{linear} if the degree of the corresponding representation is one. Note that in this case, the character is equal to the representation.

\begin{theorem}\label{thm:submtxiso}
    Suppose that $U$ is a $(G,G')$-transformation matrix and let $S \subseteq G$ and $S' \subseteq G'$ be the indices of all of the rows/columns of $U$ that have a single nonzero entry. Further define $\sigma: S \to S'$, $\chi:S \to \mathbb{C}$ and $\chi': S' \to \mathbb{C}$ as
    \begin{align*}
        U_{a,\sigma(a)} &\ne 0 \\
        \chi(a) &= U_{a,\sigma(a)} \\
        \chi'(b) &= U_{\sigma^{-1}(b),b}
    \end{align*}
    Then $S$, $S'$ are subgroups of $G$, $G'$ respectively, $\sigma$ is an isomorphism of these subgroups, and $\chi$, $\chi'$ are linear characters of $S$, $S'$.
\end{theorem}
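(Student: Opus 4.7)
The plan is to verify each claim by directly exploiting the three defining properties of a $(G,G')$-transformation matrix from \Thm{qlstransf}, together with the boundary conditions $U_{e,e}=1$ and $U_{a,e}=U_{e,b}=0$ for $a\ne e$, $b\ne e$ from \Rem{trans_mtx_extras}.

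First I would pin down the map $\sigma$. Because $U$ is unitary, every row is a unit vector, so if row $a\in S$ has a single nonzero entry $U_{a,\sigma(a)}$ then $|U_{a,\sigma(a)}|=1$. Unitarity of the column indexed by $\sigma(a)$ then forces every other entry of that column to vanish, so $\sigma(a)\in S'$; running the same argument starting from $S'$ shows that $\sigma\colon S\to S'$ is a well-defined bijection. Moreover $\chi'(b)=U_{\sigma^{-1}(b),b}=\chi(\sigma^{-1}(b))$, so once $\chi$ is understood, $\chi'$ follows by transport along $\sigma^{-1}$.

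Next I would check the three subgroup axioms for $S$. Identity: the boundary conditions say $U_{e,e}=1$ is the unique nonzero entry in row $e$, so $e\in S$ and $\sigma(e)=e$. Closure under inversion: condition~(2) gives $U_{a^{-1},b^{-1}}=\overline{U_{a,b}}$, so the unique nonzero entry of row $a^{-1}$ sits at column $\sigma(a)^{-1}$, yielding $a^{-1}\in S$ with $\sigma(a^{-1})=\sigma(a)^{-1}$ and $\chi(a^{-1})=\overline{\chi(a)}$. Closure under multiplication is the place where condition~(3) does the real work: for $a,b\in S$,
\[
U_{ab,c}\;=\;\sum_{xy=c} U_{a,x}\,U_{b,y},
\]
and sparsity of rows $a$ and $b$ collapses the sum to the single term $x=\sigma(a)$, $y=\sigma(b)$. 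Thus $U_{ab,c}=0$ unless $c=\sigma(a)\sigma(b)$, while $U_{ab,\sigma(a)\sigma(b)}=\chi(a)\chi(b)$, which still has modulus $1$ and in particular is nonzero. This simultaneously establishes $ab\in S$, $\sigma(ab)=\sigma(a)\sigma(b)$, and $\chi(ab)=\chi(a)\chi(b)$.

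The rest is assembly: $S$ is a subgroup of $G$, $\sigma$ is a group homomorphism, and, being a bijection onto $S'$, it carries the subgroup structure across, so $S'=\sigma(S)$ is a subgroup of $G'$ and $\sigma$ is an isomorphism. The map $\chi\colon S\to \C$ is multiplicative and unimodular, hence a linear character, and $\chi'=\chi\circ\sigma^{-1}$ is a linear character of $S'$ for the same reason. There is no real obstacle here; the only slightly subtle point is the initial observation that rows with a single unimodular entry are in bijective correspondence with columns of the same form, which is why defining $S,S'$ via rows and columns produces compatible data and allows $\sigma$ and $\chi'$ to be defined without ambiguity.
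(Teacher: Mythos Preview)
Your proof is correct and follows essentially the same route as the paper: identity via $U_{e,e}=1$, inverse-closure via property~(2), and closure under multiplication via property~(3) with the sum collapsing due to sparsity, after which the character claims are immediate. Your argument is in fact slightly more explicit than the paper's in justifying why $\sigma\colon S\to S'$ is a well-defined bijection (the paper merely asserts this is ``clear''), but the overall structure is the same.
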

\begin{proof}
Note first that $\sigma$ is clearly bijective and $|S|=|S'|.$
    We have seen before that $U_{e,e}=1$, so $e\in S$ with $\sigma(e) =e\in S'.$ Let $a\in S$ and $a':=\sigma(a)$. Then $|U_{a,b}|=0$ for all $b\in G'\sm\{a'\}$ and so
    $|U_{a^{-1},b^{-1}}| = |\overline{U_{a,b}}| = 0$ unless $b^{-1}= (a')^{-1}$. It follows that $a^{-1}\in S$ and that $\sigma(a^{-1}) = \sigma(a)^{-1}$. Similarly we see that $S'$ is inverse-closed. Now let $a,b\in S$ with $a':=\sigma(a)$ and $b':=\sigma(b).$ Then 
    \begin{equation}\label{eq:chariso}
        U_{ab,c} =\sum_{\substack{x,y\in G'\\xy = c}}U_{a,x}U_{b,y} = 
        \begin{cases}
            U_{a,a'}U_{b,b'}& \text{if } c=a'b'\\
            0&\text{otherwise.}
        \end{cases}
    \end{equation}
    Therefore, $ab\in S$ with $\sigma(ab) =a'b' = \sigma(a)\sigma(b)\in S'$ and we have shown that $S$ and $S'$ are subgroups of $G$ and $G',$ respectively and that $\sigma$ is an isomorphism between them. 

    To show that $\chi$ and $\chi'$ are linear characters of $G$ and $G'$, it suffices to show that they are homomorphisms to the complex unit circle. Clearly we have $|\chi(g)|=|\chi'(g')|=1$ for all $g\in G$ and $g'\in G'$ and the fact that they are homomorphisms follows easily from~\eqref{eq:chariso}. This concludes the proof.
\end{proof}
In the case where $G$ and $G'$ can be written as internal direct products containing the subgroups $S$ and $S'$, respectively, a version of the converse also holds. By Corollary \ref{cor:existence}, the existence of a $(G,G')$-transformation matrix depends on the degrees of the representations of the groups being the same, so we need that condition.

\begin{theorem}\label{thm:subgroupsiso}
    Let $G$ and $G'$ be groups, and let $H\leq G$ and $H'\leq G'$ be subgroups such that $H\cong H',$ $G\cong H\times K$ and $G'\cong H'\times K'$ for some $K,K'$. Suppose further that the multisets of degrees of irreducible representations of $G$ and $G'$ are the same. Let $\sigma:H\to H'$ be an isomorphism. Then for each linear character $\chi$ of $H$, there exists a $(G,G')$-transformation matrix, $U$ such that for all $h\in H$, we have $U_{h,x} = \delta_{x,\sigma(h)}\chi(h)$.
\end{theorem}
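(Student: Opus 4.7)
The plan is to construct $U$ as a tensor product $U_1 \otimes U_2$, where $U_1$ is an $(H,H')$-transformation matrix with the prescribed entries and $U_2$ is any $(K,K')$-transformation matrix (which we must show exists). Under the identification $G = H \times K$ and $G' = H' \times K'$, one checks directly from Theorem \ref{thm:qlstransf} that tensor products of transformation matrices are themselves transformation matrices: unitarity is preserved under $\otimes$, the identity $\overline{U_{a,b}} = U_{a^{-1}, b^{-1}}$ splits because inversion in a direct product acts coordinatewise, and the convolution identity (3) factors because every factorization $(h',k') = (x_1, y_1)(x_2, y_2)$ in $G'$ corresponds to a pair of factorizations $h' = x_1 x_2$ and $k' = y_1 y_2$.

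For $U_1$, define $U_1[h, h'] = \chi(h)\, \delta_{h', \sigma(h)}$. Since $|\chi(h)| = 1$ and $\sigma$ is a bijection, $U_1$ is a unit-weighted permutation matrix and hence unitary. The conjugate-inverse identity follows from $\chi(h^{-1}) = \overline{\chi(h)}$ together with $\sigma(h^{-1}) = \sigma(h)^{-1}$. For the convolution identity, both
\[
U_1[h_1 h_2, h'] = \chi(h_1)\chi(h_2)\, \delta_{h',\, \sigma(h_1)\sigma(h_2)}
\]
and
\[
\sum_{x y = h'} U_1[h_1, x]\, U_1[h_2, y] = \chi(h_1)\chi(h_2)\, \delta_{\sigma(h_1)\sigma(h_2),\, h'}
\]
give the same value by multiplicativity of $\chi$ and $\sigma$.

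The existence of $U_2$ reduces, by Corollary \ref{cor:existence}, to showing that the multisets of degrees of irreducible representations of $K$ and $K'$ coincide. Writing $a_n$, $b_n$, $b'_n$ for the multiplicity of $n$ as a degree of an irreducible representation of $H$ (equivalently $H'$, since $H \cong H'$), $K$, and $K'$ respectively, the irreducible representations of a direct product are tensor products of irreducibles of the factors, so the hypothesis on $G$ and $G'$ becomes the Dirichlet-convolution identity $(a * b)(m) = (a * b')(m)$ for every positive integer $m$. Because $H$ possesses the trivial representation, $a_1 \geq 1$, so $a$ is a unit in the ring of arithmetic functions under Dirichlet convolution; cancelling $a$ yields $b = b'$.

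Finally, set $U = U_1 \otimes U_2$. For $h \in H$, identified with $(h, e_K) \in G$, and $x = (h', k') \in G'$, we have $U_{h, x} = U_1[h, h']\, U_2[e_K, k']$. By Remark \ref{rem:trans_mtx_extras} applied to $U_2$, the entry $U_2[e_K, k']$ equals $\delta_{k', e_{K'}}$, so $U_{h, x} = \chi(h)\, \delta_{h', \sigma(h)}\, \delta_{k', e_{K'}} = \chi(h)\, \delta_{x, \sigma(h)}$ under the identification $\sigma(h) = (\sigma(h), e_{K'}) \in G'$, as required. The only nontrivial step is the Dirichlet-convolution cancellation used to obtain $U_2$; everything else is direct verification.
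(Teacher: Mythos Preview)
Your proof is correct and follows essentially the same route as the paper: build $U$ as a tensor product of an explicit $(H,H')$-transformation matrix encoding $\sigma$ and $\chi$ with an arbitrary $(K,K')$-transformation matrix, then verify the required entries on the $H$-rows. Your Dirichlet-convolution cancellation argument makes rigorous the step the paper simply asserts (``It follows that the multisets of degrees of the irreducible representations of $K$ and $K'$ are the same''), and your abstract verification that tensor products of transformation matrices are again transformation matrices cleanly factors the paper's direct check of conditions (1)--(3) for $V\otimes W$.
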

\begin{proof}
    Let $\chi$ be a linear character of $H$ and define $\chi':H'\to \C$ by letting $\chi'(h):=\chi(\sigma^{-1}(h))$. Since $\sigma$ is an isomorphism, it is easy to see that $\chi'$ is a linear character of $H'.$ Define the matrix $V\in\C^{H\times H'}$ by letting 
    \[V_{h,h'}:=
    \begin{cases}
        \chi(h)&\text{if } h'=\sigma(h)\\
        0&\text{otherwise.}
    \end{cases}\]
    Since $|\chi(h)| = 1$ for all $h \in H$, it is easy to see that $V$ is a unitary matrix.

    Now consider the groups $K$ and $K'$. It is well known that the irreducible characters of a direct product of groups are precisely the Kronecker products of the irreducible characters of the factors (see for example \cite[Theorem 19.18]{james2001reps}). It follows that the multisets of degrees of the irreducible representations of $K$ and $K'$ are the same. Therefore, there exists a $(K,K')$-transformation matrix, $W$. We will show that that the matrix $U:=V\otimes W$ is a $(G,G')$-transformation matrix satisfying the required conditions. Clearly, $U\in \C^{(H\times K)\times (H'\times K')} = \C^{G\times G'}$, and it is a unitary matrix since $V$ and $W$ are unitary.

    Note that since $G$ is the internal direct product of $H$ and $K$, we have for all $h\in H$ and $k\in K$ that $hk = kh$. Further, every element, $g$, of $G$ can be written uniquely as a product $g=hk$ with $h\in H$ and $k\in K$. We have for $g=hk\in G$ and $g'=h'k'\in G'$
    \begin{align*}
        U_{g^{-1},g'^{-1}} & = U_{(hk)^{-1},(h'k')^{-1}}\\ 
        & = U_{h^{-1}k^{-1},h'^{-1}k'^{-1}} & (\text{since } hk = kh)\\
        & = V_{h^{-1},h'^{-1}} W_{k^{-1},k'^{-1}} \\ 
        &= V_{h^{-1},h'^{-1}} \overline{W_{k,k'}},
    \end{align*}
    which is zero if $h'^{-1} \neq \sigma(h^{-1}) = \sigma(h)^{-1}$ or equivalently if $\sigma(h) \neq h'$. If $h' = \sigma(h)$ then 
    \[V_{h^{-1},h'^{-1}} = \chi(h^{-1}) = \chi(h)^{-1} = \overline{\chi(h)} = \overline{V_{h,h'}}.\]
    In both cases we see that $U_{g^{-1},g'^{-1}} = \overline{U_{g,g'}}$.

    Now let 
    $a=a_1a_2, b=b_1b_2\in G$ with $a_1,b_1\in H$ and $a_2,b_2\in K$ and let $c = c_1c_2\in G'$ with $c_1\in H'$ and $c_2\in K'$. Then
    \begin{align*}
        \sum_{\substack{x,y\in G'\\xy = c}}U_{a,x}U_{b,y} & = \sum_{\substack{x_1x_2,y_1y_2\in G'\\x_1x_2y_1y_2 = c}}U_{a_1a_2,x_1x_2}U_{b_1b_2,y_1y_2}\\ 
        & = \sum_{\substack{x_1x_2,y_1y_2\in G'\\x_1y_1x_2y_2 = c}} V_{a_1,x_1}W_{a_2,x_2}V_{b_1,y_1}W_{b_2,y_2} \\ 
        & = \sum_{\substack{x_2,y_2\in K'\\\sigma(a_1)\sigma(b_1)x_2y_2 = c}} \chi(a_1)\chi(b_1) W_{a_2,x_2}W_{b_2,y_2}\\ 
        & = \chi(a_1)\chi(b_1)\sum_{\substack{x_2,y_2\in K'\\\sigma(a_1b_1)= c_1\\ x_2y_2 =c_2}}  W_{a_2,x_2}W_{b_2,y_2}\\
        & = 
        \begin{cases}
            \chi(a_1b_1)W_{a_2b_2,c_2} &\text{if } \sigma(a_1b_1) = c_1\\
            0&\text{otherwise.}
        \end{cases}
    \end{align*}
    On the other hand we have 
    \[U_{ab,c} = V_{a_1b_1,c_1}W_{a_2b_2,c_2} = \chi(a_1b_1)W_{a_2b_2,c_2},\]
    if $c_1=\sigma(a_1b_1)$ and $0$ otherwise. Thus 
    \[U_{ab,c} = \sum_{\substack{x,y\in G'\\xy = c}}U_{a,x}U_{b,y}\]
    for all $a,b\in G$ and $c\in G'$ and we have shown that $U$ is a $(G,G')$-transformation matrix.
\end{proof}
\begin{remark}
    Note that the trivial character is linear, and so every group has a linear character.
\end{remark}



\begin{lemma}
    Suppose that $G$ and $G'$ are finite groups that are isomorphic via a map $\sigma: G \to G'$, and $\chi: G \to \mathbb{C}$ is a linear character of $G$. Then $U \in \mathbb{C}^{G \times G'}$ defined as
    \[U_{a,b} = \begin{cases}\chi(a) & \text{if } b = \sigma(a) \\ 0 & \text{o.w.} \end{cases}\]
    is a $(G,G')$-transformation matrix. Moreover, these are precisely the transformation matrices of $(G,G')$-invariant quantum Latin squares whose corresponding quantum permutation matrices have commuting entries. 
\end{lemma}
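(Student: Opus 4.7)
The plan is to split into the two directions and lean heavily on results already in hand. For the forward implication, that $U$ is a $(G,G')$-transformation matrix is an immediate application of \Thm{subgroupsiso} with $H=G$, $H'=G'$, and trivial complements $K=K'=\{e\}$, so that $G\cong H\times K$ and $G'\cong H'\times K'$ hold trivially; the hypothesis on multisets of irreducible degrees is automatic because $\sigma$ is an isomorphism. The resulting transformation matrix is precisely the $U$ in the statement.

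The commuting property rests on a simple observation: two rank-one projections $\proj{\psi}$ and $\proj{\varphi}$ commute if and only if $|\inner{\psi}{\varphi}|\in\{0,1\}$, i.e., the unit vectors are either orthogonal or proportional. Given any $(G,G')$-invariant quantum Latin square $\Psi$ with transformation matrix $U$ of the stated form, the inner product $\inner{\psi_{a,b}}{\psi_{c,d}}=U_{a^{-1}c,\,b^{-1}d}$ is either $0$ or equals $\chi(a^{-1}c)$, which has modulus $1$ since every linear character of a finite group takes values in the unit circle. Hence every pair of entries of $\mathcal{P}^\Psi$ commutes.

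For the converse, suppose $\Psi$ is a $(G,G')$-invariant quantum Latin square whose associated quantum permutation matrix $\mathcal{P}^\Psi$ has pairwise commuting entries. Applying the same rank-one observation to each pair $\proj{\psi_{a,b}}$ and $\proj{\psi_{c,d}}$ forces $|U_{x,y}|\in\{0,1\}$ for all $x\in G$, $y\in G'$. Combined with unitarity of $U$ (item (1) of \Thm{qlstransf}), this means every row and every column of $U$ contains exactly one nonzero entry, and that entry is unimodular. Theorem \Thm{submtxiso} applied with $S=G$ and $S'=G'$ then yields an isomorphism $\sigma\colon G\to G'$ and a linear character $\chi$ of $G$ such that $U_{a,\sigma(a)}=\chi(a)$ and all other entries vanish, which is precisely the described form.

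No step should pose real difficulty: the rank-one commutation characterization is standard and short (one direction is obvious, the other follows by computing $\proj{\psi}\proj{\varphi}-\proj{\varphi}\proj{\psi}$ and noting it has norm $\sqrt{1-|\inner{\psi}{\varphi}|^2}\,|\inner{\psi}{\varphi}|$), and both directions then reduce to invocations of already-proven theorems. The only mild subtlety is that the transformation matrix does not uniquely determine the Latin square, but the argument is insensitive to this because both the inner products $\inner{\psi_{a,b}}{\psi_{c,d}}$ and the projections $\proj{\psi_{a,b}}$ depend only on $U$.
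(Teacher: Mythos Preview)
Your proof is correct and follows essentially the same structure as the paper's: invoke \Thm{subgroupsiso} with $H=G$, $H'=G'$, $K=K'=\{e\}$ for the first claim, and use the rank-one commutation criterion together with \Thm{submtxiso} for the converse. The only minor variation is that, to show the forward direction of the commuting property, the paper observes that the correlation matrix $U\circ\overline{U}$ is a permutation matrix and appeals to (the proof of) \Cor{qperms}, whereas you apply the rank-one criterion directly; your route is arguably cleaner and more self-contained, since it avoids pulling an intermediate fact out of the proof of a corollary whose statement does not itself assert commutation.
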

\begin{proof}
    The fact that such $U$ are transformation matrices follows immediately from Theorem~\ref{thm:subgroupsiso}. Moreover, if $U$ is such a transformation matrix, then its corresponding correlation matrix $U \circ \overline{U}$ is a permutation matrix and thus it follows from Corollary~\ref{cor:qperms} that all entries of the corresponding quantum permutation matrix commute.

    Conversely, suppose that $\Psi = (\psi_{a,b})$ is a $(G,G')$-invariant quantum Latin square such that all of the projections $\outerp{\psi_{a,b}}{\psi_{a,b}}$ commute. It follows that any two vectors in $\Psi$ are either orthogonal to each other or they differ only by multiplication by a complex unit. Therefore, the transformation matrix $U$ of $\Psi$ has precisely one nonzero entry in every row/column and thus by Theorem~\ref{thm:submtxiso} it has the form given in the lemma statement.
\end{proof}

Recall from Corollary~\ref{cor:qperms} that the only permutation matrices contained in the set of correlation matrices of quantum $(G,G')$-invariant correlations are the isomorphisms from $G'$ to $G$. Using the above, we can show that these can all be obtained from $(G,G')$-invariant quantum Latin squares.

\begin{cor}
    The permutation matrices contained in the set
    \[\{U \circ \overline{U} : U \text{ is a } (G, G')\text{-transformation matrix}\}\]
    are precisely the permutation matrices encoding isomorphisms between $G$ and $G'$.\qed
\end{cor}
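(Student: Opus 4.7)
The plan is to deduce this corollary almost immediately from the lemma that precedes it, which asserts that a $(G,G')$-transformation matrix $U$ has commuting associated quantum permutation matrix (equivalently, $U$ has exactly one nonzero entry in each row and column) if and only if $U$ has the monomial form $U_{a,b} = \chi(a)\,\delta_{b,\sigma(a)}$ for some isomorphism $\sigma : G \to G'$ and some linear character $\chi$ of $G$.

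For the forward containment, I would argue as follows. Suppose $U$ is a $(G,G')$-transformation matrix such that $U \circ \overline{U}$ is a permutation matrix. Then $|U_{a,b}|^2 \in \{0,1\}$ for all $a,b$, and each row and column contains exactly one nonzero entry, which must have modulus $1$. Thus the corresponding quantum permutation matrix $\mathcal{P}^\Psi$ built from the associated quantum Latin square has rank one entries that are pairwise either equal up to phase or orthogonal; in particular, its entries pairwise commute. The preceding lemma then forces $U_{a,b} = \chi(a)\,\delta_{b,\sigma(a)}$ for some isomorphism $\sigma : G \to G'$ and linear character $\chi$. Reading off entry-wise moduli squared, $U \circ \overline{U}$ is the permutation matrix whose $(a,b)$-entry is $1$ precisely when $b = \sigma(a)$, namely $P^{\sigma^{-1}}$, which encodes the isomorphism $\sigma^{-1} : G' \to G$.

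For the reverse containment, given any isomorphism $\tau : G' \to G$, set $\sigma = \tau^{-1} : G \to G'$ and let $\chi$ be the trivial character of $G$. By the preceding lemma, the matrix $U$ defined by $U_{a,b} = \delta_{b,\sigma(a)}$ is a $(G,G')$-transformation matrix, and clearly $U \circ \overline{U} = U = P^\tau$.

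There is essentially no serious obstacle here: all of the representation-theoretic and structural content has been absorbed into Theorem~\ref{thm:submtxiso}, Theorem~\ref{thm:subgroupsiso} and the preceding lemma. The only thing to verify carefully is the bookkeeping about the direction of the isomorphism (whether we are encoding $\sigma$ or $\sigma^{-1}$ as $P^\sigma$), so as to match the convention that $P^\pi$ for $\pi \in \bij(G,G')$ has $P^\pi_{x,y} = 1$ iff $x = \pi(y)$. As a sanity check, this is also consistent with Corollary~\ref{cor:qperms}, which already implies the forward inclusion for the larger set of quantum $(G,G')$-invariant correlation matrices; the new content is that every such permutation correlation is in fact realized by an honest $(G,G')$-invariant quantum Latin square, and this realization is exhibited explicitly by the trivial-character construction.
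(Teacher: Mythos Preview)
Your argument is correct and matches the paper's intended route: the corollary is marked with a bare \qed\ because both inclusions are immediate from the preceding lemma, exactly as you spell out. Your careful bookkeeping on the direction of the isomorphism (so that $U \circ \overline{U} = P^{\sigma^{-1}}$ under the convention $P^\pi_{x,y} = \delta_{x,\pi(y)}$) and your observation that Corollary~\ref{cor:qperms} already handles the forward inclusion are both on point.
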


\section{Support Graphs}\label{sec:supportgraphs}


In this section we will introduce the notion of the \emph{support graph} of a correlation for the bijection game and see some connections between properties of this graph and properties of the correlation. We will start with general non-signalling correlations, then specialize to quantum correlations, and finally to group invariant correlations. We will see that the support graph of a correlation $p$ arising from a group invariant quantum Latin square $\Psi$ contains interesting information about $\Psi$. We begin with the definition of a support graph:

\begin{definition}
    Given a winning correlation $p$ for the $(G,G')$-bijection game, the \emph{support (di)graph} of $p$, denoted $X^p$, is the (di)graph with vertex set $\{(a,b) \in G \times G' : p(a,a|b,b) \ne 0\}$ such that there is an arc from $(a,b)$ to $(c,d)$ if they are distinct and $p(a,c|b,d) \ne 0$.
\end{definition}

Note that the sets $\{(x,b) : x \in G\} \cap V(X^p)$ and $\{(a,y) : y \in G'\} \cap V(X^p)$ are independent sets in the support graph for all $a \in G$ and $b \in G'$, since $p(x,x'|b,b) = 0$ if $x \ne x'$ and $p(a,a|y,y') = 0$ if $y \ne y'$. For quantum correlations the support (di)graph is in fact always a graph by Equation~\eqref{eq:qcorr} and the cyclicity of trace. 

\subsection{Support graphs of non-signalling correlations}\label{subsec:nonsigsupp}

Recall that a correlation $p$ is \emph{non-signalling} if both $\sum_{x} p(a,x|b,y)$ and $\sum_x p(x,a|y,b)$ are independent of $y$. If $p$ is a winning non-signalling correlation for the $(G,G')$-bijection game, then both of these so-called ``marginal probabilities" are equal to $p(a,a|b,b)$. In other words, Alice and Bob have the same marginal probabilities and so we may simply denote this by $p(a|b)$. This fact will be useful in the proofs of Lemma~\ref{lem:suppflow} and Theorem~\ref{thm:NSred}.

A common alternative term for ``classical correlation" is ``local correlation", and so a nonlocal correlation is simply a non-classical correlation. A correlation $\hat{p}$ is said to be \emph{strongly nonlocal}~\cite{strongnonlocal} if there is no classical correlation $p$ such that $p(a,b|x,y) > 0 \Rightarrow \hat{p}(a,b|x,y)$ for all $a,b,x,y$. In other words, $\hat{p}$ is strongly nonlocal if there is no classical correlation that is zero everywhere $\hat{p}$ is zero. Suppose that $\hat{p}$ is strongly nonlocal. We can then define a nonlocal game such that Alice and Bob win when responding with $a$ and $b$ upon inputs $x$ and $y$ if $\hat{p}(a,b|x,y) > 0$. Clearly, $\hat{p}$ will win this game with probability one. Moreover, any correlation that wins this game perfectly must be zero everywhere $\hat{p}$ is zero, and thus no classical correlation can win the game since $\hat{p}$ is strongly nonlocal. Conversely, if $\hat{p}$ can perfectly win some nonlocal game that cannot be won perfectly by any classical correlation, this implies $\hat{p}$ is strongly nonlocal. Thus the strongly nonlocal correlations are precisely those that can win some nonlocal game that cannot be won by any classical correlation. The following lemma shows that strong nonlocality of a group invariant correlation is captured by its support graph. Here we say a subset $C$ of vertices in a digraph is a clique if there is an arc from $x$ to $y$ for every distinct $x,y \in C$. 

\begin{lemma}\label{lem:strongnonlocal}
    Let $p$ be a winning correlation for the $(G,G')$-bijection game. Then $p$ is strongly nonlocal if and only if its support (di)graph has no clique of size $|G|$.
\end{lemma}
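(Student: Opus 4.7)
The plan is to translate the statement about strong nonlocality into a statement about deterministic winning strategies, and then see that such deterministic strategies are in bijection with size-$|G|$ cliques of $X^p$. Recall that $p$ is strongly nonlocal if and only if there is no classical correlation $q$ with $\supp(q) \subseteq \supp(p)$. Since $p$ is winning for the $(G,G')$-bijection game, any such $q$ is automatically also winning, and any classical winning correlation is a convex combination of deterministic winning strategies with each term still supported in $\supp(p)$ (the losing probability of the mixture is zero so each deterministic term with positive weight must itself be winning). As observed just after the definition of deterministic strategies in Section~\ref{sec:prelims}, the deterministic winning strategies for the $(G,G')$-bijection game correspond precisely to bijections $\pi \in \bij(G,G')$ via $p_\pi(a,c|b,d)=1$ iff $a=\pi(b)$ and $c=\pi(d)$. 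So $p$ is strongly nonlocal if and only if no bijection $\pi:G'\to G$ satisfies $p(\pi(b),\pi(d)\mid b,d)\neq 0$ for all $b,d\in G'$.

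I will then set up the dictionary between such bijections and large cliques in $X^p$. Given a bijection $\pi:G' \to G$ satisfying the above condition, consider the set $C_\pi = \{(\pi(b),b):b\in G'\}$. Setting $b=d$ shows $p(\pi(b),\pi(b)\mid b,b)\neq 0$, so each $(\pi(b),b)$ is a vertex of $X^p$; and for $b\neq d$ the nonvanishing of $p(\pi(b),\pi(d)\mid b,d)$ is precisely the condition for an arc from $(\pi(b),b)$ to $(\pi(d),d)$ in $X^p$. Hence $C_\pi$ is a clique of size $|G'|=|G|$. Conversely, given any clique $C \subseteq V(X^p)$ with $|C|=|G|$, I use the observation made just after the definition of support graph that the fibers of the two projections $V(X^p)\to G$ and $V(X^p)\to G'$ are independent sets. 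Thus both coordinate projections restricted to $C$ are injective, and since $|C|=|G|=|G'|$ they are both surjective. So $C$ has the form $\{(\pi(b),b):b\in G'\}$ for a unique bijection $\pi:G'\to G$, which is then easily verified to satisfy the support condition above via the defining property of arcs in $X^p$.

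Combining these two directions gives the equivalence: a bijection $\pi$ witnessing that $p$ is not strongly nonlocal exists if and only if $X^p$ contains a clique of size $|G|$. I expect no real obstacle here; the only subtle point worth being careful about is reducing a general classical correlation supported in $\supp(p)$ to a single deterministic winning strategy supported in $\supp(p)$, which uses the argument in the first paragraph.
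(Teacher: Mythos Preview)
Your proposal is correct and follows essentially the same approach as the paper's proof: both directions pass through the correspondence between bijections $\pi:G'\to G$ with $p(\pi(b),\pi(d)\mid b,d)\neq 0$ for all $b,d$ and cliques of size $|G|$ in $X^p$, using the independence of the coordinate fibers to see that any such clique has the form $\{(\pi(b),b):b\in G'\}$. The only stylistic difference is that you first argue each deterministic term in the convex decomposition of $q$ is itself winning (hence a bijection), whereas the paper takes arbitrary functions $f_A,f_B$ and uses that $p$ is winning to force $f_A=f_B$ a bijection; these are equivalent.
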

\begin{proof}
    Let $X$ be the support graph of $p$. We will prove that $p$ is not strongly nonlocal if and only if $X$ has a clique of size $|G|$.

    Suppose that $C$ is a clique of size $|G|$ in $X$. Since no two vertices $(a,b)$ and $(c,d)$ that agree in exactly one coordinate can be adjacent, there must exist a bijection $\pi \in \bij(G,G')$ such that $C = \{(\pi(y),y): y \in G'\}$. Then $p_\pi(a,b|c,d)$ is nonzero only when $a = \pi(c)$ and $b = \pi(d)$, i.e., only when $(a,c), (b,d) \in C$. In this case, by definition of the support (di)graph, we have that $p(a,b|c,d)$ is nonzero. Thus $p_\pi$ is a classical correlation that is zero everywhere $p$ is zero, i.e., $p$ is not strongly nonlocal.

    Conversely, suppose that $p$ is not strongly nonlocal. Then there exists a classical correlation that is zero everywhere $p$ is zero, and by convexity there then must exist a \emph{deterministic} classical correlation with this property. In other words, there are functions $f_A, f_B : G' \to G$ such that $p(f_A(x),f_B(y)|x,y) > 0$ for all $x,y \in G'$. However, since $p$ is a winning correlation for the $(G,G')$-bijection game, it follows that $f_A = f_B$ and this function is a bijection $\pi \in \bij(G,G')$. It is then easy to see that $C := \{(\pi(y),y) : y \in G'\}$ is a clique of size $|G|$ in the support (di)graph of $p$.
\end{proof}

\begin{remark}
    The above lemma is very closely related to the result of~\cite{MRV} showing that the existence of a winning classical strategy for a ``synchronous" nonlocal game is equivalent to the existence of an independent set of a certain size in its ``game graph". The main difference is that in~\cite{MRV} one constructs the graph from a game rather than a correlation, and the graph is essentially the complement of the support graph defined here.
\end{remark}

In Section~\ref{subsec:Z24comps}, we will give an example of a $\mathbb{Z}_2^4$-invariant quantum Latin squares whose correlations are strongly nonlocal, answering a question of~\cite{quantumvsnonlocal}.

We will soon show that the connected components of the support digraph can be used to decompose a non-signalling correlation into ``smaller" correlations. First we will show that all of the connected components of the support digraph of a non-signalling correlation are strongly connected. We will do this by showing that the values a correlation takes give a flow on the support digraph. Recall that a real-valued flow in a digraph is an assignment of real numbers to its arcs such that for each vertex $v$ the sum of the values assigned to the arcs leaving $v$ is equal to the sum of the values assigned to the arcs entering $v$. 

\begin{lemma}\label{lem:suppflow}
    Let $p$ be a winning non-signalling correlation for the $(G,G')$-bijection game. Then assigning the value $p(a,c|b,d)$ to the arc from $(a,b)$ to $(c,d)$ in the support digraph gives a real-valued nowhere-zero flow in $X^p$. As a consequence, every weakly connected component of $X^p$ is strongly connected.
\end{lemma}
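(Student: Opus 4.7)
I would split the proof into three steps: a short preliminary observation about which $(c,d)$ can receive nonzero mass from $(a,b)$, a direct check of flow conservation, and a standard flow-theoretic deduction of strong connectivity.

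\textbf{Preliminary observation.} Fix $(a,b) \in V(X^p)$ and $(c,d) \in G \times G'$. I would first argue that $p(a,c|b,d) = 0$ whenever $(c,d) \notin V(X^p)$. Since $p$ wins the bijection game, $p(a',c|d,d) = 0$ unless $a'=c$, so Bob's marginal at $(c,d,d)$ collapses to the diagonal:
\[
p(c,c|d,d) = \sum_{a' \in G} p(a',c|d,d).
\]
By non-signalling the right-hand side equals $\sum_{a'} p(a',c|b,d)$, which is a sum of non-negative terms. Hence $p(c,c|d,d) \geq p(a,c|b,d)$, and $(c,d) \notin V(X^p)$ (i.e.\ $p(c,c|d,d)=0$) forces $p(a,c|b,d) = 0$. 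The analogous statement using Alice's marginal gives the same conclusion for the source vertex.

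\textbf{Flow conservation.} Because of the observation, I may freely extend any sum over arcs of $X^p$ to a sum over $G \times G'$. The out-flow at $(a,b)$ is
\[
\sum_{(c,d)\neq(a,b)} p(a,c|b,d) = \Bigl(\sum_{c\in G,\,d\in G'} p(a,c|b,d)\Bigr) - p(a,a|b,b).
\]
By non-signalling, $\sum_c p(a,c|b,d)$ does not depend on $d$; taking $d = b$ and applying the winning synchronous condition collapses the sum to $p(a,a|b,b)$. Thus the total is $|G'|\,p(a,a|b,b)$ and the out-flow equals $(|G'|-1)\,p(a,a|b,b)$. The symmetric calculation, summing in-arc values $p(c,a|d,b)$ over $(c,d)$ and using Alice's marginal, yields the same expression, so conservation holds at every vertex. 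Nowhere-zeroness is then immediate from the definition of $X^p$, and in fact the flow is strictly positive.

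\textbf{Strong connectivity.} Let $C$ be a weakly connected component of $X^p$, fix $u \in C$, and let $R \subseteq C$ be the set of vertices reachable from $u$ by a directed path. By construction no arc leaves $R$, so summing the conservation law over all vertices in $R$ the total out-flow through the cut $R \to C\setminus R$ is $0$, which by conservation equals the total in-flow through $C\setminus R \to R$. Since every arc carries strictly positive flow, both cut sums are empty, i.e.\ there are no arcs between $R$ and $C \setminus R$ in either direction. Weak connectivity of $C$ then forces $R = C$, so $C$ is strongly connected.

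The only part that is not a routine calculation is the preliminary observation, which is the key to reducing sums over arcs to sums over all pairs; once it is in hand, both the flow identity and the strong-connectivity consequence are straightforward.
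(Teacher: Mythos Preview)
Your proof is correct and follows essentially the same route as the paper: compute out- and in-flow at a vertex via the non-signalling marginals (both equal a multiple of $p(a,a|b,b)$), observe the flow is strictly positive, and deduce strong connectivity of each weak component by a cut argument. The only difference is cosmetic---you subtract the self-term and make explicit the observation that $p(a,c|b,d)=0$ when $(c,d)\notin V(X^p)$, which the paper uses implicitly when it sums over all of $G\times G'$.
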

\begin{proof}
    First note that by definition of the support graph, the assignment described is nowhere-zero. We will show that the total flow leaving a vertex is equal to the total flow entering a vertex, which proves that the assignment is a flow. The total flow leaving vertex $(a,b)$ is
    \[\sum_{y \in G'} \sum_{x \in G} p(a,x|b,y) = \sum_{y \in G'} p(a|b) = |G'|p(a|b).\]
    Similarly, the total flow entering vertex $(a,b)$ is
    \[\sum_{y \in G'} \sum_{x \in G} p(x,a|y,b) = \sum_{y \in G'} p(a|b) = |G'|p(a|b).\]
    Therefore the assignment is a nowhere-zero flow.

    Now suppose that $Y$ is a weakly connected component of $X^p$. If $Y$ is not strongly connected, then its vertex set can be partitioned into two nonempty parts $A$ and $B$ such that there are arcs going from $A$ to $B$, but no arcs going from $B$ to $A$. However, since the flow described above is positive on all arcs, this would imply that the net flow leaving $A$ is nonzero, a contradiction. Therefore, every weakly connected component of $X^p$ is strongly connected.
\end{proof}

\begin{theorem}\label{thm:NSred}
    Let $p$ be a winning non-signalling correlation for the $(G,G')$-bijection game, and let $V_1, V_2, \ldots, V_k$ be the vertex sets of the connected components of its support graph $X^p$. Then there exist $s_1, \ldots, s_k \in \mathbb{R}^+$ such that for all $i=1, \ldots, k$
    \[\sum_{a,c \in G: (a,b),(c,d) \in V_i} p(a,c|b,d) = s_i \text{ for all } b,d \in G'.\]
    It then follows that functions $p_1, \ldots, p_k : G \times G \times G' \times G' \to \mathbb{R}$ defined as
    \[p_i(a,c|b,d) = \begin{cases}
        \frac{1}{s_i}p(a,c|b,d) & \text{if } (a,b),(c,d) \in V_i \\
        0 & \text{o.w.}
    \end{cases} \]
    are \emph{distinct} non-signalling winning correlations for the $(G,G')$-isomorphism game and moreover $p$ is equal to the convex combination $\sum_{i=1}^k s_ip_i$. It follows that $p$ is non-classical if and only if at least one of the $p_i$ are non-classical, and $p$ is strongly nonlocal if and only if all of the $p_i$ are strongly nonlocal.
\end{theorem}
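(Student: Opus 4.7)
The plan is to reduce the entire theorem to one \emph{localization lemma}, after which every stated property becomes straightforward bookkeeping or a short convexity argument.

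First I would prove that $p(a,c|b,d) \ne 0$ forces $(a,b)$ and $(c,d)$ to lie in a common component $V_i$. For the case $(a,b) \ne (c,d)$, the definition of $X^p$ gives an arc from $(a,b)$ to $(c,d)$, so they lie in the same weakly connected component. If instead $(a,b) \notin V(X^p)$, then $p(a|b) = 0$, and the non-signalling identity $\sum_c p(a,c|b,d) = p(a|b)$ forces $p(a,c|b,d) = 0$. The case $(a,b)=(c,d)$ is trivial. Combined, this shows that for any fixed $(a,b)\in V_i$ and any $d$,
\begin{equation*}
\sum_{c\,:\,(c,d)\in V_i} p(a,c|b,d) \;=\; \sum_{c\in G} p(a,c|b,d) \;=\; p(a|b).
\end{equation*}

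Summing over $a$ with $(a,b)\in V_i$ yields $S_i(b,d) := \sum_{(a,b),(c,d)\in V_i} p(a,c|b,d) = \sum_{a : (a,b)\in V_i} p(a|b)$, which depends only on $b$. By the symmetric calculation on Bob's side it also depends only on $d$, hence $S_i(b,d)$ is a constant $s_i$. Positivity follows because $V_i$ is nonempty and every $(a,b)\in V_i$ has $p(a|b)>0$ by the definition of the vertex set. Given this, I would verify the properties of each $p_i$ in order: normalization is $\sum_{a,c}p_i(a,c|b,d) = S_i(b,d)/s_i = 1$; the winning condition is inherited from $p$; non-signalling holds because $\sum_c p_i(a,c|b,d)$ equals $p(a|b)/s_i$ when $(a,b)\in V_i$ and $0$ otherwise, in either case independent of $d$; the $p_i$ are pairwise distinct since $p_i(a,a|b,b)>0$ precisely on $V_i$; and $p = \sum_i s_i p_i$ holds termwise by the localization lemma, with $\sum_i s_i = \sum_{a,c} p(a,c|b,d) = 1$.

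For the classical/non-classical equivalence, one direction is immediate from convexity. For the converse, I would expand a classical $p$ as $\sum_\pi \alpha_\pi p_\pi$ with $\pi\in\bij(G,G')$ and observe that every $\pi$ with $\alpha_\pi>0$ contributes a $|G|$-clique $\{(\pi(b),b):b\in G'\}$ in $X^p$, which must lie entirely in a single component; grouping terms by component $V_i$ produces a classical decomposition $p_i = s_i^{-1}\sum_{\pi\in A_i}\alpha_\pi p_\pi$. The strongly-nonlocal equivalence then follows from \Lem{strongnonlocal}: the support graph of $p_i$ is exactly the induced subgraph $X^p[V_i]$, and any clique of size $|G|$ in $X^p$ is connected and hence contained in a single $V_i$, so $X^p$ contains such a clique iff some $V_i$ does. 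The main obstacle is not technical but organizational --- the whole proof rests on keeping the localization lemma clean and reusing it uniformly, since each of the component properties (normalization, non-signalling, the decomposition $p=\sum s_i p_i$, and the clique transfer for strong nonlocality) is a one-line consequence of it.
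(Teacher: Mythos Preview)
Your proposal is correct and follows essentially the same approach as the paper's proof. The paper carries out exactly your ``localization lemma'' implicitly in its first displayed computation (dropping the restriction $(c,d)\in V_i$ in the inner sum), derives that $S_i(b,d)$ depends only on $b$ and by symmetry only on $d$, and then verifies the remaining properties and the classical/strongly-nonlocal equivalences in the same manner you outline; the only cosmetic difference is that you isolate the localization step as a named lemma while the paper absorbs it into a single chain of equalities.
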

\begin{proof}
    For any $b,d \in G'$, we have that
    \begin{align}\label{eq:Visum}
        \sum_{a,c \in G: (a,b),(c,d) \in V_i} p(a,c|b,d) &= \sum_{a \in G: (a,b) \in V_i}\sum_{c \in G: (c,d) \in V_i} p(a,c|b,d) \\
        &= \sum_{a \in G: (a,b) \in V_i}\sum_{c \in G} p(a,c|b,d) \\
        &= \sum_{a \in G: (a,b) \in V_i} p(a|b).
    \end{align}
    So we see that the sum only depends on $b$. However, it can be analogously shown that the sum only depends on $d$, and therefore the sum must be equal to some constant $s_i$ for all $b,d \in G'$, as desired. To see that $s_i > 0$, note that $p(a|b) = p(a,a|b,b) > 0$ for all $(a,b) \in V(X^p)$ by definition of the support graph.

    We now let $p_1, \ldots, p_k$ be defined as in the theorem statement. These are all correlations, since they only take nonnegative values and
    \[\sum_{a,c \in G} p_i(a,c|b,d) = \frac{1}{s_i}\sum_{a,c \in G: (a,b),(c,d) \in V_i} p(a,c|b,d) = \frac{1}{s_i}s_i = 1.\]
    Also note that Equation~\eqref{eq:Visum} shows that they are non-signalling. Lastly, they win the $(G,G')$-bijection game because they are zero everywhere $p$ is zero. They are also clearly distinct correlations since $p_i(a,a|b,b) \ne 0$ if and only if $(a,b) \in V_i$.

    The fact that $p$ is equal to the convex combination of $p_1, \ldots, p_k$ given in the theorem statement is easy to check.

    Since $p$ is a convex combination of the $p_i$, it immediately follows that if they are all classical then $p$ must be classical. Now suppose that $p$ is classical. Then $p = \sum_{\pi \in \bij(G,G')} \alpha_\pi p_\pi$ for some nonnegative coefficients $\alpha_\pi$ that sum to 1. It is easy to see that $X^p$ is the union of the support digraphs $X^{p_\pi}$ such that $\alpha_\pi > 0$. Thus every $X^{p_{\pi}}$ is a subgraph of $X^p$. Also, it is straightforward to check that $X^{p_\pi}$ is simply the complete graph on the vertex set $\{(\pi(b),b) : b \in G'\}$. Therefore, each $X^{p_\pi}$ with $\alpha_\pi > 0$ is a subgraph of one of the connected components of $X^p$. So for each $i \in [k]$, define $S_i = \{\pi \in \bij(G,G') :  V(X^{p_\pi}) \cap V_i \ne \varnothing\}$. We will show that for all $i \in [k]$,
    \[p_i = \left(\sum_{\pi \in S_i} \alpha_\pi \right)^{-1} \sum_{\pi \in S_i} \alpha_\pi p_\pi.\]
    Without loss of generality we can consider the $i = 1$ case. Let $p'_1$ denote the righthand side of the above equation and note that it is a convex combination of classical correlations and therefore is a classical correlation. For $\pi \in S_1$, we have that $V(X^{p_\pi})$ nontrivially intersects $V_1$ and therefore it is contained in $V_1$ since $X^{p_\pi}$ is a connected subgraph of $X^p$. Therefore, if $(a,b) \notin V_1$, then $a \ne \pi(b)$ and thus $p_\pi(a,c|b,d) = 0$ for all $c \in G$ and $d \in G'$. Similarly, $p_\pi(a,c|b,d) = 0$ if $(c,d) \notin V_1$. Thus it only remains to check that $p_1(a,c|b,d) = p'_1(a,c|b,d)$ for $(a,b),(c,d) \in V_1$. In this case, we have that $p_\pi(a,c|b,d) = 0$ if $\pi \notin S_1$ by the same argument as above. Therefore, for $(a,b),(c,d) \in V_1$ we have that
    \[\sum_{\pi \in S_1} \alpha_\pi p_\pi(a,c|b,d) = \sum_{\pi \in \bij(G,G')} \alpha_\pi p_\pi(a,c|b,d) = p(a,c|b,d) = s_1 p_1(a,c|b,d).\]
    Thus $p'_1 = s_i\left(\sum_{\pi \in S_i} \alpha_\pi \right)^{-1}p_1$. However, since both $p'_1$ and $p_1$ are correlations, the coefficient on the righthand side must be equal to 1 and therefore we have our desired equality. So we have shown that $p_1$ (and thus all $p_i$) are a convex combination of classical correlations and is therefore classical. Thus $p$ is classical if and only if every $p_i$ is classical as desired.
    
    Lastly, note that the support graph of $p_i$ is simply the connected component of $X^p$ with vertex set $V_i$. Since a digraph has a clique of a given size if and only if one of the connected components has a clique of this size, the final claim of the theorem follows from Lemma~\ref{lem:strongnonlocal}.
\end{proof}

    


One of the consequences of the above theorem is that if the support graph of a winning non-signalling correlation for the $(G,G')$-bijection game is not connected, then it is not an extreme point of the set of all winning non-signalling correlations for the $(G,G')$-bijection game, since it can be written as a nontrivial convex combination of other elements of this set. Note that the converse is not true. For example, taking a uniform combination of all deterministic classical winning correlations for the $(G,G')$-bijection game yields a classical (and thus non-signalling) correlation $p$ whose support graph is isomorphic to the complement of the cartesian product $K_{|G|} \square K_{|G|}$, which is connected for $|G| > 2$.


Before moving on to quantum correlations, we prove the following about the size of the connected components of a support graph:

\begin{lemma}\label{lem:cliques}
    Let $p$ be a winning non-signalling correlation for the $(G,G')$-bijection game, let $V_1, \ldots, V_k$ be the vertex sets of the connected components of $X^p$, and let $p_1, \ldots, p_k$ be the correlations defined in Theorem~\ref{thm:NSred}. Then, for all $i \in [k]$, we have that $|V_i| \ge |G|$ and the following are equivalent
    \begin{enumerate}
        \item $|V_i| = |G|$;
        \item $V_i$ induces a clique in $X^p$;
        \item $p_i$ is a deterministic classical correlation, i.e., $p_i = p_\pi$ for some $\pi \in \bij(G,G')$.
    \end{enumerate}
\end{lemma}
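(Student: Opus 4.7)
The plan is to establish the bound $|V_i| \geq |G|$ first, then prove the three equivalences cyclically as $(2) \Rightarrow (1) \Rightarrow (3) \Rightarrow (2)$. For the bound, the proof of \Thm{NSred} shows (using the non-signalling marginals $p(a\mid b) = p(a,a\mid b,b)$) that for every $b \in G'$,
\[
s_i \;=\; \sum_{a \in G:\,(a,b) \in V_i} p(a \mid b),
\]
and since $s_i > 0$, each column $b$ contributes at least one vertex to $V_i$. Writing $|V_i| = \sum_{b \in G'} |\{a : (a,b) \in V_i\}|$ then gives $|V_i| \geq |G'| = |G|$, with equality if and only if each column contributes exactly one vertex to $V_i$.

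For $(2) \Rightarrow (1)$: as observed just after the definition of $X^p$, two vertices sharing a coordinate are never adjacent in the support digraph. If $V_i$ induces a clique, then no two of its vertices share a coordinate, so $|\{a : (a,b) \in V_i\}| \leq 1$ for each $b$; combined with the lower bound this forces $|V_i| = |G|$.

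For $(1) \Rightarrow (3)$: equality yields a well-defined function $\pi : G' \to G$ sending $b$ to the unique $a$ with $(a,b) \in V_i$. Since $p_i$ vanishes outside $V_i \times V_i$ and $\sum_{a,c} p_i(a,c \mid b,d) = 1$, all the mass is concentrated at the single pair $(\pi(b), \pi(d))$, giving $p_i(\pi(b), \pi(d) \mid b, d) = 1$ for every $b, d \in G'$. Thus $p_i$ is deterministic, and since it wins the $(G,G')$-bijection game, $\pi(b) \neq \pi(d)$ whenever $b \neq d$, so $\pi$ is a bijection and $p_i = p_\pi$.

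For $(3) \Rightarrow (2)$: if $p_i = p_\pi$, then any two distinct vertices of $V_i$ have the form $(\pi(b), b), (\pi(d), d)$ with $b \neq d$, and the convex decomposition $p = \sum_j s_j p_j$ from \Thm{NSred} gives
\[
p(\pi(b), \pi(d) \mid b, d) \;\geq\; s_i\, p_\pi(\pi(b), \pi(d) \mid b, d) \;=\; s_i \;>\; 0,
\]
so the two vertices are adjacent in $X^p$ and $V_i$ is a clique. The only nontrivial step is $(1) \Rightarrow (3)$, where one must combine the vanishing of $p_i$ off $V_i \times V_i$ with its normalization to extract the $0/1$-valued deterministic structure; the remaining implications are short combinatorial consequences.
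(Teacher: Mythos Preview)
Your proof is correct and follows essentially the same approach as the paper. The only differences are organizational: the paper reduces to the connected case and argues the lower bound via an out-degree estimate (each vertex has an out-neighbour in every column), whereas you count column-by-column using $s_i = \sum_{a:(a,b)\in V_i} p(a\mid b) > 0$; and the paper proves $(1)\Leftrightarrow(2)$ then $(1)\wedge(2)\Leftrightarrow(3)$, while you close the cycle $(2)\Rightarrow(1)\Rightarrow(3)\Rightarrow(2)$ and give an explicit convex-combination argument for $(3)\Rightarrow(2)$ rather than deferring to the proof of \Thm{NSred}.
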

\begin{proof}
    By Theorem~\ref{thm:NSred}, it suffices prove the lemma in the case where $X^p$ is connected. Let $n = |G| = |G'|$. First, we show that the out/in-degree of every vertex in $X^p$ is at least $n-1$. Let $(a,b) \in V(X^p)$, i.e., $p(a|b) = p(a,a|b,b) \ne 0$. Then for each $d \in G'$, we have that $\sum_{c \in G} p(a,c|b,d) = p(a|b) \ne 0$, and therefore there exists some $c \in G$ such that there is an arc from $(a,b)$ to $(c,d)$. So the out-degree of $(a,b)$ is at least $n-1$, and the same holds for the in-degree by an analogous argument. It immediately follows that $|V(X^p)| \ge n$, and moreover if equality holds then we must have that $X^p$ is a clique, so we have shown $(1) \Rightarrow (2)$. Conversely, any clique in $X^p$ has size at most $n$, since $X^p$ can be partitioned into the $n$ independent sets $\{\{(a,b) : b \in G'\} : a \in G\}$. Thus if $X^p$ is a clique, then it contains exactly $n$ vertices, i.e., $(2) \Rightarrow (1)$.
    
    Now suppose that $(1)$ and $(2)$ hold. Then there exists $\pi \in \bij(G,G')$ such that $V(X^p) = \{(\pi(b),b) : b \in G'\}$. Therefore, by definition of support graph, $p(a,c|b,d) \ne 0$ if and only if $a = \pi(b)$ and $c = \pi(d)$. This moreover implies that $p(\pi(b),\pi(d)|b,d) = 1$ for all $b,d \in G'$. Thus $p = p_\pi$ as desired. The converse was already shown in the proof of Theorem~\ref{thm:NSred}.
\end{proof}

\subsection{Support graphs of quantum correlations}\label{subsec:qsupp}

We now will focus on support graphs of quantum correlations for the bijection game, which always arise from quantum permutation matrices up to convex combinations. We begin by showing that the support graph of such a correlation can be described in terms of the quantum permutation matrix.

\begin{lemma}\label{lem:qpermsupp}
    Let $\mathcal{Q} = (q_{a,b})_{a \in G, b \in G'}$ be a quantum permutation matrix and let $p$ be the corresponding correlation, i.e.,
    \[p(a,c|b,d) = \tr(q_{a,b}q_{c,d}).\]
    Then $(a,b) \in V(X^p)$ if and only if $q_{a,b} \ne 0$ and $(a,b) \sim (c,d)$ if and only if $q_{a,b}q_{c,d} \ne 0$.
\end{lemma}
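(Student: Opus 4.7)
My plan is to translate both ``iff'' statements into elementary facts about orthogonal projections, using only the defining properties of a quantum permutation matrix (each $q_{a,b}$ is self-adjoint and idempotent).

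For the vertex claim, I would compute $p(a,a|b,b) = \tr(q_{a,b} q_{a,b}) = \tr(q_{a,b}^2) = \tr(q_{a,b})$ using $q_{a,b}^2 = q_{a,b}$. Since $q_{a,b}$ is an orthogonal projection and the (normalized) trace is faithful on positive semidefinite matrices, we have $\tr(q_{a,b}) = 0$ if and only if $q_{a,b} = 0$. Hence $p(a,a|b,b) \neq 0$ exactly when $q_{a,b} \neq 0$, which matches the definition of $V(X^p)$.

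For the arc claim, the key identity I would establish is
\[
\tr(q_{a,b}\, q_{c,d}) \;=\; \tr\bigl((q_{a,b} q_{c,d})^\dagger (q_{a,b} q_{c,d})\bigr),
\]
which follows by writing $\tr(q_{a,b} q_{c,d}) = \tr(q_{a,b}^2\, q_{c,d}^2)$, applying cyclicity of trace to obtain $\tr(q_{c,d} q_{a,b}\, q_{c,d} q_{a,b})$, and then recognizing the last expression as $\tr\bigl((q_{a,b} q_{c,d})^\dagger(q_{a,b} q_{c,d})\bigr)$ via self-adjointness of the projections. The right-hand side is a nonnegative multiple of the squared Hilbert--Schmidt (Frobenius) norm of $q_{a,b} q_{c,d}$, so it vanishes precisely when $q_{a,b} q_{c,d} = 0$. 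Combined with the observation that the definition of an arc in $X^p$ requires $(a,b) \neq (c,d)$ and $p(a,c|b,d) \neq 0$, this gives the desired equivalence.

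I do not expect any serious obstacle: both halves reduce to the fact that a positive semidefinite matrix has zero trace iff it is zero, applied first to $q_{a,b}$ itself and then to $(q_{a,b} q_{c,d})^\dagger(q_{a,b} q_{c,d})$. The only small subtlety worth noting is that the identity $\tr(q_{a,b} q_{c,d}) = \|q_{a,b} q_{c,d}\|_{\mathrm{HS}}^2$ (up to the normalization factor in $\tr$) uses both idempotency and self-adjointness of the $q_{x,y}$, so the argument genuinely uses that $\mathcal{Q}$ is a quantum permutation matrix and not merely an entrywise POVM.
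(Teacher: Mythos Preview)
Your proof is correct and is essentially the same as the paper's, just with the underlying fact spelled out in detail. The paper's proof is a one-liner invoking the well-known fact that for positive semidefinite matrices $M$ and $N$ one has $\tr(MN)=0$ if and only if $MN=0$; your argument simply supplies a proof of that fact in the special case of projections via the identity $\tr(q_{a,b}q_{c,d}) = \tr\bigl((q_{a,b}q_{c,d})^\dagger(q_{a,b}q_{c,d})\bigr)$.
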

\begin{proof}
    This is immediate from the definition of support graph and the fact that $\tr(MN) = 0$ if and only if $MN = 0$ for positive semidefinite matrices $M$ and $N$.
\end{proof}

Note that if a quantum permutation matrix $\mathcal{Q}$ comes from a quantum Latin square, i.e., $q_{a,b} = \outerp{\psi_{a,b}}{\psi_{a,b}}$ for all $a \in G$, $b \in G'$, then the vertex set of the support graph of the corresponding correlation is $G \times G'$.

Given a finite multiset $S \in \mathbb{C}^d$ of nonzero vectors, we define the \emph{non-orthogonality graph of $S$} to be the graph with vertex set $S$ such that two vectors in $S$ are adjacent if and only if they are not orthogonal. As a corollary to the above lemma, we see that the support graph of a correlation arising from a quantum Latin square is isomorphic to the non-orthogonality graph of the vectors in the quantum Latin square. 

\begin{cor}\label{cor:nonortho}
    Let $\Psi = (\ket{\psi_{a,b}})_{a \in G, b \in G'}$ be a quantum Latin square and let $p$ be the corresponding correlation, i.e.,
    \[p(a,c|b,d) = \frac{1}{|G|}|\!\inner{\psi_{a,b}}{\psi_{c,d}}\!|^2.\]
    Then the map $(a,b) \mapsto \ket{\psi_{a,b}}$ is an isomorphism from the support graph of $p$ to the non-orthogonality graph of $\{\ket{\psi_{a,b}} : (a,b) \in G \times G'\}$. 
\end{cor}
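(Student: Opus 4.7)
The plan is to reduce the statement directly to Lemma~\ref{lem:qpermsupp} applied to the quantum permutation matrix $\mathcal{P}^\Psi$ coming from the quantum Latin square $\Psi$ via Equation~\eqref{eq:qls2qpm}, namely $\mathcal{P}^\Psi = (q_{a,b})$ with $q_{a,b} = \outerp{\psi_{a,b}}{\psi_{a,b}}$. As recorded in Section~\ref{sec:QLS}, the correlation $p$ produced by $\mathcal{P}^\Psi$ via Equation~\eqref{eq:qcorr} is precisely $p(a,c|b,d) = \frac{1}{|G|}|\inner{\psi_{a,b}}{\psi_{c,d}}|^2$, so Lemma~\ref{lem:qpermsupp} applies verbatim to this $\mathcal{P}^\Psi$ and $p$.

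First I would verify that the map $\Phi : (a,b) \mapsto \ket{\psi_{a,b}}$ is a bijection from $G \times G'$ onto the multiset $S = \{\ket{\psi_{a,b}} : (a,b) \in G \times G'\}$, viewed as indexed by its positions. Since each $\ket{\psi_{a,b}}$ is a unit vector it is nonzero, so $q_{a,b} \neq 0$ for every $(a,b)$; by Lemma~\ref{lem:qpermsupp} this means $V(X^p) = G \times G'$, which matches the domain of $\Phi$. Thus $\Phi$ is a bijection between the vertex sets of $X^p$ and of the non-orthogonality graph of $S$.

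Next I would check that $\Phi$ preserves adjacency. Fix distinct $(a,b), (c,d) \in G \times G'$. A direct computation gives
\[
q_{a,b}\, q_{c,d} \;=\; \outerp{\psi_{a,b}}{\psi_{a,b}} \cdot \outerp{\psi_{c,d}}{\psi_{c,d}} \;=\; \inner{\psi_{a,b}}{\psi_{c,d}} \,\outerp{\psi_{a,b}}{\psi_{c,d}},
\]
which is nonzero if and only if $\inner{\psi_{a,b}}{\psi_{c,d}} \neq 0$, that is, if and only if $\Phi(a,b)$ and $\Phi(c,d)$ are non-orthogonal. By Lemma~\ref{lem:qpermsupp}, the left-hand side being nonzero is exactly the condition $(a,b) \sim (c,d)$ in $X^p$. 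Thus $\Phi$ is an isomorphism from $X^p$ to the non-orthogonality graph of $S$.

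There is no genuine obstacle here; the only subtlety is the bookkeeping point that $S$ is a multiset indexed by $G \times G'$ rather than a set (different positions may carry the same vector), which is why $\Phi$ makes sense as a bijection at the level of vertex sets. Everything else is a one-line consequence of Lemma~\ref{lem:qpermsupp} together with the identity $\outerp{u}{u}\outerp{v}{v} = \inner{u}{v}\outerp{u}{v}$.
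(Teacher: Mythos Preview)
Your proof is correct and follows essentially the same approach as the paper: both reduce to Lemma~\ref{lem:qpermsupp} together with the observation that $\outerp{\psi}{\psi}\outerp{\varphi}{\varphi} = 0$ if and only if $\ket{\psi}$ and $\ket{\varphi}$ are orthogonal. You have simply spelled out more of the details (the explicit product formula and the multiset bookkeeping) than the paper's one-line proof does.
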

\begin{proof}
    This is immediate from Lemma~\ref{lem:qpermsupp} and the fact that $\outerp{\psi}{\psi}\outerp{\varphi}{\varphi} = 0$ if and only if $\ket{\psi}$ and $\ket{\varphi}$ are orthogonal.
\end{proof}

\begin{remark}
    In the case of a $(G,G')$-invariant quantum Latin square, the map $(a,b) \mapsto \ket{\psi_{a,b}}$ from the above corollary is not only an isomorphism. In the parlance of~\cite{jjortho}, it is also a \emph{symmetric orthogonal embedding} of the complement of the support graph. Conversely, any symmetric orthogonal embedding of the Cayley graph $\Cay(G \times G', \{(a,b) \in G \times G' \setminus \{(e,e)\} : a = e \text{ or } b = e\}$ in $\mathbb{C}^{|G|}$ yields a $(G,G')$-invariant quantum Latin square.
\end{remark}

Given a subset $S \subseteq M_n(\mathbb{C})$, we refer to the self-adjoint algebra
\[\spn\left\{\prod_{i=1}^k M_i : k \in \mathbb{N}, M_i \in S \text{ or } M_i^\dagger \in S \text{ for all } i \in [k]\right\}\]
as the \emph{algebra generated by $S$}. The next lemma shows that the algebra generated by the entries of a quantum permutation matrix is reducible if the support graph of its correlation is disconnected.

\begin{lemma}\label{lem:disconn2red}
    Let $\mathcal{Q} = (q_{a,b})_{a \in G, b \in G'}$ be a quantum permutation matrix and let $p$ be the corresponding correlation. Let $V_1, \ldots, V_c$ be the vertex sets of the connected components of the support graph of $p$, let $\A_i$ be the algebra generated by $\{q_{a,b} : (a,b) \in V_i)\}$. Then each $\A_i$ is nontrivial and $\A_i\A_j = \delta_{ij} \A_i$. In particular, this implies that if $X^p$ is disconnected, then the algebra generated by the entries of $\mathcal{Q}$ is reducible.
\end{lemma}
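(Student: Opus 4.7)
The plan is to verify each of the three assertions in turn, making repeated use of \Lem{qpermsupp}. For \emph{nontriviality}, this is immediate: by definition $V_i$ is the vertex set of a connected component of the support graph, hence nonempty, and for every $(a,b) \in V_i$ we have $q_{a,b} \neq 0$ by \Lem{qpermsupp}, so $\A_i$ contains a nonzero projection.

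For the orthogonality $\A_i \A_j = 0$ when $i \neq j$, I would reduce to products of generators. The key observation from \Lem{qpermsupp} is that for any $(a,b) \in V_i$ and $(c,d) \in V_j$ with $i \neq j$, the vertices $(a,b)$ and $(c,d)$ lie in distinct connected components of $X^p$ and so are non-adjacent, forcing $q_{a,b} q_{c,d} = 0$. Since each $q_{a,b}$ is self-adjoint, $\A_i$ and $\A_j$ are spanned by products of the form $q_{a_1,b_1} \cdots q_{a_k,b_k}$ and $q_{c_1,d_1} \cdots q_{c_\ell,d_\ell}$ respectively; any product of such elements contains the middle factor $q_{a_k,b_k} q_{c_1,d_1} = 0$, so vanishes. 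By linearity, $\A_i \A_j = 0$. For $i = j$, $\A_i \A_i \subseteq \A_i$ is automatic since $\A_i$ is an algebra, while the reverse inclusion follows from $q_{a,b} = q_{a,b} \cdot q_{a,b}$ giving every generator (and hence every element) of $\A_i$ as a product of two elements of $\A_i$.

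For reducibility, suppose $X^p$ is disconnected, so $c \geq 2$, and let $\A$ be the algebra generated by all entries of $\mathcal{Q}$. For each $i$, let $R_i$ be the sum of the ranges of the elements of $\A_i$, and let $P_i$ be the orthogonal projection onto $R_i$. Nontriviality of $\A_i$ gives $R_i \neq 0$, and the identity $\A_j \A_i = 0$ for $j \neq i$ implies (via $\langle x_j w', x_i w\rangle = \langle w', x_j^\dagger x_i w\rangle = 0$ using self-adjointness of the algebras) that $R_j \perp R_i$; hence each $R_i$ is a proper subspace of $\mathbb{C}^n$, so $P_i$ is a nontrivial projection. I would then verify that $P_i$ commutes with every generator $q_{c,d}$: if $(c,d) \in V_i$ then $q_{c,d} \in \A_i$ preserves $R_i$, and by self-adjointness also preserves $R_i^\perp$; if $(c,d) \in V_j$ with $j \neq i$ then $q_{c,d} \cdot \A_i \subseteq \A_j \A_i = 0$ shows $q_{c,d}$ annihilates $R_i$, and taking adjoints shows $q_{c,d}$ maps into $R_i^\perp$. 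Thus each $P_i$ lies in the commutant of $\A$, witnessing that $\A$ is reducible.

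The main obstacle is simply the bookkeeping in the $\A_i \A_j = 0$ step, but since the generators are self-adjoint projections there are no products of adjoints to worry about separately. The remainder is a standard argument that a self-adjoint algebra with an orthogonal-range decomposition gives rise to commuting projections in the commutant.
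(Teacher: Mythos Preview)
Your proof is correct and follows essentially the same approach as the paper: both rely on \Lem{qpermsupp} to conclude that generators from distinct components have zero product, then extend this to the full algebras. Your argument is considerably more explicit than the paper's, which simply asserts $\A_i\A_j = \delta_{ij}\A_i$ after noting the vanishing on generators and takes reducibility for granted; you additionally spell out the $\A_i\A_i = \A_i$ inclusion and construct the commutant projections $P_i$ directly.
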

\begin{proof}
    By Lemma~\ref{lem:qpermsupp}, we have that the product of any element of $\{q_{a,b} : (a,b) \in V_i)\}$ with any element of $\{q_{a,b} : (a,b) \in V_j)\}$ is zero if $i \ne j$. Thus $\A_i\A_j = \delta_{ij}\A_i$ as desired. Since $(a,b) \in V(X^p)$ implies $q_{a,b} \ne 0$, it follows that each $\A_i$ is nontrivial and thus if $X^p$ is disconnected then the algebra generated by the entries of $\mathcal{Q}$ is reducible.
\end{proof}

Note that the subalgebras $\A_i$ are not necessarily irreducible, and so it may be possible to break them down even further. However, in the case where the quantum permutation matrix comes from a quantum Latin square, this cannot happen and we can make a much stronger statement. In particular, irreducibility is equivalent to connectedness in this case.


\begin{lemma}\label{lem:irreducibility}
    Let $\Psi = (\ket{\psi_{a,b}})_{a \in G, b \in G'}$ be a quantum Latin square and let $p$ be the corresponding correlation. Let $V_1, \ldots, V_k$ be the vertex sets of the connected components of the support graph of $p$, let $\A_i$ be the algebra generated by $\{\outerp{\psi_{a,b}}{\psi_{a,b}} : (a,b) \in V_i\}$, and let $S_i = \spn\{\ket{\psi_{a,b}} : (a,b) \in V_i\}$. Then
    \begin{equation}\label{eq:AiSi}
        \A_i = \spn\{\outerp{\varphi_1}{\varphi_2} : \ket{\varphi_1}, \ket{\varphi_2} \in S_i\},
    \end{equation}
    which is irreducible, and $\A_i \A_j = \delta_{ij}\A_i$. In particular, this means that the algebra generated by $\{\outerp{\psi_{a,b}}{\psi_{a,b}} : a \in G, b \in G'\}$ is irreducible if and only if the support graph is connected.
\end{lemma}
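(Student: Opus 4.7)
The plan is to exploit \Cor{nonortho}, which identifies adjacency in the support graph with non-orthogonality of the corresponding vectors, and then use path-connectivity within each $V_i$ to build up a concrete basis for $\A_i$.

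First I would prove the identity \eqref{eq:AiSi}. The key observation is that for any walk $(a_0,b_0),(a_1,b_1),\ldots,(a_m,b_m)$ in the support graph with all vertices in $V_i$, the telescoping computation
\[\outerp{\psi_{a_0,b_0}}{\psi_{a_0,b_0}}\outerp{\psi_{a_1,b_1}}{\psi_{a_1,b_1}}\cdots\outerp{\psi_{a_m,b_m}}{\psi_{a_m,b_m}} = \left(\prod_{j=0}^{m-1}\inner{\psi_{a_j,b_j}}{\psi_{a_{j+1},b_{j+1}}}\right)\outerp{\psi_{a_0,b_0}}{\psi_{a_m,b_m}}\]
produces a nonzero multiple of $\outerp{\psi_{a_0,b_0}}{\psi_{a_m,b_m}}$, since by \Cor{nonortho} each factor $\inner{\psi_{a_j,b_j}}{\psi_{a_{j+1},b_{j+1}}}$ along an edge is nonzero. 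Since $V_i$ is connected, every pair $(a,b),(c,d)\in V_i$ is joined by such a walk, so $\outerp{\psi_{a,b}}{\psi_{c,d}}\in\A_i$. The span of these outer products is exactly $\spn\{\outerp{\varphi_1}{\varphi_2}:\varphi_1,\varphi_2\in S_i\}$, giving one inclusion. For the reverse inclusion, any product of generators of $\A_i$ is, by the same telescoping computation, either zero or a scalar multiple of some $\outerp{\psi_{a,b}}{\psi_{c,d}}$ with both pairs in $V_i$; the generators are self-adjoint, so adjoints add nothing, and $\A_i$ lies in the same span.

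From \eqref{eq:AiSi}, $\A_i$ acts as the full matrix algebra on $S_i$ (and annihilates $S_i^\perp$), so it is isomorphic to $M_{\dim S_i}(\mathbb{C})$ and hence irreducible. To establish $\A_i\A_j = \delta_{ij}\A_i$, the case $i=j$ is immediate since $\A_i$ is an algebra: $\A_i\A_i\subseteq\A_i$ by closure, and $\A_i\subseteq\A_i\A_i$ because each generator is idempotent. For $i\ne j$, vertices in distinct components of $X^p$ are non-adjacent, so by \Cor{nonortho} we have $S_i\perp S_j$; then the identity $\outerp{\varphi_1}{\varphi_2}\outerp{\varphi_3}{\varphi_4} = \inner{\varphi_2}{\varphi_3}\outerp{\varphi_1}{\varphi_4}$ vanishes for $\varphi_2\in S_i$, $\varphi_3\in S_j$, and so $\A_i\A_j = 0$ by \eqref{eq:AiSi}.

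For the final equivalence, when $X^p$ is connected $V_1 = G\times G'$ and $S_1 = \mathbb{C}^{|G|}$ (since any single row of $\Psi$ is already an orthonormal basis), so $\A_1 = M_{|G|}(\mathbb{C})$ is irreducible. When $X^p$ is disconnected ($k\ge 2$) the full algebra decomposes as $\A_1\oplus\cdots\oplus\A_k$ with pairwise orthogonal identities, hence is reducible. I expect the main obstacle to be the first step: realizing arbitrary rank-one operators $\outerp{\psi_{a,b}}{\psi_{c,d}}$ with $(a,b),(c,d)\in V_i$ as elements of $\A_i$ depends delicately on \Cor{nonortho}, ensuring the telescoping product along a path never collapses to zero.
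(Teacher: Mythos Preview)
Your proof is correct and takes essentially the same approach as the paper: both use the telescoping product of projections along a path in $V_i$ to realize $\outerp{\psi_{a,b}}{\psi_{c,d}}\in\A_i$, and both derive $\A_i\A_j=0$ for $i\ne j$ from $S_i\perp S_j$. The paper's only slight variation is that it dispatches the inclusion $\A_i\subseteq\A'_i$ by observing that the right-hand side is itself a self-adjoint algebra containing the generators, rather than analyzing arbitrary products of generators as you do.
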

\begin{proof}
    Let $\A'_i$ denote the righthand side of Equation~\eqref{eq:AiSi} for each $i$. Note that it is easy to see that each $\A'_i$ is in fact a self-adjoint algebra, and moreover $\A_i$ is clearly contained in $\A'_i$. So it suffices to show that $\A_i \supseteq \A'_i$. To do this, it suffices to show that $\outerp{\varphi_1}{\varphi_2} \in \A_i$ for all $\ket{\varphi_1}, \ket{\varphi_2} \in S_i$. Furthermore, to show this we only need to show that $\outerp{\psi_{a,b}}{\psi_{c,d}} \in \A_i$ for all $(a,b), (c,d) \in V_i$. This we proceed to do.

    Suppose that $(a,b), (c,d) \in V_i$. Then by connectivity, there exists some sequence of vertices $(a,b) = (a_0,b_0), (a_1,b_1), \ldots, (a_k,b_k) = (c,d)$ in $V_i$ such that $(a_{\ell-1}, b_{\ell-1}) \sim (a_{\ell},b_{\ell})$ for all $\ell = 1, \ldots, k$. This implies that $\inner{\psi_{a_{\ell-1},b_{\ell-1}}}{\psi_{a_{\ell},b_{\ell}}} \ne 0$ for $\ell=1, \ldots, k$. Therefore,
    \begin{align*}
        \outerp{\psi_{a_0, b_0}}{\psi_{a_0,b_0}}\outerp{\psi_{a_1, b_1}}{\psi_{a_1,b_1}} \ldots \outerp{\psi_{a_k, b_k}}{\psi_{a_k,b_k}} &= \left(\prod_{\ell = 1}^{k}\inner{\psi_{a_{\ell-1},b_{\ell-1}}}{\psi_{a_{\ell},b_{\ell}}}\right) \outerp{\psi_{a,b}}{\psi_{c,d}}
    \end{align*}
    and the scalar on the righthand side is nonzero. Thus $\outerp{\psi_{a,b}}{\psi_{c,d}} \in \A_i$ as desired. Note that the righthand side of Equation~\eqref{eq:AiSi} is clearly irreducible

    Lastly, note that by definition of the support graph we have that $\inner{\psi_{a,b}}{\psi_{c,d}} = 0$ whenever $(a,b) \in V_i$ and $(c,d) \in V_j$ with $i \ne j$. This implies that the subspaces $S_i$ and $S_j$ are orthogonal and the rest of the lemma follows easily from this and the above.
\end{proof}

One of the reasons that reducibility/irreducibility is relevant, is that if the algebra $\A$ generated by the entries of a quantum permutation matrix $\mathcal{Q}$ is reducible, then the correlation $p$ produced by $\mathcal{Q}$ can be written as a convex combination of quantum correlations corresponding to the irreducible subalgebras of $\A$. In particular, we have the following analog of Theorem~\ref{thm:NSred}:

\begin{lemma}\label{lem:suppconv}
    Let $\mathcal{Q} = (q_{a,b})_{a \in G, b \in G'}$ be a quantum permutation matrix whose entries are elements of $M_d(\mathbb{C})$, let $p$ be the corresponding correlation, and let $V_1, \ldots, V_k$ be the vertex sets of the connected components of $X^p$. Then there exist projections $q^i \ne 0$ for each $i = 1, \ldots, k$ such that for all $a \in G$ and $b \in G'$ we have $\sum_{y \in G' : (a,y) \in V_i} q_{a,y} = q^i$ and $\sum_{x \in G : (x,b) \in V_i} q_{x,b} = q^i$. Let $d_i = \rk(q^i)$ for $i = 1, \ldots, k$ and define
    \[p_i(a,c|b,d) = \begin{cases}
        \frac{1}{d_i}\Tr(q_{a,b}q_{c,d}) & \text{if } (a,b),(c,d) \in V_i \\
        0 & \text{o.w.}
    \end{cases}\]
    Then each $p_i$ is a distinct winning quantum correlation for the $(G,G')$-bijection game, and $p$ is equal to the convex combination $\sum_{i=1}^k \frac{d_i}{d} p_i$.
\end{lemma}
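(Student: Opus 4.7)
The plan is to first establish the existence and structure of the projections $q^i$, and then use them to compress $\mathcal{Q}$ into smaller quantum permutation matrices whose associated correlations are the $p_i$.

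For each component index $i$ and each $a \in G$, I define $R_a^i := \sum_{b \,:\, (a,b) \in V_i} q_{a,b}$, and analogously $C_b^i := \sum_{a \,:\, (a,b) \in V_i} q_{a,b}$ for $b \in G'$. Since the entries in each row (resp.\ column) of $\mathcal{Q}$ are pairwise orthogonal projections summing to $I$, each $R_a^i$ (resp.\ $C_b^i$) is a projection. The heart of the argument will be to show that $R_a^i = C_b^i =: q^i$ for every $a$ and $b$, and that $q^i$ is nonzero. This is the step I expect to be the main obstacle.

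The key identity is the following: for any $(a,b) \in V_i$ and any $c \in G$, the row-sum relation $\sum_y q_{c,y} = I$ yields
\[q_{a,b} = \sum_{y \in G'} q_{a,b}\, q_{c,y},\]
and by \Lem{qpermsupp} a summand is nonzero only when $(c,y)$ is equal or adjacent to $(a,b)$, forcing $(c,y) \in V_i$. Hence $q_{a,b} = q_{a,b} R_c^i$, and by the symmetric argument using a column sum, $q_{a,b} = C_d^i q_{a,b}$ for any $d \in G'$. Summing the first relation over $b$ with $(a,b) \in V_i$ gives $R_a^i = R_a^i R_c^i$; taking adjoints and swapping the roles of $a$ and $c$ gives $R_c^i = R_c^i R_a^i$, and together these force $R_a^i = R_c^i$. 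So $R_a^i$ is independent of $a$. The same reasoning shows $C_b^i$ is independent of $b$, and an analogous mixed computation (starting from $q_{a,b} = C_d^i q_{a,b}$ for $(a,b) \in V_i$, summing, and taking adjoints) yields $R_a^i = C_b^i$. Nonzero-ness is then automatic: for any $(a,b) \in V_i$, $0 \ne q_{a,b} \le R_a^i = q^i$.

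From there the rest is bookkeeping. Summing $\sum_i R_a^i = \sum_{b \in G'} q_{a,b} = I$ gives $\sum_i q^i = I$, while orthogonality of distinct entries in a row forces $q^i q^j = 0$ for $i \ne j$, hence $\sum_i d_i = d$. Since $q_{a,b} q^i = q_{a,b}$ when $(a,b) \in V_i$ and $q_{a,b} q^i = 0$ otherwise (in the latter case $q_{a,b} \le q^j$ for some $j \ne i$), compressing $\mathcal{Q}$ to the range of $q^i$ produces a quantum permutation matrix $\mathcal{Q}^i$ on a $d_i$-dimensional subspace, with row and column sums equal to $q^i$, the identity on that subspace. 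Its associated correlation, computed with the normalized trace there, is exactly $p_i$, which is therefore a winning quantum correlation---the bijection-game winning condition is inherited from the row/column orthogonality of $\mathcal{Q}$. Finally, $p = \sum_i (d_i/d)\, p_i$ is checked pointwise, using $q^i q^j = 0$ to dispose of the case $(a,b) \in V_i$, $(c,d) \in V_j$ with $i \ne j$; and distinctness of the $p_i$ follows from $p_i(a,a|b,b) \ne 0 \iff (a,b) \in V_i$, combined with the disjointness and nonemptiness of the components.
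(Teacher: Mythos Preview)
Your proof is correct and follows essentially the same strategy as the paper's: define the row/column partial sums, use that products of entries from different components vanish (the paper packages this as \Lem{disconn2red}, you invoke \Lem{qpermsupp} directly) to obtain the absorption identities $R_a^i = R_a^i R_c^i$ etc., conclude they are all equal to a common projection $q^i$, and then compress $\mathcal{Q}$ by $q^i$ to obtain the $p_i$. The only cosmetic differences are that the paper matches the row and column projections via the double count $|G|\,q^i = \sum_{(a,b)\in V_i} q_{a,b} = |G'|\,\hat{q}^i$ rather than your mixed absorption argument, and it offloads the convex-combination and distinctness claims to \Thm{NSred} instead of verifying them directly.
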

\begin{proof}
    For each $a \in G$ and $i \in [k]$, let $q^i_a = \sum_{y \in G' : (a,y) \in V_i} q_{a,b}$. It is then clear that $q^i_a$ is a projection, and moreover $\sum_{i=1}^k q^i_a = \sum_{y \in G'} q_{a,y} = I$. Furthermore, by Lemma~\ref{lem:disconn2red}, we have that $q^i_a q^j_c = 0$ if $i \ne j$. Therefore, $q^i_a = q^i_a \sum_{j} q^j_c = q^i_a q^i_c$ and similarly $q^i_c = q^i_a q^i_c$. Thus $q^i_a = q^i_c$ and we will simply denote this projection as $q^i$. Analogously, there is some projection $\hat{q}^i$ such that $\sum_{x \in G: (x,b) \in V_i} q_{x,b} = \hat{q}^i$ for all $b \in G'$. But $|G|q^i = \sum_{(a,b) \in V_i} q_{a,b} = |G'|\hat{q}^i$, and thus $q^i = \hat{q}^i$.

    Using the above, it is easy to see that $\mathcal{Q}^i = (q^i_{a,b})$ where
    \[q^i_{a,b} := \begin{cases}
        q_{a,b} & \text{if } (a,b) \in V_i \\
        0 & \text{o.w.}
    \end{cases}\]
    is a quantum permutation matrix over the algebra $\A_i$ generated by $\{q_{a,b} : (a,b) \in V_i\}$ whose identity element is $q^i$. Thus $\frac{1}{d_i}\Tr(\cdot)$ is the normalized trace on $\A_i$ and therefore $p_i$ is a winning quantum correlation for the $(G,G')$-bijection game by Equation~\eqref{eq:qcorr}. Note that
    \begin{align*}
    \sum_{a,c \in G: (a,b),(c,d) \in V_i} p(a,c|b,d) &= \sum_{a,c \in G: (a,b),(c,d) \in V_i} \frac{1}{d}\Tr(q_{a,b}q_{c,d}) \\
    &= \frac{1}{d}\Tr\left(\sum_{a \in G : (a,b) \in V_i}  q_{a,b} \sum_{c \in G : (c,d) \in V_i} q_{c,d}\right) \\
    &= \frac{1}{d}\Tr(q^i) = \frac{d_i}{d}.
    \end{align*}
    Thus the $s_i$ from Theorem~\ref{thm:NSred} in this case is equal to $\frac{d_i}{d}$ and it follows that the $p_i$ defined here are the same as the $p_i$ defined in Theorem~\ref{thm:NSred}. Thus the remaining claims of the lemma follow from Theorem~\ref{thm:NSred}.

\end{proof}

We remark that in the case of a quantum Latin square, the $d$ in Lemma~\ref{lem:suppconv} is equal to $|G|$ and each $d_i$ is just the dimension of the subspace $S_i$ defined in Lemma~\ref{lem:irreducibility}.

As mentioned above, whenever the algebra $\A$ generated by the entries of a quantum permutation matrix is reducible, the resulting correlation can be written as a convex combination of quantum correlations corresponding to the irreducible subalgebras of $\A$. However, it is possible that this convex combination is trivial. For example, let $\mathcal{Q} = (q_{a,b})$ be a quantum permutation matrix whose entries generate an irreducible algebra with corresponding correlation $p$, and define $\mathcal{Q}' := (q_{a,b} \oplus q_{a,b})$. Then $\mathcal{Q}'$ is also a quantum permutation matrix and the correlation it produces is also $p$. Of course, the algebra generated by the entries of $\mathcal{Q}'$ is not irreducible by design. But the convex combination given by its irreducible subalgebras is simply $p = \frac{1}{2}p + \frac{1}{2}p$. Furthermore, it is possible that $p$ cannot be written as a nontrivial convex combination of any non-signalling correlations. This happens, for instance, if $\mathcal{Q}$ is simply a classical permutation matrix. This example also shows that, unlike in the quantum Latin square case, the ``$X^p$ disconnected implies reducibility" result cannot be reversed.

\subsection{Support graphs of group-invariant correlations}\label{subsec:Ginvsupp}

Now we arrive to the group-invariant correlations. Recall that all group-invariant correlations are non-signalling, so all of our previous results still apply here. Also, since all of the marginals $p(a|b)$ are equal to $1/|G|$ in this case, the vertex set of $X^p$ is always $G \times G'$. In fact, we show that in this case the support digraph is actually a Cayley digraph on $G \times G'$:

\begin{lemma}\label{lem:suppiscay}
    Let $p$ be a $(G,G')$-invariant correlation. Then the support (di)graph of $p$ is the Cayley (di)graph on $G \times G'$ with connection set $\{(a,b) \in G \times G': D^p_{a,b} \ne 0\}$.
\end{lemma}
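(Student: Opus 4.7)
The plan is a direct unpacking of definitions, using $(G,G')$-invariance to translate between the correlation $p$ and its correlation matrix $D^p$. I would proceed in two steps.

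First, I would verify that the vertex set of $X^p$ is exactly $G \times G'$. By the definition of the support digraph, $(a,b) \in V(X^p)$ iff $p(a,a \mid b,b) \ne 0$. By $(G,G')$-invariance, $p(a,a \mid b,b)$ depends only on $a^{-1}a = e$ and $b^{-1}b = e$, so it equals $\tfrac{1}{|G|}D^p_{e,e}$. By \Lem{basic}(1) we have $D^p_{e,e} = 1$, hence every pair $(a,b)$ is a vertex. This matches the vertex set $G \times G'$ of the Cayley digraph.

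Second, I would verify that the arc sets coincide. Fix distinct $(a,b), (c,d) \in G \times G'$. By definition, $(a,b) \to (c,d)$ is an arc of $X^p$ iff $p(a,c \mid b,d) \ne 0$. By $(G,G')$-invariance,
\[
p(a,c \mid b,d) = \tfrac{1}{|G|} D^p_{a^{-1}c,\, b^{-1}d},
\]
so this is nonzero iff $(a^{-1}c, b^{-1}d) \in C$, where $C = \{(x,y) : D^p_{x,y} \ne 0\}$ is the connection set in the lemma statement. On the other hand, in the Cayley digraph $\Cay(G \times G', C)$ there is an arc from $(a,b)$ to $(c,d)$ iff $(a,b)^{-1}(c,d) = (a^{-1}c, b^{-1}d) \in C$. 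So the arc conditions agree.

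There is no real obstacle; the only subtlety worth remarking on is that $(e,e) \in C$ (since $D^p_{e,e} = 1$), so strictly speaking the Cayley digraph has a loop at every vertex whereas $X^p$ has none by convention. This is a harmless discrepancy: the arc structure between distinct vertices is what determines the (di)graph, and the two agree there. One could equivalently remove $(e,e)$ from $C$, or simply treat loops as implicit.
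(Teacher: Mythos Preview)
Your proof is correct and follows essentially the same direct unpacking of definitions as the paper: first identifying the vertex set via $D^p_{e,e}=1$, then matching the arc conditions through $p(a,c\mid b,d) = \tfrac{1}{|G|}D^p_{a^{-1}c,\,b^{-1}d}$. Your remark about the loop at $(e,e)$ is a nice observation that the paper does not explicitly address.
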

\begin{proof}
    First note that for a $(G,G')$-invariant correlation $p$, we have that $p(a,a|b,b) = \frac{1}{|G|} \ne 0$ for all $(a,b) \in G \times G'$, and so $V(X^p) = G \times G'$, as needed. Let $C = \{(a,b) \in G \times G': D^p_{a,b} \ne 0\}$. Now consider vertices $(a,b)$ and $(c,d)$ of $X^p$. We have that there is an arc from $(a,b)$ to $(c,d)$ if and only if $p(a,c|b,d) \ne 0$ if and only if $D^p_{a^{-1}c, b^{-1}d} \ne 0$ if and only if $(a,b)^{-1}(c,d) \in C$, and thus the lemma is proven.
\end{proof}

Before proving more results on support graphs of group-invariant correlations, we need a purely group-theoretic result. For finite groups $G$ and $G'$, and a subgroup $K \subseteq G \times G'$, we let $\pi_G$ and $\pi_{G'}$ denote the \emph{canonical projections} from $K$ to $G$ and $G'$ respectively, i.e., $\pi_G(a,b) = a$ and $\pi_{G'}(a,b) = b$.

\begin{lemma}\label{lem:groupthing}
    Let $G$ and $G'$ be finite groups and suppose that $K$ is a subgroup of $G \times G'$ such that the canonical projections, $\pi_G:K\to G$ and $\pi_{G'}:K\to G'$ 
    are surjective. Then the following hold:
    \begin{enumerate}
        \item The subgroups $H := \{a \in G : (a,e) \in K\}$ and $H' := \{b \in G' : (e,b) \in K\}$ are normal subgroups of $G$ and $G'$ respectively.
        \item The product $H \times H'$ is a normal subgroup of both $K$ and $G \times G'$.
        \item The quotients $G/H$, $G'/H'$ and $K/(H\times H')$ are isomorphic.
        \item $K = G \times G'$ if and only if $H = G$ if and only if $H' = G'$.
    \end{enumerate}
\end{lemma}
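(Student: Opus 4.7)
My plan is to prove the four items essentially in order, with (3) doing most of the work via the First Isomorphism Theorem, and (4) falling out as a corollary.

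For (1), I would first note that $H$ is the kernel of the composition $K \cap (G \times \{e\}) \hookrightarrow K$ identified with its image under $\pi_G$, so it is a subgroup of $G$; alternatively just check directly. For normality, given $a \in H$ and $g \in G$, surjectivity of $\pi_G$ gives some $(g, b) \in K$, and then
\[(g,b)(a,e)(g,b)^{-1} = (gag^{-1}, beb^{-1}) = (gag^{-1}, e) \in K,\]
so $gag^{-1} \in H$. The argument for $H' \trianglelefteq G'$ is symmetric.

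For (2), the containment $H \times H' \subseteq K$ follows because $(h, h') = (h,e)(e,h')$ is a product of two elements of $K$. Normality of $H \times H'$ in $G \times G'$ (and hence in $K$) is immediate from the normality established in (1).

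The heart of the proof is (3). I would define $\varphi : K \to G/H$ by $\varphi(a,b) = aH$; this is clearly a homomorphism, and surjective because $\pi_G$ is surjective. The key calculation is that $\ker \varphi = H \times H'$. The inclusion $H \times H' \subseteq \ker \varphi$ is obvious. For the reverse, suppose $(a,b) \in K$ with $a \in H$; then $(a,e) \in K$, hence $(e,b) = (a,e)^{-1}(a,b) \in K$, so $b \in H'$ and $(a,b) \in H \times H'$. The First Isomorphism Theorem then gives $K/(H \times H') \cong G/H$, and the analogous map $(a,b) \mapsto bH'$ gives $K/(H \times H') \cong G'/H'$; composing these yields $G/H \cong G'/H'$.

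For (4), the forward implications are trivial from the definitions of $H$ and $H'$. For the converse, if $H = G$ then part (3) gives $G'/H' \cong G/H = \{e\}$, so $H' = G'$, and then $K \supseteq H \times H' = G \times G'$. The single step I expect to be most delicate is the containment $\ker \varphi \subseteq H \times H'$ in (3), since it is the one place where we really use that $(a,e) \in K$ can be \emph{subtracted off} inside $K$ — everything else is bookkeeping with surjective projections and normal subgroups.
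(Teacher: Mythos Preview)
Your proof is correct and follows essentially the same approach as the paper: normality via conjugation using surjectivity of the projections, containment $H\times H'\subseteq K$ via the factorization $(h,h')=(h,e)(e,h')$, the First Isomorphism Theorem for part~(3), and part~(4) deduced from the isomorphisms in~(3). Your treatment of~(3) is in fact a bit more direct than the paper's---you apply the First Isomorphism Theorem once to the composite $K\to G\to G/H$ and compute its kernel as $H\times H'$, whereas the paper first identifies $G\cong K/(\{e\}\times H')$ via $\pi_G$ and then passes to a further quotient---but this is a minor streamlining of the same idea.
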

\begin{proof}
    Clearly, $\ker(\pi_G) = \{e\}\times H'$ and so $\{e\}\times H'$ is a normal subgroup of $K$. It then follows easily that $H'$ is normal in $G'$ and similarly $H\lhd G$, proving the first claim. It follows directly that $H\times H'$ is normal in  $G\times G'$ and moreover, since $K$ contains $H\times \{e\}$ and $\{e\}\times H'$, it also contains $H\times H'$. Therefore $H\times H'$ is normal in $K$, proving the second claim.
    
    By the first isomorphism theorem we have $G\cong K/\ker(\pi_G) = K/(\{e\}\times H')$, and similarly, $G'\cong K/(H\times \{e\})$. Let $\phi$ and $\phi'$ be the respective canonical isomorphisms. It is easy to verify that the map given by $gH\mapsto (a,b)(H\times H')=(aH,bH')$ where $(a,bH') := \phi(g)$ is an isomorphism from $G/H$ to $K/(H\times H')$ and similarly we can show that $g'H'\mapsto \phi'(g')(e,H')$ is an isomorphism. Thus 
    \begin{equation}\label{eq:quotients}
        G/H\cong K/(H\times H')\cong G'/H',
    \end{equation}
    proving the third claim. If $K=G\times G'$, we clearly have $H=G$ and $H'=G'$ by definition of $H$ and $H'$. The rest follows from \eqref{eq:quotients}.
\end{proof}

Note that the subgroup $K$ does not need to be a normal subgroup of $G \times G'$. For example, if $G=G'$ is a non-abelian group, then taking $K = \{(a,a) : a \in G\}$ meets the conditions of the above lemma and is not a normal subgroup of $G \times G$.

\begin{lemma}\label{lem:Dpblocks}
    Let $p$ be a $(G,G')$-invariant correlation, and let $K$ be the vertex set of the connected component of its support digraph containing the identity. Then $K$ is a subgroup of $G \times G'$ such that the canonical projections $\pi_G : K \to G$ and $\pi_{G'} : K \to G'$ are surjective. Moreover, if $\pi:= \phi^{-1} \circ \phi'$ where $\phi,\phi':G/H,G'/H'\to K/(H\times H')$ are the isomorphisms 
    defined in the proof of Lemma~\ref{lem:groupthing}, then 
    \[D^p_{a,b} = 0 \text{ unless } a \in \pi(bH').\]
\end{lemma}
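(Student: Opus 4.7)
The plan is to read off all three conclusions from the identification, given by Lemma~\ref{lem:suppiscay}, of the support digraph $X^p$ with the Cayley digraph $\Cay(G \times G', C)$, where $C = \{(a,b) \in G \times G' : D^p_{a,b} \neq 0\}$.

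First I would argue that $K$ is a subgroup. By Lemma~\ref{lem:suppflow}, the weakly and strongly connected components of $X^p$ coincide, so $K$ is exactly the set of vertices reachable from $(e,e)$ by a directed path in $\Cay(G \times G', C)$. In any Cayley digraph this forward-reachable set is the subsemigroup of the ambient group generated by the connection set, and a nonempty subsemigroup of a finite group is automatically a subgroup (some power of any element is the identity, which then produces inverses). Hence $K \leq G \times G'$. Surjectivity of $\pi_G$ is then a direct consequence of double stochasticity of $D^p$ (Lemma~\ref{lem:basic}(1)): for each $a \in G$, the $a$-th row of $D^p$ sums to $1$, so some $(a,b)$ lies in $C \subseteq K$, giving $a \in \pi_G(K)$. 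The same argument applied to columns gives surjectivity of $\pi_{G'}$.

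For the main claim I would unpack the concrete description of the isomorphism $\phi: G/H \to K/(H\times H')$ implicit in the proof of Lemma~\ref{lem:groupthing}: for $g \in G$, choose any $b$ with $(g,b) \in K$ (which exists by surjectivity of $\pi_G$) and set $\phi(gH) = (g,b)(H \times H')$. This is well defined because any two such $b$ differ by an element of $H'$ (from $(e, b_1^{-1}b_2) \in K$), and well defined on cosets of $H$ since $K \supseteq H \times H'$. Analogously, $\phi'(bH') = (a,b)(H \times H')$ for any $a$ with $(a,b) \in K$. Now if $D^p_{a,b} \neq 0$ then $(a,b) \in C \subseteq K$, so the same pair $(a,b)$ serves simultaneously as representative for both $\phi(aH)$ and $\phi'(bH')$, giving $\phi(aH) = \phi'(bH')$ and therefore
\[
\pi(bH') \;=\; \phi^{-1}\bigl(\phi'(bH')\bigr) \;=\; aH,
\]
which contains $a$. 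Contrapositively, $a \notin \pi(bH')$ forces $D^p_{a,b} = 0$.

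I do not anticipate any genuine obstacle; the only delicate point is matching variable names with the rather dense construction of $\phi$ and $\phi'$ in the proof of Lemma~\ref{lem:groupthing}. Once the representative description of these isomorphisms is in hand, the final claim reduces to the observation that every element of the support $C$ directly realises the coset identification between $G/H$ and $G'/H'$ through its membership in $K$.
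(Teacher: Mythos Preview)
Your proposal is correct and follows essentially the same route as the paper: identify $X^p$ with the Cayley digraph $\Cay(G\times G',C)$ via Lemma~\ref{lem:suppiscay}, read off that $K$ is the subgroup generated by $C$, use double stochasticity of $D^p$ for the surjectivity of the projections, and then note that $D^p_{a,b}\ne 0$ forces $(a,b)\in C\subseteq K$, which is exactly the condition $a\in\pi(bH')$. Your writeup is in fact more explicit than the paper's in two places---you justify the ``connected component equals generated subgroup'' fact via Lemma~\ref{lem:suppflow} and the subsemigroup argument, and you carefully unpack the representative description of $\phi$ and $\phi'$---whereas the paper simply asserts both as standard facts.
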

\begin{proof}
    The vertex set of the connected component of a Cayley digraph is the subgroup generated by its connection set. It follows that $K$ is a subgroup of $G \times G'$. Furthermore, since the correlation matrix $D^p$ is doubly stochastic, for every $a \in G$ there exists $b \in G'$ such that $D^p_{a,b} \ne 0$. It follows that $(a,b)$ is a neighbour of the identity in the support digraph and in particular it is contained in $K$. Thus $a \in \pi_G(K)$ for all $a \in G$, and the analogous statement for $G'$ holds similarly.

    The statement $a \in \pi(bH')$ is equivalent to the statement $(a,b) \in K$. Obviously, if $(a,b) \notin K$, then $(a,b)$ is not in the connected component of the support digraph of $p$ containing the identity, which in particular implies that $(a,b)$ is not in the connection set of the support digraph. This last statement is equivalent to $D^p_{a,b} = 0$ by definition.
\end{proof}

\begin{remark}\label{rem:Dblockdiag}
One way to view the above lemma is that if we partition the columns of $D^p$ into the cosets of the subgroup $H'$ and partition its rows into the cosets of $H$ such that the $bH'$ column block and $\pi(bH')$ row block appear at the same position, then this block-diagonalizes $D^p$. This is not necessarily the finest possible block diagonalization; that is given by the connected components of the bipartite graph defined in Section~\ref{subsec:cayley}.    
\end{remark}

It is trivial to see that the subgroup $K$ from the above lemma is equal to $G \times G'$ if and only if the support graph of the group-invariant correlation is connected. On the other extreme, we have the following:

\begin{lemma}
    Let $p$ be a $(G,G')$-invariant correlation, and let $K$ be the vertex set of the connected component of its support digraph containing the identity. If 
    $H$ and $H'$ are the subgroups of $G$ and $G'$ defined as in Lemma~\ref{lem:groupthing}, then the following are equivalent:
    \begin{enumerate}
        \item $H$ is the subgroup containing only the identity, i.e., $G/H =G$.
        \item $H'$ is the subgroup containing only the identity, i.e., $G'/H' =G'$.
        \item $|K| = |G|$.
        \item $D^p$ is a permutation matrix encoding an isomorphism from $G'$ to $G$.
        \item $X^p$ is the disjoint union of $|G|$ complete graphs of size $|G|$.
    \end{enumerate}
\end{lemma}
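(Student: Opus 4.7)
The plan is to derive all five equivalences from the structural results already established, namely \Lem{groupthing}, \Lem{suppiscay}, \Lem{Dpblocks}, and \Lem{cliques}, together with the fact that $|G|=|G'|$ and that $D^p$ is doubly stochastic. I would first establish the cluster (1)$\iff$(2)$\iff$(3), then (3)$\iff$(4), then (3)$\iff$(5).

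For (1)$\iff$(2)$\iff$(3): by \Lem{Dpblocks}, $K$ is a subgroup of $G\times G'$ for which the canonical projections are surjective, so \Lem{groupthing} applies. From item~(3) of that lemma we have the string of isomorphisms $G/H \cong K/(H\times H') \cong G'/H'$, which on cardinalities gives $|G|/|H| = |K|/(|H||H'|) = |G'|/|H'|$. Since $|G|=|G'|$ we immediately get $|H|=|H'|$, hence (1)$\iff$(2). Rearranging, $|K| = |G|\cdot|H'|$, so $|K|=|G|$ iff $|H'|=1$, giving (2)$\iff$(3). (Equivalently one can just invoke item~(4) of \Lem{groupthing}.)

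For (3)$\iff$(4): assume (3). Then $H=H'=\{e\}$, so the map $\pi := \pi_G\circ\pi_{G'}^{-1}$ from \Lem{Dpblocks} is an isomorphism $G'\to G$, and the lemma tells us $D^p_{a,b}=0$ unless $a=\pi(b)$. Since $D^p$ is doubly stochastic with nonnegative entries, having at most one nonzero entry per row (and per column) forces each nonzero entry to equal $1$, so $D^p=P^\pi$ with $\pi$ an isomorphism. Conversely, if $D^p = P^\pi$ for an isomorphism $\pi:G'\to G$, then by \Lem{suppiscay} the connection set of the Cayley digraph $X^p$ is $C=\{(\pi(b),b):b\in G'\}$, which is already a subgroup of $G\times G'$ of order $|G|$; since the connected component containing the identity is the subgroup generated by $C$, we get $|K|=|G|$.

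For (3)$\iff$(5): by \Lem{suppiscay}, $X^p$ is a Cayley digraph on $G\times G'$, hence vertex-transitive and a disjoint union of $|G\times G'|/|K|$ isomorphic connected components each of size $|K|$. If $|K|=|G|$, there are exactly $|G|$ components each of size $|G|$, and by \Lem{cliques} (applied to each component, or equivalently to the restriction of $p$ to the component) every component of size $|G|$ induces a clique; thus $X^p$ is the disjoint union of $|G|$ copies of $K_{|G|}$. Conversely, if $X^p$ is a disjoint union of $|G|$ complete graphs of size $|G|$ then each component has $|G|$ vertices, so $|K|=|G|$. Since all of these implications reduce either to the arithmetic of Lemma~\ref{lem:groupthing} or to applying the Cayley-digraph description of $X^p$, no serious obstacle arises; the only point to be careful about is that the nonnegativity and double stochasticity of $D^p$ are genuinely needed in (3)$\Rightarrow$(4) to upgrade ``at most one nonzero per row'' to ``is a permutation matrix''.
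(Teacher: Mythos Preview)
Your proof is correct and follows essentially the same route as the paper's own argument: you use the cardinality identities from \Lem{groupthing} for (1)$\iff$(2)$\iff$(3), the block structure from \Lem{Dpblocks} together with double stochasticity for (3)$\Rightarrow$(4), the Cayley description from \Lem{suppiscay} for (4)$\Rightarrow$(3), and \Lem{cliques} plus vertex-transitivity for (3)$\iff$(5). The only cosmetic difference is that you write the isomorphism as $\pi_G\circ\pi_{G'}^{-1}$ (valid since the projections become bijections when $H,H'$ are trivial) and include $(e,e)$ in the connection set, whereas the paper excludes it; neither affects the subgroup generated.
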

\begin{proof}
    By Lemma~\ref{lem:groupthing}, $G/H \cong G'/H'$ and in this case $|G| = |G'|$ and therefore $|H| = |H'|$. The equivalence of items (1) and (2) follows immediately.

    Since $K/(H \times H') \cong G/H$, it follows that if $(1)$ (and therefore $(2)$) holds, then $|K| = |G|$. Conversely, if $|K| = |G|$, then $K/(H \times H')$ can only have the same size as $G/H$ (which it is isomorphic to) if $|H'| = 1$. Thus $(3) \Rightarrow (1)$ and also $(3) \Rightarrow (2)$ since $(1)$ and $(2)$ are equivalent.

    If items $(1)-(3)$ hold, then $G/H = G$ and $G'/H' = G'$ and so the isomorphism $\pi$ from the proof of Lemma~\ref{lem:groupthing} is an isomorphism from $G'$ to $G$. Then by Lemma~\ref{lem:Dpblocks} the matrix $D^p$ is just the permutation matrix encoding this isomorphism. Conversely, if $D^p$ is a permutation matrix encoding an isomorphism $\pi$ from $G'$ to $G$, then the connection set of the support graph of $p$ is $\{(\pi(b),b) : b \in G' \setminus \{e\}\}$. The subgroup of $G \times G'$ generated by this set is easily seen to be $\{(\pi(b),b) : b \in G' \}$ and thus this is the vertex set of the connected component of the support graph containing the identity, which by definition is $K$. This set has size $|G'| = |G|$ and so we have shown that $(4)$ is equivalent to $(1)-(3)$.

    Lastly, if $(1)-(4)$ hold, then $|K| = |G|$ and therefore the connected component of $X^p$ containing the identity contains $|G|$ vertices. It then follows from Lemma~\ref{lem:cliques} that this connected component is a clique. Since $X^p$ is a Cayley graph, all of its connected components are isomorphic, and therefore they are all cliques of size $|G|$. Since the vertex set of $X^p$ is $G \times G'$, there are exactly $|G|$ such components. Thus we have shown that $(1)-(4) \Rightarrow (5)$. Conversely, suppose that $X^p$ is a disjoint union of $|G|$ cliques of size $|G|$. Then the connected component of $X^p$ containing the identity has $|G|$ vertices and therefore $|K| = |G|$, and so $(1)-(4)$ hold.
\end{proof}

Now we show that in the group-invariant case, the correlations $p_1, \ldots, p_k$ defined in Theorem~\ref{thm:NSred} are all the same up to left multiplication of their arguments. Recall that for $x \in G$, the bijection $\tau_x \in \bij(G)$ is defined as $\tau_x(a) = xa$.

\begin{lemma}\label{lem:cosetcorrs}
    Let $p$ be a $(G,G')$-invariant correlation, let $K$ be the vertex set of the connected component of $X^p$ containing the identity, let $H$ and $H'$ be defined as in Lemma~\ref{lem:groupthing}. For any $(x,y) \in G \times G'$, let $p_{(x,y)}$ be the correlation corresponding to the component of $X^p$ containing $(x,y)$ as defined in Theorem~\ref{thm:NSred}. Then
    \begin{equation}\label{eq:cosetcorrs1}
        p_{(x,y)}(a,c|b,d) = \begin{cases}
        \frac{|G \times G'|}{|K|} p(a,c|b,d) & \text{if } (a,b),(c,d) \in (x,y)K \\
        0 & \text{o.w.}
    \end{cases}
    \end{equation}
    and
    \begin{equation}\label{eq:cosetcorrs2}
    p_{(x,y)}(a,c|b,d) = p_{\tau_x} \circ p_{(e,e)} \circ p_{\tau_{y^{-1}}}(a,c|b,d) = p_{(e,e)}(x^{-1}a,x^{-1}c|y^{-1}b,y^{-1}d).
    \end{equation}
    It follows that if $(x_1,y_1), \ldots, (x_k,y_k)$ are a full set of representatives of the left cosets of $K$ in $G \times G'$, then
    \begin{equation}\label{eq:cosetcorrs3}
    p = \frac{|K|}{|G \times G'|} \sum_{i=1}^k p_{(x_i,y_i)} = \frac{|K|}{|G \times G'|} \sum_{i=1}^k p_{\tau_{x_i}} \circ p_{(e,e)} \circ p_{\tau_{{y_i}^{-1}}}
    \end{equation}
\end{lemma}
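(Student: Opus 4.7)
The plan is to deduce everything from \Thm{NSred} by identifying the connected components of $X^p$ explicitly. By \Lem{suppiscay}, $X^p$ is a Cayley digraph on $G \times G'$, so its connected components are the left cosets of the subgroup generated by its connection set; the component through the identity is $K$ by definition, and so the component through any $(x,y) \in G \times G'$ is the coset $(x,y)K$. Thus the vertex sets $V_1, \ldots, V_k$ of \Thm{NSred} are exactly the distinct left cosets of $K$. To upgrade this to \eqref{eq:cosetcorrs1}, I need the constant $s_i$ from that theorem to equal $|K|/|G\times G'|$ for every $i$. Since $p(a|b) = p(a,a|b,b) = 1/|G|$ by group invariance, and the number of $a$ with $(a,b) \in (x_i,y_i)K$ equals the fiber size of the surjection $\pi_{G'}\colon K\to G'$ (surjectivity from \Lem{Dpblocks}), which is $|\ker\pi_{G'}| = |H|$, we get $s_i = |H|/|G|$. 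Combining $|K| = |H|\cdot|H'|\cdot|K/(H\times H')| = |H'|\cdot|G|$ from \Lem{groupthing} with $|G| = |G'|$ (hence $|H| = |H'|$) gives $|K| = |H|\cdot|G|$, so $s_i = |K|/|G\times G'|$ as required.

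For \eqref{eq:cosetcorrs2}, the key observation is that $(G,G')$-invariance gives $p(a,c|b,d) = p(x^{-1}a, x^{-1}c|y^{-1}b, y^{-1}d)$, since the quotients $a^{-1}c$ and $b^{-1}d$ are unchanged by left translation; similarly $(a,b)\in(x,y)K$ if and only if $(x^{-1}a,y^{-1}b)\in K$. Substituting these two identities into \eqref{eq:cosetcorrs1} immediately yields the second equality $p_{(x,y)}(a,c|b,d) = p_{(e,e)}(x^{-1}a, x^{-1}c|y^{-1}b, y^{-1}d)$. The first equality is then a direct computation: in the composition $p_{\tau_x}\circ p_{(e,e)}\circ p_{\tau_{y^{-1}}}$, the factor $p_{\tau_{y^{-1}}}(x_2,y_2|b,d)$ is a delta enforcing $(x_2,y_2) = (y^{-1}b,y^{-1}d)$ and $p_{\tau_x}(a,c|x_1,y_1)$ enforces $(x_1,y_1) = (x^{-1}a,x^{-1}c)$, so both inner sums collapse and what remains is exactly $p_{(e,e)}(x^{-1}a, x^{-1}c|y^{-1}b, y^{-1}d)$.

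Finally, \eqref{eq:cosetcorrs3} is immediate: its first equality is the convex decomposition from \Thm{NSred} with the just-computed constants $s_i = |K|/|G\times G'|$, and its second equality is obtained by rewriting each $p_{(x_i,y_i)}$ via \eqref{eq:cosetcorrs2}. The main obstacle is the bookkeeping to verify $s_i = |K|/|G\times G'|$, which leans on the quotient-isomorphism statement of \Lem{groupthing} and the surjectivity of the canonical projections from \Lem{Dpblocks}; once the cosets of $K$ are recognized as the components of $X^p$, the rest is a routine unpacking of definitions.
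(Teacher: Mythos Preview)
Your proposal is correct and follows essentially the same route as the paper: identify the components of $X^p$ as the left cosets of $K$ (via the Cayley digraph structure), compute $s_i = |H|/|G| = |K|/|G\times G'|$ using the fiber count and the quotient isomorphism from \Lem{groupthing}, and then use $(G,G')$-invariance for the translation identity in \eqref{eq:cosetcorrs2}. The paper states the count ``$|H|=|K|/|G|$'' directly rather than routing it through $\ker\pi_{G'}$, but the content is the same.
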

\begin{proof}
    For fixed $a \in G$, $b,d \in G'$ such that $(a,b) \in (x,y)K$, we have that
    \[\sum_{c \in G: (c,d) \in (x,y)K} p(a,c|b,d) = \sum_{c \in G} p(a,c|b,d) = p(a|b) = \frac{1}{|G|}.\]
    Since $H \times H'$ is a normal subgroup of $K$, there are exactly $|H| = |K|/|G|$ elements $a \in G$ such that $(a,d) \in K$ for any fixed $d \in G'$. Thus summing the above expression over all $a \in G$ such that $(a,d) \in K$ yields $|K|/|G \times G'|$. Therefore, all of the $s_i$ from Theorem~\ref{thm:NSred} are equal to $|K|/|G \times G'|$, and so we have proven~\eqref{eq:cosetcorrs1}.

    Now note that $(a,b),(c,d) \in (x,y)K$ if and only if $(x^{-1}a,y^{-1}b), (x^{-1}c,y^{-1}d) \in K$, and moreover, $p(x^{-1}a,x^{-1}c|y^{-1}b,y^{-1}d) = p(a,c|b,d)$ since $p$ is $(G,G')$-invariant. Thus the first and last expressions of~\eqref{eq:cosetcorrs2} are in fact equal. The middle expression is equal to the last expression through a straightforward calculation.

    If $(x_1,y_1), \ldots, (x_k,y_k)$ are a full set of representatives of the left cosets of $K$, then the correlations $p_{(x_1,y_1)}, \ldots, p_{(x_k,y_k)}$ are precisely the correlations $p_1, \ldots, p_k$ defined in Theorem~\ref{thm:NSred} and so~\eqref{eq:cosetcorrs3} follows immediately from that theorem and our computation of the $s_i$ above.
\end{proof}

One way to view the above lemma is that a group-invariant correlation $p$ whose support graph is disconnected can be decomposed into correlations that are equal up to pre- and post-composing with some deterministic classical correlations, and moreover these all have disjoint supports. So it is natural to expect that many properties of the correlation $p$ are determined by the properties of the correlation $p_{(e,e)}$ defined above. Indeed, we have the following:

\begin{cor}
    Let $p$ be a $(G,G')$-invariant correlation, and let $p_{(e,e)}$ be as defined in Lemma~\ref{lem:cosetcorrs}. Then $p$ is classical if and only if $p_{(e,e)}$ is classical. Also $p$ is strongly nonlocal if and only if $p_{(e,e)}$ is strongly nonlocal.
\end{cor}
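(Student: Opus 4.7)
The plan is to deduce both equivalences directly from Theorem~\ref{thm:NSred} combined with the explicit description of the connected components of the support graph given in Lemma~\ref{lem:cosetcorrs}. Recall that that lemma writes $p$ as a uniform convex combination of correlations $p_{(x_i,y_i)}$, one for each left coset representative of $K$ in $G \times G'$, and furthermore expresses each $p_{(x_i,y_i)}$ as the composition $p_{\tau_{x_i}} \circ p_{(e,e)} \circ p_{\tau_{y_i^{-1}}}$. Theorem~\ref{thm:NSred} then tells us that $p$ is classical if and only if every $p_{(x_i,y_i)}$ is classical, and $p$ is strongly nonlocal if and only if every $p_{(x_i,y_i)}$ is strongly nonlocal. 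So the whole proof reduces to showing that the coset-representative correlations all share the property (classical, resp.\ strongly nonlocal) of $p_{(e,e)}$.

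For the classical case, one direction is immediate: choosing $(e,e)$ to be among the coset representatives, if $p$ is classical then Theorem~\ref{thm:NSred} forces $p_{(e,e)}$ to be classical. For the converse, I would use the observation noted in Section~\ref{sec:GInvCorr} that composition of correlations preserves classicality. Since $p_{\tau_x}$ and $p_{\tau_{y^{-1}}}$ are deterministic classical correlations, if $p_{(e,e)}$ is classical then each $p_{(x_i,y_i)} = p_{\tau_{x_i}} \circ p_{(e,e)} \circ p_{\tau_{y_i^{-1}}}$ is classical, and hence so is their convex combination $p$.

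For strong nonlocality, I would exploit the relabeling formula in Equation~\eqref{eq:cosetcorrs2}, namely
\[
p_{(x,y)}(a,c|b,d) = p_{(e,e)}(x^{-1}a,\,x^{-1}c\,|\,y^{-1}b,\,y^{-1}d).
\]
This shows that the map $(a,b) \mapsto (x^{-1}a,\,y^{-1}b)$ is a graph isomorphism between the support graph of $p_{(x,y)}$ and the support graph of $p_{(e,e)}$. By Lemma~\ref{lem:strongnonlocal}, strong nonlocality of a winning correlation for the bijection game is equivalent to the absence of a clique of size $|G|$ in its support graph, a property preserved under graph isomorphism. Consequently each $p_{(x_i,y_i)}$ is strongly nonlocal if and only if $p_{(e,e)}$ is, and one more application of Theorem~\ref{thm:NSred} finishes the argument.

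There is no serious obstacle here; the proof is essentially an assembly of earlier results. The only mildly delicate point is being careful that the $p_{(x_i,y_i)}$ used in Lemma~\ref{lem:cosetcorrs} are exactly the $p_i$ produced by Theorem~\ref{thm:NSred} (which is clear from the computation of the $s_i$ in the proof of that lemma), and that $(e,e)$ may be taken as one of the coset representatives so that $p_{(e,e)}$ literally appears as one of the $p_i$.
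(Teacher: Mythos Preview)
Your proof is correct and follows essentially the same approach as the paper. The only cosmetic difference is in the strongly nonlocal case: the paper observes that the $X^{p_{(x,y)}}$ are the connected components of the Cayley digraph $X^p$ and hence all isomorphic, whereas you exhibit the isomorphism explicitly via the relabeling formula~\eqref{eq:cosetcorrs2}; these are the same observation phrased two ways.
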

\begin{proof}
    By Theorem~\ref{thm:NSred} and Lemma~\ref{lem:cosetcorrs}, $p$ is classical if and only if $p_{(x,y)}$ is classical for all $(x,y) \in G \times G'$, so the forward direction is immediate.
    
    Now suppose that $p_{(e,e)}$ is classical. By Lemma~\ref{lem:cosetcorrs}, we have that
    \[p_{(x,y)} = p_{\tau_x} \circ p_{(e,e)} \circ p_{\tau_{y^{-1}}},\]
    where recall that $\tau_x \in \bij(G)$ is defined as $\tau_x(a) = xa$, and $\tau_{y^{-1}} \in \bij(G')$ is defined analogously. Since $p_{\tau_x}$ and $p_{\tau_{y^{-1}}}$ are classical, and composition of classical correlations results in a classical correlation, we have that $p_{(x,y)}$ is classical for all $(x,y) \in G \times G'$ and thus $p$ is classical.    
    By Theorem~\ref{thm:NSred}, $p$ is strongly nonlocal if and only if every $p_{(x,y)}$ is strongly nonlocal if and only if none of the $X^{p_{(x,y)}}$ contain a clique of size $|G|$. But each $X^{p_{(x,y)}}$ is a connected component of $X^p$ which are all isomorphic since it is a Cayley digraph. Therefore they either all contain such a clique or none do. Thus $p$ is strongly nonlocal if and only if $p_{(e,e)}$ is strongly nonlocal.
\end{proof}

The correlation $p_{(e,e)}$ from the above lemma and corollary can also in some sense be decomposed, but not to the extent of the initial correlation $p$ above.

\begin{lemma}
    Let $p$ be a $(G,G')$-invariant correlation, and let $K$, $H$, $H'$, and $p_{(e,e)}$ be as defined in Lemma~\ref{lem:cosetcorrs}. Then the restriction of $p_{(e,e)}$ to inputs from $H'$ (and thus outputs from $H$) is an $(H,H')$-invariant correlation. Moreover, if $(x,y) \in K$, then $p_{(e,e)}(xa,xc|yb,yd) = p_{(e,e)}(a,c|b,d)$
\end{lemma}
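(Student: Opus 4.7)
The approach is direct: we unpack $p_{(e,e)}$ using the explicit formula from Lemma~\ref{lem:cosetcorrs} and combine it with the subgroup structure established in Lemmas~\ref{lem:groupthing} and~\ref{lem:Dpblocks}.

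The second assertion is nearly immediate. Given $(x,y) \in K$, the fact that $K$ is a subgroup means $(a, b) \in K$ iff $(xa, yb) = (x, y)(a, b) \in K$, and similarly for $(c, d)$, so the support indicator in~\eqref{eq:cosetcorrs1} agrees on the two sides of the claimed identity. When the indicator is non-zero, $(G, G')$-invariance of $p$ gives $p(xa, xc | yb, yd) = p(a, c | b, d)$ because the quotients $(xa)^{-1}(xc) = a^{-1}c$ and $(yb)^{-1}(yd) = b^{-1}d$ are preserved. Multiplying by $|G \times G'|/|K|$ then gives the equality.

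The first assertion requires a bit more work. First I would show that $p_{(e,e)}(b, d | a, c) = 0$ whenever $a, c \in H'$ but $b \notin H$ or $d \notin H$: if $a \in H'$ and $(b, a) \in K$, then since also $(e, a) \in K$, we get $(b, e) = (b, a)(e, a)^{-1} \in K$, so $b \in H$; symmetrically for $d$. Next, for $a, c \in H'$ and $b, d \in H$ the formula $p_{(e,e)}(b, d | a, c) = \frac{|G \times G'|}{|K|} p(b, d | a, c)$ together with $(G,G')$-invariance of $p$ shows that the restriction depends only on $b^{-1}d \in H$ and $a^{-1}c \in H'$. The winning condition $p_{(e,e)}(b, d | a, a) = 0$ for $b \neq d$ is inherited directly from $p$.

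To conclude that the restriction is an $(H, H')$-invariant correlation, I need to verify two stochasticity identities. The row sums $\sum_{b, d \in H} p_{(e,e)}(b, d | a, c) = 1$ follow directly from the value $s_i = |K|/|G \times G'|$ computed in Lemma~\ref{lem:cosetcorrs}. The column sums $\sum_{a, c \in H'} p_{(e,e)}(b, d | a, c) = 1$ are the only step requiring any real content. I would write $p(b, d | a, c) = \tfrac{1}{|G|} D^p_{b^{-1}d,\, a^{-1}c}$ and for fixed $s := b^{-1}d \in H$ invoke Lemma~\ref{lem:Dpblocks}: $D^p_{s, t} = 0$ unless $s \in \pi(tH')$, which for $s \in H = \pi(H')$ forces $tH' = H'$, i.e.~$t \in H'$. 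Hence $\sum_{t \in H'} D^p_{s, t}$ coincides with the full doubly-stochastic row sum, which is $1$. Reparametrizing $c = at$ with $t$ ranging over $H'$ then gives $\sum_{a, c \in H'} p(b, d | a, c) = |H'|/|G|$. A short count via Lemma~\ref{lem:groupthing} gives $|K| = |H|\cdot|H'|\cdot|G/H| = |G| \cdot |H'|$, so $|G \times G'|/|K| = |G'|/|H'|$, and multiplying yields the desired normalization. This column-sum identity is the one step I expect to require more than definition-unpacking, though it remains quite short once Lemma~\ref{lem:Dpblocks} is in hand.
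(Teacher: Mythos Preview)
Your proof is correct and follows essentially the same approach as the paper's: both arguments reduce to the observation that $(a,b)\in K$ with $b\in H'$ forces $a\in H$, and then use the $(G,G')$-invariance of $p$ together with the subgroup property of $K$. In fact you are more thorough than the paper, which simply asserts that the restriction ``is immediate[ly] $(H,H')$-invariant'' without explicitly checking the column-sum normalization $\sum_{a,c\in H'}\tilde p(b,d|a,c)=1$; your use of Lemma~\ref{lem:Dpblocks} to verify this is a genuine addition.
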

\begin{proof}
    First note that if $b \in H'$, then $(a,b) \in K$ if and only if $a \in H$. Therefore, if $b,d \in H'$, then $p_{(e,e)}(a,c|b,d) = 0$ unless $a,c \in H$. Thus if we define $\tilde{p} : H \times H \times H' \times H' \to \mathbb{R}$ as
    \[\tilde{p}(a,c|b,d) := p_{(e,e)}(a,c|b,d) = \frac{|G \times G'|}{|K|} p(a,c|b,d) \text{ for } (a,b),(c,d) \in H \times H',\]
    then this is a correlation which we can think of as the restriction of $p_{(e,e)}$ to inputs from $H'$. Moreover, it is immediate that it is $(H,H')$-invariant, since $p$ was $(G,G')$-invariant.

    For the final claim, note that if $(x,y) \in K$, then $(xa,yb) = (x,y)(a,b) \in K$ if and only if $(a,b) \in K$. Thus, if either $(a,b) \notin K$ or $(c,d) \notin K$, then
    \[p_{(e,e)}(a,c|b,d) = 0 = p_{(e,e)}(xa,xc|yb,yd) \ \forall (x,y) \in K.\]
    But if $(a,b) \in K$ and $(c,d) \in K$, then for all $(x,y) \in K$,
    \[p_{(e,e)}(xa,xc|yb,yd) = \frac{|G \times G'|}{|K|}p(xa,xc|yb,yd) = \frac{|G \times G'|}{|K|}p(a,c|b,d) = p_{(e,e)}(a,c|b,d),\]
    as desired.
\end{proof}


Now we will consider the relationship between a $(G,G')$-transformation matrix and the support graph of its corresponding correlation.

\begin{lemma}
    Let $U$ be the transformation matrix of a $(G,G')$-invariant quantum Latin square $\Psi = (\ket{\psi_{a,b}})$ with corresponding correlation $p$. Let $K,H$, and $H'$ be defined as in previous lemmas. Then the submatrix $\hat{U}$ of $U$ with rows indexed by $H$ and columns by $H'$ is a $(H,H')$-transformation matrix. Moreover, the subarray $(\ket{\psi_{a,b}})_{a \in H, b \in H'}$ of $\Psi$ is an $(H,H')$-invariant quantum Latin square with transformation matrix $\hat{U}$.
\end{lemma}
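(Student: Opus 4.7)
The plan is to exploit Lemma~\ref{lem:Dpblocks} together with the identity $D^p = U \circ \overline{U}$ in order to pin down the support pattern of $U$, and then restrict the characterization of Theorem~\ref{thm:qlstransf} to the block indexed by $H \times H'$.

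\textbf{Step 1 (Support of $U$).} First I would show that $U_{a,b} = 0$ whenever exactly one of $a \in H$ and $b \in H'$ holds. Since the QLS correlation is given by $p(b,d\,|\,a,c) = \frac{1}{|G|}|\inner{\psi_{b,a}}{\psi_{d,c}}|^2$, the correlation matrix satisfies $D^p = U \circ \overline{U}$. Therefore $U_{a,b} = 0$ exactly when $D^p_{a,b} = 0$. By Lemma~\ref{lem:Dpblocks}, $D^p_{a,b} = 0$ unless $a \in \pi(bH')$, where $\pi : G'/H' \to G/H$ is the isomorphism that sends the identity coset $H'$ to the identity coset $H$. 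For $a \in H$ the condition $aH = \pi(bH')$ becomes $\pi(bH') = H = \pi(H')$, forcing $b \in H'$; the symmetric statement for $b \in H'$ is identical. Consequently, the only possibly nonzero entries of $U$ inside rows of $H$ or columns of $H'$ sit in the $H \times H'$ block.

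\textbf{Step 2 (Verifying the three conditions of Theorem~\ref{thm:qlstransf} for $\hat{U}$).} Unitarity follows from $(\hat{U}\hat{U}^\dagger)_{a,c} = \sum_{b \in H'} U_{a,b}\overline{U_{c,b}}$ for $a,c \in H$; extending the sum to all of $G'$ only adds zero terms by Step~1, so this equals $(UU^\dagger)_{a,c} = \delta_{a,c}$. The conjugate-inversion property $\overline{\hat{U}_{a,b}} = \hat{U}_{a^{-1},b^{-1}}$ is inherited from $U$ since $H$ and $H'$ are subgroups. For the convolution identity, fix $a,b \in H$ and $c \in H'$; in the identity $U_{ab,c} = \sum_{xy=c,\, x,y\in G'} U_{a,x}U_{b,y}$, Step~1 forces $x \in H'$ (to make $U_{a,x}$ nonzero), and then $y = x^{-1}c \in H'$ as well, so the sum collapses to $\sum_{xy=c,\, x,y\in H'} \hat{U}_{a,x}\hat{U}_{b,y}$.

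\textbf{Step 3 (Recognizing the subarray as a QLS).} For $a,c \in H$ and $b,d \in H'$, group invariance of $\Psi$ gives $\inner{\psi_{a,b}}{\psi_{c,d}} = U_{a^{-1}c,\, b^{-1}d}$. Setting $a=c$ and using Remark~\ref{rem:trans_mtx_extras} yields $\delta_{b,d}$, so each row $\{\ket{\psi_{a,b}} : b \in H'\}$ of the subarray is orthonormal, and likewise for columns. To see all rows span the same subspace, fix $a,c \in H$ and $b \in H'$ and compute
\[
\sum_{d \in H'} \lvert \inner{\psi_{a,b}}{\psi_{c,d}} \rvert^2 \;=\; \sum_{d \in H'} \lvert U_{a^{-1}c,\,b^{-1}d}\rvert^2 \;=\; \sum_{y \in G'} \lvert U_{a^{-1}c,\,y}\rvert^2 \;=\; 1,
\]
where the second equality uses Step~1 (since $a^{-1}c \in H$ forces $U_{a^{-1}c,y}=0$ for $y \notin H'$) and the last is unitarity of $U$. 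Parseval in the orthonormal set $\{\ket{\psi_{c,d}} : d \in H'\}$ then forces $\ket{\psi_{a,b}} \in \spn\{\ket{\psi_{c,d}} : d \in H'\}$, so all rows share a common $|H|$-dimensional span; the analogous calculation for columns shows this span equals the common column span. By construction the inner products of the subarray are exactly the entries of $\hat{U}$, so $\hat{U}$ is its transformation matrix.

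\textbf{Main obstacle.} The only real subtlety is Step~1: carefully translating Lemma~\ref{lem:Dpblocks}, which is phrased in terms of $D^p$, into a statement about the support of $U$, and in particular observing that the relevant isomorphism $\pi$ sends the identity coset to the identity coset. Once this block structure is in hand, the remaining verifications are routine restrictions of identities that $U$ already satisfies.
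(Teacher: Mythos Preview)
Your proof is correct and follows essentially the same route as the paper's. The paper simply asserts that $U_{a,b}=0$ whenever exactly one of $a\in H$, $b\in H'$ holds and then runs the same three verifications; you make this support claim explicit by reading it off from $D^p = U\circ\overline{U}$ and Lemma~\ref{lem:Dpblocks}, and you spell out the common-span argument via Parseval where the paper just invokes unitarity of $\hat{U}$.
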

\begin{proof}
    Since $U_{a,b} = 0$ if $a \in H$ and $b \notin H'$ or if $a \notin H$ and $b \in H'$, it follows that $\hat{U}$ is unitary. The condition $\hat{U}_{a^{-1},b^{-1}} = \overline{\hat{U}_{a,b}}$ holds trivially. Lastly, for all $a,b \in G$ and $c \in G'$, 
    \begin{align*}
        \sum_{x,y \in H': xy = c} \hat{U}_{a,x}\hat{U}_{b,y} &= \sum_{x,y \in H': xy = c} U_{a,x} U_{b,y} \\
        &= \sum_{x,y \in G': xy = c} U_{a,x} U_{b,y} \\
        &= U_{ab,c} = \hat{U}_{ab,c}.
    \end{align*}
    Thus the first claim of the lemma is proven.

    Now let $\hat{\Psi}$ denote the subarray of $\Psi$ with rows indexed by $H$ and columns by $H'$. For all $a,c \in H$ and $b,d \in H'$, we have that
    \[\inner{\psi_{a,b}}{\psi_{c,d}} = U_{a^{-1}c,b^{-1}d} = \hat{U}_{a^{-1}c,b^{-1}d}.\]
    Note that since $\hat{U}$ is unitary, this implies that every row/column of $\hat{\Psi}$ spans some common vector space, and furthermore the rows and columns must be orthonormal bases of this space since the vectors they contain are orthonormal. Thus $\hat{\Psi}$ is a quantum Latin square and it is immediately $(H,H')$-invariant with transformation matrix $\hat{U}$ by the equation displayed above.
\end{proof}

As mentioned in Remark~\ref{rem:Dblockdiag} regarding the correlation matrix $D^p$, by partitioning the rows and columns of a $(G,G')$-transformation matrix $U$ into the cosets of $H$ and $H'$ appropriately, we can block diagonalize $U$. The above Lemma shows that the block corresponding to $H$ and $H'$ is in fact a $(H,H')$-invariant transformation matrix. It would be very interesting if one were able to somehow characterize what the remaining blocks of $U$ could be.

\section{Computational Results}\label{sec:computations}

Here we will present some results that rely at least partially on computations.

\subsection{A non-extreme $D^\pi$ for $\mathbb{Z}_6$}\label{subsec:Z6comps}

As mentioned in Section~\ref{subsec:classicalGICs}, every extreme point of the set of classical $(G,G')$-invariant correlations is contained in the set $\{D^\pi : \pi \in \bij(G,G')\}$. We give an example for $G = G' = \mathbb{Z}_6$ of a $D^\pi$ that is not an extreme point.

We will let $\mathrm{id}$ denote the identity permutation of $\mathbb{Z}_6$ and $\mathrm{inv}$ denote the permutation that maps every element to its inverse. Note that both of these are automorphisms of $\mathbb{Z}_6$ and so by Lemma~\ref{lem:classperms} we have that $D^\mathrm{id} = P^\mathrm{id} = I$ and $D^\mathrm{inv} = P^\mathrm{inv}$ which is a 01-matrix with 1's on the ``backwards" diagonal. Let us fix the permutation $\pi = (1,3)$ written in cycle notation, and also let $\pi_1 = (0,1)(2,3)(4,5)$ and $\pi_2 = (1,3)(2,4)$. Then, letting $\tilde{D}^\pi$ denote the principal submatrix of $D^\pi$ consisting of all but the identity row and column, straightforward computation (using, e.g.,~Lemma~\ref{lem:redmtx}) yields
\[\tilde{D}^\pi = \begin{pmatrix}
    \frac{1}{3} & 0 & \frac{1}{3} & 0 & \frac{1}{3}\\
    0 & \frac{1}{2} & 0 & \frac{1}{2} & 0\\
    \frac{1}{3} & 0 & \frac{1}{3} & 0 & \frac{1}{3}\\
    0 & \frac{1}{2} & 0 & \frac{1}{2} & 0\\
    \frac{1}{3} & 0 & \frac{1}{3} & 0 & \frac{1}{3}\\
\end{pmatrix},
\quad \quad
\tilde{D}^{\pi_1} = \begin{pmatrix}
0 & 0 & \frac{1}{2} & 0 & \frac{1}{2}\\
0 & 1 & 0 & 0 & 0\\
\frac{1}{2} & 0 & 0 & 0 & \frac{1}{2}\\
0 & 0 & 0 & 1 & 0\\
\frac{1}{2} & 0 & \frac{1}{2} & 0 & 0
\end{pmatrix},
\quad \quad
\tilde{D}^{\pi_2} = \begin{pmatrix}
    \frac{1}{2} & 0 & \frac{1}{2} & 0 & 0\\
    0 & 0 & 0 & 1 & 0\\
    \frac{1}{2} & 0 & 0 & 0 & \frac{1}{2}\\
    0 & 1 & 0 & 0 & 0\\
    0 & 0 & \frac{1}{2} & 0 & \frac{1}{2}
\end{pmatrix}.\]
It is then straightforward to check that
\[D^\pi = \frac{1}{6} D^{\mathrm{id}} + \frac{1}{6}D^{\mathrm{inv}} + \frac{1}{3} D^{\pi_1} + \frac{1}{3} D^{\pi_2}.\]
Since $D^\pi$ can be written as a nontrivial convex combination of other correlation matrices of classical $\mathbb{Z}_6$-invariant correlations, it cannot be an extreme point of this set.

\subsection{A non-classical $\mathbb{Z}_2^4$-invariant correlation}\label{subsec:Z24comps}

In~\cite{quantumvsnonlocal}, it was shown that the quantum correlations produced by $\mathbb{Z}_2^d$-invariant quantum Latin squares are always classical for $d = 1,2,3$. They then asked whether this pattern continued: are the correlations produced by $\mathbb{Z}_2^d$-invariant quantum Latin squares always classical for all $d \in \mathbb{N}$?



Here we answer this question in the negative, by producing a $\mathbb{Z}_2^4$-transformation matrix whose resulting correlation is non-classical. In fact, the correlation we produce will be strongly nonlocal. In terms of correlation matrices, we find $\hat{\pi} \in \bij(\widehat{\mathbb{Z}_2^4})$ such that there is no $\pi \in \bij(\mathbb{Z}_2^4)$ satisfying $D^\pi_{a,b} \ne 0 \Rightarrow Q^{\hat{\pi}}_{a,b} \ne 0$. Since all the $D^\pi$ are entrywise nonnegative matrices, this implies that this $Q^{\hat{\pi}}$ cannot be contained in their convex hull. 
To find such a $Q^{\hat{\pi}}$, we generate random permutations $\hat{\pi}$ of $\widehat{\mathbb{Z}_2^4}$ (that fix the identity) and construct the corresponding $Q^{\hat{\pi}}$ according to definition~\ref{def:Qpi}. We then recursively search for $\pi \in \bij(\mathbb{Z}_2^4)$ such that $D^\pi_{a,b}$ is zero everywhere $Q^{\hat{\pi}}$ is. To do this, we make use of the penultimate expression for $D^\pi_{a,b}$ given in the proof of Lemma~\ref{lem:avg}. Writing things in additive notation since we are in $\mathbb{Z}_2^4$, it is not difficult to see that the nonzero entries of $D^\pi$ are precisely the entries $D^{\pi}_{\pi(x)+\pi(y),x+y}$ for some $x,y \in \mathbb{Z}_2^4$. Now given a bijection $\tau$ from $S \subseteq \mathbb{Z}_2^4$ to $T \subseteq \mathbb{Z}_2^4$, we consider possible extensions of $\tau$ to one more element of $\mathbb{Z}_2^4$ such that this extension does not already guarantee that we will have nonzero entries of our final $D^\pi$ that are zero in $Q^{\hat{\pi}}$. We do this by considering each $y \in \mathbb{Z}_2^4 \setminus S$ and constructing the set
\[\mathrm{pos\_vals(y)} := \{z \in \mathbb{Z}_2^4 \setminus T : Q^{\hat{\pi}}_{\tau(x) + z, x + y} \ne 0 \text{ for all } x \in S\}\]
which are the possible values for $\tau(y)$. We then select $y \in \mathbb{Z}_2^4 \setminus S$ such that $\mathrm{pos\_vals}(y)$ has minimum size and we consider all extensions of $\tau$ to $S \cup \{y\}$ such that $\tau(y) \in \mathrm{pos\_vals}(y)$. We continue this recursion until we find a permutation $\pi$ of $\mathbb{Z}_2^4$ such that $D^\pi$ is zero everywhere $Q^{\hat{\pi}}$ is, or we are able to conclude that no such $\pi$ exists. In the latter case we have found a $\hat{\pi} \in \bij(\widehat{\mathbb{Z}_2^4})$ such that $Q^{\hat{\pi}}$ is the correlation matrix of a strongly nonlocal quantum correlation.

The approach described above turns out to be more efficient than constructing the support graph of the correlation corresponding to $Q^{\hat{\pi}}$ and checking if it has a clique of size $|\mathbb{Z}_2^4| = 16$. It only takes about 90 seconds to run the recursive search on 10,000 randomly generated permutations of $\widehat{\mathbb{Z}_2^4}$, and this yields about a dozen of the desired examples of $Q^{\hat{\pi}}$ on average. One particular example of $\hat{\pi} \in \widehat{\mathbb{Z}_2^4}$ such that there is no $D^\pi$ satisfying $D^\pi_{a,b} \ne 0 \Rightarrow Q^{\hat{\pi}}_{a,b} \ne 0$ is $\hat{\pi} = (1,11)(2,8,3,7,10,15,5,6,14,4)$. Here we are identifying the numbers $0,1,\ldots, 15$ with their binary representations which we think of as members of $\mathbb{Z}_2^4$ in the obvious way. Moreover, we identify an element $a \in \mathbb{Z}_2^4$ with the character $\chi \in \widehat{\mathbb{Z}_2^4}$ defined as $\chi(x) = (-1)^{x \cdot a}$ for all $x \in \mathbb{Z}_2^4$, where $x \cdot a$ is the usual dot product. The resulting matrix $Q^{\hat{\pi}}$ is given in Figure~\ref{fig:Z24}.

\begin{figure}[h!]
    \centering
    \setcounter{MaxMatrixCols}{20}
\[
\begin{pmatrix}
   1  &  0  &  0  &  0  &  0  &  0  &  0  &  0  &  0  &  0  &  0  &  0  &  0  &  0  &  0  &  0 \\
   0 & \frac{1}{4}  &  0 & \frac{1}{4}  &  0  &  0  &  0  &  0  &  0  &  0  &  0  &  0 & \frac{1}{4}  &  0 & \frac{1}{4}  &  0\\
   0  &  0  &  0  &  0 & \frac{1}{16} & \frac{1}{16} & \frac{1}{16} & \frac{1}{16}  &  0 & \frac{1}{4} & \frac{1}{4}  &  0 & \frac{1}{16} & \frac{1}{16} & \frac{1}{16} & \frac{1}{16}\\
   0  &  0  &  0  &  0 & \frac{1}{16} & \frac{1}{16} & \frac{1}{16} & \frac{1}{16} & \frac{1}{4} & \frac{1}{4}  &  0  &  0 & \frac{1}{16} & \frac{1}{16} & \frac{1}{16} & \frac{1}{16}\\
   0 & \frac{1}{4}  &  0 & \frac{1}{4} & \frac{1}{4}  &  0&  \frac{1}{4}&    0&    0&    0&    0&    0&    0&    0&    0&    0\\
   0&    0&    0&    0&    0&  \frac{1}{4}&    0 & \frac{1}{4}&    0&    0&    0&    0&    0&  \frac{1}{4}&    0&  \frac{1}{4}\\
   0 &   0&    0&    0& \frac{1}{16}& \frac{1}{16}& \frac{1}{16}& \frac{1}{16}&    0&    0&  \frac{1}{4} & \frac{1}{4} &\frac{1}{16} &\frac{1}{16} &\frac{1}{16} &\frac{1}{16}\\
   0&    0&    0&    0& \frac{1}{16}& \frac{1}{16} &\frac{1}{16}& \frac{1}{16}&  \frac{1}{4}&    0&    0&  \frac{1}{4}& \frac{1}{16}& \frac{1}{16}& \frac{1}{16}& \frac{1}{16}\\
   0&  \frac{1}{4} & \frac{1}{4}&    0& \frac{1}{16} &\frac{1}{16}& \frac{1}{16}& \frac{1}{16}&    0&    0&    0&    0& \frac{1}{16}& \frac{1}{16}& \frac{1}{16} &\frac{1}{16}\\
   0&    0&  \frac{1}{4}&  \frac{1}{4}& \frac{1}{16}& \frac{1}{16}& \frac{1}{16}& \frac{1}{16}&    0&    0&    0&    0& \frac{1}{16}& \frac{1}{16}& \frac{1}{16}& \frac{1}{16}\\
   0&    0&    0&    0&    0&    0&    0&    0&  \frac{1}{4}&  \frac{1}{4}&  \frac{1}{4}&  \frac{1}{4}&    0&    0 &   0&    0\\
   0&    0 &   0&    0&    0&  \frac{1}{4}&    0&  \frac{1}{4} &   0&  \frac{1}{4}&    0&  \frac{1}{4} &   0 &   0&    0&    0\\
   0 &   0&  \frac{1}{4} & \frac{1}{4}& \frac{1}{16}& \frac{1}{16} &\frac{1}{16}& \frac{1}{16}&    0 &   0&    0&    0& \frac{1}{16}& \frac{1}{16}& \frac{1}{16}& \frac{1}{16}\\
   0 & \frac{1}{4} & \frac{1}{4}&    0 &\frac{1}{16}& \frac{1}{16} &\frac{1}{16}& \frac{1}{16}&    0 &   0&    0&    0& \frac{1}{16}& \frac{1}{16} & \frac{1}{16} &\frac{1}{16}\\
   0&    0&    0&    0&    0&    0&    0&    0&  \frac{1}{4}&    0&  \frac{1}{4} &   0&  \frac{1}{4}&    0 & \frac{1}{4}&    0\\
   0&    0&    0&    0&  \frac{1}{4}&    0 & \frac{1}{4}&    0 &   0 &   0 &   0 &   0 &   0 & \frac{1}{4} &   0 & \frac{1}{4}
\end{pmatrix}\]
    \caption{The correlation matrix of a strongly nonlocal quantum correlation arising from a $\mathbb{Z}_2^4$-invariant quantum Latin square.}
    \label{fig:Z24}
\end{figure}

It is important to note that this example also allows us to construct similar examples for $\mathbb{Z}_2^d$ for $d >4$. Let $\hat{\pi} \in \bij(\widehat{\mathbb{Z}_2^4})$ be the permutation such that $Q^{\hat{\pi}}$ is the matrix in Figure~\ref{fig:Z24}. Then $P^{\hat{\pi}} \otimes I_{2}$ is a permutation matrix encoding an element $\pi'$ of $\bij(\widehat{\mathbb{Z}_2^5})$. Since the normalized character table of $\mathbb{Z}_2^d$ is $H^d$ where
\[H = \frac{1}{\sqrt{2}}\begin{pmatrix}
    1 & 1\\ 1 & -1
\end{pmatrix}
\]
is the Hadamard matrix, we see that $U^{\pi'} = U^{\hat{\pi}} \otimes I_{2}$ and thus
\[Q^{\pi'} = \left( U^{\hat{\pi}} \otimes I_{2}\right) \circ \left( \overline{U^{\hat{\pi}} \otimes I_{2}}\right) = \left( U^{\hat{\pi}} \circ \overline{U^{\hat{\pi}}}\right) \otimes \left( I_{2} \circ \overline{I_{2}}\right) = Q^{\hat{\pi}} \otimes I_{2}\]
Now suppose that $\pi \in \bij(\mathbb{Z}_2^5)$ is such that $D^\pi$ is zero everywhere $Q^{\pi'}$ is. Let $G$ be the copy of $\mathbb{Z}_2^4$ in $\mathbb{Z}_2^5$ corresponding to the upper left block of $D^\pi$, and fix $a \in \mathbb{Z}_2^5 \setminus G$. Consider playing the $G$-bijection game as follows. Upon input $x \in G$, act as if you are playing the $\mathbb{Z}_2^5$-bijection game according to the correlation whose correlation matrix is $D^\pi$ to obtain an output $y \in \mathbb{Z}_2^5$. If $y \in G$, then respond with $y$. If $y \notin G$, then respond with $y+a \in G$. It is straightforward to check that this is a classical $G$-invariant correlation whose correlation matrix is the upper left block of $D^\pi$, which is of course zero every $Q^{\hat{\pi}}$ is zero, a contradiction. Thus there is no classical $\mathbb{Z}_2^5$-invariant correlation whose correlation matrix is zero everywhere $Q^{\pi'}$ is zero, and of course this can be extended to $\mathbb{Z}_2^d$ for any larger $d$.

Finally, we recall that since $Q^{\hat{\pi}}$ is the correlation matrix of a strongly nonlocal correlation, there is a corresponding nonlocal game that it wins perfectly, but which has no perfect classical strategy. The game is played as follows: the players Alice and Bob are sent elements $x,y \in \mathbb{Z}_2^4$ and respond with $a,b \in \mathbb{Z}_2^4$ respectively. They win if $Q^{\hat{\pi}}_{a+b,x+y} \ne 0$. 
Unfortunately, this game is not necessarily an isomorphism game for some pair of graphs, and in fact it is not for any of the matrices $Q^{\hat{\pi}}$ we found using the above described procedure.

\subsection{$S_3$-invariant correlations}\label{subsec:S3comps}

For the symmetric group $S_3$ we can construct a unitary $V$ satisfying Equation~\eqref{eq:conjugate} as follows. Let $\one \in \mathbb{C}^3$ be the all ones vector. Define $v,u \in \mathbb{C}^3$ as
\[v = \begin{pmatrix}1 \\ \omega \\ \omega^2 \end{pmatrix}, \quad u = \begin{pmatrix}1 \\ \omega^2 \\ \omega \end{pmatrix}\]
where $\omega$ is a primitive cube root of unity. Let $\hat{V}$ be the matrix with columns
\[\one \oplus \one, \quad \one \oplus -\one, \quad v \oplus v, \quad v \oplus -v, \quad u \oplus u, \quad -u \oplus u.\]
Then $V = \frac{1}{\sqrt{6}}\hat{V}$ is a unitary satisfying Equation~\eqref{eq:conjugate}. The rows of $V$ are indexed by the elements of $S_3$, and the order we have chosen for these elements is $e, (123), (321), (12), (23), (13)$.

The group $S_3$ has two 1-dimensional representations: the trivial one and the representation that maps every permutation to its sign. Additionally, it has one irreducible 2-dimensional representation which can be viewed as the restriction of the \emph{natural representation} (as $3 \times 3$ permutation matrices) to the orthogonal complement of the constant vector. For our choice of $V$, the matrices $X^i$ associated to the two 1-dimensional representations of $S_3$ are scalar multiples of the identity matrix. The $X^i$ corresponding to the single 2-dimensional representation of $S_3$ is equal to
\[\frac{1}{\sqrt{6}}\begin{pmatrix}1 & 1 \\ 1 & -1\end{pmatrix}.\]
Let $H$ be the $2 \times 2$ Hadamard matrix (which is $\sqrt{3}$ times the matrix displayed above), and let $X = \begin{pmatrix}0 & 1 \\ 1 & 0\end{pmatrix}$ be the Pauli $X$ matrix. Then for any $2 \times 2$ unitary $N$, we obtain two $S_3$-transformation matrices.
\begin{equation}\label{eq:S3UG}
    V\left(I_2 \oplus \left(H\overline{N}H \otimes N\right)\right) V^\dagger \quad \text{and} \quad V\left(X \oplus \left(H\overline{N}H \otimes N\right)\right)V^\dagger.
\end{equation}

The question then is whether this produces any non-classical correlations? It turns out that it does. Since there will be uncountably many transformation matrices in this case, we cannot simply compute all of them as in the abelian case. Instead, we will pick a finite subgroup of the transformation matrices and compute all of the corresponding correlation matrices. Specifically, we will let the unitary $N$ from~\eqref{eq:S3UG} vary over the icosahedral group (realized as a subgroup of the $2 \times 2$ unitaries). This group has order 120, but the elements come in pairs of the form $N, -N$, which produce the same transformation matrices. This gives 120 correlations, 60 for each choice of either $I_2$ or $X$ on the lefthand side of the direct sum in~\eqref{eq:S3UG}. Both the transformation matrices $V\left(I_2 \oplus \left(H\overline{N}H \otimes N\right)\right) V^\dagger$ and $V\left(X \oplus \left(H\overline{N}H \otimes N\right)\right)V^\dagger$ result in classical correlations for $N$ being the identity matrix or the matrix $\begin{pmatrix}0 & 1 \\ -1 & 0\end{pmatrix}$ (or their negations). If $N$ is chosen to be any of the remaining diagonal matrices in the icosahedral group, then $V\left(I_2 \oplus \left(H\overline{N}H \otimes N\right)\right) V^\dagger$ results in classical correlations but $V\left(X \oplus \left(H\overline{N}H \otimes N\right)\right)V^\dagger$ produces non-classical ones, and if $N$ is chosen to be any of the remaining matrices with zero diagonal then $V\left(X \oplus \left(H\overline{N}H \otimes N\right)\right)V^\dagger$ results in classical correlations but $V\left(I_2 \oplus \left(H\overline{N}H \otimes N\right)\right) V^\dagger$ produces non-classical ones. For all other choices for $N$ being an element of the icosahedral group, both transformation matrices result in non-classical correlations. 
One also obtains non-classical correlations when choosing $N$ to be the $2 \times 2$ Hadamard matrix. In this case, the two matrices of Equation~\eqref{eq:S3UG} are equal to
\[
\begin{pmatrix}
1 & 0 & 0 & 0 & 0 & 0 \\
0 & \frac{1}{2} & \frac{1}{2} & \frac{i}{\sqrt{3}} & \frac{-i}{2\sqrt{3}} & \frac{-i}{2\sqrt{3}} \\
0 & \frac{1}{2} & \frac{1}{2} & \frac{-i}{\sqrt{3}} & \frac{i}{2\sqrt{3}} & \frac{i}{2\sqrt{3}} \\
0 & \frac{-i}{\sqrt{3}} & \frac{i}{\sqrt{3}} & \frac{1}{3} & \frac{1}{3} & \frac{1}{3} \\
0 & \frac{i}{2\sqrt{3}} & \frac{-i}{2\sqrt{3}} & \frac{1}{3} & \frac{-1}{6} & \frac{5}{6} \\
0 & \frac{i}{2\sqrt{3}} & \frac{-i}{2\sqrt{3}} & \frac{1}{3} & \frac{5}{6} & \frac{-1}{6}
\end{pmatrix}
\quad \text{and} \quad
\begin{pmatrix}
1 & 0 & 0 & 0 & 0 & 0 \\
0 & \frac{1}{2} & \frac{1}{2} & \frac{i}{\sqrt{3}} & \frac{-i}{2\sqrt{3}} & \frac{-i}{2\sqrt{3}} \\
0 & \frac{1}{2} & \frac{1}{2} & \frac{-i}{\sqrt{3}} & \frac{i}{2\sqrt{3}} & \frac{i}{2\sqrt{3}} \\
0 & \frac{-i}{\sqrt{3}} & \frac{i}{\sqrt{3}} & \frac{-1}{3} & \frac{-1}{3} & \frac{-1}{3} \\
0 & \frac{i}{2\sqrt{3}} & \frac{-i}{2\sqrt{3}} & \frac{-1}{3} & \frac{-5}{6} & \frac{1}{6} \\
0 & \frac{i}{2\sqrt{3}} & \frac{-i}{2\sqrt{3}} & \frac{-1}{3} & \frac{1}{6} & \frac{-5}{6}
\end{pmatrix}
\]
respectively. These produce the following correlation matrices:
\begin{equation}\label{eq:Hcharmat}
\begin{pmatrix}
1 & 0 & 0 & 0 & 0 & 0 \\
0 & \frac{1}{4} & \frac{1}{4} & \frac{1}{3} & \frac{1}{12} & \frac{1}{12} \\
0 & \frac{1}{4} & \frac{1}{4} & \frac{1}{3} & \frac{1}{12} & \frac{1}{12} \\
0 & \frac{1}{3} & \frac{1}{3} & \frac{1}{9} & \frac{1}{9} & \frac{1}{9} \\
0 & \frac{1}{12} & \frac{1}{12} & \frac{1}{9} & \frac{1}{36} & \frac{25}{36} \\
0 & \frac{1}{12} & \frac{1}{12} & \frac{1}{9} & \frac{25}{36} & \frac{1}{36}
\end{pmatrix}
\quad \text{and} \quad
\begin{pmatrix}
1 & 0 & 0 & 0 & 0 & 0 \\
0 & \frac{1}{4} & \frac{1}{4} & \frac{1}{3} & \frac{1}{12} & \frac{1}{12} \\
0 & \frac{1}{4} & \frac{1}{4} & \frac{1}{3} & \frac{1}{12} & \frac{1}{12} \\
0 & \frac{1}{3} & \frac{1}{3} & \frac{1}{9} & \frac{1}{9} & \frac{1}{9} \\
0 & \frac{1}{12} & \frac{1}{12} & \frac{1}{9} & \frac{25}{36} & \frac{1}{36} \\
0 & \frac{1}{12} & \frac{1}{12} & \frac{1}{9} & \frac{1}{36} & \frac{25}{36}
\end{pmatrix}
\end{equation}

We can compute a hyperplane that separates these correlations from the classical set of $S_3$-invariant correlation matrices. This is given in the form of a $6 \times 6$ matrix below:
\[
M = \begin{pmatrix}
1 & 0 & 0 & 0 & 0 & 0 \\
0 & 0 & 0 & -1 & 1 & 1 \\
0 & 0 & 0 & -1 & 1 & 1 \\
0 & -1 & -1 & 1 & 0 & 0 \\
0 & 1 & 1 & 0 & -1 & -1 \\
0 & 1 & 1 & 0 & -1 & -1
\end{pmatrix}
\]

Given any classical $S_3$-invariant correlation $p$ with correlation matrix $D^p$, the inner product $\Tr(M^\dagger D^p)$ is at least zero. However, both characteristic matrices presented in~\eqref{eq:Hcharmat} have inner product $-1$ with $M$. This is the largest separation we were able to find, where we restricted our hyperplane coefficients to lie between $-1$ and $1$. 

\section{Discussion}

We have introduced the notion of group invariant quantum Latin squares and provided a complete characterization in terms of unitary isomorphisms of the corresponding group algebras. Moreover, assuming we know unitaries that block diagonalize the group algebras, we can use the results of Section~\ref{sec:constructing} to actually construct group invariant quantum Latin squares. However, there are several natural questions that this work leaves unanswered. One such question is mentioned at the end of Section~\ref{subsec:cayley}, and we rephrase it here:
\begin{prob}
    Find a group invariant quantum Latin square that provides a quantum isomorphism of non-isomorphic Cayley graphs.
\end{prob}
\noindent We believe that the answer is yes, but it is likely that this is a rare phenomenon and thus random approaches may not be useful.

The other major open question 
is the following:
\begin{prob}
    Characterize when the correlation produced by a group invariant quantum Latin square is non-classical.
\end{prob}
\noindent Even an answer for the special case of abelian groups would be of great interest. In this case the $(G,G')$-invariant quantum Latin squares are in bijection with the bijections between $\widehat{G}$ and $\widehat{G'}$, and it is unclear what properties of these bijections could possibly influence whether the resulting correlation is non-classical.

There are a few possibilities for generalizing the notion of group invariant quantum Latin squares. The most obvious is to only require that the squared modulus of the inner products have the group invariance property, i.e., that the resulting correlation is group invariant. But the characterization of such quantum Latin squares may be difficult. However, in a similar direction one could take the quasi-regular representations used in Section~\ref{sec:reptheory} and extend this notion to allow for \emph{projective representations}. 
\begin{prob}
    Which results of the present work have analogs in the more general setting of projective representations?
\end{prob}
\noindent The resulting quantum Latin squares would no longer be group invariant, but only because inner products that should be equal differ by a unit complex factor. Thus the correlations would still be group invariant. It seems likely that analogs of many of the results from this work would also hold in this setting, but with the representation theory replaced with projective representation theory.

\section*{Acknowledgments}
Both authors were supported by the Carlsberg Foundation Young Researcher Fellowship CF21-0682 -- ``Quantum Graph Theory". Additionally, the first author is supported by CNPq, National Council for Scientific and Technological Development (Brazil).

\bibliographystyle{plainurl}
\bibliography{GIQLS}

\appendix

\section{Proof of Lemma~\ref{lem:hatAautos}}\label{app:proof}

\begin{proof}[Proof of Lemma~\ref{lem:hatAautos}]
    If $U$ has the form given in the lemma statement, it is easy to see that $U\hat{\A}U^\dagger = \hat{\A}$. Conversely, suppose that $U\hat{\A}U^\dagger = \hat{\A}$, and write $U$ as a block matrix whose blocks correspond to the summands in the expression for $\hat{\A}$, i.e.,
    \[U = \begin{pmatrix} 
    U_{11} & \dots & U_{1r} \\
    \vdots & \ddots & \vdots \\
    U_{r1} & \dots & U_{rr} 
    \end{pmatrix}.\]
    An arbitrary element $A \in \hat{\A}$ can be written as
    \begin{equation}\label{eq:Aform}
        A = \bigoplus_{i=1}^r I_{d_i} \otimes A_i
    \end{equation}
    for some $A_i \in M_{d_i}(\mathbb{C})$ for each $i \in [r]$. The $s,t$-block of $UAU^\dagger$ is then 
    \begin{equation}\label{eq:UAUform}
        \left(UAU^\dagger\right)_{s,t} = \sum_{i \in [r]} U_{si} \left(I_{d_i} \otimes A_i \right) U^\dagger_{ti}.
    \end{equation}
    
    For any $j \in [r]$ and $\ell,k \in [d_j]$, define $E^j_{\ell,k} \in \hat{\A}$ to be the matrix of the form given in~\eqref{eq:Aform} such that $A_i = 0$ if $i \ne j$ and $A_j = e_{\ell}e_k^\dagger$. Clearly $E^i_{\ell,k}$ has rank $d_i$ and
    \begin{equation}\label{eq:Eformula}
    E^i_{\ell,k}E^j_{s,t} = \delta_{i,j}\delta_{k,s}E^i_{\ell,t}.
    \end{equation}
    Now choose $\hat{i} \in [r]$ such that $d_{\hat{i}}$ is minimum. Note that then $E^{\hat{i}}_{\ell,k}$ has rank $d_{\hat{i}}$ which is the minimum nonzero rank of an element of $\hat{\A}$. Since $UE^{\hat{i}}_{\ell,k}U^\dagger \in \hat{\A}$, we have that
    \[B^{\hat{i},\ell,k} := UE^{\hat{i}}_{\ell,k}U^\dagger = \bigoplus_{i=1}^r I_{d_i} \otimes B^{\hat{i},\ell,k}_i\]
    for some matrices $B^{\hat{i},\ell,k}_i$ for $i \in [r]$. Since $UE^{\hat{i}}_{\ell,k}U^\dagger$ must have the same rank as $E^{\hat{i}}_{\ell,k}$, it follows that there is a unique $\pi(\hat{i},\ell,k) \in [r]$ such that $B^{\hat{i},\ell,k}_{\pi(\hat{i},\ell,k)}$ is nonzero, and moreover $d_{\pi(\hat{i},\ell,k)}$ must be minimum. We aim to show that $\pi(\hat{i},\ell,k)$ depends only on $\hat{i}$. Since $E^{\hat{i}}_{\ell,k}E^{\hat{i}}_{k,s} = E^{\hat{i}}_{\ell,s}$, we have that $B^{\hat{i},\ell,k}B^{\hat{i},k,s} = B^{\hat{i},\ell,s} \ne 0$. It follows that $\pi(\hat{i},\ell,k) = \pi(\hat{i},k,s)$ for all $\ell,k,s \in [r]$, and from this it is immediate that $\pi(\hat{i},\ell,k)$ depends only on $\hat{i}$, so we denote it as $\pi(\hat{i})$.

    If we let $\hat{\A}_i = \spn\{E^i_{\ell,k} : \ell,k \in [d_i]\}$, then by the above and a dimension argument we must have that $U\hat{\A}_{\hat{i}}U^\dagger = \hat{\A}_{\pi(\hat{i})}$ and in particular, for any two distinct $i,j \in [r]$ with $d_i = d_j$ minimum, we have that $\pi(i) \ne \pi(j)$, i.e., $\pi$ gives a permutation of the indices $i \in [r]$ with $d_i$ minimum. Considering next the indices $i \in [r]$ with $d_i$ equal to its second smallest value, we conclude that $\pi$ also gives a permutation of these indices, and continuing on we see that $\pi \in \mathrm{Sym}_{\hat{\A}}$.

    Applying~\eqref{eq:UAUform} to $E^i_{\ell,k}$, we have that
    \begin{equation}\label{eq:UEUblocks}
        \left(UE^i_{\ell,k}U^\dagger\right)_{s,t} = U_{si}\left(I_{d_i} \otimes e_\ell e_k^\dagger \right) U^\dagger_{ti}
    \end{equation}
    and by the above this is zero unless $s = t = \pi(i)$. If we let $I^i = \sum_{\ell \in [d_i]} E^i_{\ell,\ell}$, then $I^i$ is the identity in the subalgebra $\hat{\A}_i$. From the above it is then easy to see that $UI^i U^\dagger = I^{\pi(i)}$ and therefore
    \[U_{\pi(i),i} (I_{d_i} \otimes I_{d_i}) U^\dagger_{\pi(i),i} = I_{d_i} \otimes I_{d_i}.\]
    Since $U_{\pi(i),i}$ is square, it is therefore unitary. Since the expression in~\eqref{eq:UEUblocks} is zero if $t \ne \pi(i)$, by taking $s=\pi(i) \ne t$ and summing the expression over all $\ell = k \in [d_i]$ we obtain that $U_{\pi(i),i}U^\dagger_{t,i} = 0$ and therefore $U^\dagger_{t,i} = 0$ for $t \ne \pi(i)$.

    Now, if we let $U_i = U_{\pi(i),i}$, then the above shows that $U = \widehat{P}^\pi \hat{U}$ where
    \[\hat{U} = \bigoplus_{i=1}^r U_i.\]
    Furthermore, since $(\widehat{P}^\pi)^\dagger \hat{\A} \widehat{P}^\pi = \hat{\A}$, we have that $\hat{U}\hat{\A}\hat{U}^\dagger = \hat{\A}$. Therefore,
    \[U_i\left(I_{d_i} \otimes M_{d_i}(\mathbb{C})\right)U_i^\dagger = I_{d_i} \otimes M_{d_i}(\mathbb{C})\]
    for all $i \in [r]$.

    Now suppose that $V(I_d \otimes M_{d}(\mathbb{C})V^\dagger = I_d \otimes M_{d}(\mathbb{C})$ for some unitary $V$. We will show that $V = M \otimes N$ for some unitaries $M$ and $N$, which will complete the proof of the lemma. 
    
    Since $I \otimes e_ie_j^\dagger$ has rank $d$ which is the minimum nonzero rank, we have that there is some rank one matrix $B_{ij}$ such that
    \[V\left(I \otimes e_ie_j^\dagger\right) V^\dagger = I \otimes B_{ij}.\]
    Since $B_{ij}$ has rank one, it can be written as $u_{ij}v_{ij}^\dagger$ for some \emph{nonzero} vectors $u_{ij}, v_{ij}$. By definition of the $B_{ij}$, we must have that $B_{ij}B^\dagger_{\ell j} = B_{i \ell} \ne 0$ and thus
    \[(v^\dagger_{ij}v_{\ell j}) u_{ij}u_{\ell j}^\dagger = u_{i\ell}v_{i \ell}^\dagger \ne 0.\]
    It follows that $u_{ij}$ is proportional to $u_{i\ell}$ for all $i, \ell, j$. Thus let $w_i$ be a unit vector that is proportional to every $u_{ij}$. Then the above equation also implies that $v_{i\ell}$ is proportional to $w_\ell$ for all $i,\ell$. Therefore, there are scalars $\alpha_{ij}$ such that $B_{ij} = \alpha_{ij}w_iw_j^\dagger$ for all $i,j$. Lastly, since $B_{i 1}B_{j 1}^\dagger = B_{ij}$, we have that
    \[\alpha_{i1} \overline{\alpha_{j1}} w_iw_j^\dagger = B_{ij}\]
    for all $i,j$. Thus letting $x_i = \alpha_{i1} w_i$ satisfies $B_{ij} = x_ix_j^\dagger$.

    Now write $V$ in block form as 
    \[V = \begin{pmatrix} 
    V_{11} & \dots & V_{1d} \\
    \vdots & \ddots & \vdots \\
    V_{d1} & \dots & V_{dd} 
    \end{pmatrix}\]
    where each block is $d \times d$. Then considering the $s,t$-block of $V(I \otimes e_ie_j^\dagger)V^\dagger$, we see that
    \begin{equation}\label{eq:xixj}
        \sum_{\ell} \left(V_{s,\ell} e_i\right)\left(V_{t,\ell} e_j\right)^\dagger = \delta_{st}x_ix_j^\dagger.
    \end{equation}
    In the case where $s=t$ and $i=j$, all terms in the above sum are positive semidefinite and thus it follows that $V_{s,\ell}e_i = \beta_{s,\ell,i} x_i$ for some scalar $\beta_{s,\ell,i}$. We aim to show that $\beta_{s,\ell,i}$ does not depend on $i$. Using Equation~\eqref{eq:xixj}, we have that
    \[\sum_{\ell} \beta_{s,\ell,i}\overline{\beta_{t,\ell,j}} = \delta_{s,t}.\]
    If we let $\beta^{s,i}$ be the vector with entries $\beta_{s,\ell,i}$, then the above says that each $\beta^{s,i}$ is a unit vector and that $\langle \beta^{s,i},\beta^{t,j}\rangle = \delta_{s,t}$, which in particular implies that $\beta^{s,i} = \beta^{s,j}$ for all $i,j$. Therefore we have shown that $\beta_{s,\ell,i}$ depends only on $s$ and $\ell$ and thus
    \[V_{s,\ell}e_i = \beta_{s,\ell}x_i\]
    for some scalars $\beta_{s,\ell}$. It follows that $\beta_{t,k}V_{s,\ell} e_i = \beta_{s,\ell}V_{t,k}e_i$ for all $i$ and therefore all the $V_{s,\ell}$ are proportional to each other. Therefore $V = M \otimes N$ for $M,N \in M_d(\mathbb{C})$. Since $V$ is unitary, we have that $MM^\dagger \otimes NN^\dagger = I$ and therefore both $M$ and $N$ are proportional to unitaries and thus by rescaling them appropriately we will have $V = M \otimes N$ where both $M$ and $N$ are unitaries, as desired. This (finally) completes the proof.
\end{proof}

\end{document}